\documentclass[11 pt]{amsart}

\usepackage{amsmath,amssymb,amsthm,amsfonts,setspace,hyperref,dsfont,yhmath,euscript}

\usepackage{hyperref}\hypersetup{colorlinks=true, citecolor=green, linkcolor=blue, hypertexnames=false}

\usepackage[letterpaper,margin=1.4in]{geometry}

\usepackage{color}
\usepackage[utf8]{inputenc}

\makeatletter
\newcommand{\namedlabel}[2]{%
  \begingroup
  \def\@currentlabel{#2}
  \label{#1}%
  \endgroup
}
\makeatother

\newcommand{\NN}{\mathbb{N}}
\newcommand{\RR}{\mathbb{R}}
\newcommand{\R}{\mathbb{R}}

\newcommand{\CC}{\mathbb{C}}

\newcommand{\ZZ}{\mathbb{Z}}

\newcommand{\norm}[1]{\lVert#1\rVert}
\newcommand{\abs}[1]{\lvert#1\rvert}

\newtheorem{theorem}{Theorem}[section]
\newtheorem{corollary}[theorem]{Corollary}

\newtheorem*{theorem*}{Theorem}
\newtheorem*{conjecture*}{Conjecture}
\newtheorem{lemma}[theorem]{Lemma}
\newtheorem{proposition}[theorem]{Proposition}

\newcommand{\comment}[1]{}
\theoremstyle{definition}

\newtheorem{remark}[theorem]{Remark}

\newtheorem{cect6}{}

\newtheorem{sect}{}

\numberwithin{equation}{section}

\newcommand \re {{%
\operatorname{Re}
}}

\DeclareMathOperator{\Ad}{Ad}

\def\Lie{\mathrm{Lie}}
\makeatletter
\renewcommand*\env@matrix[1][*\c@MaxMatrixCols c]{%
  \hskip -\arraycolsep
  \let\@ifnextchar\new@ifnextchar
  \array{#1}}
\makeatother

\begin{document}
\title[Uniform bounds of correlations]
{Multiple mixing and multiple fractional cohomological equation: semisimple setting}

\thanks{ $^1$ Based on research supported by NSF grant DMS-2452194}

\thanks{{\em Key words and phrases:} Fractional cohomological equation, representation theory, multiple exponential mixing}

\author[]{ Zhenqi Jenny Wang$^1$ }

\address{Department of Mathematics\\
        Michigan State University\\
        East Lansing, MI 48824,USA}
\email{wangzq@math.msu.edu}

\begin{abstract}

The purpose of this paper is to develop a new effective approach to
higher-order mixing in the semisimple setting. We prove effective exponential mixing of all orders for partially hyperbolic algebraic actions, under a strong spectral-gap assumption. The decay rates are explicit in the Lyapunov and spectral-gap data, and the required Sobolev orders are explicit. Already at order two, our estimates require only partial Sobolev/H\"older regularity along weak stable and unstable subgroup directions, with no transverse derivatives. For representations admitting better-than-tempered decay, the resulting order-two estimate attains the optimal matrix-coefficient exponent.

The proof introduces a new fractional-cohomological method in the semisimple setting. The central analytic input is a solvability theory for multiple fractional cohomological equations of Type~$II$ (sum-of-product type). These equations are solvable in a cohomology-free range governed by the spectral behavior near the edge \(0\), and the solutions satisfy estimates in partial Sobolev norms. This mechanism converts fractional solvability into order-two decay of correlations under partial regularity, and then into effective higher-order mixing, yielding a quantitative form of Rokhlin's multiple-mixing problem.


\end{abstract}


\maketitle

\setcounter{tocdepth}{2}

\tableofcontents

\section{Introduction and main results}

Quantitative mixing for semisimple actions has a long history. Order-two decay is closely tied to decay of matrix coefficients, and is classically studied through representation-theoretic methods based on $K$-finite vectors, $K$-type expansions, and spherical analysis. By contrast, quantitative higher-order mixing is much less understood. Even when exponential mixing of order two is known, effective higher-order rates and effective regularity have remained out of reach in the semisimple setting. Moreover, for representations with
better-than-tempered decay, the classical framework does not yield
quantitative estimates for smooth vectors at the optimal
matrix-coefficient exponent.

The present paper develops a different approach to these problems. Rather than starting from $K$-type analysis, we introduce a new analytic mechanism for quantitative mixing in the semisimple setting. We prove effective exponential mixing of all orders for partially hyperbolic algebraic actions of semisimple groups under a strong spectral-gap assumption. The decay rates are explicit in the Lyapunov and spectral-gap data, and the required Sobolev orders are explicit. Already at order two, our estimates require only partial Sobolev/H\"{o}lder regularity along weak stable and unstable subgroup directions, with no transverse derivatives. For representations admitting better-than-tempered decay, the resulting order-two estimate attains the optimal matrix-coefficient exponent.

The key new ingredient is a fractional-cohomological method for semisimple unitary representations. In the semisimple setting, the classical cohomological equation along nilpotent directions is generally obstructed. We show that, after passing to suitable fractional operators, a nontrivial cohomology-free range emerges, governed sharply by the spectral behavior near the edge 0. The corresponding solutions satisfy estimates in partial Sobolev norms, and this fractional solvability is the analytic mechanism that first yields quantitative order-two decay and then propagates to effective higher-order mixing. In this way, the paper builds a new bridge from spectral gap to decay of correlations.

\subsection{Setting and mixing} Let $G$ be a connected semisimple Lie group of non-compact type with finite center and $A$ be a split Cartan subgroup of $G$. Fix a positive Weyl chamber $A^+$ of $A$. Let $K$ be a maximal compact subgroup of $G$ such
that the Cartan decomposition $G=KA^+K$ holds.

Let $\mathcal{Z}\subseteq G$ be a closed abelian subgroup isomorphic to $\ZZ^m\times \RR^l$.  We consider a measure-preserving action $\alpha$ of $G$ on a probability space $(\mathcal{X},\,\varrho)$, where $\mathcal X$ is a $C^\infty$ manifold endowed with a $G$-invariant Borel probability measure $\varrho$. Let $L^2_0(\mathcal X)
=
\left\{f\in L^2(\mathcal X):\int_{\mathcal X}f\,d\varrho=0\right\}$. We assume that the induced unitary representation
of \(G\) on \(L^2_0(\mathcal X)\) has a strong spectral gap.

We write $\alpha_{\mathcal Z}$ for the restriction of $\alpha$ to $\mathcal Z$.
 Given \( n \ge 2 \), we say that $\alpha_\mathcal{Z}$ is:
\begin{itemize}
  \item \emph{\(n\)-mixing} if for every $f_1,\cdots, f_n\in L^\infty(\mathcal{X})$ and every $z_1,\cdots, z_n\in \mathcal{Z}$,  we have
\[
\int_{\mathcal{X}} \Pi_{i=1}^{n} f_i\big(\alpha_\mathcal{Z}(z_i)x\big) \, d\varrho(x) \longrightarrow \Pi_{i=1}^{n} \int_{\mathcal{X}} f_i(x) \, d\varrho(x)
\]
as $\min_{i \neq j} \norm{z_iz_j^{-1}} \to +\infty$.

\smallskip
  \item \emph{\(n\)-exponential-mixing with rate $\eta$} if there exist $\eta,\,s,\,C>0$ such that for every $f_1,\cdots, f_n\in C_c^\infty(\mathcal{X})$ and every $z_1,\cdots, z_n\in \mathcal{Z}$,  we have
  \begin{align*}
   &\Big|\int_{\mathcal{X}} \Pi_{i=1}^{n} f_i\big(\alpha_\mathcal{Z}(z_i)x\big) \, d\varrho(x) - \Pi_{i=1}^{n} \int_{\mathcal{X}} f_i(x) \, d\varrho(x)\Big|\\
   &\leq Ce^{-\eta\min_{i \neq j} \norm{z_iz_j^{-1}}}\Pi_{i=1}^{n} \|f_i\|_s
  \end{align*}
  where $\|f_i\|_s$ denotes the Sobolev
norm of order $s$.

\end{itemize}
We say that mixing rate $\eta$  is \textbf{\emph{effective}} if it is explicitly determined by the Lyapunov exponents of $\alpha_\mathcal{Z}$,  the spectral gap and $n$.
We say that the Sobolev order $s$ is \textbf{\emph{effective}} if $s$ is explicitly determined by $\eta$.

\subsection{Main results} We develop a new dynamical and analytic method for quantitative mixing in the semisimple setting. The principal result of the paper is an effective order-$2$ mixing theorem with explicit decay rates and explicit partial-Sobolev regularity. Quantitative mixing of all orders is then obtained as an application of the order-$2$ theorem. The key analytic input is a new solvability theory for multiple fractional cohomological equations of Type~$II$.

\subsubsection{Effective order-two decay} Let $(\pi, \mathcal{H})$ be a unitary representation of $G$ with a strong spectral gap, i.e., the restriction of $\pi$ to each simple factor of $G$ is isolated
from the trivial representation with respect to the Fell topology. For \(a\in A\), let
\[
W_{-,a}=\{Y\in\text{Lie}(G):\operatorname{Ad}(a^n)Y\to0
\text{ as }n\to+\infty\},
\]
\[
W_{+,a}=\{Y\in\text{Lie}(G):\operatorname{Ad}(a^{-n})Y\to0
\text{ as }n\to+\infty\},
\]
and let \(W_{0,a}\) be the neutral subspace.  Denote by
\(H_{-,a}\), \(H_{+,a}\), \(H_{-0,a}\), and \(H_{+0,a}\) the connected
subgroups with Lie algebras $W_{-,a}$, $W_{+,a}$, $W_{-,a}\oplus W_{0,a}$,
$W_{+,a}\oplus W_{0,a}$,
respectively.

For a closed subgroup \(H\leq G\), the norm
\(\|\cdot\|_{H,s}\) denotes the order-\(s\) Sobolev norm defined using
only derivatives from \(\Lie(H)\); the precise definition is given in
Section~\ref{sec:9}.
\begin{theorem}\label{th:4}
There exist a function \(\eta:A\to\mathbb R_{\ge0}\) and an explicit Sobolev order \(s=s(\eta)>0\)
such that the following holds.
If \(m\geq0\), then
\[
|\langle \pi(a^m)\psi,\xi\rangle|
\leq
C_\eta e^{-m\eta(a)}
\|\psi\|_{H_{+,a},s}\|\xi\|_{H_{-0,a},s}.
\]
If \(m\leq0\), then
\[
|\langle \pi(a^m)\psi,\xi\rangle|
\leq
C_\eta e^{m\eta(a)}
\|\psi\|_{H_{-,a},s}\|\xi\|_{H_{+0,a},s}.
\]
\end{theorem}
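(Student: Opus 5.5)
By symmetry (replacing $a$ by $a^{-1}$ swaps $W_{-,a}$ and $W_{+,a}$ and sends $m\mapsto -m$), it suffices to treat $m\ge 0$. The plan is the fractional-cohomological mechanism described in the introduction, organized in three steps.

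\textbf{Step 1: reduce to a product of root subgroups.} Write $W_{+,a}=\bigoplus_{\phi(a)>1}\mathfrak g_\phi$, a sum of root spaces. Choose an ordering of the roots and pick one-parameter unipotent generators $\exp(tU_j)$, $U_j\in\mathfrak g_{\phi_j}$, so that the map $(t_1,\dots,t_k)\mapsto \prod_j \exp(t_jU_j)$ parametrizes $H_{+,a}$. The conjugation relation
\[
a^{m}\exp(tU_j)a^{-m}=\exp\big(e^{m\lambda_j}tU_j\big),\qquad \lambda_j=\log\phi_j(a)>0,
\]
is the key dynamical input. It shows that $\pi(a^m)$ intertwines the $U_j$-flow with a rescaled $U_j$-flow: $\pi(a^m)\,U_j^{\,r}=e^{-rm\lambda_j}\,(U_j)^{r}\,\pi(a^m)$ at the level of fractional powers $|U_j|^r$ defined by spectral calculus on each $\exp(\mathbb R U_j)$.

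\textbf{Step 2: solve a multiple fractional cohomological equation.} For $\psi$ with finite $\|\cdot\|_{H_{+,a},s}$ norm, the goal is to write
\[
\psi=\sum_j |U_j|^{r}\omega_j
\]
(a Type~$II$ sum-of-products equation), with $\|\omega_j\|\le C\|\psi\|_{H_{+,a},s}$ for some $0<r<r_0$. Here $r_0$ is the cohomology-free threshold determined by the spectral gap. Concretely, I would use the strong spectral gap to restrict $\pi$ to the copy of $SL_2(\mathbb R)$ (or its cover) generated by $U_j$, $\Ad$-opposite elements, and $H_{\phi_j}$. In each irreducible $SL_2$ component, the spectral measure of $U_j$ near $0$ is controlled by the Casimir parameter: the spectral density behaves like $|\lambda|^{-1+2\nu}$-integrable near $\lambda=0$, with $\nu$ bounded below uniformly by the gap. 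Dividing by $|\lambda|^r$ therefore stays in $L^2$ once $r<\nu_0$, where $\nu_0$ is governed by the gap. The transverse directions are handled by splitting $\psi$ into pieces that are spectrally localized near $0$ for each $U_j$, together with a remainder on which some $U_j$ is invertible; the remainder can be inverted with smooth derivatives. Sobolev loss along $H_{+,a}$ only is acceptable because every operator used lies in $\Lie(H_{+,a})$.

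\textbf{Step 3: conclude, and identify the main obstacle.} Substituting the decomposition gives
\[
\langle \pi(a^m)\psi,\xi\rangle=\sum_j \langle \pi(a^m)|U_j|^r\omega_j,\xi\rangle=\sum_j e^{-rm\lambda_j}\langle \pi(a^m)\omega_j,|U_j|^r\xi\rangle .
\]
The factor $|U_j|^r\xi$ is controlled by $\|\xi\|_{W_{+,a}^{*}}$-type derivatives. These must be moved to derivatives in $H_{-0,a}$, and the right bookkeeping is to pair the $H_{+,a}$ Sobolev regularity of $\psi$ against the $H_{-0,a}$ regularity of $\xi$ via the opposite-root $SL_2$ identities. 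This yields the bound with $\eta(a)=r\min_j\lambda_j$ and $s=s(\eta)$ explicit from the number of roots and $r$. The main obstacle is Step~2: obtaining uniform estimates for the simultaneous Type~$II$ equation across all components, including the nontrivial commutators among root spaces (where $[\mathfrak g_\phi,\mathfrak g_\psi]\subseteq \mathfrak g_{\phi+\psi}$), while keeping only partial norms. To handle this, I would first solve along the highest-height roots, whose generators are central in $\Lie(H_{+,a})$, and then induct downward in root height.
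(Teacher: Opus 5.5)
There is a genuine gap: you put the fractional operator on the wrong vector, and as a result both the decay mechanism and the partial-norm estimate fail. First, your intertwining relation has the wrong sign. Differentiating $a^m\exp(tU_j)a^{-m}=\exp(e^{m\lambda_j}tU_j)$ gives $\pi(a^m)U_j\pi(a^{-m})=e^{m\lambda_j}U_j$. Hence $\pi(a^m)|U_j|^r=e^{rm\lambda_j}|U_j|^r\pi(a^m)$, consistent with Lemma~\ref{le:17}. So if $\psi=\sum_j|U_j|^r\omega_j$, then
\[
\langle \pi(a^m)|U_j|^r\omega_j,\xi\rangle=e^{+rm\lambda_j}\langle \pi(a^m)\omega_j,|U_j|^r\xi\rangle ,
\]
which grows in $m$. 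Moreover, $\||U_j|^r\xi\|$ is not controlled by any $H_{-0,a}$-norm of $\xi$, because regularity along the opposite Borel subgroup (generated by $X,V$) does not give regularity along $U_j$. So the ``opposite-root bookkeeping'' in Step~3 cannot be carried out. The paper does the opposite. It writes $\xi=\sum_\lambda|\Lambda_{1,\lambda}|^{r_1}\cdots|\Lambda_{l,\lambda}|^{r_l}\omega_\lambda$ with unstable $\Lambda_{i,\lambda}$, and moves the operator onto $\pi(a)\psi$. There $|U|^r\pi(a^m)=e^{-rm\lambda}\pi(a^m)|U|^r$, so one only needs $\||U|^r\psi\|\le\|\psi\|_{H_{+,a},r}$.

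Second, Step~2 is false as stated, and this is the heart of the matter. Regularity along $U$, or along all of $H_{+,a}$, only controls the $U$-spectral measure at large $|\chi|$. The obstruction to solving $|U|^r\omega=\psi$, however, sits at $\chi\to 0$. A counterexample already exists for $G=SL(2,\RR)$ in a principal series representation. In the Fourier model, take $\phi$ whose $U$-spectral density is $\asymp|\lambda|^{-1+\delta}$ near $0$, and set $\psi=\mathcal P_{U,g}\phi$ with $g$ a bump at $0$. Then $\psi$ is $U$-smooth of all orders, yet $|U|^r\omega=\psi$ has no $L^2$ solution once $2r\ge\delta$, even though $r<\gamma$. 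The integrability near $\lambda=0$ that you invoke holds only for vectors with regularity \emph{transverse} to $U$.

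The missing ingredient is the paper's spectral cutoff $\mathcal P=\mathcal P_{U,f}$. The piece $(I-\mathcal P)\xi$ is solved with a pure $L^2$ bound. For the other piece, $\|\mathcal P\xi\|_t\le C\|\Sigma^{t/2}\xi\|$ with $\Sigma=I-X^2-V^2$ (Lemma~\ref{le:16}, via Lemma~\ref{le:14}). Lemma~\ref{le:23} then solves the low-frequency equation for $r<\gamma$, by converting horocycle decay into $\int\tau(\lambda)|\lambda|^{-2r'}\,d\lambda<\infty$. Since $X\in W_{0,a}$ and $V\in W_{-,a}$, this is an $H_{-0,a}$-norm, which is exactly why $\xi$, not $\psi$, must be decomposed.

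Finally, the commutator issue you single out is not the real obstacle. For the existence of some $\eta$, a single root $\theta$ with $\theta(a)>1$ suffices. To obtain the product rate $\prod_i\theta_i(a)^{-(\gamma_i-\epsilon)}$, the paper uses a strongly orthogonal system, whose $\mathfrak{sl}_2$-factors commute. It then solves the Type~$II$ product equation factor by factor (Theorem~\ref{th:6}).
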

The precise statement is given in Theorem \ref{th:7}.

\begin{remark}\emph{Decay rate.} The estimate has two features which, to our knowledge, were not
previously available in the semisimple setting. The function \(\eta\) is explicit and depends only on the spectral gap of \(\pi\). When \(G\) is higher-rank simple, or \(G=Sp(1,n)\), the resulting uniform lower bound matches the optimal rate obtained by Oh for \(K\)-finite vectors \cite{oh}. For representations admitting better-than-tempered decay, Theorem \ref{th:4} gives an effective order-$2$ estimate at the optimal rate.

\emph{Regularity.} The required Sobolev order \(s(\eta)\) is explicit. Moreover, in contrast with previous approaches, Theorem \ref{th:4} requires only \emph{partial} Sobolev/H\"older regularity along suitable stable/unstable directions, with no transverse derivatives.

\end{remark}
\begin{remark}Order-$2$ mixing is a basic input in many problems of smooth rigidity. From this viewpoint, the partial-Sobolev form in Theorem \ref{th:4} is especially significant: it provides quantitative decay estimates that require regularity only along dynamically relevant stable and unstable directions, without imposing transverse derivatives. We expect this directional form of the estimate to be useful in future rigidity applications.
\end{remark}
\subsubsection{Effective higher-order mixing} We next give two higher-order consequences of Theorem \ref{th:4};  equivalently,
we address the  \emph{quantitative} Rokhlin multiple-mixing problem for the
semisimple partially hyperbolic algebraic actions. The fact that exponential mixing requires only partial smoothness of the test functions  plays a crucial role in the proof.

We first state the rank-one consequence. Let $\eta(a)$ and $s(\eta)$ be as described in Theorem \ref{th:4}, and assume we are in the
rank-one situation \(\mathcal Z\simeq \mathbb Z\), generated by \(a\).
\begin{theorem}\label{th:12}  Then for any $n\geq2$ and any $f_1,\cdots, f_n\in C_c^\infty(\mathcal{X})$ and any $z_1,\cdots, z_n\in\ZZ$ we have
\begin{align*}
   &\Big|\int_{\mathcal{X}} \Pi_{i=1}^{n} f_i\big(\alpha(a^{z_i})x\big) \, d\varrho(x) - \Pi_{i=1}^{n} \int_{\mathcal{X}} f_i(x) \, d\varrho(x)\Big|\\
   &\leq C_{n,\eta}e^{-\eta\min_{i \neq j} |z_i-z_j|}\Pi_{i=1}^{n} \norm{f_i}_{C^s}.
  \end{align*}

\end{theorem}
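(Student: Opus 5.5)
We argue by induction on $n$; the case $n=2$ is Theorem~\ref{th:4} applied to $\pi$ on $L^2_0(\mathcal X)$. Partial Sobolev norms along $H_{\pm,a}$, $H_{\pm0,a}$ are dominated by $C^s$ norms. By invariance of $\varrho$ we may shift all $z_i$ by a common constant. We may also order them, $z_1< z_2<\cdots<z_n$. Let $d=\min_{i\ne j}|z_i-z_j|$. Subtracting means, write $f_i=\mu_i+f_i^0$ with $\mu_i=\int f_i\,d\varrho$. Expanding the product shows that it suffices to bound correlations of products in which at least one factor has mean zero. The terms where all but one factor are constants vanish, and the remaining terms reduce to fewer functions, so induction handles them.

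The key step splits the ordered times at a gap. We shift so that $z_k=0$ for some $k$ with $z_{k+1}-z_k\ge d$; take for instance $k=n-1$. Set
\[
F=\prod_{i\le k} f_i\circ\alpha(a^{z_i}),\qquad \Phi=\prod_{i>k} f_i\circ\alpha(a^{z_i-z_{k+1}}),
\]
so that the correlation equals $\langle \Phi\circ\alpha(a^{m}),F\rangle$ with $m=z_{k+1}-z_k\ge d$. We then write $\langle\Phi\circ\alpha(a^m),F\rangle=\int\Phi\int F+\langle \pi(a^m)\Phi^0,F^0\rangle$. The first term is handled by the induction hypothesis on each group of functions.

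For the second term we apply Theorem~\ref{th:4} with $m\ge0$. This costs $e^{-m\eta}\|\Phi\|_{H_{+,a},s}\|F\|_{H_{-0,a},s}$, and here partial regularity is essential. The function $\Phi$ is a product of $f_i\circ\alpha(a^{t_i})$ with $t_i\ge0$. Differentiating $f\circ\alpha(a^t)$ along $Y\in W_{+,a}$ gives $(\mathrm{Ad}(a^t)Y)f\circ\alpha(a^t)$. For $t\ge0$, $\mathrm{Ad}(a^t)$ expands $W_{+}$ (with our convention $W_{+,a}$ is contracted by $a^{-n}$). The sign must therefore be chosen carefully: we arrange the split so that the factors in $\Phi$ carry times $t_i\le0$ relative to the base point, i.e.\ we anchor $\Phi$ at its largest time. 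Then $\mathrm{Ad}(a^{t_i})$ contracts $W_{+,a}$, and $\|\Phi\|_{H_{+,a},s}\le C\prod\|f_i\|_{C^s}$ uniformly in the times. Symmetrically, anchoring $F$ at its smallest time $z_1$ puts its factors at nonnegative times. There $W_{-,a}\oplus W_{0,a}$ is contracted or isometric up to polynomial growth from the neutral part, which is absent for split $A$ since $\mathrm{Ad}(a)$ is trivial on $W_0$. This gives $\|F\|_{H_{-0,a},s}\le C\prod\|f_i\|_{C^s}$.

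With these anchorings the time separation becomes $m=z_{k+1}-z_1$ rather than a single gap. This is harmless, because $m\ge d$ and we only need a bound by $e^{-\eta d}$. The mean-zero parts $F^0,\Phi^0$ differ from $F,\Phi$ by constants, which have zero derivatives, so their partial norms are the same. To do the $L^2$ part of the norm we use the bound $\|\cdot\|_{L^\infty}$.

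The main obstacle is verifying that $C^s$ norms (Hölder norms for non-integer $s$) of products of composed functions are controlled along the relevant subgroup directions with constants independent of the $z_i$. This requires the Leibniz rule and an interpolation argument for fractional $s$, and relies on exactly the contraction of the correct directions described above. A second subtlety is making the induction give a rate $\eta$ uniform in $n$, with only the constant $C_{n,\eta}$ depending on $n$. This works because every term in the expansion either contains a factor $e^{-\eta m}$ with $m\ge d$, or is a lower-order correlation bounded by induction with the same rate.
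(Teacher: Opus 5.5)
There is a genuine gap, and it is in exactly the step you flagged as delicate: your first split was the right one, and the ``sign correction'' breaks it. You differentiate with the left-action rule $Y(f\circ\alpha(a^t))=((\mathrm{Ad}(a^t)Y)f)\circ\alpha(a^t)$. Theorem~\ref{th:4}, however, is formulated in the convention of \eqref{for:125}, namely $v\,\pi(a^t)=\pi(a^t)\,\mathrm{Ad}_{a^{-t}}v$. Under that convention $\pi(a^t)$ with $t\ge0$ \emph{contracts} $W_{+,a}$-derivatives, so mixing the two conventions reverses the sign.

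In the paper's convention, your original anchoring gives uniform bounds on both blocks:
\begin{itemize}
\item The pushed block $\Phi=\prod_{i>k}\pi(a^{z_i-z_{k+1}})f_i$ has nonnegative offsets, so $\|\Phi\|_{H_{+,a},s}\le C\prod_{i>k}\|f_i\|_{C^s}$.
\item The stationary block $F=\prod_{i\le k}\pi(a^{z_i-z_k})f_i$ has nonpositive offsets, and $\mathrm{Ad}_a$ is trivial on $W_{0,a}$, so $\|F\|_{H_{-0,a},s}\le C\prod_{i\le k}\|f_i\|_{C^s}$.
\end{itemize}
If you keep the composition rule instead, then $\langle\Phi\circ\alpha(a^m),F\rangle=\langle\pi(a^{-m})\Phi,F\rangle$. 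The second half of Theorem~\ref{th:4} then asks for $\|\Phi\|_{H_{-,a},s}\|F\|_{H_{+0,a},s}$, and the same anchoring again controls these. So in every consistent convention, the pushed block is anchored at its earliest time and the stationary block at its latest, with the single gap $m=z_{k+1}-z_k\ge d$.

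Your flipped anchoring does the opposite in both blocks. The partial norms you need are then only bounded by $e^{c\,(\text{intra-block spread})}$ with $c>\eta$, since the Sobolev order exceeds the decay exponent (cf.\ $\zeta_{\theta,\epsilon}>\gamma_\theta-\epsilon$). For example, take $n=3$, $k=2$, $z=(0,D,D+d)$. The stationary block $f_1\cdot(f_2\circ\alpha(a^{D}))$ must be differentiated along directions that the offset $D$ expands. The resulting bound is of order $e^{-\eta(D+d)+cD}$, which is useless as $D\to\infty$ with $d$ fixed. (With that anchoring the separation is also $z_n-z_1$, not $z_{k+1}-z_1$.)

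The repair is to revert to your original split. With $k=n-1$ the pushed block is the single mean-zero function $f_n$, so $\int\Phi\int F=0$. One application of Theorem~\ref{th:4}, with $F$ replaced by $F-\int F$, then gives the mean-zero case.

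This is the mirror image of the paper's proof of Theorem~\ref{th:8}. There the earliest function is the stationary factor, and the remaining block is anchored at the second-earliest time (gap $\kappa$). The paper then treats general means by expanding over subsets and applying the mean-zero bound to each, so no induction on $n$ is needed.
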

The precise statement is given in Theorem \ref{th:8}.

\begin{remark} In rank one, the higher-order estimate has the same exponential rate
and the same Sobolev order as the order-two estimate; neither
deteriorates with the order of correlation. For test functions with zero average, partial Sobolev norms are sufficient for exponential mixing of all orders (see \eqref{for:207} of Theorem \ref{th:8}). In particular, the optimal order-$2$ rate propagates to all orders. Moreover, for triple correlations with zero average, the relevant time separation is governed by the maximal pairwise gap (\eqref{for:208} of Theorem \ref{th:8}),  while for \(n\ge4\) no analogous maximal-gap estimate can hold uniformly in general.
We are not aware of similar results in the semisimple settings.

Thus, in rank one, higher-order mixing has no additional exponential
cost beyond order two: the order-two exponent and the order-two Sobolev
regularity propagate to correlations of all orders.  This phenomenon is
not visible from existing global-Sobolev approaches to higher-order
mixing, such as \cite{BEG}. 

\end{remark}

We next state the higher-rank consequence.   Let $|\mathcal{S}|$ be a positive number depending only  on $G$ and let $\eta(a)$ and $s(\eta)$ be as described in Theorem \ref{th:4}.

\begin{theorem}\label{th:16}  For any $n\geq2$, any $f_1,\cdots, f_n\in C_c^\infty(\mathcal{X})$ and any $a_i\in A$, $1\leq i\leq n$, we have
\begin{align*}
 &\Big|\int_{\mathcal{X}} \Pi_{i=1}^{n} f_i\big(\alpha(a_i)x\big) \, d\varrho(x) - \Pi_{i=1}^{n} \int_{\mathcal{X}} f_i(x) \, d\varrho(x)\Big|\notag\\
&\leq C_{n,\eta} \max_{1\le i\neq j\le n} e^{-\frac{\eta(a_ia_j^{-1})}{(n-1)|\mathcal{S}|}}\,\Pi_{i=1}^n\norm{f_i}_{C^{s}}.
\end{align*}

\end{theorem}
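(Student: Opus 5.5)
We argue by induction on $n$, using the partial-Sobolev order-two bound of Theorem~\ref{th:4} as the engine. By subtracting means and expanding $f_i=(f_i-\int f_i)+\int f_i$, we reduce to the case where every $f_i$ has zero average. The lower-order terms that appear are handled by the induction hypothesis, and the factors $\max_{i\ne j}e^{-\eta(a_ia_j^{-1})/((n-1)|\mathcal S|)}$ are monotone under passing to subfamilies. The plan is to find one element $b\in A$, a direction, that separates the points $a_1,\dots,a_n$ into two nonempty clusters. Here $\mathcal S$ should be the finite set of Weyl-type directions (for instance walls and regular directions determined by the roots) attached to $G$. Then:
\[
\int_{\mathcal X}\Pi_{i} f_i(\alpha(a_i)x)\,d\varrho=\big\langle \pi(b^m)\Psi,\Xi\big\rangle,
\]
where $\Psi$ is the product of the $f_i\circ\alpha(a_i b^{-m}\cdots)$ over the "far" cluster and $\Xi$ is the product over the "near" cluster.

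\textbf{Step 1 (geometric pigeonhole).} Take the pair $(i,j)$ realizing the maximum of $\eta(a_ia_j^{-1})$. Project the points $\log a_k$ onto a suitable functional $\lambda\in\mathcal S$ that controls $\eta$ in the direction $a_ia_j^{-1}$. The $n$ projected values span an interval of length $\gtrsim \eta(a_ia_j^{-1})/|\mathcal S|$, so there are $n-1$ consecutive gaps and one of them has length at least $\eta(a_ia_j^{-1})/((n-1)|\mathcal S|)$. This is exactly where the exponent in the statement comes from. Cutting at this gap splits the indices into $I_-\sqcup I_+$. After translating by a common element (using invariance of $\varrho$), all $a_k$ with $k\in I_+$ lie on one side of the gap and those with $k\in I_-$ on the other.

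\textbf{Step 2 (partial regularity of the clusters).} Choose $b$ so that $\lambda(\log b)$ equals the gap and so that the cluster $I_+$ is pushed toward the expanding side. The key point is that $\Psi=\Pi_{k\in I_+}f_k\circ\alpha(c_k)$ only needs to be differentiated along $H_{+,b}$, and $\Xi$ along $H_{-0,b}$. For $Y\in W_{+,b}$ and $c_k$ lying on the correct side of the cut, $\operatorname{Ad}(c_k)Y$ has norm $\le\|Y\|$, or at most an $e^{\epsilon}$-controlled amount. So by Leibniz,
\[
\|\Psi\|_{H_{+,b},s}\le C_n\,\Pi_{k\in I_+}\|f_k\|_{C^s},
\]
and similarly $\|\Xi\|_{H_{-0,b},s}\le C_n\Pi_{k\in I_-}\|f_k\|_{C^s}$. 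This is why $C^s$ norms appear: products of compositions are controlled only in $C^s$, not $L^2$-Sobolev. It is also why partial regularity is essential. Full derivatives would pick up the transverse expansion $e^{\|c_k\|}$ and destroy the estimate.

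\textbf{Step 3 (apply Theorem~\ref{th:4} and recombine).} Write $\Psi=\Psi_0+\int\Psi$ and $\Xi=\Xi_0+\int\Xi$. Theorem~\ref{th:4} gives $|\langle\pi(b^m)\Psi_0,\Xi_0\rangle|\le C e^{-\eta(b^m)}\|\Psi\|_{H_{+,b},s}\|\Xi\|_{H_{-0,b},s}$. The remaining term $\int\Psi\int\Xi$ is a product of lower-order correlations, which by induction (the clusters have fewer than $n$ points) differ from $\Pi\int f_k$ by an admissible error. The factor $\eta(b^m)$ must be compared with $\eta(a_ia_j^{-1})/((n-1)|\mathcal S|)$, using homogeneity and concavity-type properties of $\eta$ coming from its explicit definition via the spectral gap (Theorem~\ref{th:7}).

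\textbf{Main obstacle.} The main obstacle is Step 2 combined with the choice of $b$. In higher rank, a single $b$ must simultaneously make every $c_k$ in $I_+$ non-expanding on $W_{+,b}$ and every $c_k$ in $I_-$ non-expanding on $W_{-,b}\oplus W_{0,b}$. Meanwhile $\eta(b)$ must remain comparable to the gap. My first attempt would be to take $b$ in the Weyl chamber face determined by $\lambda$, possibly singular, so that the neutral subgroup absorbs the remaining directions. I would handle the bounded non-contracting parts by losing a constant in $C_{n,\eta}$.
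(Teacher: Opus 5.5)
Your architecture is the paper's: induction on $n$, fixing the most separated pair, choosing one root $\beta\in\mathcal S$ whose factor is at most $\eta_\epsilon(\mathcal S,\cdot)^{1/|\mathcal S|}$, projecting onto $\beta$ and pigeonholing the $n-1$ gaps, splitting into two blocks, and handling the product of block means by induction. Note that $\mathcal S$ is a maximal strongly orthogonal system of roots, not a set of walls; the pigeonhole over $\mathcal S$ works because $\eta_\epsilon(\mathcal S,a)=\prod_{\theta\in\mathcal S}\theta(a^+)^{-(\gamma_\theta-\epsilon)}$ is a product of $|\mathcal S|$ factors.

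\textbf{Gap in Step 2.} Step 2 fails, exactly at the obstacle you flag, and your remedy does not close it. You cut along the single functional $\beta$, but then ask for regularity along all of $H_{+,b}$ and $H_{-0,b}$, as in Theorem~\ref{th:4}.

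As soon as $\beta$ lies in a simple factor of rank at least two, no $b$ with $\beta(b)\neq1$ has $W_{+,b}\oplus W_{-,b}=\Lie(u_\beta)\oplus\Lie(u_{-\beta})$. For example, in $\mathfrak{sl}(3,\RR)$, $\alpha_1(H)\neq0$ forces $\alpha_2(H)\neq0$ or $(\alpha_1+\alpha_2)(H)\neq0$.

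Making $b$ singular does not help either. It only moves such root spaces into $W_{0,b}$, and $\Xi$ must still be differentiated along $W_{0,b}$. On a root space $\Lie(u_\alpha)$ with $\alpha\neq\pm\beta$, the translates $c_k$ act by $\alpha(c_k)^{\pm1}$. These values are not constrained by the ordering of the $\beta$-values and can be of size $e^{C\max_{i,j}\|\log a_i-\log a_j\|}$. That loss is exponential, not bounded. It cannot be absorbed into $C_{n,\eta}$, nor into the gain, whose exponent is only guaranteed to be about $1/((n-1)|\mathcal S|)$ of the spread.

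\textbf{The missing idea.} Invoke the sharper order-two bound, Theorem~\ref{th:7}, with the one-element strongly orthogonal system $\mathcal S_1=\{\beta\}$. Its partial norms involve only $U_\beta$ for the first block, and only $X_\beta$ and $U_{-\beta}$ for the second. On these, $\Ad(A)$ acts through $\beta^{\pm1}$ and trivially, so the cut along $\beta$ controls exactly the directions that are differentiated.

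Center the upper block (larger $\beta$-values) at its lowest point $t_{\mathfrak q_1}$, and the lower block at its highest point $t_{\mathfrak q_2}$. Then $\beta(\mathfrak a^{t_i-t_{\mathfrak q_1}})\ge1$ on the upper block and $\beta(\mathfrak a^{t_i-t_{\mathfrak q_2}})\le1$ on the lower one. Every translated factor is therefore non-expanding in the relevant directions. Leibniz then bounds $\|\mathcal F_1\|_{u(\mathcal S_1),C^{\zeta}}$ and $\|\mathcal F_2\|_{S(\mathcal S_1),C^{\zeta}}$, with $\zeta=\zeta_\epsilon(\mathcal S_1)$, by the products of the $\|f_i\|_{C^\zeta}$ over the respective blocks.

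The proof of Theorem~\ref{th:7} uses only $\beta(b)>1$, so $b=\mathfrak a^{t_{\mathfrak q_1}-t_{\mathfrak q_2}}$ need not lie in any chamber. The gap pigeonhole gives $\beta(b)\ge c^{1/(n-1)}$, where $c$ is at least the $\beta$-ratio of the extreme pair. Hence $\beta(b)^{-(\gamma_\beta-\epsilon)}\le\eta_\epsilon(\mathcal S,\cdot)^{1/((n-1)|\mathcal S|)}$ evaluated at that pair. This yields the stated exponent directly, with no homogeneity or concavity of $\eta$ needed.
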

The precise statement is given in Theorems \ref{th:3}.

 \begin{remark} Theorem \ref{th:16} provides effective higher-order mixing with explicit dependence on the order \(n\). In particular, the higher-rank result applies beyond the homogeneous setting and yields explicit higher-order rates and explicit regularity. Previous
order-two-to-all-order approaches, such as \cite{BEG},  in the semisimple setting do not seem
to provide this simultaneous effectiveness of the rate and regularity
in this generality.

\end{remark}

\subsection{The fractional-cohomological mechanism} We now describe the analytic mechanism underlying the preceding theorems. The main new ingredient of the paper is a fractional-cohomological mechanism for semisimple settings.

\subsubsection{Multiple fractional cohomological equation}\label{sec:13}

 The classical cohomological equation
\begin{align}\label{for:10}
X\xi=d\rho(X)\xi=\omega
\end{align}
is generally obstructed in the semisimple setting. Our basic idea is to replace the classical infinitesimal operators by suitable fractional operators attached to abelian subgroup actions. This weakens the singular behavior at the spectral edge \(0\) and yields a nontrivial solvable range.

We first recall the relevant fractional operators. Let $S$ be a Lie group and $(\rho,\mathcal R)$ a unitary representation of $S$. Let $\mathcal A\le S$ be an abelian Lie subgroup isomorphic to $\RR^m$, and let $X_1,\dots,X_m\in \Lie(\mathcal A)$ be a basis of $\Lie(\mathcal A)$. Consider the restricted representation $\rho|_{\mathcal A}$.

By spectral theory, there exists a regular Borel measure \(\sigma\) on \(\widehat{\RR^m}\) such that every vector \(\xi\in\mathcal{R}\)
admits a decomposition $\xi=\int_{\widehat{\RR^m}}\xi_\chi\,d\sigma(\chi)$, and for every \(t=(t_1,\dots,t_m)\in\RR^m\),
\[
\rho(\exp(\sum_{i=1}^m t_iX_i))\xi=\int_{\widehat{\RR^m}}\chi(t)\,\xi_\chi\,d\sigma(\chi),
\]
where \(\chi(t)=e^{\mathrm{i}\chi \cdot t}\) (here we identify \(\RR^m\) with \(\widehat{\RR^m}\)).

For $r=(r_1,\cdots,r_m)\in (\RR^+)^m$ and $X=(X_1,\cdots,X_m)$, we define the associated \emph{fractional operator} by
\begin{align*}
  |X|^{r}(\xi):=|X_1|^{r_1}\cdots |X_m|^{r_m}(\xi):=\int_{\widehat{\RR^m}}|\chi_1|^{r_1}\cdots |\chi_m|^{r_m}\xi_\chi d\sigma(\chi).
\end{align*}
This is a positive self-adjoint operator with domain
\[
\mathrm{Dom}(|X|^{\,r})=\Bigl\{\xi\in\mathcal R:\ \int_{\mathbb R^m} |\chi_1|^{2r_1}\cdots |\chi_m|^{2r_m}\|\xi_\chi\|^2d\sigma(\chi)<\infty\Bigr\}.
\]
In the direct-integral model,
\[
\bigl(|X|^{\,r}\xi\bigr)_\chi = |\chi_1|^{r_1}\cdots |\chi_m|^{r_m}\,\xi_\chi\quad\text{for $\sigma$-a.e.\ }\chi\in\RR^m.
\]

\begin{remark} When $m=1$ and \(r\in\NN\), the operator $|X|^r$ is closely related to, but different from, the usual Lie derivative. Writing $d\rho(X)$ for the infinitesimal generator, one has
\[
d\rho(X)^r(\xi)
=
\int_{\widehat{\RR}} (i\chi)^r\,\xi_\chi\,d\sigma(\chi),
\]
so $|X|^r$ differs from $d\rho(X)^r$ by replacing $(i\chi)^r$ with $|\chi|^r$.

 When $S=\RR$ and $\rho$ is the left-regular representation on $L^2(\RR)$, the operator $|X|^r$ is the Fourier multiplier
\[
f\longmapsto \mathcal F^{-1}\bigl(|\chi|^r\widehat f\,\bigr),
\]
which coincides, up to normalization, with the Riesz fractional derivative.
\end{remark}
We now define the class of equations used in the semisimple setting. Let $\mathcal A_i\le S$, $1\le i\le m$,
be abelian Lie subgroups, with $\mathfrak u_{i,1},\dots,\mathfrak u_{i,l_i}\in\Lie(\mathcal A_i)$ a basis of \(\Lie(\mathcal A_i)\). For exponents \(r_{i,j}>0\), set
\[
\mathfrak u_i=(\mathfrak u_{i,1},\dots,\mathfrak u_{i,l_i}),
\qquad
r_i=(r_{i,1},\dots,r_{i,l_i}),
\]
and write $|\mathfrak u_i|^{r_i}:=
|\mathfrak u_{i,1}|^{r_{i,1}}\cdots |\mathfrak u_{i,l_i}|^{r_{i,l_i}}$.

Given \(\omega\in\mathcal R\), we study the following \emph{multiple fractional cohomological equation of Type~II (sum of product type)}:
\[
\sum_{i=1}^m |\mathfrak u_{i}|^{r_{i}}\,\xi_{i}=\omega.
\]
We say that \(\omega\) is a \(\{\mathfrak u_{i,j};r_{i,j}\}\)-coboundary if there exists \(\xi_i\in\mathcal R\), $1\le i\le m$ satisfying the above equation.

\begin{remark} If $l_i=1$ for each $1\leq i\leq m$, then Type~$II$ reduces to the multiple fractional cohomological equation of Type~$I$ (sum type), studied in \cite{W2} in the nilmanifold setting.

\end{remark}
Such multiple fractional equations of sum-of-product type, defined via the spectral theory of abelian subgroups, are new in the representation-theoretic setting of possibly non-abelian Lie groups.

\subsubsection{Fractional solvability in the semisimple setting}
The key new analytic input behind Theorem \ref{th:4} is a solvability theory for multiple fractional cohomological equations of Type~$II$.
The theorem below shows that, after passing to suitable multiple fractional operators, a cohomology-free range of exponents emerges.
Moreover, the corresponding solutions satisfy \emph{partial} Sobolev estimates, involving only derivatives along suitable stable/unstable directions. This fractional solvability is the analytic mechanism behind all of our quantitative mixing results.

\begin{theorem}\label{th:19} Suppose \(S\) is a Lie group with
\[
\Lie(S)\simeq \mathfrak{sl}(2,k_1)\oplus\cdots\oplus \mathfrak{sl}(2,k_n),
\qquad k_i\in\{\RR,\CC\},
\]
and let \(S_i\le S\) be the corresponding rank-one subgroups. Let \((\beta,\mathcal L)\) be a unitary representation of \(S\), and assume that each restriction \(\beta|_{S_i}\) has strong spectral gap. Then there exist a subgroup \(H\le S_1S_2\cdots S_n\) and thresholds \(\gamma_i>0\), depending only on the spectral gap of $\beta|_{S_i}$, such that
for every choice of exponents \(0\le r_i<\gamma_i\), \(1\le i\le n\), every sufficiently smooth vector \(\xi\in\mathcal L\) admits a finite decomposition $\xi=\sum_{\lambda} \xi_\lambda$ such that, for each \(\lambda\), the fractional equation
\[
|\Lambda_{1,\lambda}|^{r_1}\cdots |\Lambda_{n,\lambda}|^{r_n}\,\omega_\lambda=\xi_\lambda,
\]
has a solution \(\omega_\lambda\in\mathcal L\), where each \(\Lambda_{i,\lambda}\) is one of the nilpotent directions associated with the \(i\)-th rank-one factor. Equivalently,
\[
\sum_\lambda
|\Lambda_{1,\lambda}|^{r_1}\cdots
|\Lambda_{n,\lambda}|^{r_n}\omega_\lambda
=
\xi.
\]
Thus \(\xi\) is a Type~II fractional coboundary.

Moreover, the solutions satisfy estimates of the form
\[
\|\omega_\lambda\|
\le C_{\mathfrak r,\mathfrak p}\,\|\xi\|_{H,s},
\]
where \(s\) depends explicitly on the spectral-gap data.

Finally, the thresholds \(\gamma_i\) are sharp: if \(r_i>\gamma_i\) for at least one index \(i\), then there exist smooth vectors \(\xi\) for which the corresponding fractional equations are not solvable in \(\mathcal L\).

\end{theorem}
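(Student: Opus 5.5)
Our approach is to reduce everything to a single rank-one factor, prove solvability and sharpness there, and then tensor the factors together. Since \(\beta|_{S_i}\) has strong spectral gap, we decompose \(\beta\) as a direct integral over irreducible unitary representations of \(S\). Each irreducible component is, up to a finite cover, an outer tensor product \(\rho_1\otimes\cdots\otimes\rho_n\) with \(\rho_i\in\widehat{S_i}\) nontrivial. The spectral-gap hypothesis keeps each \(\rho_i\) at uniform distance from the trivial representation. For \(\mathfrak{sl}(2,\RR)\) this means the Casimir parameter \(\nu_i\) of complementary series components satisfies \(\nu_i\le 1-\epsilon_i\). The analogous bound holds for \(\mathfrak{sl}(2,\CC)\).

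\textbf{Rank one.} The building block is the case \(n=1\). Fix the standard triple \(U^+, U^-, X\) (the two nilpotents and the diagonal element). Restrict \(\rho\) to the Borel subgroup generated by \(X\) and \(U^+\), and use the Kirillov / line model, in which \(U^+\) acts by multiplication by \(e^{\mathrm{i}t\chi}\). For principal and complementary series this model is \(L^2(\RR^*, |\chi|^{-\nu}d\chi)\)-type. In it, \(|U^+|^{r}\) is literally multiplication by \(|\chi|^r\). Solving \(|U^+|^r\omega=\xi\) therefore amounts to showing that \(|\chi|^{-r}\xi_\chi\) is square integrable near \(\chi=0\).

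Near \(0\), a smooth vector behaves like \(c_+|\chi|^{(\nu-1)/2}+c_-|\chi|^{(-\nu-1)/2}\) times smooth functions; this is the spectral edge behaviour. Integrability then holds exactly when \(r<\gamma:=(1-\nu_{\max})/2\), up to the normalization of the measure. Here \(\gamma\) is determined by the spectral gap, which gives the threshold. For discrete series the edge behaviour is better, so their threshold is larger than \(\gamma\).

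A single direction will not suffice, because smooth vectors need not decay on the whole line. We therefore use a partition of unity \(\phi_++\phi_-=1\) on the \(K\)-circle, or equivalently on the projective line. Localizing \(\xi\) away from the points where \(U^+\) (respectively \(U^-\)) degenerates gives \(\xi=\xi_{+}+\xi_{-}\). The equation is then solved for \(\xi_+\) with \(\Lambda=U^+\) and for \(\xi_-\) with \(\Lambda=U^-\). This is the origin of the finite index set \(\lambda\) and of the phrase ``one of the nilpotent directions''.

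\textbf{Estimates.} The bounds for \(\|\omega_\pm\|\) come from a Hardy-type inequality near \(\chi=0\) plus trivial control at infinity. They are expressed through \(L^2\) norms of derivatives of \(\xi_\chi\) in \(\chi\). In the representation these correspond to \(X\) and \(U^{\mp}\)-derivatives, which are directions inside a Borel-type subgroup \(H\). One must check that the cutoffs only involve these directions, and that the estimate is uniform over the direct integral with constants depending only on \(\epsilon_i\).

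\textbf{Tensoring.} For general \(n\), take \(H\) to be the product of the rank-one subgroups. The partition of unity is tensored, giving \(2^n\) pieces indexed by \(\lambda\in\{\pm\}^n\). On an irreducible tensor product the operators \(|\Lambda_{i,\lambda}|^{r_i}\) commute and act on separate factors, so the multiplier is the product \(\prod_i|\chi_i|^{r_i}\). The inverse is then bounded factor by factor, with partial Sobolev norms multiplying. Integrating over the direct integral with uniform constants finishes the existence and estimate parts.

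\textbf{Sharpness.} If \(r_i>\gamma_i\), choose a component \(\rho_i\) with \(\nu_i\) near the spectral-gap edge; it appears in \(\beta\) because the gap is the infimum. Take a \(K\)-finite vector with \(c_+\neq0\). Then \(|\chi|^{-r_i}\xi_\chi\notin L^2\) near \(0\) for every admissible localization, so no decomposition can be solved.

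\textbf{Main obstacle.} We expect the hardest step to be the uniform Hardy/edge analysis near \(\chi=0\) for arbitrary smooth (not \(K\)-finite) vectors. This requires an exact description of the model near the edge, with a uniform dependence on \(\nu\) as \(\nu\) approaches the gap. We would first try the explicit intertwining formula for \(\mathfrak{sl}(2,\RR)\) and then handle \(\mathfrak{sl}(2,\CC)\) analogously.
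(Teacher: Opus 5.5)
The step that goes wrong is your claim that a single nilpotent direction does not suffice, together with the $U^{+}/U^{-}$ localization built on it. For $0<r<\gamma$ the equation $|U|^r\omega=\xi$ is solvable for \emph{every} sufficiently smooth $\xi$ (Lemma~\ref{le:23}; with partial norms, Proposition~\ref{po:4}). In the precise version, Theorem~\ref{th:6}, every $\Lambda_{j,\lambda}$ is $U_j$ or $\mathrm{i}U_j$, both in the same positive root space, and the $2^n$ pieces come only from the complex factors. Admitting $U^-$ changes the theorem, and it makes your sharpness claim (``no decomposition can be solved'') false.

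Here is a counterexample. Take a single complementary series $\pi_\nu$ with $0<\nu<1$, so $\gamma=(1-\nu)/2$. The Fourier-model density is $|\xi(\lambda)|^2|\lambda|^{-\nu}$, and smooth vectors satisfy $\xi(\lambda)=\xi(0)+O(|\lambda|^{\nu})$. Hence for $\gamma<r<(1+\nu)/2$ the equation $|U^+|^r\omega=\xi$ is obstructed only by the $U^+$-invariant functional $D_+(\xi)=\xi(0)$, which in the line model is $\phi\mapsto\int_{\RR}\phi\,dx$. Likewise $|U^-|^r\omega=\xi$ is obstructed only by the Weyl conjugate $D_-$. These two functionals are linearly independent, since otherwise $D_+$ would be $\mathfrak{sl}_2$-invariant. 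Choose a smooth $\eta$ with $D_+(\eta)=1$ and $D_-(\eta)=0$, and write $\xi=(\xi-D_+(\xi)\eta)+D_+(\xi)\eta$. This solves $|U^+|^r\omega_++|U^-|^r\omega_-=\xi$ for every smooth $\xi$, well past $\gamma$.

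The localization also fails on its own terms:
\begin{enumerate}
\item Cutting off away from the fixed point of $U^+$ removes only the mild edge term; $D_+$ survives.
\item Multiplication by a cutoff on the circle does not preserve discrete-series subspaces.
\item The $U^-$-pieces need $U^+$-derivatives, so $H$ becomes all of $S_1\cdots S_n$, as your tensoring paragraph in fact says. That discards the partial regularity, which is the point of the theorem. It is also what Theorem~\ref{th:7} needs: there every $\Lambda_{i,\lambda}$ must be contracted by $\Ad_{a^{-1}}$.
\end{enumerate}

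Separately, the step you call the main obstacle is the actual analytic content, and the proposal gives no way through it. That step is a uniform edge/Hardy analysis of arbitrary smooth vectors across the direct integral, with constants controlled near the gap and through the principal series. The paper never analyzes the edge of individual models. It proceeds in three moves:
\begin{enumerate}
\item It proves geodesic decay $|\langle\rho(a_s)\psi,\xi\rangle|\le Ce^{-2|s|(\gamma-\epsilon)}\|\xi\|_p\|\psi\|_\zeta$. For $\mathfrak{sl}(2,\CC)$ this uses $K$-types; for $\mathfrak{sl}(2,\RR)$ it uses distributional solutions of $Y\eta=\xi$ plus interpolation.
\item A $KAK$ argument transfers this to $|\hat\tau(t)|\le C(1+|t|)^{-2(\gamma-\epsilon)}\|\xi\|_\zeta^2$, where $\tau$ is the spectral density of $U$ at $\xi$ in the whole representation.
\item For $r<r'<\gamma$ with $2r'\notin\ZZ$, it writes $\int\tau(\lambda)|\lambda|^{-2r'}d\lambda=c_{r'}\int\hat\tau(t)|t|^{2r'-1}dt$ and splits at $|t|=1$. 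Uniformity is automatic.
\end{enumerate}

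Partial norms come from the spectral cutoff $\mathcal P_{U,f}$. The high-frequency part is inverted with an $L^2$ bound. The low-frequency part has full Sobolev norms bounded by $\|\Sigma^n\xi\|$ (Lemma~\ref{le:14}). So $H$ is generated by $X_i,V_i$ (and $\mathrm{i}X_i,\mathrm{i}V_i$), with no $U_i$-derivatives.

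Sharpness also comes from decay rather than edge coefficients. If $|U|^r\omega=\xi$, then $|\langle\rho(a_s)\xi,\xi\rangle|\le e^{-2r|s|}\||U|^r\xi\|\|\omega\|$, contradicting optimality of $\gamma$ (Lemma~\ref{le:13}). In the product setting one must use a positive-measure set of components whose gap lies below some $c\in(\gamma_i,r_i)$, since a single component generally has measure zero, and smooth the other factors with $\mathcal P$.

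Your tensor-product-of-bounded-operators step is a fine substitute for the paper's induction, once bounded rank-one solution operators with gap-dependent constants are actually available.
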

The precise statement is given in Theorem \ref{th:6}.

\begin{remark}\label{re:3}
For \(G=SL(2,\RR)\), obstructions to the classical cohomological equation over the horocycle flow are classified in \cite{Forni}. In particular, nilpotent directions are not cohomology free in the classical sense. Theorem~\ref{th:19} shows that this changes at the fractional level: although the classical equation is obstructed, a nontrivial solvable range appears for the corresponding fractional equations. Such a theory does not appear to have been previously developed in semisimple unitary representation theory. 
\end{remark}

\subsection{Background and comparison with previous results}

Mixing is a basic manifestation of randomness in dynamical systems: correlations decay, and the system asymptotically forgets its initial state. In the semisimple setting, quantitative mixing of order \(2\) is closely related to decay of matrix coefficients of unitary representations.

\subsubsection{History} \emph{Qualitative mixing}:
Building on work of Dani \cite{Da1}, \cite{Da2}, algebraic partially hyperbolic one-parameter flows are mixing of all orders under mild assumptions.
 Using Ratner's measure classification, Starkov in \cite{Star} proved mixing
of all orders for general mixing one-parameter flows on finite-volume homogeneous spaces.

\emph{Quantitative mixing of order two}: Quantitative (exponential) mixing of order two rests on decay estimates for matrix coefficients of (unitary) representations of semisimple Lie groups. Building on Harish-Chandra's research programme (surveyed in the
monographs \cite{Warner}, \cite{Warner1}), \emph{effective} bounds for $K$-finite vectors were developed in work of Borel-Wallach \cite{Borel}, Cowling \cite{Cowling}, Howe \cite{Howe}, Li-Zhu \cite{Li}, \cite{Li1}, Moore \cite{Moore}, and Oh \cite{Oh1}, \cite{oh};  in particular, the optimal rate for higher rank groups, as well as for $Sp(1,n)$ was obtained by Oh~\cite{oh}. These works proceed by first establishing sharp bounds for matrix coefficients restricted to embedded rank-one subgroups that are locally isomorphic to $SL(2,k)$, $k=\RR$ or $\CC$, and then propagate these bounds to general $K$-finite vectors via the $K$-type decomposition (Howe's strategy). Katok and Spatzier \cite{Spatzier1} extended such decay estimates to smooth vectors.

\emph{Quantitative mixing of order $\ge 3$}:   If the phase space is homogeneous, then under a strong spectral-gap assumption, Bj{\"o}rklund, Einsiedler, and Gorodnik~\cite{BEG} recently showed that quantitative exponential mixing of order two implies quantitative exponential mixing of all higher orders, thereby answering a quantitative version of Rokhlin's multiple-mixing problem. We also mention recent work of Konstantoulas \cite{Konstantoulas}, which gives explicit multi-correlation estimates along split Cartan directions, but does not yield the usual exponential mixing bounds for all smooth observables. However, despite the extensive history of the subject, obtaining \emph{effective} exponential rates together with \emph{effective} regularity remains a central challenge in the semisimple setting.

\subsubsection{What is new in the present paper} Our results differ from previous approaches in three ways.

First, we obtain effective regularity already at order two, and in a partial-Sobolev form. Classical representation-theoretic
proofs work with $K$-finite vectors and use precise bounds on $K$-types
(dimensions and spherical-function asymptotics), which does
not keep track of Sobolev regularity of the observables. The extension from $K$-finite to smooth vectors then requires very
high (and non-quantified) Sobolev order; see, for instance, \cite{Spatzier1}.

Second, we prove quantitative bounds beyond the tempered exponent. In the classical representation-theoretic framework, the available estimates bound the decay rate by some
exponent that is at most the tempered exponent. For representations with strictly faster decay, such as the discrete series of \(SL(2,\RR)\), quantitative upper bounds at the optimal decay rate are not available in this framework.

Third, we obtain effective higher-order rates and effective higher-order regularity. In the semisimple setting, effective rates are known for order~$2$ (via matrix coefficients), but effective higher-order mixing remains out of reach. In particular, the approach of \cite{BEG} combines coupling with Sobolev embedding, product estimates, and translation bounds. The Sobolev step prevents both the higher-order decay rates and the required Sobolev orders on the test functions from being \emph{effective}.

\subsection{Comparison with previous methods}  Historically, quantitative mixing in the nilmanifold and semisimple settings has developed along rather different lines. In the nilmanifold case, one often reduces the problem to quantitative equidistribution of rational submanifolds, with essential input from number theory. In the semisimple case, the study of mixing is closely tied to decay of matrix coefficients and has been developed largely through representation-theoretic methods. The present paper, together with \cite{W2}, shows that both settings also admit a common fractional-cohomological perspective. At the same time, the analytic mechanism is genuinely different in the two cases, and the semisimple arguments developed here are new.

\emph{Comparison with Forni's mechanism for mixing}
The approach closest in spirit to ours is Forni’s use of cohomological equations to study mixing \cite{Forni1}. In settings where the spaces of coboundaries and invariant distributions can be described explicitly, this strategy yields powerful information on decay of correlations by deducing information about Ruelle resonances and their
asymptotics for the action.   However, outside a few highly explicit models, such a description is generally unavailable, and the method does not readily extend to quantitative higher-order mixing.

\emph{Comparison with Flaminio-Forni's method for the classical equation} Flaminio and Forni developed a representation-theoretic method for the study of the cohomological equation
 $U\xi=\omega$ over the horocycle flow $\{\exp(tU)\}_{t\in\RR}$ in the $SL(2,\RR)$ setting \cite{Forni}. In that framework, the classical equation \(U\xi=\omega\) is analyzed using the explicit \(K\)-type model of irreducible representations: the \(K\)-eigenvectors form an orthogonal basis, the Laplacian acts by a scalar on each \(K\)-eigenvector, and the action of \(U\) has an explicit expansion in that basis. This makes it possible to classify invariant distributions, construct solutions via the Green operator, and estimate Sobolev norms.

The fractional problem studied here is fundamentally different. First, the obstruction to solvability is no longer described by invariant distributions, but by the spectral behavior near the edge \(\chi=0\). Second, the Laplacian-based method naturally yields \emph{full} Sobolev estimates, whereas our application to mixing requires \emph{partial} Sobolev control along stable/unstable subgroup directions. Third, for the fractional operator \( |U|^r \), no analogue of the explicit \(K\)-type formula for \(U\) is available. For these reasons, the treatment of the fractional equation in the present paper requires a genuinely new mechanism.

\subsection{Proof strategy} In \cite{W2}, a new scheme was introduced to study mixing for partially hyperbolic automorphisms on nilmanifolds: one first solves suitable fractional cohomological equations of Type~$I$ (sum type) with partial regularity, then converts this solvability into order-$2$ decay, and finally bootstraps order-$2$ decay to higher-order mixing by a two-block decomposition.

The general philosophy introduced in \cite{W2} survives in the present paper, but in the semisimple setting both main analytic ingredients--solving the fractional equations and obtaining partial Sobolev estimates--are fundamentally different. In the nilmanifold setting, the relevant fractional equations are solved by direct construction using Fourier analysis and Kirillov theory, and the partial Sobolev estimates then follow from direct computation. By contrast, in the semisimple setting such a direct route is not available even for \(SL(2,\RR)\). Thus the treatment of the fractional equation in the semisimple case is not a variant of the nilmanifold argument, but requires a genuinely new mechanism.

\textbf{\emph{Study of the fractional equations in $SL(2,k)$}}  Suppose \((\pi_\mu,\mathcal H_\mu)\) is an irreducible, nontrivial, unitary representation  of \(SL(2,k)\), where \(k=\RR\) or \(\CC\), and \(\mu\) is determined by the Casimir parameter(s). The lack of an explicit \(K\)-type formula for the fractional operator \( |U|^r \) is a major obstacle to the Flaminio--Forni approach. We therefore introduce a new dynamical mechanism tailored to the horocycle flow.

First, we establish sharp exponential decay estimates for matrix
coefficients along the geodesic flow, with the representation-specific
optimal rate. This already goes beyond the classical \(K\)-finite-vector
framework in representation theory when better-than-tempered decay
occurs, such as for the discrete series of \(SL(2,\RR)\). Our method is
a new purely dynamical one based on the study of cohomological
equations. While cohomological equations have previously been used by
Forni to extract spectral information related to mixing, our use is different: we use the existence of
distributional solutions directly to obtain quantitative estimates for
matrix coefficients. This gives a new dynamical route to decay and also
makes clear the limitation of the classical equation: distributional
solutions live in global negative Sobolev spaces, which forces full
Sobolev norms on observables. This is precisely what motivates the
passage to fractional equations, whose \(L^2\)-solutions yield the
partial-Sobolev estimates needed later. The method also leads to
quantitative upper bounds for matrix coefficients in more general
semisimple settings, avoiding the traditional heavy use of \(K\)-type
expansions and spherical-function analysis. This is one of the main
innovations of the paper.

Second, we then use these geodesic estimates to derive polynomial decay along the horocycle flow. Third, we prove a Tauberian-type implication from horocycle decay to fractional solvability. This yields a cohomology-free range for the fractional equation, with constants uniform over a spectral gap. The Tauberian step is carried out directly in the time domain, via an explicit Riesz-kernel identity and a small/large-time decomposition, without passing through an absolutely continuous spectral density. This Tauberian step is the core analytic mechanism of the paper.

\textbf{\emph{Partial Sobolev estimates}} A second major difficulty is to obtain partial Sobolev estimates for the fractional solutions. In the nilmanifold setting this follows from direct computation once the representation-theoretic model is in hand. In the semisimple setting, the key observation is that there exists a subgroup \(H\) of \(SL(2,k)\) such that each smooth vector \(\omega\) can be decomposed as $\omega=\omega_1+\omega_2$, where the two components behave differently with respect to the fractional equation: the \(L^2\)-norm of the solution with coboundary term \(\omega_1\) is controlled directly by \(\|\omega\|_{L^2}\), while the Sobolev norms of \(\omega_2\) are controlled by the \(H\)-partial Sobolev norm of \(\omega\). Consequently, the \(L^2\)-norm of the solution corresponding to \(\omega_2\) is bounded by a partial Sobolev norm of \(\omega\). This partial-norm mechanism is the second key input.

Finally, if \(S\) is a subgroup commuting with the given \(SL(2,k)\) subgroup, then the solution obtained along that \(SL(2,k)\) direction remains smooth along \(S\), since the Laplacian \(\Delta_S\) commutes with \( |U|^r \). This allows the argument to be iterated along all relevant rank-one \(SL(2,k)\) subgroups, ultimately yielding the solvability of the Type~$II$ multiple fractional equations.

\textbf{\emph{From order-$2$ decay to higher-order mixing}. }Once the fractional equations have been solved with partial Sobolev estimates, order-$2$ decay follows by conjugating the fractional operators under the dynamics: stable directions produce exponential gain in forward time, and unstable directions produce the analogous gain in backward time. Higher-order mixing is then obtained by a two-block decomposition of multiple correlations and an induction on the number of factors. This yields quantitative mixing of all orders. In particular, for triple correlations one obtains the maximal-gap phenomenon, whereas for higher orders no analogous uniform maximal-gap bound can hold in general.

\section{Notations and preparative steps}\label{sec:33}

Throughout the rest of the paper, we use the notation introduced in this section. We use $\epsilon>0$ to denote a sufficiently small constant.    We write $C$ for a constant depending only on the manifolds $\mathcal{X}$, the group action $\alpha$ (its value may vary from one occurrence to the next). When a constant depends on additional parameters, we indicate this with subscripts; for example, $C_{x,y,z}$ denotes a constant that may depend on $x,y,z$ (in addition to $\mathcal{X}$ and $\alpha$).

\subsection{Basic notations} Let $G$ be a connected semisimple Lie group of non-compact type with finite center. Let $A$ be a maximal split torus of $G$ and $A^+$ the closed positive Weyl chamber of $A$ such
that the Cartan decomposition $G=KA^+K$ holds. For each $c\in A$, there exists $w$ in the Weyl group $W$ such that $c^+:=w^{-1}cw\in A^+$. We note that $c^+$ is the unique element of the Weyl orbit of $c$
lying in $A^+$.
\begin{cect6}\label{for:120} In the semisimple case, we use $(\pi,\mathcal H)$ to denote a unitary representation of $G$ with \emph{strong spectral gap}, i.e., the restriction of $\pi$ to each simple factor of $G$ is isolated
from the trivial representation with respect to the Fell topology. We also assume that $\pi$ has no non-trivial $G$-invariant vectors.  We will also consider a special type of unitary representation arising from measure-preserving actions.  Given a measure-preserving action of $G$ on a probability space $(\mathcal{X},\,\varrho)$, where $\mathcal X$ is a $C^\infty$ manifold endowed with a $G$-invariant Borel probability measure $\varrho$. Let $(\pi, L^2(\mathcal{X}))$ be the induced representation of $G$ on $L^2(\mathcal{X})$. The subspace
\[
\mathcal H \;=\; L_0^2(\mathcal X,\varrho)
:= \Bigl\{f\in L^2(\mathcal X,\varrho): \int_{\mathcal X} f\,d\varrho = 0\Bigr\}
\]
is $G$-invariant, so $(\pi,\mathcal H)$ is the restriction of the above unitary
representation to zero-mean functions.
 We assume in this case that $(\pi,\mathcal H)$ has a strong spectral gap as above.

In what follows (within the semisimple setting), unless stated otherwise, $\mathcal H$ denotes an abstract Hilbert space. When we specialize to $\mathcal H=L^2_0(\mathcal X,\varrho)$, we will say so explicitly.

\end{cect6}

\begin{cect6}\label{for:123} We use $a$ to denote an element of $A$.  We have a decomposition
\[
    \text{Lie}(G)\;=\; W_{-,a}\oplus W_{0,a}\oplus W_{+,a},
\]
into the stable, neutral, and unstable subspaces of $\text{Ad}_a$. Concretely, if $\lambda$ ranges over the eigenvalues of $da$, then
$W_{-,a}$ (resp.\ $W_{0,a}$, $W_{+,a}$) is the sum of generalized eigenspaces with $|\lambda|<1$ (resp.\ $|\lambda|=1$, $|\lambda|>1$).

Let $H_{-,a}$, $H_{+,a}$, $H_{0,a}$ be the connected subgroups of $G$ corresponding to the Lie subalgebras $W_{-,a}$, $W_{+,a}$ and $W_{0,a}$, respectively. Similarly, let $H_{-0,a}$ and $H_{+0,a}$ be the connected subgroups corresponding to $W_{-,a} \oplus W_{0,a}$ and $W_{+,a} \oplus W_{0,a}$.

\end{cect6}

\begin{cect6} For any compactly supported function  $f$ on $\mathcal{X}$ and any subgroup $H$ of $G$,
we write $\|f\|_{H,C^s}$ for the $C^s$-norm of $f$ taken using only
derivatives along vector fields from $\Lie(H)$. Write $s=m+\theta$ with $m\in\NN$ and $0\le\theta<1$. Fix a basis
$(Y_1,\dots,Y_d)$ of $\Lie(H)$ and a compact set $\mathcal C\subset\Lie(H)$
whose linear span equals $\Lie(H)$. Define
\[
\|f\|_{H,C^{s}}
:=\|f\|_{H,C^{m}}
\;+\;
\sum_{|\alpha|=m}\;
\sup_{Y\in\mathcal C}\;
\sup_{0<|t|\le 1}\;
\frac{\big\|\pi(\exp(tY))(Y^\alpha f) - Y^\alpha f\big\|_{C^0}}{|t|^{\theta}},
\]
where $Y^\alpha:=Y_{\alpha_1}\cdots Y_{\alpha_m}$ for a multi-index $\alpha = (\alpha_1, \dots, \alpha_m)$. Denote by $C_c^{s, H}(\mathcal{X})$ the space of compactly supported functions on $\mathcal{X}$ for which $\|f\|_{H,C^{s}} < \infty$.

 It is clear that if $f\in C_c^{s,H}(\mathcal{X})$, then $f\in W^{s,H}(L^2(\mathcal{X}))$ (with respect to $\pi$, the precise definition is given in Section \ref{sec:9}) we have
\begin{align}\label{for:189}
 \norm{\xi}_{H,s}\leq \norm{\xi}_{H,C^s}.
\end{align}
For any $v\in \text{Lie}(G)$ we have
\begin{align}\label{for:125}
v\pi(a)
=\|\text{Ad}_{a^{-1}}v\|\,\pi(a)\tilde v,
\qquad
\tilde v:=\frac{\text{Ad}_{a^{-1}}v}{\|\text{Ad}_{a^{-1}}v\|}.
\end{align}
This factorization allows us to extract the norm factors $\|\text{Ad}_{a^{-1}}v\|$ from the directional derivative.
\end{cect6}

\subsection{Abelian subgroups and semisimple parts}\label{for:173}
Let $\mathcal{Z}\subset G$ be a closed abelian subgroup. For any $z\in \mathcal{Z}$ we have a corresponding Jordan-Chevalley normal form decomposition into $3$ commuting elements $z=s_zk_zn_z$ where $s_z$ is semisimple, $k_z$ is compact and $n_z$ is unipotent. Set $p(z)=s_z$ and $w_z=k_zn_z$. Then:
 \begin{enumerate}
   \item There exists an $\RR$-split Cartan subgroup $A$ of $G$ such that $p(z)\in A$ for any $z\in \mathcal{Z}$, and $p: \mathcal{Z}\to A$ is a group homomorphism (see Proposition 5.1 of \cite{W6} or Proposition 4.2 of \cite{vw}).

   \item\label{for:121} For any $n\in\ZZ$, $\norm{\text{Ad}_{w_z^n}}\leq C_z(|n|+1)^{\dim G}$ (see Appendix $K$ of \cite{W6}).
 \end{enumerate}
There exist $z_1,\cdots z_{m}\in \mathcal{Z}$, with $m=\dim \mathcal{Z}$ and a compact subset $\mathcal{C}\subseteq \mathcal{Z}$ such that:
\begin{enumerate}
  \item $z_1,\cdots z_{m}\in \mathcal{Z}$ generate a subgroup $\mathcal{Z}'$ and
for any $z\in \mathcal{Z}$, there exists $z'\in \mathcal{Z}'$ and $c\in \mathcal{C}$ such that $z=z'c$. Let
\[
c_0:=\; \max_{c\in \mathcal{C}}\max\!\bigl\{\|\Ad_c\|,\|\Ad_{c^{-1}}\|\bigr\}<\infty .
\]
  \item Denote by $S$ the (connected) subgroup of $A$ generated by the one-parameter subgroups $\{(s_{z_i})^{t}\}_{t\in\mathbb{R}}$. Then $p(\mathcal{Z})\subseteq S\subseteq A$.
\end{enumerate}
For $z'=\prod_{i=1}^{m} z_i^{\,n_i}$ with $n_i\in\ZZ$, we have
\begin{align}\label{for:122}
 \norm{\text{Ad}_{w_{z'}}}&=\norm{\text{Ad}_{(w_{z_1})^{n_1}\cdots (w_{z_{\dim \mathcal{Z}}})^{n_{\dim \mathcal{Z}}}}}\leq
 \Pi_{i=1}^{\dim \mathcal{Z}}\norm{\text{Ad}_{(w_{z_i})^{n_i}}}\notag\\
 &\overset{(*)}{\leq} C_{\mathcal{Z}} \Pi_{i=1}^{\dim \mathcal{Z}}(\abs{n_i}+1)^{\dim G}.
\end{align}
Here in $(*)$ we use \eqref{for:121}.

Now we suppose $\mathcal{Z}$ is \emph{essentially semisimple}, i.e., if $z\in \mathcal{Z}$ is non-trivial, then $s_{z}$ is non-trivial. Let $a_i=s_{z_i}\in A$, $1\leq i\leq m$. Then for any $z\in \mathcal{Z}$, there exists a unique vector $t=(t_1,\cdots, t_{m})\in\RR^{m}$, $m=\dim\mathcal{Z}$ such that
\begin{align*}
 s_z=a_1^{t_1}\cdots a_{m}^{t_{m}}
\end{align*}
and the map $\tau: z\to t$ is a group homomorphism as  $p$ is a homomorphism.
For any $z_1,\,z_2\in \mathcal{Z}$, define $d(s_{z_1},s_{z_2})=\norm{\tau(z_1)-\tau(z_2)}$.

 Write $\mathfrak{a}=(a_1,\dots,a_{m})$ and, for $t\in\RR^m$, set $\mathfrak{a}^t := a_1^{t_1}\cdots a_{m}^{t_m}$.
Then we have
\begin{align}\label{for:127}
 \norm{\text{Ad}_{w_z}}&\leq c_0\norm{\text{Ad}_{w_{z'}}}\overset{(*)}{\leq} C_{\mathcal{Z}} (\norm{\tau(z')}+1)^{\dim \mathcal{Z}\dim G}\notag\\
 &\overset{(**)}{\leq} C_{\mathcal{Z},1} (\norm{\tau(z)}+1)^{\dim \mathcal{Z}\dim G}.
\end{align}
Here in $(*)$ we use \eqref{for:122}; in $(**)$ we note that $\tau(z')=\tau(z)-\tau(c)$ with $\tau(c)$ ranging over a compact set.

\subsection{Lie preliminaries}\label{sec:36} Let $k=\RR$ or $\CC$.  Denote by $\Phi$ the set of non-multipliable roots of $A$ and by $\Phi^+$
the set of positive roots in $\Phi$.
The group $G$ contains a connected
subgroup $G_0$ such that each simple factor of $G_0$ is split over $k$ and we have $A\subset G_0$ and $\Phi=\Phi(A,  G_0)$. We fix a
left-invariant Riemannian metric $d$ on $G$ which is bi-invariant under $K$.

\subsubsection{Strongly orthogonal system of $\Phi$}\label{for:220}  A subset $\mathcal{S}$ of $\Phi^+$ is called a strongly orthogonal system
of $\Phi$ if any two distinct elements $\theta$ and $\beta$ of $\mathcal{S}$ are strongly orthogonal, that is, neither
of $\theta\pm\beta$ belongs to $\Phi$. A strongly orthogonal system $\mathcal{S}$ is called \emph{maximal} if the coefficient of each simple
root in the formal sum $\sum_{\theta\in \mathcal{S}}\theta$ is not less than the one in
$\sum_{\theta\in \mathcal{O}}\theta$ for any strongly
orthogonal system $\mathcal{O}$ of $\Phi$.

\subsubsection{Basic subgroups}\label{for:221}   For each $\theta\in \Phi$ let $u_\theta$ denote the corresponding one-dimensional  root subgroup inside $G_0$ (defined over $\theta(k)$, where $\theta(k)=\RR$ or $\CC$).  There is an isomorphism $\Psi_\theta:\,\mathfrak{sl}(2,\theta(k))\to \text{Lie}(G_0)$ defined over $\theta(k)$ such that
\[
\Psi_\theta\!\begin{pmatrix} 0 & x \\ 0 & 0 \end{pmatrix} \in\Lie(u_\theta),\qquad
\Psi_\theta\!\begin{pmatrix} 0 & 0 \\ y & 0 \end{pmatrix} \in\Lie(u_{-\theta}),\qquad
\Psi_\theta\!\begin{pmatrix} t & 0 \\ 0 & -t \end{pmatrix} \in \text{Lie}(A)
\]
for all $x, y \in \theta(k)$ and $t \in \RR$. We can choose these $\Psi_\theta$ to be \emph{compatible} with $\Phi^+$, i.e., if $\theta\in \Phi^+$ and $a\in A^+$ ,then $\Lie(u_\theta)$ lies in the unstable subspace of $\text{Ad}_a$ and $\Lie(u_{-\theta})$ lies in the stable subspace of $\text{Ad}_a$.

Let $\mathcal{V}_\theta$ be the subgroup with Lie algebra $\{\Psi_\theta\!\begin{pmatrix} z & x \\ 0 & -z \end{pmatrix}:x,z\in \theta(k)\}$ and let $H_\theta$ be the subgroup with Lie algebra $\Psi_\theta(\mathfrak{sl}(2,\theta(k)))$. For a strongly orthogonal system $\mathcal{S}$, set
\begin{align*}
 S(\mathcal{S})=\Pi_{\theta\in \mathcal{S}}\mathcal{V}_\theta,\quad u(\mathcal{S})=\Pi_{\theta\in \mathcal{S}}u_{\theta},\quad H(\mathcal{S})= \Pi_{\theta\in \mathcal{S}}H_{\theta}.
\end{align*}
\emph{Note}. For any $a\in A^+$,  $\Lie(u_\theta)$ lies in the unstable subspace of $\text{Ad}_a$, while $\Lie(u_{-\theta})$ and $\Psi_\theta \begin{pmatrix} z & 0 \\ 0 & -z \end{pmatrix}$, $z\in k$, lie in the weak stable subspace. Hence  cosets of $u(\mathcal{S})$  lie in the unstable foliations of the left translations by $a$ and $S(\mathcal{S})$ lies in weak stable foliations, i.e., $u(\mathcal{S})\subseteq H_{+,a}$ and $S(\mathcal{S})\subseteq H_{-0,a}$ (see \eqref{for:123} of Section \ref{sec:33}).

\subsubsection{Basic numbers}\label{for:222}    Let $a_{\theta,s}=\exp\big(\Psi_\theta\!\begin{pmatrix} s & 0 \\ 0 & -s \end{pmatrix}\big)$ and  we say that $r>0$ is a \emph{decay rate} for the restricted representation  $\pi|_{H_\theta}$ if for any $v\in W^{\infty,H_\theta}(\mathcal{H})$ there exists a constant $C_{v}>0$ such that
  \begin{align}
\big|\langle \pi(a_{\theta,s})v,\, v\rangle \big|\leq C_{v}e^{-2|s|r}, \qquad \forall\,s\in\RR.
\end{align}
Since $\pi$ has strong spectral gap (see \eqref{for:120} of Section \ref{sec:33}), for any $\theta\in \Phi$ $\pi|_{H_\theta}$ also has strong spectral gap; and there exists some $r>0$
which is a decay rate for  (see \cite{Kleinbock} and \cite{Spatzier1}). Define
\[
\gamma_\theta:=\sup\bigl\{\,r>0 : r \text{ is a decay rate for }\pi|_{H_\theta}\,\bigr\}.
\]
We call the number $\gamma_\theta$ the \emph{strong spectral gap} of the restriction $\pi|_{H_\theta}$. Set
\begin{align*}
 \zeta_{\theta,\epsilon}=1+\epsilon \quad\text{if } \theta(k) = \CC \quad\text{ and }\quad\zeta_{\theta,\epsilon}=\gamma_\theta+2+\epsilon \quad\text{if } \theta(k) = \RR.
\end{align*}
For a strongly orthogonal system $\mathcal{S}$, set
\begin{align*}
  \eta_\epsilon(\mathcal{S}, a)=\Pi_{\theta\in \mathcal{S}}\theta(a^+)^{-(\gamma_\theta-\epsilon)},\quad \zeta_{\epsilon}(\mathcal{S})=\sum_{\theta\in \mathcal{S}}\zeta_{\theta,\epsilon},
  \quad p_{\epsilon}(\mathcal{S})=\sum_{\theta\in \mathcal{S}}(\gamma_{\theta}-\epsilon)
 \end{align*}
 (recall that $a^+$ is defined in \eqref{for:120} of Section \ref{sec:33}).
\begin{remark}\label{re:12}
 If $\theta(k)=\CC$, then $0<\gamma_\theta\leq1$ \cite{Wallach}. Suppose $\theta(k)=\RR$. If $\pi|_{H_\theta}$ has no discrete series summand, $0<\gamma_\theta\leq\frac{1}{2}$ \cite{tan}. If $\pi|_{H_\theta}$ consists only of discrete series representations, $\gamma_\theta\in \{\tfrac12,1,\tfrac32,\dots\}$ \cite{Wallach}.
 In all cases, by definition of \(\zeta_{\theta,\epsilon}\) we have $\zeta_{\theta,\epsilon}\geq \gamma_\theta $ for all \(\theta\). Then we have
\begin{align}\label{for:4}
\zeta_{\epsilon}(\mathcal S)\ge p_{\epsilon}(\mathcal S).
\end{align}

\end{remark}

\section{Preliminaries on unitary representation theory}\label{sec:9}
\subsection{Smooth vectors, Sobolev spaces, distributions}
Let $\rho$ be a unitary representation of a Lie group $S$ with Lie algebra $\mathfrak{S}$ on a Hilbert space $H_\rho$. We call a vector $\xi\in H_\rho$ $C^\infty$ for $S$ if the map
\begin{align*}
 s\to \rho(s)\xi
\end{align*}
is a $C^\infty$ function from $S$ to $H_\rho$. We use $W^\infty(H_\rho)$ to denote the set of $C^\infty$ vectors. The derived representation $\rho_*$ of $\rho$  is the Lie algebra representation of $\mathfrak{S}$ on $H_\rho$ defined as follows. For every $X \in \mathfrak{S}$,

\begin{equation}
\rho_*(X) := \operatorname{strong{-} }\lim_{t \to 0} \frac{\rho(\exp tX) - I}{t}.
\end{equation}
For any $v\in H_\rho$, we also write $Xv:=\rho_*(X)v$ for simplicity.

It can be shown that the derived representation $\rho_*$ of the Lie algebra $\mathfrak{S}$ on the Hilbert space $H_\rho$ is essentially skew-adjoint in the following sense: For every element $X \in \mathfrak{S}$, the operator $\rho_*(X)$ is essentially skew-adjoint. Moreover, these operators share a common invariant core, which is the subspace $W^\infty(H_\rho) \subset H_\rho$ of $C^\infty$ vectors.

The subspace $W^\infty(H_\rho)\subset H_\rho$ is endowed with the Fr\'{e}chet $C^\infty$ topology, that is, the topology defined by the family of seminorms
\[
\left\{\, \| \cdot \|_{v_1,v_2,\dots,v_m} \;\middle|\; m \in \mathbb{N} \text{ and } v_1, \ldots, v_m \in \mathfrak{S} \,\right\}
\]
defined as follows:
\[
\| \xi\|_{v_1,v_2,\dots,v_m} := \left\|\, \rho_*(v_1)\rho_*(v_2) \cdots \rho_*(v_m)\xi \,\right\|, \quad \text{for } \xi \in W^\infty(H_\rho) .
\]
For simplicity, we often write
\begin{align*}
 v_1v_2\cdots v_m\xi:=\rho_*(v_1)\rho_*(v_2) \cdots \rho_*(v_m)\xi, \quad \text{for } \xi \in W^\infty(H_\rho).
\end{align*}
The space of \emph{Schwartz distributions} for the representation $\rho$ is defined as the dual space of $W^\infty(H_\rho)$ (endowed with the Fr\'{e}chet $C^\infty$ topology). The space of Schwartz distributions for the representation $\rho$ will be denoted $\mathcal{E}'(H_\rho)$.

The representation $\rho_*$ extends in a canonical way to a representation (denoted by the same symbol) of the enveloping algebra $\mathcal{U}(\mathfrak{S})$ of $\mathfrak{S}$ on the Hilbert space $H_\rho$. Let $\Delta_S \in \mathcal{U}(\mathfrak{S})$ be a left-invariant second-order, positive elliptic operator on $S$ fixed once and for all. For example, the operator
\begin{align*}
 \Delta_S = -\left( v_1^2 + \cdots + v_d^2 \right)
\end{align*}
where $\{ v_1, \ldots, v_d \}$ is a basis of $\mathfrak{S}$ as a vector space.

For each \(\alpha>0\), let
\[
W^{\alpha}(H_\rho)
\;=\;
\mathrm{Dom}\bigl((I + \Delta_{S})^{\alpha/2}\bigr)
\;\subset\;
H_{\rho}
\]
be the Sobolev space of order \(\alpha\) with respect to the group \(S\) on the Hilbert space \(H_{\rho}\).  Equip it with the full norm
\[
\|\xi\|_{S,\alpha,H_{\rho}}
\;=\;
\bigl\|(I + \Delta_{S})^{\alpha/2}\,\xi\bigr\|_{H_{\rho}},
\qquad
\xi\in W^{\alpha}(H_\rho).
\]
When \(S\) and \(H_{\rho}\) are clear from context, we may abbreviate
\[
\|\xi\|_{S,\alpha,H_{\rho}}
=\|\xi\|_{S,\alpha}
=\|\xi\|_{\alpha}.
\]
If instead we restrict to a subgroup \(P\subset S\), we write
\[
W^{\alpha,P}(H_{\rho})
=\mathrm{Dom}\bigl((I + \Delta_{P})^{\alpha/2}\bigr),
\]
with norm
\[
\|\xi\|_{P,\alpha,H_{\rho}}
=\bigl\|(I + \Delta_{P})^{\alpha/2}\,\xi\bigr\|_{H_{\rho}},
\]
and similarly drop subscripts when no ambiguity arises.

Let also $W^{-\alpha}(H_\rho) \subset \mathcal{E}'(H_\rho)$ be the dual Hilbert space of $W^\alpha(H_\rho)$. Then $W^\infty(H_\rho)$ is the projective limit of the spaces $W^\alpha(H_\rho)$ (and consequently $\mathcal{E}'(H_\rho)$ is the inductive limit of $W^{-\alpha}(H_\rho)$) as $\alpha \to +\infty$. A distribution $\mathcal{D} \in W^{-\alpha}(H_\rho)$ will be called a distribution of order at most $\alpha \in \mathbb{R}^+$.

We list the well-known elliptic regularity theorem which will be frequently
used in this paper (see \cite[Chapter I, Corollary 6.5 and 6.6]{Robinson}):
\begin{theorem}\label{th:15}
Fix a basis $\{Y_j\}$ for $\mathfrak{S}$ and set $L_{2m}=\sum Y_j^{2m}$, $m\in\NN$. Then for any $\xi\in W^\infty(H_\rho)$, we have
\begin{align*}
    \norm{\xi}_{2m}\leq C_m(\norm{L_{2m}\xi}+\norm{\xi}),\qquad \forall\, m\in\NN
\end{align*}
where $C_m$ is a constant only dependent on $m$ and $\{Y_j\}$.
\end{theorem}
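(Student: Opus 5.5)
Since Theorem \ref{th:15} is quoted from \cite{Robinson}, I would reproduce the standard argument rather than invent a new one. The plan is to reduce the estimate for an arbitrary unitary representation $\rho$ to properties of a single convolution semigroup on $S$. The starting point is that $\rho_*(Y_j)$ is skew-adjoint, so $(-1)^m Y_j^{2m}=(iY_j)^{2m}$ is formally positive. Hence $(-1)^mL_{2m}$ is a \emph{strongly elliptic} element of $\mathcal U(\mathfrak S)$: its principal symbol $\sum_j \xi_j^{2m}$ is strictly positive on $\mathfrak S^*\setminus\{0\}$. I would then invoke Langlands' theorem (as presented in \cite{Robinson}). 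It says that the closure of $-(-1)^m\rho_*(L_{2m})$ generates a holomorphic contraction semigroup $T_t$. Moreover, $T_t=\rho(k_t):=\int_S k_t(g)\rho(g)\,dg$, where $k_t$ is a smooth, rapidly decaying kernel that is independent of $\rho$.

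The next step is to obtain derivative bounds on this semigroup. Differentiating under the integral, $Y^\alpha\rho(k_t)=\rho(\tilde Y^\alpha k_t)$, where $\tilde Y^\alpha$ is the corresponding left-invariant derivative on $S$. The Gaussian-type bounds on $k_t$ give
\[
\|\rho_*(Y^\alpha)T_t\xi\|\le C_\alpha\, t^{-|\alpha|/2m}e^{\omega t}\|\xi\|
\qquad (0<t\le 1).
\]
These bounds are uniform in $\rho$, because they come from $L^1(S)$-norms of derivatives of $k_t$. To compare with $L_{2m}$, I would write the resolvent $R_\lambda=(\lambda+(-1)^mL_{2m})^{-1}=\int_0^\infty e^{-\lambda t}T_t\,dt$. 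For $\xi\in W^\infty$ this gives $\xi=R_\lambda\bigl(\lambda\xi+(-1)^mL_{2m}\xi\bigr)$. The goal then becomes: $Y^\alpha R_\lambda$ is bounded for every word with $|\alpha|\le 2m$. Once this holds, the norm $\|\xi\|_{2m}\asymp\sum_{|\alpha|\le 2m}\|Y^\alpha\xi\|$ is bounded by $C_m(\|L_{2m}\xi\|+\|\xi\|)$. The equivalence of $\|(I+\Delta_S)^m\cdot\|$ with the word norms is a separate, standard step. It follows from Nelson's commutator theorem, or by applying the same argument with $L_{2m}$ replaced by $\Delta_S^m$.

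The main obstacle is the top order $|\alpha|=2m$. There the bound $t^{-|\alpha|/2m}=t^{-1}$ is not integrable at $0$, so the Laplace-transform argument gives boundedness of $Y^\alpha R_\lambda$ only for $|\alpha|<2m$. To close this gap I would first try transference. For the left regular representation on $L^2(S)$, the estimate $\|f\|_{W^{2m}}\le C(\|L_{2m}f\|+\|f\|)$ is the classical interior elliptic estimate for an operator with smooth coefficients. On a unimodular (or general) Lie group it can be obtained by localizing with cutoffs, with left-invariance making the constants uniform. This statement says that the kernel of $Y^\alpha R_\lambda$ defines an operator bounded on $L^2(S)$, even though it is not an $L^1$ kernel. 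To pass to general $\rho$, I would realize $\rho$ via matrix coefficients: the map $\xi\mapsto\bigl(g\mapsto \varphi(g)\rho(g)^{-1}\xi\bigr)$ with a compactly supported bump $\varphi$ embeds into $L^2(S;H_\rho)$ and intertwines left-invariant derivatives up to lower-order terms coming from derivatives of $\varphi$. The vector-valued regular representation is an amplification of the scalar one, so the $L^2$ elliptic estimate holds for it too. The lower-order error terms are then absorbed using the already-established bounds for $|\alpha|<2m$ together with an interpolation inequality $\|Y^\beta\xi\|\le\varepsilon\|\xi\|_{2m}+C_\varepsilon\|\xi\|$. This absorption step is where the constant $C_m$ is produced, and it depends only on $m$ and $\{Y_j\}$.
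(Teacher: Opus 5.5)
Your argument is correct in substance (after the convention fix below). It necessarily takes a different route from the paper, which gives no proof here: the estimate is quoted from \cite{Robinson} (Chapter I, Corollaries 6.5 and 6.6) and used as a black box. Your reconstruction makes explicit two features the paper relies on later. First, the constant is independent of $\rho$, since it comes only from $L^1(S)$-bounds on derivatives of the Langlands kernel $k_t$ and from the classical estimate on $L^2(S)$. Second, Hilbert-space structure is needed only at top order, where $t^{-|\alpha|/2m}$ stops being integrable. Transferring through the cutoff embedding $\xi\mapsto\varphi(\cdot)\rho(\cdot)^{-1}\xi$ is the right choice. Transferring the $L^2(S)$-bounded but non-$L^1$ operator $Y^\alpha R_\lambda$ directly would in general require amenability of $S$, which fails for the semisimple groups of this paper. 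Your version uses only that $Y^\alpha$ and $L_{2m}$ are local, so commutators with $\varphi$ are of lower order. The price is reliance on Langlands' theorem with Gaussian kernel bounds, which is the deep part of \cite{Robinson}. Citing it, as the paper does, buys brevity and covers general strongly elliptic operators.

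One correction and two simplifications. The correction: the intertwining holds for the generators $dL(Y)f(g)=\frac{d}{ds}f(\exp(-sY)g)|_{s=0}$ of the left regular representation, i.e.\ for right-invariant vector fields. For these, $dL(Y)\bigl(\varphi\,\rho(\cdot)^{-1}\xi\bigr)=(dL(Y)\varphi)\,\rho(\cdot)^{-1}\xi+\varphi\,\rho(\cdot)^{-1}\rho_*(Y)\xi$. With left-invariant fields and $\rho(g)^{-1}\xi$, as you wrote, the principal part of $\sum_j\tilde Y_j^{2m}$ applied to the embedded vector is $\varphi(g)\rho(g)^{-1}\sum_j\rho_*(\Ad_gY_j)^{2m}\xi$, not $\varphi(g)\rho(g)^{-1}L_{2m}\xi$, and the argument does not close. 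There are two fixes. You can use right-invariant fields and localize with right translations; these commute with $dL(Y)$ and rescale both sides of the local estimate by the same modular factor. Or you can embed via $\varphi(g)\rho(g)\xi$ with left-invariant fields and right Haar measure. The same convention gives $Y^\alpha\rho(k_t)=\rho(dL(Y^\alpha)k_t)$.

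The first simplification: the lower-order error terms need no interpolation inequality, which, stated with $\norm{\xi}_{2m}$, would itself need proof. For $|\beta|<2m$ your Laplace-transform bound gives directly $\norm{Y^\beta\xi}\le\norm{Y^\beta R_\lambda}\,(\lambda\norm{\xi}+\norm{L_{2m}\xi})$. The second: only the trivial direction $\norm{(I+\Delta_S)^m\xi}\le C\sum_{|\alpha|\le 2m}\norm{Y^\alpha\xi}$ of the norm equivalence is used. It follows by expanding $(I+\Delta_S)^m$ into words in the $Y_j$, so no Nelson-type theorem is needed.
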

There exists a collection of smoothing operators $\mathfrak{s}_b: H_\rho\to W^\infty(H_\rho)$, $b>0$, such that for any $s, s_1,s_2\geq0$ and any $\xi\in W^s(H_\rho)$ the following holds (see \cite{Hamilton}):
\begin{align}
 \norm{\mathfrak{s}_b\xi}_{s+s_1}&\leq C_{s,s_1}b^{s_1}\norm{\xi}_{s},\quad \text{and}\label{for:197}\\
 \norm{(I-\mathfrak{s}_b)\xi}_{s-s_2}&\leq C_{s,s_2}b^{-s_2}\norm{\xi}_{s},\quad \text{if }s\geq s_2.\label{for:198}
\end{align}

\subsection{Direct decompositions of Sobolev space}\label{sec:20}
For any Lie group $S$ of type $I$ and its unitary representation $(\rho, H_\rho)$, there is a decomposition of $\rho$ into a direct integral
\begin{align*}
  \rho=\int_Z\rho_zd\mu(z)
\end{align*}
of irreducible unitary representations $(\rho_z,\,(H_\rho)_z)$ for some measure space $(Z,\mu)$ (we refer to
\cite[Chapter 2.3]{Zimmer} or \cite{margulis1991discrete} for more detailed account for the direct integral theory). All the operators in the enveloping algebra are decomposable with respect to the direct integral decomposition. Hence there exists for all $s\in\RR$ an induced direct
decomposition of the Sobolev spaces
\begin{align*}
 W^s(H_\rho)=\int_Z W^s((H_\rho)_z)d\mu(z)
\end{align*}
with respect to the measure $d\mu(z)$.

The existence of the direct integral decompositions allows us to reduce our analysis of the
cohomological equation to irreducible unitary representations. This point of view is
essential for our purposes.

\subsection{Functional calculus for self-adjoint operators}\label{sec:19} Let $H$ be a Hilbert space, and let
  $\mathcal{P}$ be an unbounded, self-adjoint linear operator with a dense domain $\text{Dom}(\mathcal{P})\subset H$. Suppose $\mathcal{P}$ is strictly positive, meaning there is $c>0$ such that
\begin{align*}
  \norm{\mathcal{P}\vartheta}\geq c\norm{\vartheta},\qquad \forall\,\vartheta\in \text{Dom}(\mathcal{P}),
\end{align*}
then there is a regular Borel measure $\tau$ on $[c,\infty)$ such that $\mathcal{P}$ is unitarily equivalent to multiplication by $\lambda$ on
\begin{align}\label{for:108}
 \int_{[c,\infty)} H_\lambda d\tau(\lambda).
\end{align}
More precisely, we have
a direct integral decomposition:
\begin{align*}
  \vartheta=\int_{[c,\infty)}\vartheta_\lambda d\tau(\lambda),\qquad \forall \,\vartheta\in H.
\end{align*}
Furthermore, for any (measurable) function $g:[c,\infty)\to\CC$, the functional calculus gives
\begin{align*}
  g(\mathcal{P})\vartheta=\int_{[c,\infty)}g(\lambda)\vartheta_\lambda d\tau(\lambda),\qquad \forall \,\vartheta\in \text{Dom}(g(\mathcal{P})),
\end{align*}
where
\begin{align*}
 \text{Dom}(g(\mathcal{P}))=\{\vartheta\in H: \int_{[c,\infty)}|g(\lambda)|^2\norm{\vartheta_\lambda}^2 d\tau(\lambda)<\infty\}.
\end{align*}
A symmetric operator $\mathcal{T}$ is said to be essentially self-adjoint if the closure $\bar{\mathcal{T}}$ of
$\mathcal{T}$ is self-adjoint. An essentially self-adjoint operator is ``almost as good" a self-adjoint operator, since one obtains a self-adjoint operator simply by taking the closure.
If such an operator $\mathcal{T}$ is strictly positive, then its closure $\bar{\mathcal{T}}$ is also strictly positive. Moreover, $\bar{\mathcal{T}}^{\!r}$ is also self-adjoint for any $r\in\RR$.
Therefore, whenever we speak of the spectral decomposition \eqref{for:108} of $\mathcal{T}^{\!r}$ (where $\mathcal{T}$ is an essentially self-adjoint, strictly positive operator $\mathcal{T}$),
we are really referring to the spectral decomposition of
$\bar{\mathcal{T}}^r$.

\subsection{Fractional operators} Suppose $S$ is a Lie group. Let $(\rho,\mathcal{R})$ be a unitary representation of $S$.  we assume there is a closed abelian subgroup $\mathcal{A}\le S$ isomorphic to $\RR^m$ for which $\{\mathfrak u_1,\ldots,\mathfrak u_m\}$ is a basis of $\text{Lie}(\mathcal{A})$.
We have the following results:
\begin{lemma}\label{le:17}  Let $t_1,\cdots, t_m\in\RR^+$. Set
\begin{align*}
 \mathcal{T}=|\mathfrak{u}_{1}|^{t_1}|\mathfrak{u}_{2}|^{t_2}\cdots |\mathfrak{u}_{m}|^{t_m}.
\end{align*}
Then:
\begin{enumerate}
  \item\label{for:163} For any $\xi,\,\eta\in W^{\infty,\,\mathcal{A}}(\mathcal{R})$ (recall Section \ref{sec:9}), we have $\langle \mathcal{T}\xi, \eta \rangle=\langle \xi, \mathcal{T}\eta\rangle$.

 \item\label{for:164} for any $t_1,\cdots, t_m\in\NN\cup \{0\}$, we have $\big\|\mathcal{T}\xi\big\|=\big\|\mathfrak{u}_{1}^{t_1}\mathfrak{u}_{2}^{t_2}\cdots \mathfrak{u}_{m}^{t_m}\xi\big\|$.

  \item\label{for:169} For any $h\in S$, we have
  \begin{align*}
  &\rho(h^{-1})\mathcal{T}\rho(h)=a_1^{t_1}\cdots a_m^{t_m}|\tilde{\mathfrak{u}}_{1}|^{t_1}|\tilde{\mathfrak{u}}_{2}|^{t_2}\cdots |\tilde{\mathfrak{u}}_{m}|^{t_m},
\end{align*}
  where
  \begin{align*}
 \tilde{\mathfrak{u}}_{i}&=\frac{\text{Ad}_{h^{-1}}\mathfrak{u}_{i}}{\|\text{Ad}_{h^{-1}}\mathfrak{u}_{i}\|},\quad\text{and} \quad
 a_i=\|\text{Ad}_{h^{-1}}\mathfrak{u}_{i}\|,\quad 1\leq i\leq m.
 \end{align*}

\end{enumerate}

\end{lemma}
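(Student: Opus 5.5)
All three parts follow from the joint spectral theorem for the commuting family of skew-adjoint generators of $\rho|_{\mathcal A}$. Write $\xi=\int_{\widehat{\RR^m}}\xi_\chi\,d\sigma(\chi)$ as in the definition of the fractional operator. In this model $\mathcal T$ is multiplication by the nonnegative real function $m(\chi)=|\chi_1|^{t_1}\cdots|\chi_m|^{t_m}$, so $\mathcal T$ is a positive self-adjoint operator. A vector $\xi\in W^{\infty,\mathcal A}(\mathcal R)$ lies in $\mathrm{Dom}(\mathfrak u^\alpha)$ for every multi-index $\alpha$. Hence $\int|\chi^\alpha|^2\|\xi_\chi\|^2\,d\sigma<\infty$ for all $\alpha$, and since $m(\chi)^2\le C\prod_i(1+|\chi_i|^2)^{\lceil t_i\rceil}$ this gives $\xi\in\mathrm{Dom}(\mathcal T)$.

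\textbf{Part (1).} Since $\xi,\eta\in\mathrm{Dom}(\mathcal T)$, we have
\[
\langle\mathcal T\xi,\eta\rangle=\int m(\chi)\langle\xi_\chi,\eta_\chi\rangle\,d\sigma=\langle\xi,\mathcal T\eta\rangle,
\]
because $m$ is real. This is just self-adjointness of a real multiplication operator.

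\textbf{Part (2).} Suppose the $t_i$ are nonnegative integers. Each $\mathfrak u_i$ acts in the model as multiplication by $\mathrm{i}\chi_i$, so $\mathfrak u_1^{t_1}\cdots\mathfrak u_m^{t_m}$ acts by $\prod_i(\mathrm{i}\chi_i)^{t_i}$. This symbol has the same modulus as $m(\chi)$ pointwise. Therefore
\[
\|\mathfrak u^{t}\xi\|^2=\int|m(\chi)|^2\|\xi_\chi\|^2\,d\sigma=\|\mathcal T\xi\|^2.
\]

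\textbf{Part (3).} This is the main content. Conjugation by the unitary $\rho(h)$ transports the spectral data of $\mathcal A$ to that of $h^{-1}\mathcal A h$. I would set $v_i=\mathrm{Ad}_{h^{-1}}\mathfrak u_i=a_i\tilde{\mathfrak u}_i$; these form a basis of $\Lie(h^{-1}\mathcal Ah)$. We have $\rho(h^{-1})\rho(\exp(s\mathfrak u_i))\rho(h)=\rho(\exp(s v_i))$. So the unitary group $s\mapsto\rho(h^{-1})\rho(\exp\sum s_i\mathfrak u_i)\rho(h)$ equals $s\mapsto\rho(\exp\sum s_i v_i)$, and its projection-valued spectral measure is $E'(\cdot)=\rho(h^{-1})E(\cdot)\rho(h)$.

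Functional calculus commutes with unitary conjugation. Hence $\rho(h^{-1})\mathcal T\rho(h)=\int m(\chi)\,dE'(\chi)$, which is the operator $|v_1|^{t_1}\cdots|v_m|^{t_m}$ defined with respect to the basis $(v_i)$. The domains also correspond: $\rho(h)^{-1}\mathrm{Dom}(\mathcal T)=\mathrm{Dom}$ of the conjugated operator.

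It remains to rescale each basis vector from $v_i$ to $\tilde{\mathfrak u}_i=v_i/a_i$. The joint spectral variable for $(v_i)$ is $\chi$, and the one for $(\tilde{\mathfrak u}_i)$ is $\tilde\chi=(\chi_1/a_1,\dots,\chi_m/a_m)$. The reason is that $\exp(\sum s_i\tilde{\mathfrak u}_i)=\exp(\sum(s_i/a_i)v_i)$, so the character $e^{\mathrm{i}\chi\cdot s}$ attached to the $v$-basis becomes $e^{\mathrm{i}\tilde\chi\cdot s}$ in the $\tilde{\mathfrak u}$-basis. Then $m(\chi)=\prod|a_i\tilde\chi_i|^{t_i}=a_1^{t_1}\cdots a_m^{t_m}\prod|\tilde\chi_i|^{t_i}$, which gives the stated identity.

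\textbf{Main obstacle.} The only delicate point is this change of basis: one must check that $|X|^r$ depends on the chosen basis only through the stated scaling. This amounts to a push-forward of the spectral measure under the linear map $\chi\mapsto(\chi_i/a_i)_i$, and the identity then holds with equality of domains.
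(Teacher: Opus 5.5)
Your proposal is correct and follows essentially the same route as the paper: parts (1) and (2) are read off directly from the spectral model. Part (3) uses that unitary conjugation intertwines the functional calculus, $\rho(h^{-1})\mathfrak{u}_i\rho(h)=a_i\tilde{\mathfrak u}_i$, together with the homogeneity of $|\lambda|^{t}$. The only difference is that the paper conjugates each factor $|\mathfrak u_i|^{t_i}$ separately, using the one-variable calculus of the self-adjoint operator $\mathrm{i}\mathfrak u_i$. You instead conjugate the joint spectral measure and push it forward under $\chi\mapsto(\chi_i/a_i)_i$, which has the minor advantage of tracking the domain of the product operator explicitly.
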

\begin{proof} \eqref{for:163} and \eqref{for:164} follow directly from the definition. \eqref{for:169}:  For any $\xi\in \mathcal{R}$ and any $v\in \text{Lie}(S)$ we have
\begin{align*}
\rho(h^{-1})(iv)\rho(h)=c(i\tilde{v}),\qquad\text{where }c=\|\text{Ad}_{h^{-1}}v\|,\quad \tilde v=\frac{\Ad_{h^{-1}}v}{\|\Ad_{h^{-1}}v\|}.
\end{align*}
We note that both $iv$ and $i\tilde{v}$ are self-adjoint. Applying functional calculus to the Borel function $\lambda\mapsto |\lambda|^r$, we obtain
\[
\rho(h^{-1})|v|^r\rho(h)=\rho(h^{-1})|iv|^r\rho(h)=|c(i\tilde{v})|^r=c^r|\tilde{v}|^r.
\]
Applying this to each \(\mathfrak u_i\) gives \eqref{for:169}.

\end{proof}

\section{Representation theory of $SL(2,\RR)$}\label{sec:1} We recall the conclusions in \cite{Forni}. We choose as generators for $\mathfrak{sl}(2,\RR)$ the elements
\begin{align}\label{for:95}
X=\begin{pmatrix}
  1 & 0 \\
  0 & -1
\end{pmatrix},\quad U=\begin{pmatrix}
  0 & 1 \\
  0 & 0
\end{pmatrix},\quad V=\begin{pmatrix}
  0 & 0 \\
  1 & 0
\end{pmatrix},\quad \Theta=\begin{pmatrix}
  0 & 1 \\
  -1 & 0
\end{pmatrix}.
  \end{align}
The \emph{Casimir} operator is then given by
\begin{align*}
\Box:= -X^2-2(UV+VU),
\end{align*}
which generates the center of the universal enveloping algebra of $\mathfrak{sl}(2,\RR)$. The Casimir operator $\Box$
acts as a constant $\mu \in \mathbb{R}^+ \cup \{-n^2 + 2n \mid n \in \mathbb{Z}^+\}$ on the Hilbert space of each irreducible unitary representation, and its value classifies all nontrivial irreducible unitary representations according to three different types. Let $\mathcal{H}_\mu$ be the Hilbert space of an irreducible unitary representation $\pi_\mu$ on which the Casimir operator takes the value $\mu \in \mathbb{R}^+ \cup \{-n^2 + 2n \mid n \in \mathbb{Z}^+\}$. The representation is said to belong to:
\begin{itemize}
  \item the principal series representations if $\mu >1$,
\smallskip
  \item the complementary series representations if $0 < \mu < 1$,
  \smallskip
  \item the discrete series representations if $\mu=-n^2+2n$ for $n\in\NN$ and $\mu\leq0$,
  \smallskip
  \item the mock discrete series (limit of the discrete series representations) if $\mu=1$.
\end{itemize}

\begin{remark}\label{re:4}The above classification is  still valid for irreducible unitary representations of Lie groups whose
Lie algebra is $\mathfrak{sl}(2,\RR)$. All of these are unitarily equivalent to
irreducible representations of $SL(2,\RR)$ itself \cite{tan}.
\end{remark}

Any unitary representation $(\rho,H_\rho)$ of a connected Lie group $P$ with $\text{Lie}(P)=\mathfrak{sl}(2,\RR)$ is decomposed into a direct integral (see \cite{Forni} and \cite{mautner1950unitary})
\begin{align*}
H_\rho=\int_{\oplus}\mathcal{L}_\mu ds(\mu)\quad\text{and}\quad \omega=\int_{\oplus}\omega_\mu ds(\mu) \quad \forall\,\omega\in H_\rho
\end{align*}
with respect to a positive Stieltjes measure $ds(\mu)$ over the spectrum $\sigma(\Box)$. The
Casimir operator acts as the constant $\mu\in \sigma(\Box)$ on every Hilbert space $\mathcal{L}_\mu$. The
representation of $\rho$ on $\mathcal{L}_\mu$  need not to be irreducible. In fact, $\mathcal{L}_\mu$ is in general
the direct sum of an (at most countable) number of unitary representations equal
to the spectral multiplicity of $\mu\in \sigma(\Box)$. We say that \emph{$\rho$ has a spectral gap (of $\mu_0$)} if $\mu_0>0$ and $s((0,\mu_0))=0$.
\begin{remark}\label{re:9} Traditionally, one then defines a parameter $\nu_0 \in [0,1)$ (see \cite{Forni} and \cite{tan}) by
\begin{align}\label{for:50}
\nu_0 :=
\begin{cases}
\sqrt{1 - \mu_0} \qquad & \text{if } 0<\mu_0 < 1, \\
0 \qquad & \text{if } \mu_0 \geq 1.
\end{cases}
\end{align}
The spectral gap is used to determine the edge of the complementary series: it bounds how far the spectrum of $\Box$ can penetrate into the interval $(0,1)$. The discrete series and principal series lie outside a fixed neighborhood of the trivial representation  (in the sense of Fell topology).

In particular, when $0<\mu_0<1$, the parameter $\nu_0$ coincides with the parameter of the ``worst" (i.e.\ slowest-decaying) complementary series allowed by the gap $\mu_0$, and therefore governs the optimal decay rate of matrix coefficients of $\rho$ (see Lemma \ref{for:72}). If $\mu_0\ge 1$ (so that $\nu_0=0$), then there is no complementary-series component in the spectrum: all irreducible constituents are principal or discrete, hence tempered. Among tempered representations, the only ones that are strictly better (in the sense of having square-integrable matrix coefficients) are the discrete series (see \cite{Warner}). In other words, the stronger decay coming from discrete-series components is not reflected in $\nu_0$ defined in the traditional sense.

For later use, in the purely discrete case (including the mock discrete series) we extend the notation as follows: if the spectrum of $\Box$ is contained in $\{-n^2+2n : n\in\ZZ^+\}$ and its maximal eigenvalue is $-n^2+2n$, we set
\begin{align}\label{for:213}
\mu_0:=n^2-2n \quad\text{and}\quad \nu_0:= -\sqrt{\mu_0+1}= -n+1
\end{align}
This convention encodes the additional exponential decay coming from discrete-series components, while the definition \eqref{for:50} continues to describe the contribution of the complementary series.

\end{remark}

\subsection{A Fourier model for $SL(2, \RR)$}\label{sec:3}

  We use the representation parameter $\varpi := \sqrt{1 - \mu}$ for convenience, and we denote the real part of $\varpi$ by $\Re\varpi$. In the Fourier model $H_\mu$ for all cases (i.e., principal, complementary, discrete, and mock discrete series),  Lemma~3.1 and Lemma~3.16 of \cite{flaminio2016effective} give a
constant $C_{\re\varpi} > 0$ such that for any $f\in H_\mu$
\begin{equation*}
\Vert f \Vert_{H_\mu}^2 = C_{\re\varpi} \int_{\R} |f(\lambda)|^2 \vert \lambda\vert^{-\Re\varpi} d\lambda\,.
\end{equation*}
For the discrete series or mock discrete series Cauchy's theorem implies that $f$ is supported on $\R^+$ when $f \in W^\infty(H_\mu)$ (see Lemma~3.15 of \cite{flaminio2016effective}).

By direct computation from the vector-field formulas in this model, the derived \(\mathfrak{sl}_2\)-action is
\begin{align}\label{for:47}
U = -\textrm{i} \lambda, \quad X = (\varpi -1) - 2\lambda \frac{d}{d\lambda}, \quad V = \textrm{i}\left((\varpi-1) \frac{d}{d\lambda} - \lambda \frac{d^2}{d\lambda^2}\right).
\end{align}

\subsection{Study of cohomological equation}\label{sec:18}  For the classical horocycle flow defined by the $\mathfrak{sl}(2,\RR)$-matrix $U=\begin{pmatrix}
  0 & 1 \\
  0 & 0
\end{pmatrix}$, Flaminio and Forni made a detailed study in \cite{Forni}.

\begin{theorem}\label{th:5} (Theorems 1.3 and 4.1 of \cite{Forni}) Let $(\rho,\mathcal{H})$ be a unitary representation of $SL(2,\RR)$ without $SL(2,\RR)$-fixed vectors. If $\rho$ has a spectral gap $\mu_0$ and $s > \frac{1 + \nu_0}{2}$, where $\nu_0$ is defined as in \eqref{for:50}, then there exists a constant $C_{\nu_0,s,t}>0$ such that for any $\xi\in W^s(\mathcal{H})$,
\begin{enumerate}
  \item\label{for:68} if $t<-\frac{1 + \nu_0}{2}$, or

  \smallskip
  \item\label{for:217} if \( t < s - 1 \) and \( \mathcal{D}(\xi) = 0 \) for any $U$-invariant distribution $\mathcal{D}$ of Sobolev order $s$
\end{enumerate}
then the equation
\begin{align*}
  U\omega=\xi
\end{align*}
has a solution $\omega\in W^t(\mathcal{H})$ which satisfies the Sobolev estimate
\[
\| \omega \|_t \leq C_{\nu_0,s,t} \| \xi \|_s.
\]
Moreover, if the equation $U\omega=\xi$
has a solution $\omega\in \mathcal{H}$, then $\omega\in W^t(\mathcal{H})$ for any \( t < s - 1 \) with estimates
\[
\| \omega \|_t \leq C_{\nu_0,s,t} \| \xi \|_s.
\]
\end{theorem}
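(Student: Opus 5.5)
The plan is to reduce to irreducible representations, solve the equation explicitly in a concrete model, and then reassemble with constants that are uniform over the spectrum permitted by the gap. Since Theorem~\ref{th:5} is quoted from \cite{Forni}, I would follow the structure of that argument.

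\textbf{Step 1: reduction to irreducibles.} Using the direct integral decomposition $H_\rho=\int_\oplus \mathcal L_\mu\,ds(\mu)$ and the induced decomposition of Sobolev spaces (Section~\ref{sec:20}), it suffices to prove three things on each irreducible $\mathcal H_\mu$ with $\mu$ in the support of $ds$:
\begin{itemize}
\item the estimate $\|\omega_\mu\|_t\le C\|\xi_\mu\|_s$;
\item a constant $C=C_{\nu_0,s,t}$ that is independent of $\mu$;
\item a solution operator $\xi_\mu\mapsto\omega_\mu$ that is measurable in $\mu$.
\end{itemize}
Gluing the fibrewise solutions $\omega=\int_\oplus\omega_\mu\,ds(\mu)$ then gives the global statement. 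The only information used from the spectral gap is that $\Re\varpi\le\nu_0<1$ on the support of $ds$.

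\textbf{Step 2: the Fourier model.} In each $\mathcal H_\mu$ I would work in the model of Section~\ref{sec:3}, where $U$ acts as multiplication by $-\mathrm i\lambda$. The equation $U\omega=\xi$ then reads $\omega(\lambda)=\mathrm i\,\xi(\lambda)/\lambda$, and everything hinges on the behaviour of $\xi$ near $\lambda=0$.

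For smooth $\xi$, I would establish an expansion at $\lambda=0$ from the formulas \eqref{for:47} for $X$ and $V$ (for example via the $X$-homogeneity). The expected form is
\[
\xi(\lambda)\sim c_1(\xi)\,|\lambda|^{0}+c_2(\xi)\,|\lambda|^{\varpi}+\cdots,
\]
with the two exponents coinciding when $\varpi=0$ (a logarithmic term appears), and with one-sided support for the discrete series. The coefficients $c_j(\xi)$ are exactly the $U$-invariant distributions. Their Sobolev order is bounded by $(1+\Re\varpi)/2<(1+\nu_0)/2$, which is where the threshold $s>\frac{1+\nu_0}{2}$ enters.

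\textbf{Step 3: the two cases and the a priori regularity.}
\begin{itemize}
\item \emph{Case (1).} I would construct $\omega$ as a distribution by duality. For a test vector $\varphi$, define
\[
\langle\omega,\varphi\rangle=-\langle\xi,\,U^{-1}\varphi\rangle,
\]
after subtracting from $\varphi$ its projection onto the span of the invariant distributions. The key estimate is $\|U^{-1}\varphi\|_{s}\lesssim\|\varphi\|_{-t}$ for $-t>\frac{1+\nu_0}{2}$; since then $\varphi\in W^{-t}$ with $-t>\frac{1+\nu_0}{2}$, its Taylor-type expansion controls the singularity of $\varphi/\lambda$.
\item \emph{Case (2).} If all $c_j(\xi)$ vanish, then $\xi(\lambda)/\lambda$ is integrable against the weight $|\lambda|^{-\Re\varpi}$. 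Weighted Hardy inequalities then give $\omega\in W^{t}$ for every $t<s-1$; the loss of one derivative comes from dividing by $\lambda$.
\item \emph{The ``moreover'' part.} If $\omega\in\mathcal H$ solves the equation, then every $U$-invariant distribution annihilates $\xi=U\omega$, so Case (2) applies. Uniqueness of $L^2$ solutions follows from ergodicity: there are no $U$-invariant vectors in the absence of $G$-fixed vectors, by Howe–Moore.
\end{itemize}

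\textbf{Main obstacle: uniformity in $\mu$.} The step I expect to be hardest is making all constants uniform in $\mu$, in two regimes.
\begin{itemize}
\item \emph{Near the complementary-series edge.} As $\varpi\to\nu_0$, the bound is protected by the gap: $\Re\varpi\le\nu_0<1$ keeps the weights away from the degenerate case.
\item \emph{Large Casimir $|\mu|$.} Here the Fourier-model constants degenerate, so I would instead pass to the $K$-type (Fourier series in $\Theta$) basis. In that basis the Laplacian is diagonal and $U$ is a three-term operator, and the Green operator can be estimated by explicit recursions in the $K$-weights, in the manner of \cite{Forni}. The two regimes would then be matched on an intermediate range of $\mu$.
\end{itemize}
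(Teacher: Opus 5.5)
\textbf{The main gap is your treatment of the discrete series, and it breaks your argument for the ``moreover'' clause.}

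A spectral gap in the sense of Section~\ref{sec:1} only excludes $\mu\in(0,\mu_0)$, so the discrete series $\mu=-n^2+2n$ are allowed. For them $\varpi=n-1\ge1$, and the invariant distribution has order $n/2$ (Theorem~\ref{th:18}). Hence two of your claims are false:
\begin{itemize}
\item ``$\Re\varpi\le\nu_0<1$ on the support of $ds$'';
\item ``all invariant distributions have order $<\frac{1+\nu_0}{2}$''.
\end{itemize}
In Case~(1) this only means those components must be treated separately. Fibrewise, the quotient $\mathrm i\xi/\lambda$ already lies in $W^r$ for $r<\min(s-1,\frac n2-1)$, which covers $t<-\frac{1+\nu_0}{2}$.

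Your ``moreover'' argument, however, fails. For $\omega\in\mathcal H$ you cannot pair $\mathcal D$ with $\omega$, so $\mathcal D(U\omega)=0$ is not automatic. In the model of Section~\ref{sec:3}, $\xi\sim c\lambda^{n-1}$ gives $|\xi/\lambda|^2\lambda^{-(n-1)}\sim\lambda^{n-3}$, which is integrable for $n\ge3$ even when $c\ne0$.

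Here is an explicit counterexample to the clause. Take the weight-$n$ holomorphic model, with norm $\int_{\mathbb H}|F|^2y^{n-2}\,dx\,dy$ and $U$ acting by $\pm\frac{d}{dz}$.
\begin{itemize}
\item The $K$-finite lowest weight vector $\xi=(z+\mathrm i)^{-n}$ equals $U\omega$ for $\omega$ a multiple of $(z+\mathrm i)^{1-n}$.
\item With $w=\frac{z-\mathrm i}{z+\mathrm i}$, one has $(z+\mathrm i)^{1-n}=2\mathrm i\sum_{k\ge0}w^k(z+\mathrm i)^{-n}$.
\item The term $w^k(z+\mathrm i)^{-n}$ has $K$-weight $n+2k$ and squared norm $\asymp k^{-(n-1)}$.
\item Hence $\omega\in\mathcal H$ for $n\ge3$, but $\omega\in W^r$ only for $r<\frac n2-1$.
\end{itemize}
Taking $s>\frac n2$ and $t=\frac n2-1<s-1$ contradicts the clause.

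So the implication ``an $L^2$ solution forces $\mathcal D(\xi)=0$'' holds only on the principal and complementary part. There $\Re\varpi<1$ makes $|c_1|^2|\lambda|^{-2-\Re\varpi}$ and $|c_2|^2|\lambda|^{\Re\varpi-2}$ non-integrable. As quoted, the clause needs the extra hypothesis that $\xi$ is annihilated by the invariant distributions of order $\le s$. The paper itself only uses parts~(1) and~(2).

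\textbf{Secondary issues.} The paper gives no proof; it cites Flaminio--Forni, who work in the $K$-type basis. There $U$ is a three-term recursion, and explicit Gamma-function formulas for the invariant distributions and the Green operator give constants uniform over the whole admissible spectrum. Your Fourier-model/Hardy route is a different reduction, but it hands exactly that uniformity back to their $K$-type analysis. So it is a plan rather than an independent proof.

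It also misses a second degenerate regime. The gap keeps $\Re\varpi$ away from $1$ but not $\varpi$ away from $0$. As the exponents $0$ and $\varpi$ coalesce, the coefficients $c_1,c_2$, and any biorthogonal vectors used in your Case~(1) subtraction, degenerate unless you renormalize, e.g.\ via $(|\lambda|^{\varpi}-1)/\varpi$.

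Finally, the duality bound needed in Case~(1) is $\|U^{-1}\varphi\|_{-s}\lesssim\|\varphi\|_{-t}$, not $\|U^{-1}\varphi\|_{s}$. Since $U$ has a kernel (the invariant distributions) on these negative Sobolev spaces, you must also fix a specific right inverse.
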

Flaminio and Forni classified the $U$-invariant distributions in each irreducible unitary representation of $SL(2,\RR)$.
\begin{theorem}\label{th:18} (Theorem 1.1 of \cite{Forni}) We use $D_\mu$ to denote the space of $U$-invariant distributions for $\pi_\mu$. Then
\begin{itemize}
  \item If $\mu \geq 1$, then $D_\mu\subseteq W^{-s}(\mathcal{H}_\mu)$ if $s>\frac{1}{2}$.
\smallskip
  \item If $0 < \mu < 1$, then $D_\mu\subseteq W^{-s}(\mathcal{H}_\mu)$ if $s>\frac{1\pm\sqrt{1 - \mu}}{2}$.
  \smallskip
  \item If $\mu=-n^2+2n$ for $n\in\NN$, then $D_\mu\subseteq W^{-s}(\mathcal{H}_\mu)$ if $s>\frac{n}{2}$.

\end{itemize}
\end{theorem}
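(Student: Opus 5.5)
Since $D_\mu$ is defined inside a single irreducible representation, I would argue directly in an explicit orthogonal basis of $\mathcal H_\mu$ adapted to the compact subgroup $K=\exp(\RR\Theta)$. The aim is to write every $U$-invariant distribution as an explicit sequence and then read off its dual Sobolev norm from the growth of that sequence.

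\emph{Step 1: the ladder.} Fix a basis $\{u_k\}$ of $K$-eigenvectors, $\Theta u_k = \mathrm{i}k\,u_k$, with $k$ ranging over $2\ZZ$ or $2\ZZ+1$ for principal and complementary series, and over $k\ge n$ (or $k\le -n$) for the discrete series with $\mu=-n^2+2n$. Introduce the raising and lowering operators $\eta_\pm=\tfrac12(X\mp \mathrm{i}(U+V))$. They shift $k$ by $\pm2$, and their coefficients are fixed by the Casimir value $\mu=1-\varpi^2$, roughly $\eta_\pm u_k=\tfrac12(1-\varpi\pm k)\,u_{k\pm2}$ up to normalization. Since $\Delta=-X^2-U^2-V^2$ can be written as $\Box$ plus a quadratic expression in $\Theta$, it acts on $u_k$ by a scalar comparable to $1+k^2$. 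Hence
\[
\|\mathcal D\|_{-s}^2\asymp \sum_k |\mathcal D(u_k)|^2\,\|u_k\|^{-2}(1+k^2)^{-s}.
\]
The norms $\|u_k\|$ can be normalized to be constant in principal series. In complementary and discrete series they are given by Gamma-function ratios, and I will track them as such.

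\emph{Step 2: the recursion.} Write $U$ in terms of $\Theta$ and $\eta_\pm$. The invariance condition $\mathcal D(Uu_k)=0$ for every $k$ then becomes a three-term recursion linking $\mathcal D(u_{k-2})$, $\mathcal D(u_k)$ and $\mathcal D(u_{k+2})$. Solving it should show that $D_\mu$ is two-dimensional for principal and complementary series and one-dimensional for each discrete-series component. Because $[X,U]=2U$, the operator $X$ acts on $D_\mu$, and its eigenvalues there are $-(1\pm\varpi)$ (respectively $-n$ in the discrete series). The corresponding eigen-solutions are given by products of Gamma-function ratios, for instance of the form $\Gamma\big(\tfrac{k+1\pm\varpi}{2}\big)\big/\Gamma\big(\tfrac{k+1\mp\varpi}{2}\big)$.

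\emph{Step 3: asymptotics and summability.} Stirling's formula gives the growth of $|\mathcal D(u_k)|/\|u_k\|$ as $|k|\to\infty$. The target is that this quantity behaves like $|k|^{\pm\Re\varpi/2}$ for the two eigen-distributions, and like $|k|^{(n-1)/2}$ in the discrete case. Inserting this into the series from Step 1, the sum $\sum_k |k|^{\pm\Re\varpi-2s}$ (respectively $\sum_k |k|^{n-1-2s}$) converges exactly when $s>\tfrac{1\pm\Re\varpi}{2}$ (respectively $s>\tfrac n2$). For $\mu\ge1$ we have $\Re\varpi=0$, which gives $s>\tfrac12$. This yields all three cases of the statement.

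\emph{Main obstacle.} The delicate point is getting the exponent in Step 3 exactly right. The Gamma-function growth of $\mathcal D(u_k)$ and the non-constant norms $\|u_k\|$ in the complementary and discrete series must be combined without losing the $\pm\Re\varpi/2$. I would cross-check this against the Fourier model \eqref{for:47}. There $U=-\mathrm{i}\lambda$, so $U$-invariant distributions are supported at $\lambda=0$, and the weight $|\lambda|^{-\Re\varpi}$ together with the scaling generated by $X$ shows that the two invariant functionals have homogeneities $1\pm\varpi$. A dyadic decomposition in $\lambda$ then gives an independent derivation of the same Sobolev thresholds.
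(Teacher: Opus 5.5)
The paper gives no proof of this statement; it imports it verbatim from Theorem~1.1 of \cite{Forni}. Your outline ($K$-eigenbasis, $U$-invariance as a three-term recursion in $\mathcal D(u_k)$, Gamma-function solutions, Stirling, and $\|\mathcal D\|_{-s}^2\asymp\sum_k|\mathcal D(u_k)|^2\|u_k\|^{-2}(1+k^2)^{-s}$) is essentially Flaminio--Forni's own proof. The growth rates you target, $|k|^{\pm\Re\varpi/2}$ and $|k|^{(n-1)/2}$, are the correct ones. Two repairs are needed, though.

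\emph{The case $\varpi=0$.} Your Step~2 breaks down at $\varpi=0$, i.e.\ $\mu=1$ on the even $K$-types, which is part of the first bullet. There the eigenvalues $-(1\pm\varpi)$ coincide and $X$ is not diagonalizable on the two-dimensional space $D_\mu$. Your Gamma-ratio eigen-solutions therefore produce only one of the two invariant distributions. The missing one is a generalized eigenvector with $|\mathcal D(u_k)|\asymp\log|k|$, while $\|u_k\|$ stays constant. The threshold $s>\tfrac12$ survives, because $\sum_k\log^2|k|\,|k|^{-2s}<\infty$ exactly when $s>\tfrac12$. Still, this case must be treated separately, as Flaminio--Forni do.

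One uniform way to handle all cases is as follows.
\begin{itemize}
\item Normalize the $u_k$ so that the operator raising $k$ by $2$ acts by $\tfrac12(k+1+\varpi)$. The Casimir then forces the lowering coefficient to be $-\tfrac12(k-1-\varpi)$.
\item With $d_k=\mathcal D(u_k)$, the condition $\mathcal D(Uu_k)=0$ becomes $(k-1-\varpi)d_{k-2}+(k+1+\varpi)d_{k+2}=2k\,d_k$, and constant sequences solve it.
\item The differences $g_k=d_{k+2}-d_k$ of any solution obey the first-order relation $(k+1+\varpi)g_k=(k-1-\varpi)g_{k-2}$.
\item Hence $g_k\propto\Gamma(\tfrac{k+1-\varpi}{2})/\Gamma(\tfrac{k+3+\varpi}{2})\sim|k|^{-1-\varpi}$. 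When $\varpi=0$ this is $\propto(k+1)^{-1}$, whose partial sums give the $\log|k|$ solution.
\end{itemize}

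\emph{The Laplacian.} The operator $-X^2-U^2-V^2=-X^2-\tfrac12(U+V)^2-\tfrac12\Theta^2$ is not $K$-invariant. So it is not $\Box$ plus a function of $\Theta$, and it does not act diagonally on the $u_k$. Use instead $-X^2-(U+V)^2-\Theta^2=\Box-2\Theta^2$, which acts on $u_k$ by $\mu+2k^2$ and defines equivalent Sobolev norms.
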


\section{Multiple fractional equations in the semisimple setting}
Suppose $S$ is a Lie group with Lie algebra $\Lie(S)
  \;=\mathfrak{sl}(2,k_1)\oplus\cdots\oplus \mathfrak{sl}(2,k_n)$, where each $k_i\in\{\RR,\CC\}$. Let $S_i$ denote the subgroup of $S$ whose Lie
algebra is the $i$-th copy of $\mathfrak{sl}(2,k_i)$. Let $X_i$, $U_i$ and $V_i$
(see \eqref{for:95} of Section~\ref{sec:1}) denote the corresponding elements
in the $i$-th copy of $\mathfrak{sl}(2,k_i)$. For $1\le i\le n$, let $\Delta_i$ be the Laplacian of $S_i$ and set
\[
  \Sigma_i
  \;=\;
  \begin{cases}
    I - X_i^2 - V_i^2, &
      \text{if } k_i = \RR,\\[4pt]
    I - X_i^2 - (\mathrm{i}X_i)^2 - V_i^2 - (\mathrm{i}V_i)^2, &
      \text{if } k_i = \CC,
  \end{cases}\quad \text{and}\quad \mathcal{D}_i=I-\Delta_i.
\]

\subsection{Notations}\label{sec:43} We list the following notation that will be used throughout this section:
\begin{enumerate}
\item $\epsilon>0$ denotes a sufficiently small constant: see Section \ref{sec:33}.
  \item\label{for:80} $\pi_\mu,\mathcal{H}_\mu$, a spectral gap $\mu_0$,  and parameter $\nu_0$: see Section \ref{sec:1}.

\item Fourier model $H_\mu$ for $SL(2,\RR)$: see Section \ref{sec:3}.

  \item $P$:  a Lie group with Lie algebra $\mathfrak{sl}(2,k)$, $k=\RR$ or $\CC$.
   \item $(\rho,H_\rho)$: a unitary representation of $P$ with \textbf{strong} spectral gap $\gamma$ (see Section \ref{for:222}).

   \item $U\in\text{Lie}(P)$:  see \eqref{for:95} of Section \ref{sec:1}.
\end{enumerate}
\begin{remark}
We point out the difference between a \emph{spectral gap} and a \emph{\textbf{strong} spectral gap}
for a unitary representation of $P$ when $\Lie(P)=\mathfrak{sl}(2,\RR)$. Both
notions describe which irreducible representations can occur in the
decomposition: the former is formulated in terms of eigenvalues of the
Casimir operator, while the latter is formulated in terms of decay of matrix
coefficients.
\end{remark}

\subsection{Main results: Type $II$ (product) equation}

\begin{theorem}\label{th:6}Let $(\beta,\mathcal{L})$ be a unitary representation of $S$. Suppose for each $1\leq i\leq n$, $\beta|_{S_i}$ has \textbf{strong} spectral gap $\gamma_i$. Let
\begin{align*}
 \zeta_i>1 \quad\text{if } k_i = \CC \quad\text{ and }\quad\zeta_i>\gamma_i+2 \quad\text{if } k_i = \RR.
\end{align*}
For each $\lambda\in\{1,\dots,2^n\}$ and each $1\le j\le n$ we set
\[
  \Lambda_{j,\lambda}=
  \begin{cases}
    U_j, & \text{if } k_j=\RR,\\[4pt]
    U_j \text{ or } \mathrm{i}U_j, & \text{if } k_j=\CC,
  \end{cases}
\]
so that, as $\lambda$ ranges from $1$ to $2^n$, the $n$-tuples
$(\Lambda_{1,\lambda},\dots,\Lambda_{n,\lambda})$ exhaust all possible choices
of $U_j$ or $\mathrm{i}U_j$ in the complex factors.

Then for any $0\leq r_i<\gamma_i$, $1\leq i\leq n$, we have:
\begin{enumerate}
  \item\label{for:82} For any $\xi\in \mathcal{L}$ with $\Sigma_n^{\frac{\zeta_n}{2}}\cdots\Sigma_1^{\frac{\zeta_1}{2}}\xi\in\mathcal{L}$, there is a decomposition $\xi=\sum_{\lambda=1}^{2^n}\xi_\lambda$ with $\xi_\lambda\in \mathcal{L}$, such that each of the equations

\begin{align*}
|\Lambda_{1,\lambda}|^{r_1}|\Lambda_{2,\lambda}|^{r_2}\cdots |\Lambda_{n,\lambda}|^{r_n}\omega_\lambda=\xi_\lambda, \quad 1\leq \lambda\leq 2^n
\end{align*}
 has a solution $\omega_\lambda\in \mathcal{L}$ with the estimate
\begin{align*}
\norm{\omega_\lambda} \leq C_{\mathfrak{r},\mathfrak{p}} \big\|\Sigma_n^{\frac{\zeta_n}{2}}\cdots\Sigma_1^{\frac{\zeta_1}{2}}\xi\big\|,\qquad 1\leq\lambda\leq 2^n,
\end{align*}
where $\mathfrak{r}=(r_1,\cdots, r_n)$ and $\mathfrak{p}=(\gamma_1,\cdots, \gamma_n)$.

\smallskip
\item\label{for:277} For any $\xi\in \mathcal{L}$ with $\mathcal{D}_n^{\frac{\zeta_n}{2}}\cdots\mathcal{D}_1^{\frac{\zeta_1}{2}}\xi\in\mathcal{L}$, and any $\lambda\in\{1,\dots,2^n\}$, the equation
\begin{align*}
|\Lambda_{1,\lambda}|^{r_1}|\Lambda_{2,\lambda}|^{r_2}\cdots |\Lambda_{n,\lambda}|^{r_n}\omega_\lambda=\xi
\end{align*}
 has a solution $\omega_\lambda\in \mathcal{L}$ with the estimate
\begin{align*}
\norm{\omega_\lambda} \leq C_{\mathfrak{r},\mathfrak{p}} \big\|\mathcal{D}_n^{\frac{\zeta_n}{2}}\cdots\mathcal{D}_1^{\frac{\zeta_1}{2}}\xi\big\|.
\end{align*}

  \item\label{for:17}  If there exists an index $i$ with $r_i>\gamma_i$, then for any $m\in\NN$, there is $\xi\in W^m(\mathcal{L})$ such that for any $\lambda\in\{1,\dots,2^n\}$, the equation
\begin{align*}
|\Lambda_{1,\lambda}|^{r_1}|\Lambda_{2,\lambda}|^{r_2}\cdots |\Lambda_{n,\lambda}|^{r_n}\omega_\lambda=\xi
\end{align*}
has no solution $\omega_\lambda\in\mathcal{L}$.
\end{enumerate}

\end{theorem}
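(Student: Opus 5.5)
The plan is to reduce Theorem~\ref{th:6} to a single rank-one statement and then iterate over the commuting factors $S_1,\dots,S_n$. The rank-one statement, for $P$ with $\Lie(P)=\mathfrak{sl}(2,k)$ and $(\rho,H_\rho)$ with strong spectral gap $\gamma$, reads as follows. For every $0\le r<\gamma$ and every $\xi$ with $\Sigma^{\zeta/2}\xi\in H_\rho$ (respectively $\mathcal D^{\zeta/2}\xi\in H_\rho$), the equation $|U|^r\omega=\xi$ is solvable, in the complex case after splitting $\xi$ between $|U|^r$ and $|\mathrm iU|^r$. The solution satisfies $\|\omega\|\le C_{r,\gamma}\|\Sigma^{\zeta/2}\xi\|$.

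\textbf{Rank-one case.} The rank-one proof has three steps, in the following order.

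\emph{Geodesic decay.} First obtain $|\langle\rho(\exp sX)v,w\rangle|\le Ce^{-(\gamma-\epsilon)|s|}\|v\|_{\cdot}\|w\|_{\cdot}$ with explicit Sobolev orders. I would do this dynamically via Theorem~\ref{th:5} and Theorem~\ref{th:18}: write $\xi=U\omega+\sum\mathcal D$-components and use that $U$-invariant distributions are eigen-distributions for $X$ with eigenvalues $\frac{1\pm\varpi}{2}$ (or $\frac n2$ in the discrete series). Their pairing with $\rho(\exp sX)\xi$ then decays at the rate fixed by the spectral gap, as encoded in $\gamma$.

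\emph{Horocycle decay.} Conjugate $\exp(tU)$ by $\exp(sX)$ with $e^{2s}\sim t$. This yields $|\langle\rho(\exp tU)\xi,\xi\rangle|\le C(1+|t|)^{-\gamma+\epsilon}\|\xi\|_{\Sigma,\zeta}^2$, where only $X,V$-derivatives appear, because the $U$-direction is expanded and so costs nothing.

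\emph{Tauberian step.} For $0<r<\gamma$ use the Riesz kernel identity
\[
\| |U|^{-r}\xi\|^2=c_r\int_\RR |t|^{2r-1}\langle\rho(\exp tU)\xi,\xi\rangle\,dt
\]
(interpreted with a regularization, e.g.\ $e^{-\delta|t|}$ with $\delta\to0$). Split this into $|t|\le1$, which is bounded by $\|\xi\|^2$, and $|t|>1$, which is integrable since $2r-1-\gamma<-1$ once $r<\gamma/2$. I expect to need the full range $r<\gamma$, so I would instead apply the pairing to $\langle\rho(\exp tU)\xi,\eta\rangle$ and integrate the kernel $|t|^{r-1}$. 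This requires the spectral measure of $iU$ to be controlled near $0$ and amounts to the claim that $\chi\mapsto\|\xi_\chi\|^2/|\chi|^{2r}$ is integrable near the edge; this is the point where $\gamma$ must be sharp.

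\emph{Partial-norm mechanism.} Separately, decompose $\xi=\xi_1+\xi_2$ by a smoothing operator in the $U$-spectral variable. The low-frequency part is handled by the decay estimate above, and the high-frequency part is handled trivially since $|\chi|^{-r}$ is bounded there. This is what converts full Sobolev norms into the $\Sigma$ (or $\mathcal D$) norms. For $k=\CC$, the two real directions $U$ and $\mathrm iU$ generate $\RR^2$. Split $\widehat{\RR^2}$ into the cones $|\chi_1|\ge|\chi_2|$ and $|\chi_1|<|\chi_2|$; this gives the decomposition $\xi=\xi_1+\xi_2$ in part~\eqref{for:82}. Part~\eqref{for:277} with a single $\Lambda$ instead uses the full Laplacian $\mathcal D$ to absorb the missing direction.

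\textbf{Iteration over factors and sharpness.} Since $S_i$ commutes with $S_j$, the operators $|\Lambda_{j}|^{r_j}$ commute with $\Sigma_i$ and $\mathcal D_i$ for $i\ne j$. I would solve in $S_1$ first, applied to $\Sigma_n^{\zeta_n/2}\cdots\Sigma_2^{\zeta_2/2}\xi$. The rank-one estimate is applied fibrewise in a direct integral over the $S_2\cdots S_n$ representation, using that each restriction $\beta|_{S_i}$ has strong gap $\gamma_i$. This produces $\omega^{(1)}$ with $\Sigma_2,\dots,\Sigma_n$ regularity preserved, and induction on $n$ then gives the product equation with the stated estimate, the $2^n$ pieces arising from the complex cones. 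For part~\eqref{for:17}, take $r_i>\gamma_i$ and choose an irreducible component $\pi_\mu$ of $\beta|_{S_i}$ with decay near $\gamma_i$. Using the Fourier model of Section~\ref{sec:3}, pick a smooth $f$ with $f(\lambda)\sim1$ near $0$. Then $\int|f|^2|\lambda|^{-2r_i-\Re\varpi}d\lambda=\infty$, so $|U_i|^{-r_i}f\notin H_\mu$. Tensoring with smooth vectors in the other factors gives $\xi\in W^m$ with no solution. \textbf{The main obstacle} is the Tauberian step attaining the full range $r<\gamma$ with uniform constants over the spectral gap.
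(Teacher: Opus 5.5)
Your outline follows the paper's architecture: geodesic decay, then horocycle decay, then the Riesz-kernel Tauberian step, then a low-frequency cutoff for partial norms, then induction over the commuting factors. The cone splitting in the complex case is a legitimate substitute for the paper's decomposition $\xi=(I-\mathcal P_1)\xi+\mathcal P_2\mathcal P_1\xi+(\mathcal P_1-\mathcal P_2\mathcal P_1)\xi$.

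The genuine gap is in the rate bookkeeping at the step you call the main obstacle. By the definition of the strong spectral gap (Section~\ref{for:222}), matrix coefficients along $a_s=\exp(sX)$ decay like $e^{-2(\gamma-\epsilon)|s|}$, not $e^{-(\gamma-\epsilon)|s|}$. Also, in the Cartan decomposition $\exp(tU)=k_1a_sk_2$ one has $s=\operatorname{arcsinh}(|t|/2)$, so $e^{s}\asymp|t|$, not $e^{2s}\asymp t$. Hence the horocycle bound is $|\langle\rho(\exp tU)\xi,\xi\rangle|\le C(1+|t|)^{-2(\gamma-\epsilon)}\|\xi\|_\zeta^2$ (Corollary~\ref{cor:5}), which is twice the exponent you wrote.

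With your exponent the large-time piece only converges for $r<\gamma/2$, and your workaround does not fix this. Pairing with $\eta$ against the kernel $|t|^{r-1}$ gives at best $|\langle|U|^{-r}\xi,\eta\rangle|\lesssim\|\xi\|_\zeta\|\eta\|_\zeta$. That only shows $|U|^{-r}\xi$ is a distribution in $W^{-\zeta}(H_\rho)$. Membership in $H_\rho$ would need a bound by $\|\eta\|$ alone, which no decay estimate can give, since the supremum over unit $\eta$ is $\|\rho(\exp tU)\xi\|=\|\xi\|$.

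With the correct exponent no workaround is needed. Choose $r<r'<r''<\gamma$ with $2r'\notin\ZZ$, and bound the small-time part by $\|\xi\|^2$. Bound the large-time part by $\int_{|t|>1}|t|^{2r'-1}(1+|t|)^{-2r''}\,dt<\infty$. This gives a solution of $|U|^{r'}\omega_1=\xi$ with $\|\omega_1\|\lesssim\|\xi\|_\zeta$. Then pass to exponent $r$ via $|\chi|^{-2r}\le 1+|\chi|^{-2r'}$. This is exactly Lemma~\ref{le:23}.

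A second step needs an actual argument. The horocycle estimate is a full-Sobolev statement, because the compact factors $k_1,k_2$ mix all directions. So your remark that only $X,V$-derivatives appear there is unfounded. The passage to $\Sigma$-norms has to come entirely from the cutoff $\mathcal P=\mathcal P_{U,f}$. For that you need $\|\mathcal P\xi\|_{2n}\le C_n\|\Sigma^n\xi\|$ uniformly over representations, which is Lemma~\ref{le:14} (Lemma 8.6 of \cite{W5}). It is not a formal consequence of the spectral cutoff, because $\mathcal P$ does not commute with $V$. In the complex case the same fact is needed for $\mathcal P_{U,f}\mathcal P_{\mathrm iU,f}$, which is what controls the low-frequency part of each of your cones.

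Finally, in the sharpness part, a Fourier-model vector with $f\sim1$ near $\lambda=0$ is not available in the discrete series. There smooth vectors are supported on $\RR^+$ and must vanish at $0$ for the norm $\int|f|^2|\lambda|^{-\Re\varpi}d\lambda$ to be finite. The paper's argument avoids this and is uniform over all series. If $|U|^r\omega=\xi$, then $\langle\rho(a_s)\xi,\xi\rangle=e^{-2rs}\langle\rho(a_s)|U|^r\xi,\omega\rangle$ for $s\ge0$, which contradicts the optimality of $\gamma$ for a suitable smooth $\xi$ (Lemma~\ref{le:13}). The product case is reduced to this by applying low-frequency cutoffs in the other factors.
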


\begin{remark}
Theorem~\ref{th:6} makes precise the relation between decay of matrix
coefficients and solvability of fractional cohomological equations along the
nilpotent directions $U_i$ (and $\mathrm{i}U_i$ in the complex case). The
strong spectral gap determines exactly the admissible range of exponents
$r_i$.

The theorem has two forms.
In \eqref{for:82} we obtain solvability using only \emph{partial} Sobolev norms in each factor $S_i$
(of the form $\|\Sigma_n^{\zeta_n/2}\cdots\Sigma_1^{\zeta_1/2}\xi\|$), at the
cost of having to solve simultaneously a family of $2^n$ equations. In
\eqref{for:277} we use the \emph{full} Sobolev norm in each factor $S_i$ (of the form $\|\mathcal{D}_n^{\zeta_n/2}\cdots\mathcal{D}_1^{\zeta_1/2}\xi\|$), and
obtain solvability for a single equation. We will see that these partial norms
play a crucial role in obtaining  higher-order mixing estimates.
\end{remark}

\subsection{Proof strategy}
We first establish the rank-one ingredients underlying Theorem~\ref{th:6}.
Fix $1\le i\le n$. The first task is to solve the single-factor fractional
equation in the $i$-th rank-one subgroup $S_i$:
\[
  |U_i|^{r_i}\omega=\xi
  \qquad\text{or}\qquad
  |U_i|^{r_i}\omega_1+|\mathrm{i}U_i|^{r_i}\omega_2=\xi,
\]
according to whether $k_i=\RR$ or $k_i=\CC$.

The proof proceeds in three steps. First, we prove quantitative decay
estimates for matrix coefficients along the geodesic flow (Proposition \ref{po:1} for $SL(2,\CC)$ and Proposition \ref{cor:4} for $SL(2,\RR)$), and then transfer
these to polynomial decay along the horocycle flow (Corollary~\ref{cor:5}). Second, we show that this
horocycle decay implies solvability of the fractional equation up to the
spectral threshold determined by the strong spectral gap (see Lemma~\ref{le:23}). Third, we refine the solvability statement to obtain estimates in partial
Sobolev norms by introducing suitable spectral cutoffs (see Lemma \ref{le:16}); this leads to
Proposition~\ref{po:4}.

Once these rank-one inputs are available, the full theorem is obtained by an
induction over the commuting factors $S_1,\dots,S_n$. The commutativity of the
factors allows one to combine the single-factor fractional solutions without
loss of control, yielding the product-type solvability statement in Theorem
\ref{th:6}.
\subsection{Rank-one decay estimates} We begin with quantitative decay estimates for matrix coefficients in a
single rank-one factor. These estimates are the starting point for the
fractional solvability argument. The real (Section \ref{sec:7}) and complex cases (Section \ref{sec:5}) are treated
separately, but their role in the proof is the same: they provide the decay
input that later yields the admissible fractional range.
\subsubsection{The complex case} \label{sec:5}

\begin{proposition}\label{po:1} Let $P$ be a Lie group with Lie algebra $\mathfrak{sl}(2,\CC)$ and $(\rho,H_\rho)$ be a unitary representation of $P$ with \textbf{strong }spectral gap $\gamma$ (see Section \ref{for:222}).  Assume:
\begin{align*}
 p=\zeta>1.
\end{align*}
 Then for any $\xi\in W^{p}(H_\rho)$, $\psi\in W^{\zeta}(H_\rho)$ and
  any $a_s = \begin{pmatrix} e^{s} & 0 \\[5pt]
0 & e^{-s} \end{pmatrix}$, $s\in\RR$ we have
  \begin{align*}
\big|\langle \rho(a_s)\psi,\, \xi\rangle \big|\leq C_{\gamma,\epsilon}e^{-2|s|(\gamma-\epsilon)} \| \xi \|_{p} \big\|\psi\big\|_{\zeta}.
\end{align*}

\end{proposition}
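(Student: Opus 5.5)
We would prove Proposition~\ref{po:1} by reducing to irreducible representations and then computing the matrix coefficient explicitly in a Fourier (line) model, in the spirit of Section~\ref{sec:3}.

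\textbf{Reduction.} By Section~\ref{sec:20}, $\rho=\int_Z\rho_z\,d\mu(z)$, and the Sobolev norms decompose accordingly. So it suffices to prove
\[
\big|\langle\rho_z(a_s)\psi,\xi\rangle\big|\le C_{\gamma,\epsilon}\,e^{-2|s|(\gamma-\epsilon)}\,\|\xi\|_{\zeta}\|\psi\|_{\zeta}
\]
for each nontrivial irreducible $\rho_z$, with a constant uniform in $z$. Cauchy--Schwarz in the direct integral then gives the general case. Replacing $s$ by $-s$ and swapping $\psi,\xi$, we may take $s\ge0$.

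\textbf{Parametrization by the gap.} For $\mathfrak{sl}(2,\CC)$ the nontrivial irreducible unitary representations are the principal series and the complementary series. The latter carry a parameter $\nu\in(0,1)$, and their optimal decay exponent is $1-\nu$ (Remark~\ref{re:12} gives $\gamma\le1$). The strong spectral gap $\gamma$ forces every $\rho_z$ in the support of $\mu$ to have $\nu\le 1-\gamma$. This is the only place the gap enters. The tempered case corresponds to $\nu=0$, $\gamma=1$.

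\textbf{Model computation.} Realize $\rho_z$ on functions $f$ on $\CC$ with norm $\int_\CC|f(w)|^2|w|^{-2\nu}\,dw$, the complex analogue of the model in Section~\ref{sec:3}. In this model:
\begin{itemize}
\item $U$ and $\mathrm iU$ act by multiplication by $-\mathrm i\,\mathrm{Re}\,w$ and $-\mathrm i\,\mathrm{Im}\,w$;
\item $a_s$ acts by the dilation $f(w)\mapsto e^{(2\nu-2)s}f(e^{-2s}w)$ (up to the appropriate normalization);
\item $X$ and $\mathrm iX$ are first-order Euler-type operators;
\item $V$ and $\mathrm iV$ are second-order operators, analogous to \eqref{for:47}.
\end{itemize}
Then
\[
\langle\rho(a_s)\psi,\xi\rangle=e^{(2\nu-2)s}\int_\CC\psi(e^{-2s}w)\,\overline{\xi(w)}\,|w|^{-2\nu}\,dw.
\]
Split this integral into $|w|\le e^{s}$ and $|w|>e^{s}$. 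On the first region we use that $\psi$ is bounded near $0$. On the second we use that $\xi$ decays at $\infty$. Each region contributes roughly $e^{-2s(1-\nu)}$, up to an $e^{\epsilon s}$ loss coming from borderline integrability, giving the rate $e^{-2s(\gamma-\epsilon)}$.

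\textbf{Main obstacle.} The crux is to control, uniformly in $\nu\in[0,1-\gamma]$ (and in the $K$-type label of the principal series), the quantities
\[
\sup_{|w|\le1}|\psi(w)|\qquad\text{and}\qquad \int_{|w|\ge R}|\xi(w)|\,|w|^{-2\nu}\,dw
\]
by $\|\cdot\|_{\zeta}$ with $\zeta>1$. Heuristically, $\zeta>1$ is exactly half the real dimension of $\CC$: the Sobolev embedding $H^{\zeta}(\RR^2)\subset C^0$ gives the local sup-bound, and the same order gives integrable tails at infinity.

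To carry this out, we would first use the elliptic estimate (Theorem~\ref{th:15}) to bound $\|\mathcal D^{\zeta/2}\psi\|$ below by explicit weighted derivative norms in the model. For the behavior at $\infty$ we would use the multiplication operators $U,\mathrm iU$. For the behavior near $0$ we would use $V,\mathrm iV$, whose second-order form yields ordinary derivatives in $w$, with the $X,\mathrm iX$ terms absorbing the weight $|w|^{-2\nu}$. Fractional orders $\zeta$ would then be handled by interpolation.

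The $\epsilon$-loss arises from logarithmic divergences at the borderline exponent. The uniformity of $C_{\gamma,\epsilon}$ over the support of $\mu$ follows because all constants depend continuously on $\nu$, which ranges over the compact set $[0,1-\gamma]$, and on nothing else.
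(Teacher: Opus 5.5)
There is a genuine gap, and it is at the step you identify as the crux. In the line (Fourier) model, the estimate $\sup_{|w|\le 1}|\psi(w)|\lesssim\|\psi\|_{\zeta}$ is false, and no Sobolev embedding $H^{\zeta}(\RR^2)\subset C^0$ is available there.

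Take $\nu=0$, the spherical unitary principal series. It may occur in $\rho$ for every $\gamma$; for instance $\rho$ itself, with $\gamma=1$. Its $K$-fixed vector is $(1+|z|^2)^{-1}$ in the noncompact picture on $N\cong\CC$, and its Fourier transform is
\[
\int_{\RR^2}\frac{e^{-\mathrm{i}x\cdot w}}{1+|x|^2}\,dx=2\pi\,\mathrm{K}_0(|w|),
\]
where $\mathrm{K}_0$ is the Macdonald function. This diverges like $-\log|w|$ as $w\to 0$, even though the vector is smooth with every $\|\psi\|_\zeta$ finite.

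For $0<\nu\le 1-\gamma$ the spherical vector is a multiple of $|w|^{\nu}\mathrm{K}_\nu(|w|)$. It is finite at $0$ but blows up like $1/\nu$ there, while its norm (in your normalization) stays bounded. So the appeal to continuity in $\nu$ on the compact set $[0,1-\gamma]$ also fails. Moreover, the principal series carries the non-compact parameters $(k,t)$, with the same blow-up as $t\to 0$.

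The structural reason is that Fourier-model functions of smooth vectors are not smooth at $w=0$. They carry singular terms ($\log|w|$, $|w|^{2\nu}$) that the degenerate operators $V,\mathrm{i}V$ cannot see. You can check this in the real model \eqref{for:47}: $V$ annihilates $|\lambda|^{\varpi}$, and for $\varpi=0$ one has $V=-\mathrm{i}\frac{d}{d\lambda}\big(\lambda\frac{d}{d\lambda}\big)$, which kills $\log|\lambda|$. So $V$-derivatives do not yield ``ordinary derivatives near $0$''. What you would actually need is a bound like $|\psi(w)|\le C_\epsilon|w|^{-\epsilon}\|\psi\|_\zeta$, uniform over all constituents, and that is essentially as hard as the decay estimate itself.

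Separately, splitting at $|w|=e^{s}$ with only $L^2$ control of $\psi$ on the outer region gives $e^{-\zeta s}$. This is too weak when $\nu=0$ and $1<\zeta<2$. The split has to be at $|w|\approx e^{2s}$, which puts all the weight on the false sup bound.

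The paper does not use the line model for this. It reduces to an irreducible representation of $SL(2,\CC)$ and decomposes into $SU(2)$-types $\mathcal H_\ell$, each of multiplicity at most one. It then applies the Cowling--Haagerup--Howe bound
\[
|\langle\rho(a_s)v,w\rangle|\le C_r\big(\dim\langle Kv\rangle\dim\langle Kw\rangle\big)^{1/2}e^{-2|s|r}\|v\|\|w\|
\]
for $K$-finite $v,w$, with $r=\gamma-\epsilon$. Finally it sums over $K$-types by Cauchy--Schwarz:
\[
\sum_\ell d_\ell^{1/2}\|v_\ell\|\le\Big(\sum_\ell d_\ell(1+\lambda_\ell)^{-2m}\Big)^{1/2}\|(1+\Omega)^m v\|.
\]
The right side is finite exactly when $m>\frac12$, i.e.\ Sobolev order $\zeta=2m>1$. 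Uniformity over the direct integral is automatic because $C_r$ depends only on $r$.

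So your threshold $\zeta>1$ is indeed a half-dimension Sobolev threshold. But it lives on $K/M\cong S^2$, the compact picture, where smooth vectors really are bounded smooth functions. It does not live on $N\cong\CC$ in the Fourier picture.
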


\begin{proof} Since $SL(2,\mathbb C)$ is simply connected,
any irreducible representation of such a group $P$ is equivalent to an
irreducible representation of $SL(2,\mathbb C)$. Hence, it is harmless to assume \(P = SL(2,\mathbb C)\) and $\rho$ is an irreducible representations of $P$.

Let \(K = SU(2)\).  The irreducible \(K\)-types $\sigma_\ell$, where \(\ell \in \{0,\tfrac12,1,\tfrac32,\dots\}\) indexes the irreducible
representation of \(SU(2)\) of dimension
\[
  d_\ell = \dim \mathcal \sigma_\ell = 2\ell + 1.
\]
Let \(\Omega\) be the (positive) Casimir operator of \(K\), so that
\[
  \Omega|_{\mathcal H_\ell} = \lambda_\ell \,\mathrm{Id}, \qquad
  \lambda_\ell=\ell(\ell+1).
\]
Let $H_\rho = \widehat{\bigoplus}_{\ell} \mathcal H_\ell$
where $H_\ell$ is $K$-invariant and the action of $K$ on $H_\ell$ is equivalent to $n_\ell\sigma_\ell$ where $n_\ell\in\{0,1\}$ is the multiplicity of $\sigma_\ell\in H_\rho$ (see \cite{Knapp}).  For a \(K\)-finite vector \(v \in \mathcal H\), write $v = \sum_{\ell} v_{\ell}$. Then for any \(m > 0\), we have
\begin{align*}
\|(1+\Omega)^m v\|^2
  = \sum_{\ell} (1+\lambda_\ell)^{2m} \,\|v_\ell\|^2.
\end{align*}
Then, for any \(m > \tfrac12\), by Cauchy-Schwarz inequality we have
\begin{align}\label{for:1}
\sum_{\ell} d_\ell^{\frac{1}{2}} \,\|v_\ell\|
 &= \sum_{\ell}
    \bigl(d_\ell^{\frac{1}{2}} (1+\lambda_\ell)^{-m}\bigr)
    \bigl((1+\lambda_\ell)^m \|v_\ell\|\bigr) \notag\\
 &\le \Big(\sum_{\ell} d_\ell (1+\lambda_\ell)^{-2m}\Big)^{1/2}
      \Big(\sum_{\ell} (1+\lambda_\ell)^{2m} \|v_\ell\|^2\Big)^{1/2}\leq C_m\norm{v}_{2m}
\end{align}
for all \(K\)-finite vectors \(v\).

Suppose $\rho$ has decay rate $r$ (see Section \ref{for:222}). The non-trivial irreducible representations of $SL(2,\mathbb C)$ are principal/complementary series.
The decay exponent $r$ is $1-\epsilon$ for the principal series, while for
the complementary series one has some $r\in(0,1)$ depending on the parameter. Let $a_s = \begin{pmatrix} e^{s} & 0 \\[5pt]
0 & e^{-s} \end{pmatrix}$, $s\in\RR$. Then for any $K$-finite vectors $v$ and $w$ we have
\begin{align}\label{for:270}
 \big|\langle \rho&(a_s)v,\, w\rangle \big|\leq C_r\big(\dim\langle Kv\rangle\dim\langle Kw\rangle\big)^{\frac{1}{2}} e^{-2|s|r}\norm{v}\norm{w}
\end{align}
(see \cite{Cowling1}). Then for any $\psi,\,\xi\in W^t(H_\rho)$, $t>1$ it follows that
\begin{align}\label{for:242}
 \big|\langle \rho(a_s)\psi,\, \xi\rangle \big|&=\Big| \big\langle \rho(a_s)\sum_{\ell} \psi_\ell,\, \sum_{\ell} \xi_\ell \big\rangle \Big|\leq C_r e^{-2|s|r}\sum_{\ell_1,\ell_2}d_{\ell_1}^{\frac{1}{2}}d_{\ell_2}^{\frac{1}{2}}\norm{\psi_{\ell_1}}\norm{\xi_{\ell_2}}\notag\\
 &= C_r e^{-2|s|r}
    \big(\sum_{\ell_1} d_{\ell_1}^{\frac{1}{2}}\,\|\psi_{\ell_1}\|\big)
    \big(\sum_{\ell_2} d_{\ell_2}^{\frac{1}{2}}\,\|\xi_{\ell_2}\|\big)\overset{\text{(1)}}{\leq} C_{r,t} e^{-2|s|r}\norm{\psi}_t\norm{\xi}_t.
\end{align}
Here in $(1)$ we use \eqref{for:1}.
\end{proof}
\begin{remark}\label{re:13}
Since $C_{r,t}$ depends only on $r$, $t$ and the choice of Sobolev norms (and
not on the particular representation $\rho$ or the particular vectors
$\psi,\xi$), the estimate \eqref{for:242} is valid for any representation
of $SL(2,\mathbb C)$ with decay rate $r$. It also extends to any representation
of a Lie group $P$ with $\Lie(P)=\mathfrak{sl}(2,\mathbb C)$ that has decay
rate $r$ in the above sense.

In particular, for \(SL(2,\mathbb C)\) there is no irreducible unitary representation
whose matrix coefficients decay faster than the tempered rate \(r=1-\epsilon\), so
\eqref{for:270} covers all non-trivial irreducible unitary representations.
For other groups that do admit representations with strictly faster decay (for example,
the discrete series of \(SL(2,\mathbb R)\)), the optimal decay rate for each representation
may exceed the tempered rate, so the exponent \(r\) in \eqref{for:270} must be interpreted
as the tempered exponent rather than the best possible one.
\end{remark}

\begin{remark}
For \(SL(2,\RR)\), the same \(K\)-type argument yields the usual Sobolev-form
decay estimates for principal, complementary, and mock-discrete series. However,
in the discrete-series case it only captures the tempered exponent and does not
recover the representation-specific optimal decay rate. This is precisely the
regime where the dynamical argument developed below becomes essential.
\end{remark}

\subsubsection{The real case}\label{sec:7}
We introduce a dynamical method, based on the study of cohomological equations,
to obtain decay of matrix coefficients in the \(SL(2,\RR)\) case. This is in
contrast with the traditional representation-theoretic approach via \(K\)-types,
spherical functions, and Harish-Chandra's Plancherel theorem (see \cite{tan}). The method
recovers the optimal decay exponent for representations of \(SL(2,\RR)\).

More precisely, we combine solvability results for the cohomological equation of
the horocycle flow with Sobolev interpolation to derive quantitative decay
estimates for matrix coefficients.

\begin{proposition}\label{cor:4} Let $P$ be a Lie group with Lie algebra $\mathfrak{sl}(2,\RR)$ and $(\rho,H_\rho)$ be a unitary representation of $P$ with \textbf{strong} spectral gap $\gamma$ (see Section \ref{for:222}).  Assume:
 \begin{align*}
  p=\max\{1, \gamma\}\quad\text{ and }\quad\zeta>\gamma+2.
 \end{align*}
 Then for any $\xi\in W^{p}(H_\rho)$, $\psi\in W^{\zeta}(H_\rho)$ and
  any $a_s = \begin{pmatrix} e^{s} & 0 \\[5pt]
0 & e^{-s} \end{pmatrix}$, $s\in\RR$ we have
  \begin{align*}
\big|\langle \rho(a_s)\psi,\, \xi\rangle \big|\leq C_{\gamma,\epsilon}e^{-2|s|(\gamma-\epsilon)} \| \xi \|_{p} \big\|\psi\big\|_{\zeta}.
\end{align*}
\end{proposition}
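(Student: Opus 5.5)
We prove Proposition~\ref{cor:4} by the dynamical route announced above: we use Theorem~\ref{th:5} (Flaminio--Forni) to write $\psi$ as an invariant-distribution part plus a $U$-coboundary, and let the geodesic flow act on each piece. By the direct integral decomposition of Section~\ref{sec:20}, all constants will depend only on $\gamma$ (through $\nu_0$ or the discrete parameter $n$) and the Sobolev orders, and not on the representation. We may therefore reduce to $\rho=\pi_\mu$ irreducible with $\mu$ in the range allowed by the strong spectral gap. By symmetry ($s\mapsto -s$ exchanges the roles of $U$ and $V$ under conjugation by the Weyl element) it suffices to treat $s\ge0$, where $\pi(a_s)$ contracts $U$: $\Ad_{a_{-s}}U=e^{-2s}U$.

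\textbf{Step 1 (splitting).} Fix $\psi\in W^\zeta(H_\mu)$. By Theorem~\ref{th:18} the $U$-invariant distributions $D_\mu$ have Sobolev order just above $\frac{1\pm\nu}{2}$, or $\frac n2$ in the discrete case. Choose a basis $\mathcal D_1,\dots$ of $D_\mu$ (at most two elements), and dual smooth vectors $\psi_j$ with $\mathcal D_i(\psi_j)=\delta_{ij}$ and controlled norms. Set $\psi=\sum_j\mathcal D_j(\psi)\psi_j+\psi'$. Then $\psi'$ is killed by all invariant distributions, so Theorem~\ref{th:5}\eqref{for:217} gives $\psi'=U\omega$ with $\|\omega\|_t\le C\|\psi\|_\zeta$ for $t<\zeta-1$.

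\textbf{Step 2 (coboundary part).} Using \eqref{for:125} we write
\[
\langle\rho(a_s)U\omega,\xi\rangle=e^{-2s}\langle U\rho(a_s)\omega,\xi\rangle=-e^{-2s}\langle\rho(a_s)\omega,U\xi\rangle .
\]
This bound still costs a derivative on $\xi$, so we iterate. We apply the splitting again to $\omega$, which is possible since $\omega\in W^{t}$ with $t$ still large because $\zeta>\gamma+2$. After $k\approx\lceil\gamma\rceil$ steps we gain $e^{-2ks}$, and we stop when $2k\ge 2\gamma$. To avoid losing $k$ derivatives on $\xi$ when only $p=\max\{1,\gamma\}$ is available, we interpolate. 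On one side we have the trivial bound $|\langle\rho(a_s)\psi,\xi\rangle|\le\|\psi\|\|\xi\|$; on the other, the bound with $e^{-2s}$ and one derivative on $\xi$. Complex interpolation in the Sobolev scale of $\xi$ between $W^0$ and $W^{\lceil\gamma\rceil}$ then yields decay $e^{-2s\theta}$ with $\|\xi\|_\theta$ for fractional $\theta$. This is where $p=\max\{1,\gamma\}$ and the $\epsilon$-loss enter.

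\textbf{Step 3 (invariant-distribution part).} This step is the main obstacle, since it carries the true rate $\gamma$. For $\mathcal D\in D_\mu$ the geodesic flow acts by scaling: $a_s$ normalizes $U$, so $\mathcal D\circ\rho(a_{-s})$ is again $U$-invariant. Hence $\rho(a_s)^*$ acts on the (at most two-dimensional) space $D_\mu$ with eigenvalues $e^{-(1\pm\nu)s}$, or $e^{-ns}$ in the discrete case, i.e.\ exactly $e^{-2\gamma s}$ at worst. We would therefore bound $\langle\rho(a_s)\psi_j,\xi\rangle$ by pairing the contracted component against $\xi$. Concretely, we construct $\psi_j$ in the Fourier model of Section~\ref{sec:3} as explicit functions, e.g.\ $\lambda^{c}e^{-\lambda}$-type profiles, on which $\rho(a_s)$ acts by dilation $f(\lambda)\mapsto e^{(\varpi-1)s}f(e^{2s}\lambda)$ by \eqref{for:47}. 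We would then compute $\langle\rho(a_s)\psi_j,\xi\rangle$ directly using the weight $|\lambda|^{-\Re\varpi}$ and smoothness of $\xi$ near $\lambda=0$, which requires about $\gamma$ derivatives of $\xi$, matching $p$. The delicate points are uniformity of the constants in $\mu$ near the spectral edge, where the two eigenvalues $1\pm\nu$ merge at $\mu=1$, producing a polynomial factor absorbed by the $\epsilon$, and showing that the coefficients $|\mathcal D_j(\psi)|\le C\|\psi\|_\zeta$ are uniform in $\mu$. Summing Steps 2 and 3 gives the claimed $C_{\gamma,\epsilon}e^{-2|s|(\gamma-\epsilon)}\|\xi\|_p\|\psi\|_\zeta$.
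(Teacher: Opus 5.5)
There is a genuine gap. The step that carries the rate $\gamma$, your Step 3, is only sketched, and the steps around it do not work as written.

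\textbf{Step 2 has the wrong direction.} With the paper's normalization $\rho(a_s)U\rho(a_s)^{-1}=e^{2s}U$, one has $\rho(a_s)U\omega=e^{2s}U\rho(a_s)\omega$. So for $s\ge0$ a $U$-coboundary placed on the moved vector $\psi$ is \emph{expanded}. The contraction $U\rho(a_s)=e^{-2s}\rho(a_s)U$ only helps when the coboundary sits on the fixed vector $\xi$. You can repair this by using $V$-coboundaries and $V$-invariant distributions on the $\psi$ side, but then everything rests on Step 3.

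\textbf{Step 3 does not yield a bound as stated.} Knowing that $\rho(a_s)^*$ acts on $D_\mu$ by eigenvalues does not bound $\langle\rho(a_s)\psi_j,\xi\rangle$. Re-splitting $\rho(a_s)\psi_j=e^{-c_js}\psi_j+V\omega_j(s)$ only produces a term $\langle\omega_j(s),V\xi\rangle$ of the same kind, and you have no control of $\omega_j(s)$ in $s$.

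The explicit Fourier-model computation you fall back on is essentially the whole content of the proposition. It must also be uniform over the entire direct integral: all principal series up to $\mu\to\infty$, and the edge $\mu\to1$. Near $\mu=1$ the geodesic eigen-distributions $\mathcal D_\pm$ (the basis your eigenvalue argument needs) coalesce. The dual vectors $\psi_j$ with $\mathcal D_i(\psi_j)=\delta_{ij}$ then have norms of order $|\nu|^{-1}$, so the uniform bound $|\mathcal D_j(\psi)|\,\|\psi_j\|\le C\|\psi\|_\zeta$ fails. A non-eigen basis avoids the blow-up, but then the eigenvalue argument no longer applies term by term.

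\textbf{The interpolation in Step 2 is not justified.} Each iteration level produces new invariant-distribution terms paired with $V^k\xi$. These need Step-3-type estimates at a different rate and with fewer derivatives than you have. So the endpoint bound $e^{-2ks}\|\xi\|_k$ is not available for $\psi$ itself when $k>\gamma$.

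\textbf{The missing idea: the paper never analyzes invariant distributions.} It puts the coboundary on $\xi$, with $Y=U$ for $s>0$ and $Y=V$ for $s<0$, so that $Y\rho(a_s)=e^{-2|s|}\rho(a_s)Y$. It then solves $Y\eta=\xi$ in the \emph{negative} Sobolev space $W^{-(1-r)}$. By Theorem~\ref{th:5}\eqref{for:68} this is unobstructed whenever $-(1-r)<-\frac{1+\nu_0}{2}$, i.e.\ $r<\frac{1-\nu_0}{2}$, and it gives $\|\eta\|_{-(1-r)}\le C\|\xi\|_p$. Then
\[
|\langle\rho(a_s)\psi,\xi\rangle|=e^{-2|s|}\,|\langle\rho(a_s)Y\psi,\eta\rangle|\le e^{-2|s|}\,\|\eta\|_{-(1-r)}\,\|\rho(a_s)Y\psi\|_{1-r}.
\]
The growth bound \eqref{for:71}, $\|\rho(a_s)\|_{W^q\to W^q}\le C_qe^{2q|s|}$ (interpolation on the $\psi$ side), then gives $Ce^{-2|s|r}\|\xi\|_p\|\psi\|_{2-r}$. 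The rate is read off from the Sobolev order of the distributional solution, and uniformity comes directly from the constants in Theorem~\ref{th:5}. In the discrete-series case the paper iterates $Y\eta=\xi$ about $p$ times using Theorem~\ref{th:5}\eqref{for:217}. That step is unobstructed because, by Theorem~\ref{th:18}, there are no $U$-invariant distributions of order $\le n/2$.
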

We first prove the following more precise rank-one decay estimate, from which Proposition~\ref{cor:4} follows immediately.

\begin{lemma}\label{for:72} Let $P$ be a Lie group with Lie algebra $\mathfrak{sl}(2,\RR)$ and $(\rho,H_\rho)$ be a unitary representation of $P$.  Assume the following conditions are satisfied:
 \begin{enumerate}
   \item\label{for:155} $\rho$ has a spectral gap of $\mu_0$ (see Section \ref{sec:1}) and $\rho$ has no $P$-fixed vectors,

    \smallskip
   \item\label{for:214} if $\nu_0\geq0$, then
   \[
  \frac{1 + \nu_0}{2}<p<1,\qquad r>0,\qquad r+\frac{1 + \nu_0}{2}<1,
  \]
   \item\label{for:216} if $\nu_0<0$, then
  \begin{align*}
  0< p<\big|\frac{1 + \nu_0}{2}\big|+\frac{1}{2} \ \text{ and }\ 0<r<p.
  \end{align*}

\end{enumerate}
Let
\begin{align*}
 \zeta=2\lfloor p\rfloor+2-r\quad\text{and}\quad a_s = \begin{pmatrix} e^{s} & 0 \\[5pt]
0 & e^{-s} \end{pmatrix},\,s\in\RR.
\end{align*}
Then for any $\xi\in W^{p}(H_\rho)$, $\psi\in W^{\zeta}(H_\rho)$ and
  any $s\in\RR$ we have
  \begin{align*}
\big|\langle \rho(a_s)\psi,\, \xi\rangle \big|\leq C_{\nu_0,p,r}e^{-2|s|r} \| \xi \|_{p} \big\|\psi\big\|_{\zeta}.
\end{align*}

\end{lemma}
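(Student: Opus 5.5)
The plan is a dynamical argument built on the cohomological equation for $U$ (Theorem~\ref{th:5}) combined with interpolation, rather than $K$-types. By the direct integral decomposition of Section~\ref{sec:20} and the spectral gap hypothesis, it suffices to prove the estimate uniformly for each irreducible $\pi_\mu$ whose Casimir parameter is allowed by $\mu_0$. The constants will depend only on $\nu_0,p,r$, through Theorem~\ref{th:5} and Theorem~\ref{th:18}. By symmetry (conjugating by the Weyl element, which swaps $U$ and $V$ and sends $a_s$ to $a_{-s}$) we may assume $s\ge0$.

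The first key identity is $\rho(a_{-s})U\rho(a_s)=e^{-2s}U$, from \eqref{for:125}. Fix $\xi\in W^p$ and $\psi\in W^\zeta$. Since $p>\frac{1+\nu_0}{2}$ in case \eqref{for:214}, or $p>0>\frac{1+\nu_0}{2}$ in case \eqref{for:216}, part \eqref{for:68} of Theorem~\ref{th:5} applied with $t=-p$ lets us solve $U\omega=\xi$ in the distributional sense. A cleaner route is to dualize: solve $U\Omega=\psi$ instead. If $\psi$ were a coboundary with $\Omega\in W^{t}$, we would have
\[
\langle\rho(a_s)\psi,\xi\rangle=\langle\rho(a_s)U\Omega,\xi\rangle=e^{2s}\langle U\rho(a_s)\Omega,\xi\rangle,
\]
which is the wrong sign for a gain. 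So the decay must instead come from putting $U$ onto $\xi$ together with the conjugation $U\rho(a_s)=e^{-2s}\rho(a_s)U$. Writing $\psi=U\Omega$ with $\Omega\in W^{-p}$, we get $\langle\rho(a_s)U\Omega,\xi\rangle=e^{-2s}\langle U\rho(a_s)\Omega,\xi\rangle=-e^{-2s}\langle\rho(a_s)\Omega,U\xi\rangle$. This yields a full factor $e^{-2s}$, but the pairing of $\rho(a_s)\Omega\in W^{-p}$ with $U\xi$ costs growth of $\|\rho(a_s)\|_{W^{p}\to W^p}\lesssim e^{2ps}$.

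The resulting bound $e^{-2s(1-p)}$ is not yet the claimed one, so the interpolation step is essential. Each $\psi$ splits into a coboundary part and a part detected by the $U$-invariant distributions of Theorem~\ref{th:18}; their orders are $\frac{1\pm\nu_0}{2}$, respectively $\frac n2$. The invariant-distribution component behaves like the eigen-distribution of $a_s$, with rate $e^{-(1\mp\nu_0)s}$, respectively $e^{-ns}$ in the discrete series; this is exactly where the optimal rate $2r<1-\nu_0$ (resp.\ $r<p<\frac{n}{2}$) arises. The coboundary part, solved with $\Omega\in W^{t}$, $t<\zeta-1$, via part \eqref{for:217}, is handled by the computation above, giving decay $e^{-2s}$ times growth $e^{2s\max(p,\cdot)}$. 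I would first prove a two-point estimate, $|\langle\rho(a_s)\psi,\xi\rangle|\le C\|\xi\|_p\|\psi\|_{\zeta_0}$ with no decay, and $C e^{-2s\cdot(\text{rate})}\|\xi\|_p\|\psi\|_{\zeta_1}$ with the full rate but a larger Sobolev order. I would then interpolate in $\psi$ using the smoothing operators \eqref{for:197}--\eqref{for:198}: split $\psi=\mathfrak s_b\psi+(I-\mathfrak s_b)\psi$ and optimize $b=e^{cs}$. This trades the exponent and produces the specific order $\zeta=2\lfloor p\rfloor+2-r$.

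The main obstacle is to make the invariant-distribution contribution precise. We need that $\mathcal D\circ\rho(a_s)=e^{-(1\pm\nu_0)s}\mathcal D$ (resp.\ $e^{-ns}\mathcal D$ for discrete series), with $\mathcal D$ normalized in $W^{-p}$, and a projection $\psi\mapsto\psi-\sum\mathcal D(\psi)u_{\mathcal D}$ onto the kernel of all invariant distributions with control in $W^\zeta$, so that part \eqref{for:217} of Theorem~\ref{th:5} applies. I would carry this out in the Fourier model \eqref{for:47}. There $U=-\mathrm i\lambda$, the invariant distributions are the evaluations at $\lambda=0$ (value and derivatives), and $a_s$ acts by dilation $\lambda\mapsto e^{2s}\lambda$ with weight $|\lambda|^{-\Re\varpi}$. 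This makes the eigenvalues explicit and keeps the constants uniform in $\mu$ within the gap.
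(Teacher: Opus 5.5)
There is a genuine gap, and it begins with a sign error. From $\rho(a_{-s})U\rho(a_s)=e^{-2s}U$ you get $\rho(a_s)U=e^{2s}U\rho(a_s)$. So writing $\psi=U\Omega$ gives $\langle\rho(a_s)U\Omega,\xi\rangle=e^{+2s}\langle U\rho(a_s)\Omega,\xi\rangle$, which is the ``wrong sign'' you noted one line earlier. Your next identity with $e^{-2s}$ is false. (Pairing $\rho(a_s)\Omega\in W^{-p}$ with $U\xi$ would also require $\xi\in W^{p+1}$.)

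For $s\ge0$ the gain comes only from putting $U$ on the fixed vector, which was your first instinct. Write $\xi=U\eta$; then $\langle\rho(a_s)\psi,U\eta\rangle=-e^{-2s}\langle\rho(a_s)U\psi,\eta\rangle$.

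The idea you are missing is that in Theorem~\ref{th:5}\eqref{for:68} the order $t$ of the distributional solution is not tied to $p$. Once $\xi\in W^p$ with $p>\frac{1+\nu_0}{2}$, any $t<-\frac{1+\nu_0}{2}$ is allowed. The paper takes $t=-(1-r)$, which is admissible exactly because $r+\frac{1+\nu_0}{2}<1$. Combined with the interpolated bound $\|\rho(a_s)\|_{W^q\to W^q}\le C_qe^{2q|s|}$, this gives directly
\[
|\langle\rho(a_s)\psi,\xi\rangle|\le e^{-2s}\|\eta\|_{-(1-r)}\|\rho(a_s)U\psi\|_{1-r}\le Ce^{-2s}e^{2s(1-r)}\|\xi\|_{p}\|\psi\|_{2-r}=Ce^{-2sr}\|\xi\|_{p}\|\psi\|_{2-r}.
\]
No obstruction needs removing and no invariant-distribution decomposition is needed. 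Fixing $t=-p$ is what produced your suboptimal rate $1-p$.

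Your proposed repair does not close this gap. Interpolating with $\mathfrak s_b$ between a no-decay estimate and a decaying one can only lower the rate (compare Corollary~\ref{cor:9}), and it does not yield the specific order $\zeta$. The projection off the $U$-invariant distributions, with $W^\zeta$-control uniform over the direct integral, is left unproved and is not routine: the two invariant distributions coalesce at $\mu=1$.

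In the discrete-series case $\nu_0=-n+1$, the paper gets rates up to $\frac n2$ by iterating the equation. By Theorem~\ref{th:18} there are no $U$-invariant distributions of order $\le\frac n2$. Hence Theorem~\ref{th:5}\eqref{for:217} can be applied $b$ times, with $p\le b\le p+1$ and $b\le\frac n2$, to solve $Y^b\eta=\xi$ with $\eta\in W^{-(b-r)}$ (this uses $r<p$). The same pairing then gives $e^{-2bs}e^{2s(b-r)}=e^{-2sr}$ at Sobolev order $2b-r\le 2\lfloor p\rfloor+2-r$. A half-integer adjustment is made when $\frac{n-1}{2}<p<\frac n2$. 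This is where $\zeta=2\lfloor p\rfloor+2-r$ comes from, and your plan contains no mechanism that produces it.
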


\begin{proof}
Note the conjugation relations (see \eqref{for:95} of Section \ref{sec:1})
\[
\rho(a_s)\,U\,\rho(a_s)^{-1}=e^{2s}U,\quad \rho(a_s)\,X\,\rho(a_s)^{-1}=X,\quad \rho(a_s)\,V\,\rho(a_s)^{-1}=e^{-2s}V.
\]
 This shows that
\begin{align*}
 \norm{\rho(a_s)\phi}_q\leq e^{2q|s|}\norm{\phi}_{q},\qquad \forall\,\phi\in W^{q}(H_\rho),\quad \forall\,q\in\NN
\end{align*}
Since $\rho(a_s)$ is unitary on $H_\rho$,
\begin{align*}
 \norm{\rho(a_s)\phi}=\norm{\phi}.
\end{align*}
By interpolation between $W^0$ and $W^q$ we obtain
\begin{align}\label{for:71}
 \|\rho(a_s)\|_{W^{q}(H_\rho)\to W^{q}(H_\rho)}\le C_qe^{2q|s|},\qquad \forall\, q\geq0.
\end{align}
Choose $Y\in\{U,V\}$ so that
\begin{align}\label{for:69}
\rho(a_{s})^{-1}\,Y\,\rho(a_s)=e^{-2|s|}Y;
\end{align}
explicitly $Y=U$ if $s>0$ and $Y=V$ if $s<0$.

Case $I$: $\nu_0\in [0,1)$ (complementary/principal series/mock-discrete)  By assumption \eqref{for:214},
\[
p>\frac{1+\nu_0}{2},\qquad 0<r<1,\qquad
-(1-r)<-\frac{1+\nu_0}{2}.
\]
 Then by \eqref{for:68} of Theorem \ref{th:5}, the equation
\begin{align}
 Y\eta=\xi
\end{align}
has a solution $\eta\in W^{-(1-r)}(H_\rho)$,  which satisfies the Sobolev estimate
\begin{align}\label{for:70}
 \|\eta \|_{-(1-r)} \leq C_{\nu_0,p,r} \| \xi \|_{p}.
\end{align}
Here \(0<1-r<1\), so the solution lies in a negative Sobolev space of order strictly less than one, which is exactly what allows the subsequent conjugation argument to produce exponential decay.

For any $\psi\in W^{2-r}(H_\rho)$, we have
\begin{align}\label{for:218}
 \big|\langle \rho&(a)\psi,\, \xi\rangle \big|=\big|\langle \rho(a_s)\psi,\, Y\eta\rangle \big|=\big|\langle Y(\rho(a_s)\psi),\, \eta\rangle \big|\notag\\
 &\overset{\text{(1)}}{=}e^{-2|s|}\big|\langle \rho(a_s)(Y\psi),\, \eta\rangle \big|\overset{\text{(2)}}{\leq} e^{-2|s|} C_{\nu_0,p,r} \| \xi \|_{p} \big\|\rho(a_s)(Y\psi)\big\|_{1-r}\notag\\
 &\overset{\text{(3)}}{\leq} e^{-2|s|} e^{2|s|(1-r)}\,C_{\nu_0,p,r,1} \| \xi \|_{p} \big\|Y\psi\big\|_{1-r}\notag\\
 &\leq C_{\nu_0,p,r,1} e^{-2|s|r}  \| \xi \|_{p} \big\|\psi\big\|_{2-r}.
\end{align}
Here in $(1)$ we use \eqref{for:69}; in $(2)$ we use \eqref{for:70}; in $(3)$ we use \eqref{for:71}. This proves the lemma in Case $I$, with
$\zeta=2\lfloor p\rfloor+2-r=2-r$.

\smallskip

Case $II$: $\nu_0=-n+1$, $n\in\NN$, $n\geq2$ (see \eqref{for:213} of Remark \ref{re:9}) and $0<p<\frac{n}{2}$. In this case, $\rho$ only contains discrete series.
It follows from Theorem \ref{th:18} that
if $\mathcal{D}$ is a $U$-invariant distribution for $\rho$, then $\mathcal{D}\in W^{-s}(H_\rho)$, $s>\frac{n}{2}$. In particular, there is no
$Y$-invariant distribution of Sobolev order $\le \frac{n}{2}$. Hence for any
$0<s\le \frac{n}{2}$ there are no nontrivial $Y$-invariant distributions
of order $s$. By \eqref{for:217} of Theorem \ref{th:5}, for any $\xi\in W^s(H_\rho)$, if
\[
-\frac{n}{2}+1=\frac{1+\nu_0}{2}<s\le \frac{n}{2},
\]
then the equation
\begin{align*}
 Y\eta=\xi
\end{align*}
has a solution $\eta\in W^{t}(H_\rho)$, $t<s-1$ with the Sobolev estimate
\begin{align*}
 \|\eta \|_{t} \leq C_{\nu_0,s,t} \| \xi \|_s.
\end{align*}
If $\frac{n}{2}\in\ZZ$,  set
\[
b=
\begin{cases}
\lfloor p\rfloor,& \text{if } p\in\ZZ,\\[4pt]
\lfloor p\rfloor+1,& \text{if } p\notin\ZZ.
\end{cases}
\]
If $\frac{n}{2}\notin\ZZ$,  set
\begin{align*}
b=
\begin{cases}
\lfloor p\rfloor,& \text{if } p\in\ZZ,\\[4pt]
\lfloor p\rfloor+1,& \text{if } p\notin\ZZ\text{ and }p<\frac{n-1}{2}.
\end{cases}
\end{align*}
Then $p\leq b\leq p+1$ and $1\le b\le \frac{n}{2}$, so we can iterate the solvability for the
cohomological equation $Y\eta=\xi$ exactly $b$ times. Then the equation
\begin{align*}
 Y^b\eta=\xi
\end{align*}
has a solution $\eta\in W^{t}(H_\rho)$, where $t=-(b-r)<0$ (we recall that $r<p\le b$) with the Sobolev estimate
\begin{align}\label{for:240}
 \|\eta \|_{t} \leq C_{\nu_0,p,r} \| \xi \|_{p}.
\end{align}
Similar to \eqref{for:218}, for any $\psi\in W^{b+|t|}(H_\rho)$, we have
\begin{align}\label{for:174}
 \big|\langle \rho&(a_s)\psi,\, \xi\rangle \big|=\big|\langle \rho(a_s)\psi,\, Y^b\eta\rangle \big|=\big|\langle Y^b(\rho(a_s)\psi),\, \eta\rangle \big|\notag\\
 &=e^{-2b|s|}\big|\langle \rho(a_s)(Y^b\psi),\, \eta\rangle \big|\overset{\text{(1)}}{\leq} e^{-2b|s|} C_{\nu_0,p,r} \| \xi \|_{p} \big\|\rho(a_s)(Y^b\psi)\big\|_{|t|}\notag\\
 &\overset{\text{(2)}}{\leq} e^{-2b|s|} e^{2|s||t|}\,C_{\nu_0,p,r,1} \| \xi \|_{p} \big\|Y^b\psi\big\|_{|t|}\notag\\
 &\leq C_{\nu_0,p,r,1} e^{-2|s|(b-|t|)}  \| \xi \|_{p} \big\|\psi\big\|_{b+|t|}\notag\\
 &\overset{\text{(3)}}{\leq} C_{\nu_0,p,r,1} e^{-2|s|r}  \| \xi \|_{p} \big\|\psi\big\|_{2\lfloor p\rfloor+2-r}
\end{align}
Here in $(1)$ we use \eqref{for:240}; in $(2)$ we use \eqref{for:71}; in $(3)$ a direct computation shows that
\begin{align*}
 b+|t|\leq2\lfloor p\rfloor+2-r\quad\text{and}\quad b-|t|=r.
\end{align*}
Thus
\[
  \big|\langle \rho(a_s)\psi,\, \xi\rangle \big|
  \leq C_{\nu_0,p,r}\, e^{-2|s|r}\,\|\xi\|_{p}\,\|\psi\|_{\zeta}
\]
for all $\psi\in W^{\zeta}(H_\rho)$, with $\zeta=2\lfloor p\rfloor+2-r$.

Suppose $\frac{n}{2}\notin\ZZ$ and $\frac{n-1}{2}<p<\frac{n}{2}$.   Set $b=\frac{n-1}{2}$. Then the equation
\begin{align*}
 Y^b\eta_1=\xi
\end{align*}
has a solution $\eta_1\in W^{p-\frac{n-1}{2}-\varepsilon}(H_\rho)$ (where $\varepsilon>0$ is sufficiently small and will be specified below) with the Sobolev estimate
\begin{align}
 \|\eta_1 \|_{t} \leq C_{\nu_0,p,\varepsilon} \| \xi \|_{p}.
\end{align}
Moreover, the equation
\begin{align*}
 Y\eta=\eta_1
\end{align*}
has a solution $\eta\in W^{t}(H_\rho)$, where $t=-\big(1-(p-\frac{n-1}{2}-\varepsilon)+\varepsilon\big)$ with the Sobolev estimate
\begin{align}
 \|\eta \|_{t} \leq C_{\nu_0,p,\varepsilon} \| \eta_1 \|_{p-\frac{n-1}{2}-\varepsilon}\leq C_{\nu_0,p,\varepsilon,1} \| \xi \|_{p}.
\end{align}
Then $\eta$ satisfies the equation
\begin{align*}
 Y^\frac{n+1}{2}\eta=\xi.
\end{align*}
Again arguing as in \eqref{for:174}, for any $\psi\in W^{\frac{n+1}{2}+|t|}(H_\rho)$, we have
\begin{align*}
 \big|\langle \rho&(a)\psi,\, \xi\rangle \big|\leq e^{-2\frac{n+1}{2}|s|} e^{2|s||t|}\,C_{\nu_0,p,\varepsilon} \| \xi \|_{p} \big\|Y^\frac{n+1}{2}\psi\big\|_{|t|}\notag\\
 &\leq C_{\nu_0,p,\varepsilon} e^{-2|s|(\frac{n+1}{2}-|t|)}  \| \xi \|_{p} \big\|\psi\big\|_{\frac{n+1}{2}+|t|}\overset{\text{(1)}}{\leq} C_{\nu_0,p,r} e^{-2|s|r}  \| \xi \|_{p} \big\|\psi\big\|_{\zeta}.
\end{align*}
Here in $(1)$ we note that
\begin{align*}
 \frac{n+1}{2}-|t|&=p-2\varepsilon> r\qquad\text{and}\\
 \frac{n+1}{2}+|t|&=n+1-p+2\varepsilon=2\lfloor p\rfloor+2-(p-2\varepsilon)<2\lfloor p\rfloor+2-r=\zeta
\end{align*}
and we choose $\varepsilon=\min\{\frac{p-r}{2}, \frac{p-\frac{n-1}{2}}{2}\}$ such that $p-2\varepsilon>r$ (recall $p>r$).

Combining Case $I$ and Case $II$ completes the proof.
\end{proof}
\begin{proof}[Proof of Proposition~\ref{cor:4}]
Choose \(r=\gamma-\epsilon\). By the definition of the strong spectral gap and
the parametrization in Remark~\ref{re:9}, the hypotheses of Lemma~\ref{for:72}
are satisfied with \(p=\max\{1,\gamma\}\) and any \(\zeta>\gamma+2\). The
claimed estimate follows.
\end{proof}

\begin{remark}\label{re:10}
When $\nu_0\in [0,1)$, the optimal decay rate exponent (for both geodesic flow and horocycle flow) in our normalization is $r$, where
\[
r \;=\; 1 - \frac{1+\nu_0}{2} - \epsilon \;=\; \frac{1-\nu_0}{2} - \epsilon.
\]
This agrees with the classical optimal decay exponent for the complementary/principal/mock-discrete series with parameter $\nu_0$ (see \cite{howe-moore}).

When $\nu_0 = -n+1<0$, the optimal decay exponent is $r$, where
\[
r \;=\; 1 - \frac{1+\nu_0}{2} - \epsilon \;=\; \frac{n}{2} - \epsilon,
\]
which coincides with the optimal classical decay rate for the discrete series of lowest $K$ (see $\Theta$ in \eqref{for:95})-weight $n$ (see \cite{Warner}), but not in the form of quantitative upperbounds. In addition, our estimates provide an explicit dependence of the decay exponent $r$ and the Sobolev regularity parameters $\gamma$ and $p$ of the observables.
\end{remark}

\begin{remark}\label{re:14} We can use the decay rate obtained for $SL(2,\RR)$ to study decay for $SL(2,\CC)$ by restricting to an $SL(2,\RR)$-subgroup, even though in some cases this does not yield the optimal decay rate for $SL(2,\CC)$ itself.

Let $\rho$ be an irreducible unitary principal series representation of
$SL(2,\CC)$. Its restriction to $SL(2,\RR)$ decomposes into a direct integral
of irreducible unitary representations of $SL(2,\RR)$, including principal,
mock-discrete and discrete series components (see, for example, \cite{BV} for
complementary series and Steinberg components). In this case, in the notation
of Lemma~\ref{for:72}, we have $\nu_0=0$. If we apply Lemma~\ref{for:72} only
to this restricted $SL(2,\RR)$ representation, using the $SL(2,\RR)$ spectral
parameter, we obtain a decay exponent $r = \tfrac{1}{2} - \epsilon.$ This coincides with the tempered exponent for $SL(2,\RR)$, but is strictly
smaller than the optimal exponent $r = 1 - \epsilon$ available for the
original $SL(2,\CC)$ representation.

The rank-one argument developed here provides the basic analytic input for the general semisimple case. Indeed, if \(\pi\) is a unitary representation of \(G\) with strong spectral gap and without non-trivial \(G\)-invariant vectors, then the restriction of \(\pi\) to each relevant embedded subgroup \(SL(2,k)\) is isolated from the trivial representation. This allows the rank-one estimates proved here to be applied inside \(G\), by iterating along suitable commuting rank-one subgroups, and hence yields quantitative upper bounds for matrix coefficients, although in general these bounds need not be optimal.

To the best of our knowledge, this is the first approach that derives such decay estimates entirely outside the traditional \(K\)-finite framework: the bounds are obtained by analytic and dynamical methods, via cohomological equations and Sobolev interpolation, rather than through spherical analysis.

\end{remark}

\begin{remark}[Classical versus fractional cohomological methods]
In Lemma~\ref{for:72} we use the classical cohomological equation to obtain
exponential decay of matrix coefficients. This argument inherently requires
\emph{full} Sobolev control of the observables. Indeed, the solution
\(\eta\) of \(Y\eta=\xi\) is only a distributional solution, lying in a
negative Sobolev space \(W^t(H_\rho)\) with \(t<0\), and this is a
\emph{global} Sobolev space rather than a directional one. Consequently,
to estimate
\[
\langle \rho(a)\psi,\xi\rangle
=
\langle Y(\rho(a)\psi),\eta\rangle,
\]
one must bound \(\|\rho(a)Y\psi\|_{|t|}\), which involves full Sobolev
regularity of \(\psi\), not just derivatives along \(Y\).

By contrast, if one solves the fractional equation
\[
|Y|^r\eta=\xi,\qquad 0<r<1,
\]
with \(\eta\in H_\rho\), then the relevant quantity becomes
\(\||Y|^r\rho(a)\psi\|\). Since the line \(\mathbb{R}Y\) is
\(\Ad_{a^{-1}}\)-invariant, this requires only directional regularity
along \(Y\). This is the key reason that the fractional method developed
below yields decay estimates under \emph{partial} Sobolev norms, and
ultimately allows partial regularity for both observables in the
semisimple setting.
\end{remark}

\subsection{Single-factor fractional solvability}
In this part, we study the fractional equation for a Lie group $P$ with Lie algebra $\mathfrak{sl}(2,k)$. Let $(\rho,H_\rho)$ be a unitary representation of $P$ with \textbf{strong} spectral gap $\gamma$ (see Section \ref{for:222}).  Assume:
\begin{enumerate}
  \item  if $k=\CC$, then $p=\zeta>1$,

   \item if $k=\RR$, then $p=\max\{1, \gamma\}$ and $\zeta>\gamma+2$.

\end{enumerate}
Set
\begin{align}\label{for:252}
 \Sigma\;=\;
  \begin{cases}
    I - X^2 - V^2, &
      \text{if } k = \RR,\\[4pt]
    I - X^2 - (\mathrm{i}X)^2 - V^2 - (\mathrm{i}V)^2, &
      \text{if } k= \CC
  \end{cases}
\end{align}
(see \eqref{for:95} of Section \ref{sec:1}).
\begin{proposition} \label{po:4}  For any $\xi\in H_\rho$ with $\Sigma^{\frac{\zeta}{2}}\xi\in H_\rho$ we have:
\begin{enumerate}
  \item If $k=\RR$, for any $0\leq r<\gamma$ the fractional equation $|U|^r\omega=\xi$ has a solution $\omega\in H_\rho$ with the estimate
  \begin{align*}
 \norm{\omega}\leq C_{\gamma,r}\big\|\Sigma^{\frac{\zeta}{2}}\xi\big\|.
\end{align*}
  \item If $k=\CC$, for any $0\leq r<\gamma$ and any $a,b\geq0$ with $a+b=r$, then the fractional equation
\begin{align*}
  |U|^{r}\omega_1+|U|^{a}|\emph{i} U|^{b}\omega_2+|\emph{i}U|^{r}\omega_3=\xi
  \end{align*}
  has solutions $\omega_1,\,\omega_2,\,\omega_3\in H_\rho$ with the estimate
  \begin{align*}
\max_{1\leq i\leq 3} \norm{\omega_i}\leq C_{\gamma,r}\big\|\Sigma^{\frac{\zeta}{2}}\xi\big\|.
\end{align*}
 Moreover, $|U|^{r}\omega_1,\,|U|^{a}|\emph{i} U|^{b}\omega_2,\,|\emph{i}U|^{r}\omega_3\in H_\rho$.

\end{enumerate}

\end{proposition}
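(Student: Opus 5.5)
The plan is to prove the real case by a Tauberian argument that turns horocycle decay into an $L^2$ bound for $|U|^{-r}\xi$, and then refine it to partial Sobolev norms by a spectral cutoff; the complex case follows the same scheme on $\RR^2$.

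\emph{Candidate solution.} Consider first $k=\RR$. Let $\sigma_\xi$ be the spectral measure of $\xi$ for the one-parameter group $t\mapsto\rho(\exp tU)$. The natural candidate is $\omega=|U|^{-r}\xi$, defined by functional calculus. We need
\[
\|\omega\|^2=\int_{\RR}|\chi|^{-2r}\,d\sigma_\xi(\chi)<\infty,
\]
together with the bound by $\|\Sigma^{\zeta/2}\xi\|^2$. For $0<2r<1$ we will use the Riesz kernel identity $|\chi|^{-2r}=c_r\int_{\RR}|t|^{2r-1}e^{\mathrm{i}t\chi}\,dt$, understood via a Gaussian or Abel regularization and monotone convergence. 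This gives
\[
\|\omega\|^2=c_r\int_{\RR}|t|^{2r-1}\langle\rho(\exp tU)\xi,\xi\rangle\,dt .
\]
We split this integral at $|t|=1$. The small-time part is bounded by $\|\xi\|^2$ because $2r-1>-1$. For the large-time part we need polynomial horocycle decay $|\langle\rho(\exp tU)\xi,\xi\rangle|\lesssim|t|^{-2(\gamma-\epsilon)}$, and then $r<\gamma$ makes $\int_{|t|\ge1}|t|^{2r-1-2\gamma+2\epsilon}\,dt$ converge. For $r\ge\tfrac12$, which can occur only for discrete series with $\gamma>\tfrac12$, we factor $|U|^{-r}=|U|^{-r'}|U|^{-(r-r')}$ with $r'<\tfrac12$. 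The integer-order part is treated via $|U|^{m}$ and Lemma~\ref{le:17}\eqref{for:164}, or alternatively we use the higher-order Riesz kernel with finite-difference corrections. Keeping this reduction uniform is a technical point to be checked.

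\emph{Horocycle decay from geodesic decay.} For $|t|\ge1$ we use the Bruhat-type factorization
\[
\begin{pmatrix}1&t\\0&1\end{pmatrix}=\begin{pmatrix}1&0\\t^{-1}&1\end{pmatrix}\begin{pmatrix}t&0\\0&t^{-1}\end{pmatrix}\begin{pmatrix}1&0\\t^{-1}&1\end{pmatrix}.
\]
Hence $\langle\rho(\exp tU)\xi,\xi\rangle=\langle\rho(a_s)\rho(v_{1/t})\xi,\rho(v_{-1/t})\xi\rangle$ with $e^{s}=|t|$. Proposition~\ref{cor:4} bounds this by $e^{-2|s|(\gamma-\epsilon)}$ times Sobolev norms of $\rho(v_{\pm1/t})\xi$, and these are uniformly controlled since $|1/t|\le1$. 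The point to verify is that the norms entering Proposition~\ref{cor:4} can be arranged to involve only $X$ and $V$ derivatives. That would yield $\|\Sigma^{\zeta/2}\xi\|$, since the $V$- and $X$-flows preserve $\Sigma$-norms up to constants.

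\emph{Partial norms via a spectral cutoff; this is the main obstacle.} In general the geodesic decay requires the full norm $\|\xi\|_\zeta$, which contains $U$-derivatives. To remove them we split $\xi=\xi_1+\xi_2$ with $\xi_1=\mathbf 1_{\{|U|\ge1\}}\xi$ and $\xi_2=\mathbf 1_{\{|U|<1\}}\xi$, using the spectral projections of $\mathrm{i}U$. For $\xi_1$ the solution is trivially controlled: $\||U|^{-r}\xi_1\|\le\|\xi\|$. For $\xi_2$ the $U$-derivatives are bounded by $\xi_2$ itself, so $\|\xi_2\|_\zeta\lesssim\|\Sigma^{\zeta/2}\xi_2\|+\|\xi_2\|$, by elliptic regularity (Theorem~\ref{th:15}) applied to $\Delta=\Sigma-I-U^2$. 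The difficulty is that the projection does not commute with $X$ and $V$. I would first try replacing the sharp cutoff by a smooth one $\phi(\mathrm{i}U)$, and computing commutators through $\rho(a_s)\phi(\mathrm{i}U)\rho(a_s)^{-1}=\phi(e^{2s}\mathrm{i}U)$ for $X$ and the $\mathfrak{sl}_2$ relations for $V$. The aim is to bound $\|\Sigma^{\zeta/2}\phi(\mathrm{i}U)\xi\|\lesssim\|\Sigma^{\zeta/2}\xi\|$. If that commutator estimate fails, the fallback is to apply the decay estimate directly to the pair $(\xi_2,\xi)$ using the sesquilinear form of the Riesz identity.

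\emph{Complex case.} $U$ and $\mathrm{i}U$ generate a commuting $\RR^2$, so we work with the joint spectral measure on $\widehat{\RR^2}$. We partition $\RR^2$ into three regions: $|\chi_1|\ge2|\chi_2|$, $|\chi_2|\ge2|\chi_1|$, and the middle cone. On each region $|\chi|^{r}$ is comparable to $|\chi_1|^r$, $|\chi_2|^r$, or $|\chi_1|^a|\chi_2|^b$ respectively, so setting $\omega_j=(\text{symbol})^{-1}\mathbf 1_{\text{region}}\,\xi$ solves the stated equation. Moreover each term $|U|^r\omega_1$, $|U|^a|\mathrm{i}U|^b\omega_2$, $|\mathrm{i}U|^r\omega_3$ is simply a spectral piece of $\xi$, hence lies in $H_\rho$. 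It remains to bound $\int|\chi|^{-2r}\,d\sigma_\xi$. For this we use the two-dimensional Riesz kernel $|\chi|^{-2r}=c_r\int_{\RR^2}|t|^{2r-2}e^{\mathrm{i}t\cdot\chi}\,dt$, valid for $0<r<1$, which suffices since $\gamma\le1$ by Remark~\ref{re:12}. The required decay $|t|^{-2(\gamma-\epsilon)}$ of the complex horocycle coefficients follows from the same Bruhat factorization combined with Proposition~\ref{po:1}. Convergence at infinity again needs $r<\gamma$, and the partial-norm cutoff argument is identical.
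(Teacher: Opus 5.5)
You have the paper's architecture: a Riesz-kernel/Tauberian step (Lemma~\ref{le:23}) and a smooth low/high split of the $U$-spectrum (Lemma~\ref{le:16}). For $0<r<\tfrac12$, working directly with the spectral measure $\sigma_\xi$ of the $U$-flow, with Gaussian regularization and Fatou, is if anything cleaner than the paper's Fourier model. There are, however, genuine gaps.

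\textbf{The partial-norm estimate is not proved.} This is the step that makes Proposition~\ref{po:4} more than Lemma~\ref{le:23}, and you leave it as a plan. Proposition~\ref{cor:4}, and hence Corollary~\ref{cor:5}, needs full Sobolev norms in both slots. So everything hinges on showing that $\phi(\mathrm{i}U)\xi$ has full Sobolev norms controlled by $X,V$-derivatives of $\xi$, uniformly in $\rho$. Your fallback of pairing $(\xi_2,\xi)$ does not avoid this, because the other slot still carries $\|\xi\|_p$ with $p\ge1$. The paper imports exactly this estimate as Lemma~\ref{le:16}\eqref{for:76}, $\|\mathcal P_{U,f}\psi\|_{2n}\le C_{f,n}\|\Sigma^n\psi\|$, via Lemma~\ref{le:14} (Lemma 8.6 of \cite{W5}). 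Your commutator idea does prove it if you write $\phi(\mathrm{i}U)=\int\check\phi(s)\rho(\exp sU)\,ds$ and use:
\begin{itemize}
\item $\mathrm{Ad}_{\exp(-sU)}X=X+2sU$;
\item $\mathrm{Ad}_{\exp(-sU)}V=V-sX-s^2U$;
\item $\rho(\exp sU)U\zeta=\frac{d}{ds}\rho(\exp sU)\zeta$.
\end{itemize}
Integrating by parts onto $\check\phi$ then turns each order-$k$ derivative of $\phi(\mathrm{i}U)\xi$ into Schwartz-kernel averages $\int g(s)\rho(\exp sU)\,ds$ applied to $X,V$-words of length $\le k$ in $\xi$. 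This still has to be carried out.

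\textbf{The factorization is false.} A product of lower-triangular matrices is lower-triangular, so your identity cannot give $u_t$. The correct version is $u_t=v_{1/t}\,\mathrm{diag}(t,t^{-1})\,w\,v_{1/t}$ for $t>0$, with $v_x$ the lower unipotent and $w=\begin{pmatrix}0&1\\-1&0\end{pmatrix}$. Since $\mathrm{Ad}_wU=-V$, no partial-norm gain is available at that stage. The paper simply uses $KAK$ and \eqref{for:73} with full norms.

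\textbf{The real case with $r\ge\tfrac12$ is not handled.} The statement covers it (pure discrete series, $\gamma\ge1$). Factoring $|U|^{-r}=|U|^{-r'}|U|^{-(r-r')}$ does not reduce the difficulty, since inverting $|U|^{r-r'}$ is the same kind of problem. Lemma~\ref{le:17}\eqref{for:164} is a norm identity for applying $|U|^m$, not a way to invert it. The paper does not factor: it applies the Riesz identity directly at some $r'\in(r,\gamma)$ with $2r'\notin\mathbb Z$, then descends to $r$ by splitting at $|\lambda|=1$.

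Your caution here is warranted. For $2r'>1$, $|\lambda|^{-2r'}$ is not locally integrable, and the distribution with transform $c\,|t|^{2r'-1}$ is its finite-part regularization. So that identity controls $\int|\lambda|^{-2r'}\,d\sigma_\xi$ only once the spectral density is known to vanish at $0$. No decay bound on $\langle\rho(\exp tU)\xi,\xi\rangle$ can force this: a Schwartz density with $\tau(0)>0$ has rapidly decaying transform, yet $\int\tau|\lambda|^{-2r'}d\lambda=\infty$ for every $2r'\ge1$. This regime needs input beyond horocycle decay.

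\textbf{Your complex case is a different and more robust route.} The paper solves $|U|^r\omega=\xi$ and $|\mathrm{i}U|^r\omega=\xi$ separately by the one-variable argument, then uses $|\chi_1|^{-2a}|\chi_2|^{-2b}\le|\chi_1|^{-2r}+|\chi_2|^{-2r}$. That needs one-variable inversion up to order $\gamma\le1$, so it meets the same finite-part issue. Your cone decomposition only needs $\int_{\RR^2}|\chi|^{-2r}\,d\sigma_\xi<\infty$. For this, $|t|^{2r-2}$ is locally integrable on $\RR^2$ with positive transform for every $0<r<1$. Combined with the two-variable decay in Corollary~\ref{cor:5} and a cutoff in both $U$ and $\mathrm{i}U$ (Lemma~\ref{le:14} with $u_1=U$, $u_2=\mathrm{i}U$), that part of your plan goes through once the cutoff estimate is supplied.
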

Proposition~\ref{po:4} is proved in two steps. We first relate decay of
matrix coefficients to solvability of the fractional equation. More
precisely, the optimal decay exponent determines exactly the threshold
for solvability of the fractional equation. The idea is that decay gives
precise control of the spectral behaviour near \(\lambda=0\), which
allows one to divide by \( |\lambda|^r \) and solve the equation. This
is quite different from the classical invariant-distribution/Green-operator
method of Flaminio--Forni for the cohomological equation. We then refine the resulting solvability statement to obtain estimates in
partial Sobolev norms, which is the form needed later in the proof of
Theorem~\ref{th:6}.

\subsubsection{Solvability from horocycle decay} As a first step in the proof of Proposition~\ref{po:4}, we pass from
exponential decay along the geodesic flow to polynomial decay of matrix
coefficients along the horocycle flow. In geodesic time $s$ we obtain
\emph{exponential} decay $e^{-2|s|r}$, while in horocycle time $t$ (with
$s=\operatorname{arcsinh}(|t|/2)$) this translates into \emph{polynomial}
decay of order $(1+|t|)^{-2r}$.

\begin{corollary}\label{cor:5} Let $(\rho,H_\rho)$ be as in Corollary~\ref{cor:4}.
For any $\xi\in W^{p}(H_\rho)$, $\psi\in W^{\zeta}(H_\rho)$, we have:
\begin{enumerate}
  \item If $k=\RR$, then for any $t\in\RR$
  \begin{align*}
\Big|\big\langle \rho(\exp(tU))\psi,\, \xi\big\rangle \Big|\leq C_{\gamma,\epsilon}(|t|+1)^{-2(\gamma-\epsilon)}  \| \xi \|_{p} \big\|\psi\big\|_{\zeta}.
\end{align*}
  \item If $k=\CC$, then for any $t=t_1+t_2 \emph{i}\in\CC$ (so
        $|t|=\sqrt{t_1^2+t_2^2}$),
  \begin{align*}
\Big|\big\langle \rho(\exp(t_1U+t_2 \emph{i}U))\psi,\, \xi\big\rangle \Big|\leq C_{\gamma,\epsilon}(|t|+1)^{-2(\gamma-\epsilon)}  \| \xi \|_{p} \big\|\psi\big\|_{\zeta}.
\end{align*}
\end{enumerate}

\end{corollary}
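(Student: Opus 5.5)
We reduce Corollary~\ref{cor:5} to the geodesic estimates of Proposition~\ref{cor:4} (real case) and Proposition~\ref{po:1} (complex case), using a $KAK$-type factorization of the unipotent element.

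\textbf{Real case.} Write $u_t=\exp(tU)=\begin{pmatrix}1&t\\0&1\end{pmatrix}$. By the Cartan decomposition $u_t=k_1(t)\,a_{s(t)}\,k_2(t)$ with $k_i(t)\in K=SO(2)$ (or its image in $P$). Computing $\|u_t\|_{HS}^2=2+t^2=2\cosh(2s)$ gives $\sinh|s(t)|=|t|/2$, i.e.\ $s(t)=\operatorname{arcsinh}(|t|/2)$, up to the sign convention. Hence
\[
\langle \rho(u_t)\psi,\xi\rangle=\langle \rho(a_{s(t)})\rho(k_2)\psi,\ \rho(k_1)^{-1}\xi\rangle .
\]
Apply Proposition~\ref{cor:4} to the vectors $\rho(k_2)\psi$ and $\rho(k_1^{-1})\xi$. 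This yields the bound
\[
C_{\gamma,\epsilon}\,e^{-2|s(t)|(\gamma-\epsilon)}\,\|\rho(k_1^{-1})\xi\|_p\,\|\rho(k_2)\psi\|_\zeta .
\]

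The main point to check is that the full Sobolev norms are invariant, or at least uniformly bounded, under $\rho(k)$ for $k$ in the compact group $K$. We use $\Delta_P$ defined with an $\Ad(K)$-invariant basis (e.g.\ orthonormal for an $\Ad K$-invariant inner product on $\mathfrak{sl}(2,k)$). Then $\rho(k)$ commutes with $\Delta_P$, so $\|\rho(k)\phi\|_q=\|\phi\|_q$. For a general fixed Laplacian we get equivalence of norms with constants $\sup_{k\in K}\|\Ad_k\|^q$, which is bounded since $K$ is compact; for non-integer $q$ we interpolate.

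Finally, $e^{|s(t)|}\ge \sinh|s(t)|=|t|/2$, and also $e^{|s(t)|}\ge 1$. Hence $e^{-2|s(t)|(\gamma-\epsilon)}\le C(1+|t|)^{-2(\gamma-\epsilon)}$, which is the claimed estimate.

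\textbf{Complex case.} The element $\exp(t_1U+t_2\mathrm{i}U)=\begin{pmatrix}1&t\\0&1\end{pmatrix}$ with $t=t_1+\mathrm{i}t_2\in\CC$. It can be conjugated by the diagonal unitary $\mathrm{diag}(e^{\mathrm{i}\phi/2},e^{-\mathrm{i}\phi/2})\in SU(2)$ to the real unipotent with parameter $|t|$. Alternatively, we apply the $KAK$ decomposition directly in $SL(2,\CC)$ with $K=SU(2)$: the Hilbert--Schmidt norm computation is identical, giving $\sinh|s|=|t|/2$.

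We then invoke Proposition~\ref{po:1} with $p=\zeta>1$ and decay exponent $\gamma-\epsilon$. By Remark~\ref{re:13} the constant is uniform, and the $SU(2)$-invariance of Sobolev norms is handled as in the real case. The same conversion from $e^{-2|s|(\gamma-\epsilon)}$ to $(1+|t|)^{-2(\gamma-\epsilon)}$ concludes the proof.

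The only nontrivial step is the uniform control of $\|\rho(k)\cdot\|_q$ over $K$; everything else is the elementary identity $\sinh|s|=|t|/2$.
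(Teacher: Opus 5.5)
Your proposal is correct and follows the paper's proof: the $KAK$ factorization $u_t=k_1a_sk_2$ with $s=\operatorname{arcsinh}(|t|/2)$, uniform control of $\|\rho(k)\cdot\|_q$ via an $\Ad_K$-invariant norm on the Lie algebra, the geodesic decay estimate, and the conversion $e^{-2|s|(\gamma-\epsilon)}\asymp(1+|t|)^{-2(\gamma-\epsilon)}$. Two small differences are harmless refinements of the same argument. First, you build $\Delta_P$ from an $\Ad(K)$-orthonormal basis, which gives exact invariance, whereas the paper proves a bound at integer orders and then interpolates. Second, you cite Proposition~\ref{po:1} explicitly for the complex case, whereas the paper cites only the real-case proposition.
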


\begin{proof} If $k=\RR$ (resp.\ $k=\CC$), for any $t\in\RR$ (resp.\ $t\in\CC$) $u_t=\exp\!\begin{pmatrix} 0 & t \\[4pt] 0 & 0 \end{pmatrix}$
has a $KAK$–decomposition $u_t = k_1 a k_2$ with
\[
  k_i \in \Bigl\{\exp\!\begin{pmatrix} 0 & r \\[4pt] -r & 0 \end{pmatrix}
          : r\in\RR\Bigr\}\quad\text{if }k=\RR,
\]
and
\[
  k_i \in \Bigl\{\exp\!\begin{pmatrix} b\mathrm{i} & c\\[4pt] -\overline{c} &
           -b\mathrm{i} \end{pmatrix} : b\in\RR,\,c\in\CC\Bigr\}\quad\text{if }k=\CC,
\]
while
\[
  a = \exp\!\begin{pmatrix} s & 0 \\[5pt] 0 & -s \end{pmatrix},\qquad
  s=\operatorname{arcsinh}(|t|/2).
\]
Similar to \eqref{for:71} in the proof of Lemma \ref{for:72}, by noting that there is an $\text{Ad}_K$-invariant norm on $\mathfrak{sl}(2,\RR)$, we have
\begin{align*}
 \norm{\rho(k)\phi}_q\leq C_q\norm{\phi}_q,\qquad \forall\,\phi\in W^{q}(H_\rho),\quad \forall\,k\in K,\,\,\forall\,q\in\NN\cup\{0\}.
\end{align*}
Combined with the unitarity of $\rho(K)$ on $H_\rho$, we have
\begin{align}\label{for:73}
 \|\rho(k)\|_{W^{q}(H_\rho)\to W^{q}(H_\rho)}\le C_q,\qquad \forall\,k\in K,\,\,\forall\,q\geq 0.
\end{align}
we write
\[
  \exp(tU) :=
  \begin{cases}
    \exp(tU), & k=\RR,\\[4pt]
    \exp(t_1U+t_2\mathrm{i}U), & k=\CC.
  \end{cases}
\] It follows from Corollary \ref{cor:4}  that
\begin{align*}
 \Big|\big\langle \rho&(\exp(tU))\psi,\, \xi\big\rangle \Big|=\Big|\big\langle \rho(a)\rho(k_2)\psi,\, \rho(k_1^{-1})\xi\big\rangle \Big|\\
 &\leq C_{\gamma,\epsilon}e^{-2\operatorname{arcsinh}(|t|/2)(\gamma-\epsilon)}  \| \rho(k_1^{-1})\xi \|_{p} \big\|\rho(k_2)\psi\big\|_{\zeta}\\
 &\overset{\text{(1)}}{\leq} C_{\gamma,\epsilon,1}(|t|+1)^{-2(\gamma-\epsilon)}  \| \xi \|_{p} \big\|\psi\big\|_{\zeta}.
\end{align*}
Here in $(1)$ we use \eqref{for:73} and $e^{-2(\gamma-\epsilon)|s|}\asymp (1+|t|)^{-2(\gamma-\epsilon)}$.  Then we complete the proof.

\end{proof}

The next result reveals the connection between solvability of the fractional
equation and the optimal decay exponent for matrix coefficients in $(\rho,H_\rho)$.
\begin{lemma}\label{le:13}
For any $r>\gamma$, there is $\xi\in W^{\infty}(H_\rho)$ such that:
\begin{enumerate}
  \item If $k=\RR$, the fractional equation $ |U|^r\omega=\xi$ has no solution $\omega\in H_\rho$.
  \item If $k=\CC$, for any $a,b\geq0$ with $a+b=r$, \emph{neither} of the
  fractional equations
  \[
    |U|^{a}|\mathrm{i} U|^{b}\omega=\xi
    \quad\text{nor}\quad
    |U|^{r}\omega_1+|\mathrm{i} U|^{r}\omega_2=\xi
  \]
  admits a solution $\omega\in H_\rho$ or a pair of solutions
  $\omega_1,\,\omega_2\in H_\rho$, respectively.
\end{enumerate}
\end{lemma}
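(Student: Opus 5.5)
The plan is to argue on the spectral side of the abelian subgroup $\{\exp(tU)\}$ (resp.\ $\{\exp(t_1U+t_2\mathrm{i}U)\}$). The idea is to show that solvability of $|U|^r\omega=\xi$ forces the $U$-spectral measure of $\xi$ to vanish to order $|\chi|^{2r}$ at the edge $\chi=0$, and then to exhibit a smooth $\xi$ whose spectral mass near $0$ is too large. Precisely, if $\omega\in H_\rho$ solves $|U|^r\omega=\xi$, then in the direct-integral model $\xi_\chi=|\chi|^r\omega_\chi$ for $\sigma$-a.e.\ $\chi$. Hence
\[
\int |\chi|^{-2r}\|\xi_\chi\|^2\,d\sigma(\chi)=\|\omega\|^2<\infty .
\]
Here no $U$-invariant vectors exist, by Howe--Moore and the spectral gap, so $\sigma(\{0\})=0$. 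In the complex case, for the product equation we take $\xi_\chi=|\chi_1|^a|\chi_2|^b\omega_\chi$, and $|\chi_1|^a|\chi_2|^b\le |\chi|^r$. For the sum equation we use
\[
\|\xi_\chi\|\le |\chi_1|^r\|\omega_{1,\chi}\|+|\chi_2|^r\|\omega_{2,\chi}\|\le |\chi|^r\bigl(\|\omega_{1,\chi}\|+\|\omega_{2,\chi}\|\bigr).
\]
In every case this gives the necessary condition $\int|\chi|^{-2r}\|\xi_\chi\|^2d\sigma<\infty$. It therefore suffices to find a smooth $\xi$ violating this condition.

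To construct $\xi$ in the real case, I would use the direct integral over the Casimir spectrum (Section~\ref{sec:1}) together with the Fourier model of Section~\ref{sec:3}. There $U=-\mathrm{i}\lambda$ and $\|f\|^2=C\int|f(\lambda)|^2|\lambda|^{-\Re\varpi}d\lambda$, so the condition reads
\[
\int |f(\lambda)|^2|\lambda|^{-2r-\Re\varpi}\,d\lambda<\infty .
\]
For a smooth vector with $f$ nonvanishing near $0$ (e.g.\ a $K$-finite vector in the principal, complementary or mock-discrete series, where $f(0)\neq0$), this is finite iff $2r+\Re\varpi<1$, i.e.\ $r<\frac{1-\Re\varpi}{2}$. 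For a discrete series component with $\varpi=n-1$ (cf.\ \eqref{for:213}, with weight $|\lambda|^{n-1}$ on $\RR^+$), the lowest-weight vector behaves like $e^{-c\lambda}$ near $0$. There the condition becomes $2r<n$, i.e.\ $r<\frac n2$. By Remark~\ref{re:10}, in each case the threshold $\frac{1-\Re\varpi}{2}$ (resp.\ $\frac n2$) is exactly the optimal decay exponent of that component.

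Since $\gamma$ is the supremum of decay rates (Section~\ref{for:222}), for $r>\gamma$ the spectral measure $ds(\mu)$ gives positive mass to the set $E$ of parameters whose optimal exponent lies below some $r'\in(\gamma,r)$. Otherwise every component would decay at rate $\ge r'$, and by the uniformity of the constants in Lemma~\ref{for:72} and Proposition~\ref{po:1} (Remark~\ref{re:13}) we would get decay rate $r'>\gamma$, a contradiction. Take $\xi=\int_E f_\mu\,ds(\mu)$, where $f_\mu$ is a measurably chosen, normalized lowest $K$-type vector, and smooth $\xi$ by restricting to a bounded set of parameters. Then the integral above diverges on each fibre of $E$, so $\xi\in W^\infty$ admits no solution.

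The complex case is handled in the same way using the principal/complementary series model of $SL(2,\CC)$ restricted to $N\simeq\CC$, with Fourier variable $\chi=(\chi_1,\chi_2)\in\CC$ and weight $|\chi|^{-2\Re\varpi}$. The radial integral $\int_0 \rho^{1-2r-2\Re\varpi}d\rho$ diverges exactly when $r$ exceeds the decay exponent $1-\Re\varpi$.

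I expect the main obstacle to be the measure-theoretic step. It requires making the passage from ``$r>\gamma$'' to ``a positive-measure set of components has threshold $<r$'' rigorous, and choosing smooth vectors uniformly across that set so that the $L^2$ behaviour near $\lambda=0$ is controlled from below. This includes the multiplicity spaces $\mathcal L_\mu$ and verifying $f_\mu(0)\neq0$ (resp.\ the $e^{-c\lambda}$ behaviour) with uniform bounds. I would first try to reduce to a single irreducible summand when $E$ contains an atom, and otherwise to a compact subset of $E$ on which the explicit $K$-finite Fourier profiles depend continuously on $\mu$.
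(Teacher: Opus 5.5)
Your argument is correct in outline, but it takes a genuinely different route from the paper's.

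\textbf{The paper's route.} The paper never constructs $\xi$. Since $r>\gamma$ is not a decay rate, the definition of $\gamma$ as a supremum directly gives some $\xi\in W^\infty(H_\rho)$ for which no bound $|\langle\rho(a_s)\xi,\xi\rangle|\le Ce^{-2r|s|}$ holds. Suppose $|U|^r\omega=\xi$ were solvable. Self-adjointness of $|U|^r$ and the scaling $|U|^r\rho(a_s)=e^{-2rs}\rho(a_s)|U|^r$ (Lemma~\ref{le:17}) give $|\langle\rho(a_s)\xi,\xi\rangle|\le e^{-2rs}\,\||U|^r\xi\|\,\|\omega\|$ for $s\ge0$. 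The case $s<0$ follows from $\langle\rho(a_s)\xi,\xi\rangle=\overline{\langle\rho(a_{-s})\xi,\xi\rangle}$. Since $a_s$ scales $U$ and $\mathrm{i}U$ by the same factor, the product and sum equations over $\CC$ are handled identically. This uses no model, no direct integral and no upper decay estimates.

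\textbf{Your route.} Yours is essentially the spectral dual of this. You convert solvability into integrability of $|\chi|^{-2r}$ against the $U$-spectral measure of $\xi$. That step is right, including the complex reductions via $|\chi_1|^a|\chi_2|^b\le|\chi|^r$ and the triangle inequality for the sum equation. You then build a bad $\xi$ by hand.

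What this buys is an explicit picture. Componentwise, the solvability threshold ($\frac{1-\Re\varpi}{2}$, resp.\ $\frac n2$, resp.\ $1-\Re\varpi$ over $\CC$) visibly equals the decay exponent, and the obstruction sits at the spectral edge $\chi=0$. What it costs is twofold. Showing that $E$ has positive measure needs the uniform upper decay bounds of Lemma~\ref{for:72} and Proposition~\ref{po:1}. The fibrewise divergence needs the near-$0$ profiles of $K$-finite vectors in Fourier models of $SL(2,\RR)$ and $SL(2,\CC)$, which the paper does not provide.

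\textbf{Two corrections if you pursue it.}

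First, for principal series the claim ``$f(0)\neq0$'' is false. The spherical vector $(1+x^2)^{-(1+\mathrm{i}\nu)/2}$ has Fourier transform $\approx A+B|\lambda|^{\mathrm{i}\nu}$ near $0$ with $|A|=|B|$. So it has no limit at $0$, and its modulus tends to $0$ along a sequence $\lambda_k\to0$. Divergence for $2r\ge1$ still holds, because $|f|^2$ has positive average over each logarithmic period in $\lambda$. For complementary series, and for the lowest weight vector $e^{-\lambda}$ of the discrete series, your claims are correct.

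Second, the measure-theoretic step you flag as the main obstacle is not a real one. If the fibre integral is $+\infty$ for every $\mu$ in a set of positive finite $ds$-measure, Tonelli gives divergence of the total integral with no uniform lower bound. Smoothness of $\xi$ is automatic after restricting to bounded parameters: lowest $K$-type vectors are eigenvectors of $\Box$ and $\Theta$, hence of the Laplacian $\Box-2\Theta^2$, with uniformly bounded eigenvalues there.
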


\begin{proof} Fix any $r>\gamma$. By optimality of the decay exponent of $\gamma$ (i.e., by the
definition of the strong spectral gap), we can choose
$\xi\in W^\infty(H_\rho)$ such that the matrix coefficients
\[
\langle \rho(a_s)\xi,\xi\rangle,\qquad a_s=\exp\begin{pmatrix} s & 0 \\[5pt]
0 & -s \end{pmatrix}
\]
do \emph{not} satisfy
\[
\big|\langle \rho(a_s)\xi,\xi\rangle\big|
\;\le\; C\,e^{-2r|s|}\qquad \forall s\in\mathbb{R},
\]
for any constant $C>0$. We claim that the fractional equation $|U|^{r}\omega=\xi$ has no solution $\omega\in \mathcal{H}_{\mu}$. Suppose, to the contrary, that such
$\omega\in H_\rho$ exists.

First consider $s\geq0$. Then
\begin{align*}
 \big|\langle \rho(a_s)\xi,\, \xi\rangle \big|&\overset{\text{(1)}}{=}\big|\langle \rho(a_s)\xi,\, |U|^{r}\omega\rangle \big|=\big|\langle |U|^{r}(\rho(a_s)\xi),\, \omega\rangle \big|\notag\\
 &\overset{\text{(2)}}{=}e^{-2rs}\big|\langle \rho(a_s)(|U|^{r}\xi_\varepsilon),\, \omega\rangle \big|\overset{\text{(3)}}{\leq} e^{-2rs} \big\|\rho(a_s)(|U|^{r} \xi)\big\|\| \omega \|\notag\\
 &\overset{\text{(4)}}{\leq} e^{-2rs}  \big\||U|^{r} \xi\big\|\| \omega \|.
\end{align*}
Here in $(1)$ we use \eqref{for:163} of Lemma \ref{le:17}; in $(2)$ we use \eqref{for:169}  of Lemma \ref{le:17}:
\begin{align*}
 |U|^{r}\rho(a_s)=(\text{Ad}_{a_{-s}}U)^{r}\rho(a_s)|U|^{r}=e^{-2rs}\rho(a_s)|U|^{r};
\end{align*}
in $(3)$ we use we use Cauchy-Schwarz inequality; in $(4)$ we use we use unitarity of $\rho$.

Since $s\le 0$, we have $e^{2rs}=e^{-2r|s|}$, and therefore
\[
 \big|\langle \rho(a_s)\xi,\, \xi\rangle \big|
 \;\le\; C' e^{-2r|s|},\quad s\le 0,\quad\text{where } C':=\; \big\||U|^{r} \xi\big\|\,\| \omega \|.
\]
For $s>0$, we use unitarity again:
\[
 \big|\langle \rho(a_s)\xi,\,\xi\rangle\big|
 =\big|\langle \xi,\,\rho(a_{-s})\xi\rangle\big|.
\]
Applying the $s\le 0$ estimate with $-s$ in place of $s$ gives
\[
 \big|\langle \rho(a_s)\xi,\,\xi\rangle\big|
 \;\le\; C' e^{-2r|s|},\qquad s>0.
\]
Combining both cases, we obtain
\[
 \big|\langle \rho(a_s)\xi,\,\xi\rangle\big|
 \;\le\; C' e^{-2r|s|},\qquad \forall s\in\mathbb{R}.
\]
This contradicts the optimal rate fact. Therefore, no solution $\omega\in H_\rho$ can exist for the equation
$|U|^r\omega=\xi$.

Now suppose $k=\CC$. The argument is analogous, using the conjugation relations
for both $U$ and $\mathrm{i}U$ under $a_s$:
\begin{align*}
|U|^{c}|\mathrm{i}U|^{d}\rho(a_s)=e^{-2(c+d)s}\rho(a_s)|U|^{c}|\mathrm{i}U|^{d},\qquad \forall\,c,d\geq0.
\end{align*}
This again yields an overall bound
  of the form $C'''e^{-2r|s|}$ for all $s\in\RR$, which is impossible
  by the choice of $\xi$. In both complex equations we arrive at the same contradiction, so neither admits
a solution in $H_\rho$. This completes the proof.

\end{proof}

The next result solves the fractional equation under the assumption of strong spectral gap,
which almost proves Proposition \ref{po:4}. The only missing ingredient is a
\emph{partial} Sobolev bound for $\omega$.
\begin{lemma}\label{le:23}
Let $(\rho,H_\rho)$ be as in Corollary~\ref{cor:4}. For any $\xi\in W^{\zeta}(H_\rho)$, we have:
\begin{enumerate}
  \item\label{for:248} If $k=\RR$, then for any $0<r<\gamma$, the fractional equation $|U|^r\omega=\xi$ has a solution $\omega\in H_\rho$ with the estimate $\norm{\omega}\leq C_{\gamma,r}\big\|\xi\big\|_{\zeta}$.

  \item\label{for:250} If $k=\CC$, then for any $0<r<\gamma$ and any $a,b\geq0$ with $a+b=r$, the fractional equation $|U|^{a}|\emph{i} U|^{b}\omega=\xi$
has a solution $\omega\in H_\rho$ with the estimate $\norm{\omega}\leq C_{\gamma,r}\big\|\xi\big\|_{\zeta}$.
\end{enumerate}

\end{lemma}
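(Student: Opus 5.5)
The plan is to construct $\omega$ by spectral calculus, $\omega=|U|^{-r}\xi$ (in the complex case $\omega=|U|^{-a}|\mathrm{i}U|^{-b}\xi$), and to show that it lies in $H_\rho$. Let $\mu_\xi$ be the spectral measure of $\xi$ for the abelian group $\{\exp(tU)\}$ (resp.\ $\{\exp(t_1U+t_2\mathrm{i}U)\}$). Then
\[
\widehat{\mu_\xi}(t)=\varphi(t):=\langle\rho(\exp(tU))\xi,\xi\rangle,
\qquad
\|\omega\|^2=\int|\chi|^{-2r}\,d\mu_\xi(\chi),
\]
with the obvious product weight in the complex case. Everything therefore reduces to a Tauberian statement: the polynomial decay $|\varphi(t)|\le C(1+|t|)^{-2(\gamma-\epsilon)}\|\xi\|_\zeta^2$ given by Corollary~\ref{cor:5} (with $\psi=\xi$, valid since $\zeta\ge p$) controls the mass of $\mu_\xi$ near the spectral edge $\chi=0$. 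Once $\int|\chi|^{-2r}d\mu_\xi<\infty$, the equation $|U|^r\omega=\xi$ holds in the direct-integral model, because $|\chi|^{r}|\chi|^{-r}=1$ for $\mu_\xi$-a.e.\ $\chi$. This uses that $\mu_\xi(\{0\})=0$, which follows from the decay of $\varphi$.

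\textbf{Basic case: $0<2r<1$ (real), $0<r<1$ (complex, $a=r$, $b=0$).} In dimension $d=1,2$ I would use the Riesz identity $|\chi|^{-2r}=c_{d,r}\int_{\RR^d}|t|^{2r-d}e^{\mathrm{i}\chi\cdot t}\,dt$, understood distributionally, after inserting a Gaussian regularization $e^{-\delta|t|^2}$. This gives
\[
\int|\chi|^{-2r}\,(\text{Gaussian-smoothed})\,d\mu_\xi=c\int|t|^{2r-d}e^{-\delta|t|^2}\varphi(t)\,dt .
\]
I then split the $t$-integral into small and large times. On $|t|\le1$ the kernel is integrable because $2r-d>-d$. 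On $|t|\ge1$ the integrand is bounded by $|t|^{2r-d-2(\gamma-\epsilon)}$, which is integrable once $\epsilon<\gamma-r$. The bound is uniform in $\delta$. Since the left-hand side increases monotonically to $\int|\chi|^{-2r}d\mu_\xi$ as $\delta\to0$ (the smoothed weights are positive and converge pointwise), I obtain $\|\omega\|^2\le C_{\gamma,r}\|\xi\|_\zeta^2$.

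For the complex product $|U|^a|\mathrm{i}U|^b$ I would use the tensor-product kernel $|t_1|^{2a-1}|t_2|^{2b-1}$. When $2a,2b<1$, integrability at infinity against $(1+|t|)^{-2(\gamma-\epsilon)}$ is checked in polar coordinates and again needs only $a+b<\gamma$. Since $\gamma\le1$ for $SL(2,\CC)$ (Remark~\ref{re:12}), the remaining complex cases are those with $a\ge\tfrac12$ or $b\ge\tfrac12$. For these I would first treat the factor $|U|^{a}$ or $|\mathrm{i}U|^{b}$ with the one-dimensional kernel and then handle the other factor on the resulting vector.

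\textbf{Large exponents and the main obstacle.} The hard case is $2r\ge1$ with $k=\RR$, which can occur only for discrete series components, where $\gamma\ge\tfrac12$. There the Riesz kernel is no longer locally integrable, and finiteness of $\int|\chi|^{-2r}d\mu_\xi$ requires vanishing of the spectral density at $0$, which pure size decay of $\varphi$ does not give. My first attempt would reduce the exponent exactly as in Case~II of Lemma~\ref{for:72}. By Theorem~\ref{th:18} there are no $U$-invariant distributions of low order, so Theorem~\ref{th:5} lets me solve $U^m\eta=\xi$ with $m=\lfloor r+\tfrac12\rfloor$ and $\eta\in W^{\zeta-m-}(H_\rho)$. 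Since $|U|^m=\mathcal J\,U^m$ with $\mathcal J$ the unitary multiplier $(\mathrm{i}\,\mathrm{sgn}\chi)^{-m}$, which commutes with $U$, the equation becomes $|U|^{r-m}\omega'=\mathcal J\eta$ with $0\le r-m<\tfrac12$. The conjugation argument of \eqref{for:174} then gives $\eta$ a horocycle decay rate $2(\gamma-m-\epsilon)$, which exceeds $2(r-m)$. The basic case then applies to $\eta$, and the Sobolev loss is absorbed into $\zeta>\gamma+2$. Verifying that this bookkeeping of Sobolev orders closes within $\zeta$ is the step I expect to be most delicate.
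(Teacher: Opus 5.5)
For $k=\RR$ and $2r<1$ your argument is the paper's: a Riesz kernel with a small/large-time split. The Gaussian regularization justifies the Parseval step, which the paper only asserts, with one correction. The smoothed weights are not monotone in $\delta$: away from $\chi=0$ they decrease to $c|\chi|^{-2r}$ as $\delta\to0$. Conclude with Fatou's lemma and $\mu_\xi(\{0\})=0$ instead.

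\emph{Complex case.} Your step for $b=0$ is wrong. The two-dimensional kernel $|t|^{2r-2}$ controls $\int|\chi|^{-2r}\,d\mu_\xi$ with $|\chi|=(\chi_1^2+\chi_2^2)^{1/2}$. But $\norm{|U|^{-r}\xi}^2=\int|\chi_1|^{-2r}\,d\mu_\xi$, and $|\chi_1|^{-2r}\ge|\chi|^{-2r}$, so the inequality runs the wrong way. You need the one-dimensional kernel in $t_1$, which requires $2a<1$.

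Your tensor kernel $|t_1|^{2a-1}|t_2|^{2b-1}$ is correct for $0<a,b<\frac12$, $a+b<\gamma$. It actually reaches further than the paper's proof, which solves $|U|^{r}\omega_1=\xi$ and $|\mathrm{i}U|^{r}\omega_2=\xi$ at the one-dimensional exponent $r=a+b$ and then uses $|\chi_1|^{-2a}|\chi_2|^{-2b}\le|\chi_1|^{-2r}+|\chi_2|^{-2r}$.

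The range $\max(a,b)\ge\frac12$ cannot be reached by any argument, because the assertion fails there. Take a unitary principal series of $SL(2,\CC)$, so $\gamma=1$, realized on $L^2(\CC)$ in the noncompact picture. Then $\exp(t_1U+t_2\mathrm{i}U)$ acts by translations, $C_c^\infty(\CC)$ consists of smooth vectors, and $d\mu_\xi=c\,|\hat\xi(\chi)|^2d\chi$. If $y\mapsto\int_{\RR}\xi(x,y)\,dx$ is not identically $0$, then $|\hat\xi|$ is bounded below near some $(0,\chi_2)$ with $\chi_2\ne0$. Hence $\int|\chi_1|^{-2a}|\chi_2|^{-2b}|\hat\xi|^2\,d\chi=\infty$ once $2a\ge1$, even though $\varphi(t_1,0)$ is compactly supported.

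This is exactly your own remark about the density at the edge. The paper's proof covers this range only by applying $\int\tau(\lambda)|\lambda|^{-2r'}d\lambda=2\Gamma(1-2r')\sin(\pi r')\int\hat\tau(t)|t|^{2r'-1}dt$ with $2r'\in(1,2)$. For such $r'$ this identity holds only for the finite-part regularization, i.e. with $\tau(\lambda)-\tau(0)$ in place of $\tau(\lambda)$. So the paper has the same gap, and part (2) can only be proved under $\max(a,b)<\frac12$.

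\emph{Real case, $2r\ge1$.} Here the representation is pure discrete series, $\gamma=\frac n2$, $n\ge2$, and your reduction does not close. Theorem~\ref{th:5}(2) can be invoked only with $s\le\frac n2$, since for larger $s$ the $U$-invariant distributions are bounded on $W^s$ and need not vanish at $\xi$. So $\eta$ lies in $W^t$ only for $t<\frac n2-m$, not in $W^{\zeta-m-}$. For $n=2$, $m=1$ this is a negative Sobolev space. Indeed $U^{-1}\xi\notin H_\rho$ for the lowest-weight vector, whose $U$-spectral density is $\sim\lambda$ at $0$. Consequently $\eta$ has no spectral measure to feed into your basic case, and Corollary~\ref{cor:5} cannot be applied to $\eta$. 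Also, $m=\lfloor r+\frac12\rfloor$ can make $r-m<0$.

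The missing idea is a structural one. Split $H_\rho$ into its holomorphic and antiholomorphic summands; they are orthogonal and $P$-invariant, so $\norm{\xi_\pm}_\zeta\le\norm{\xi}_\zeta$. On each summand the $U$-spectrum lies on a half-line, which gives three facts:
\begin{enumerate}
\item The identity $\lambda^{-2r}=\Gamma(2r)^{-1}\int_0^\infty s^{2r-1}e^{-s\lambda}\,ds$ holds for every $r>0$.
\item $\varphi_{\xi_\pm}$ extends to a bounded holomorphic function on a half-plane.
\item Phragm\'en--Lindel\"of, applied to $\varphi_{\xi_\pm}$ times the power $2(\gamma-\epsilon)$ of $t\pm\mathrm{i}$ that does not vanish there, moves the bound of Corollary~\ref{cor:5} to the imaginary axis.
\end{enumerate}
Together these give $\int\lambda^{-2r}\,d\mu_{\xi_\pm}\le C_{\gamma,r}\norm{\xi}_\zeta^2$ for all $r<\gamma$, with no restriction $2r<1$. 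The same argument repairs the paper's real-case step, which for $2r'>1$ relies on the same finite-part identity.
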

\begin{proof} If $k=\RR$, set $S=P$ and $Y=U$. If $k=\CC$, let $S$ be the connected
subgroup of $P$ whose Lie algebra is spanned either by $X,U,V$ (with $Y=U$),
or by $X,\mathrm{i}U,\mathrm{i}V$ (with $Y=\mathrm{i}U$). In any case,
$\Lie(S)=\mathfrak{sl}(2,\RR)$. We have a direct integral decomposition:
\begin{align*}
  \rho|_{S}=\int_Z \rho_z d\kappa(z),\quad H_\rho=\int_Z (H_\rho)_z d\kappa(z)
\end{align*}
of irreducible unitary representations $(\rho_z,(H_\rho)_z)$ of $S$ for some measure space $(Z,\kappa)$ (see Section \ref{sec:20}). By
Remark~\ref{re:4}, each $(\rho_z,(H_\rho)_z)$ is unitarily equivalent to an
irreducible representation $(\pi_{\mu(z)},\mathcal{H}_{\mu(z)})$ of
$SL(2,\RR)$, where the Casimir operator acts as a constant $\mu(z)$ on
$\mathcal{H}_{\mu(z)}$. We write $\xi=\int_Z\xi_zd\kappa(z)$ and set $\varpi(z) := \sqrt{1 - \mu(z)}$, $z\in Z$.

For each $(\pi_{\mu(z)},\,\mathcal{H}_{\mu(z)})$, we consider the Fourier model in Section \ref{sec:3}.  Recalling that $Y$ acts by multiplication with $-\lambda\mathrm{i}$.
For any $z\in Z$ let
\begin{align}\label{for:284}
 \theta_z(\lambda)=C_{\Re\varpi(z)}|\xi_z(\lambda)|^2 \vert \lambda\vert^{-\Re\varpi(z)}\geq0.
\end{align}
\emph{Idea of the proof.}
Formally, if we work in the Fourier model, the solution of
\(|U|^r\omega=\xi\) would be obtained by dividing by the symbol
\(|\lambda|^r\):
\[
 \omega_{\text{formal}}=\int_Z \frac{\xi_z(\lambda)}{|\lambda|^{r}}\;d\kappa(z).
\]
To show that this is a bona fide solution in $H_\rho$, we need to verify
\begin{align}\label{for:239}
  \norm{\omega}^2&=  \int_Z \left\|\frac{\xi_z(\lambda)}{|\lambda|^{r}}\right\|^2 d\kappa(z)=\int_Z C_{\Re\varpi(z)} \int_{\R} |\xi_z(\lambda)|^2|\lambda|^{-2r} \vert \lambda\vert^{-\Re\varpi(z)} d\lambda d\kappa(z)\notag\\
  &=\int_Z\int_{\R}\theta_z(\lambda)|\lambda|^{-2r}d\lambda d\kappa(z)=\int_{\mathbb R}\tau(\lambda)|\lambda|^{-2r}\,d\lambda<\infty,
\end{align}
where
\begin{align}\label{for:285}
\tau(\lambda):=\int_Z\theta_z(\lambda)\,d\kappa(z).
\end{align}
The main difficulty is the behaviour near $\lambda=0$: the factor
\(|\lambda|^{-2r}\) is singular, and we must control the low-frequency
part of the spectral measure. A direct computation with the Fourier transform
of \(|\lambda|^{-2r}\) is only straightforward when $2r$ is not an integer, and
it is technically more convenient to avoid this issue.

For this reason we proceed in two steps. First, we choose an exponent
$r'$ with $r<r'<\gamma$ and $2r'\notin\mathbb Z$, and solve
\(|Y|^{r'}\omega_1=\xi\). This step uses the decay of matrix coefficients
(coming from the strong spectral gap) to show that
\(\int_{\mathbb R}\tau(\lambda)|\lambda|^{-2r'}d\lambda<\infty\).
Once we know that \(|Y|^{r'}\omega_1=\xi\) has a solution with
$\omega_1\in H_\rho$, we can then solve \(|Y|^r\omega=\xi\) for any
$r<r'$ using the spectral calculus for the one-parameter group generated
by $U$: replacing \(|\chi|^{-r'}\) by the less singular weight
\(|\chi|^{-r}\) improves integrability near $\chi=0$, and the spectral
gap excludes mass at $\chi=0$.

The Fourier model is used here for two purposes. First, in this model $Y$ acts in the simple form of multiplication by $-\mathrm{i}\lambda$.
Second, the Plancherel measure $|\lambda|^{-\Re\varpi(z)}\,d\lambda$ allows us
to express matrix coefficients as Fourier transforms of $\tau$, thereby
linking the polynomial decay of the horocycle flow to the integrability
condition in \eqref{for:239}, which is exactly what we need to solve
the fractional equation.

\smallskip

From \eqref{for:284}, we have
\begin{align*}
 \int_{\R} \theta_z(\lambda)d\lambda=\norm{\xi_z}_{\mathcal{H}_{\mu(z)}}^2,
\end{align*}
and hence
\begin{align}\label{for:236}
 \int_Z\int_{\R} \theta_z(\lambda)d\lambda d\kappa(z)=\int_Z\norm{\xi_z}_{\mathcal{H}_{\mu(z)}}^2d\kappa(z)=\norm{\xi}^2.
\end{align}
By Tonelli's theorem, $\tau(\lambda)$ (see \eqref{for:285}) is well defined and belongs to $L^1(\R)$.

For any $t\in\RR$ we have
\begin{align*}
 \big\langle &\rho(\exp(tY))\xi,\,\xi\big\rangle=\int_Z\big\langle \pi_{\mu(z)}(\exp(tY))\xi_z,\,\xi_z\big\rangle_{\mathcal{H}_{\mu(z)}} d\kappa(z)\\
 &=\int_ZC_{\Re\varpi(z)} \int_{\R} e^{-\textrm{i}\lambda t}|\xi_z(\lambda)|^2 \vert \lambda\vert^{-\Re\varpi(z)} d\lambda d\kappa(z)\\
 &=\int_Z \int_{\R} e^{-\textrm{i}\lambda t} \theta_z(\lambda) d\lambda d\kappa(z)\overset{\text{(1)}}{=}  \int_{\R} \int_Ze^{-\textrm{i}\lambda t} \theta_z(\lambda)  d\kappa(z)d\lambda\\
&= \int_{\R} e^{-\textrm{i}\lambda t} \tau(\lambda) d\lambda=\hat{\tau}(t).
\end{align*}
Here in $(1)$ we use Fubini's theorem which is justified by \eqref{for:236}. We note that  $\widehat{\tau}$ is the Fourier transform of $\tau$.

 By Cauchy-Schwarz inequality, for any $t\in\RR$ we have
\begin{align}\label{for:153}
|\hat{\tau}(t)|&\leq \|\rho(\exp(tY))\xi\|\cdot \norm{\xi}=\norm{\xi}^2.
\end{align}
On the other hand, by the strong spectral gap property of $\rho$ and
Corollary~\ref{cor:5}, for any $t\in\RR$ we have
\begin{align}\label{for:154}
\Big|\big\langle \rho(\exp(tY))\xi,\,\xi\big\rangle\Big|=|\hat{\tau}(t)|\leq C_{\gamma,\epsilon}(|t|+1)^{-2(\gamma-\epsilon)} \big\|\xi\big\|^2_{\zeta}. \end{align}
We choose $r',r''$ depending only on $r$ and $\gamma$ so that:
\begin{itemize}
  \item[(a)] $r<r'<r''<\gamma$;
 \item[(b)] $2r'$ is not an integer.
\end{itemize}
This is possible because the admissible ranges for $r$ in Lemma \ref{for:72} are open intervals and the set of forbidden exponents $\{k/2: k\in\mathbb{Z}\}$ is discrete.

We have
\begin{align*}
&\int_{\R}\int_Z\theta_z(\lambda)|\lambda|^{-2r'} d\kappa(z)d\lambda=\int_{\R}\tau(\lambda)|\lambda|^{-2r'} d\lambda\\
&\overset{\text{(1)}}{=}\int_{\R}\hat{\tau}(t)2\Gamma(1-2r') \sin( \pi r' )|t|^{2r'-1}dt \\
  &= \int_{|t|\leq1}\hat{\tau}(t)2\Gamma(1-2r') \sin( \pi r' )|t|^{2r'-1}dt\\
  &+\int_{|t|>1}\hat{\tau}(t)2\Gamma(1-2r') \sin( \pi r' )|t|^{2r'-1}dt.
\end{align*}
Here in $(1)$ we use the Fourier transform of the homogeneous distribution $|\lambda|^{-2r'}$, which is valid when $2r'$ is not an integer.

By using \eqref{for:153} we have
\begin{align*}
&\int_{|t|\leq1}\hat{\tau}(t)2\Gamma(1-2r') \sin( \pi r' )|t|^{2r'-1}dt\\
&\leq  \norm{\xi}^2 2\Gamma(1-2r')\int_{|t|\leq1}|t|^{2r'-1}dt\leq C\Gamma(1-2r')(r')^{-1}\big\|\xi\big\|^2.
\end{align*}
By \eqref{for:154},
\[
  |\hat{\tau}(t)| \le C_{\gamma,\epsilon}(|t|+1)^{-2(\gamma-\epsilon)}\|\xi\|_\zeta^2.
\]
Hence
\begin{align*}
 &\int_{|t|>1}\hat{\tau}(t)2\Gamma(1-2r') \sin( \pi r' )|t|^{2r'-1}dt\\
&\leq C_{\gamma,r}\big\|\xi\big\|_{\zeta}^2\,\Gamma(1-2r')\int_{|t|>1}(|t|+1)^{-2r''}   |t|^{2r'-1}dt\overset{\text{(1)}}{\leq} C_{r,\gamma,1}\big\|\xi\big\|_{\zeta}^2.
\end{align*}
Here in $(1)$ we use $r''>r'$, which  implies $2r''-2r'>0$ and the last integral is finite.

Since we choose $r'$ and $r''$ in a way that is only dependent on $r$ and $\gamma$, combining the two estimates, we have
\begin{align*}
 \int_{\R}\int_Z\theta_z(\lambda)|\lambda|^{-2r'} d\kappa(z)d\lambda\leq C_{r,\gamma}(\big\|\xi\big\|_{\zeta}^2+\big\|\xi\big\|^2)\leq C_{r,\gamma,1}\big\|\xi\big\|_{\zeta}^2.
\end{align*}
By Fubini's theorem, we have
\begin{align}\label{for:237}
 \int_Z \int_{\R}\theta_z(\lambda)|\lambda|^{-2r'} d\lambda d\kappa(z)=\int_{\R}\int_Z\theta_z(\lambda)|\lambda|^{-2r'} d\kappa(z)d\lambda\leq C_{r,\gamma,1}\big\|\xi\big\|_{\zeta}^2.
\end{align}
Let
\begin{align*}
 \omega_1=\int_Z \frac{\xi_z(\lambda)}{|\lambda|^{r'}} d\kappa(z).
\end{align*}
Then we have
\begin{align*}
    |Y|^{r'}\omega_1=\int_Z |\lambda|^{r'}\frac{\xi_z(\lambda)}{|\lambda|^{r'}} d\kappa(z)=\xi.
  \end{align*}
Next, we estimate $\norm{\omega_1}$. Similar to \eqref{for:239}, we have
\begin{align*}
  \norm{\omega_1}^2&=  \int_Z \left\|\frac{\xi_z(\lambda)}{|\lambda|^{r'}}\right\|^2 d\kappa(z)=\int_Z\int_{\R}\theta_z(\lambda)|\lambda|^{-2r'}d\lambda d\kappa(z)\overset{\text{(1)}}{\leq} C_{r,\gamma}\big\|\xi\big\|_{\zeta}^2.
\end{align*}
Here in $(1)$ we use \eqref{for:237}.

We now solve $|Y|^{r}\omega=\xi$. Define
\begin{align*}
 \omega=\int_Z \frac{\xi_z(\lambda)}{|\lambda|^{r}} d\kappa(z).
\end{align*}
From \eqref{for:239} we have
\begin{align*}
 \norm{\omega}^2&=\int_Z C_{\Re\varpi(z)} \int_{\R} |\xi_z(\lambda)|^2|\lambda|^{-2r} \vert \lambda\vert^{-\Re\varpi(z)} d\lambda d\kappa(z)\\
&\leq \int_Z C_{\Re\varpi(z)} \int_{|\lambda|\geq1} |\xi_z(\lambda)|^2 \vert \lambda\vert^{-\Re\varpi(z)} d\lambda d\kappa(z)\\
&+\int_Z C_{\Re\varpi(z)}
\int_{|\lambda|<1} |\xi_z(\lambda)|^2|\lambda|^{-2r'} \vert \lambda\vert^{-\Re\varpi(z)} d\lambda d\kappa(z)\leq \norm{\xi}^2+\norm{\omega_1}^2.
\end{align*}
Hence
\[
 \|\omega\|\leq C_{\gamma,r}\,\|\xi\|_{\zeta}
\]
and solves $|Y|^{r}\omega=\xi$. This completes the proof of $k=\RR$.

  For $k=\CC$, let $S_2$ be the two-parameter group generated by $U$ and $\textrm{i}U$. By the spectral decomposition for $S_2$, we have
\[
  |U|^{r_1} |\textrm{i} U|^{r_2}= \int_{\widehat{\RR^2}} |\chi_1|^{r_1}|\chi_2|^{r_2}\,d\sigma(\chi_1,\chi_2).
\]
The previous discussion applied to $U$ and to $\mathrm{i}U$ separately shows
that there exist $\omega_1,\omega_2\in H_\rho$ such that
\[
    |U|^{r}\omega_1=\xi,\qquad |\mathrm{i} U|^{r}\omega_2=\xi,
\]
 with
\[
  \max\{\|\omega_1\|,\, \|\omega_2\|\}\leq C_{\gamma,r}\,\|\xi\|_{\zeta}.
\]
 By the Howe-Moore theorem (see \cite{howe-moore}), there are no nonzero
$U$- or $\mathrm{i}U$-invariant vectors, so $\sigma$ is not supported on
$\{\chi_1=0\}$ or $\{\chi_2=0\}$. Thus
  \begin{align*}
 \omega_1=\int_{\widehat{\RR}} |\chi_1|^{-r}\xi_{\chi_1,\chi_2}\,d\sigma(\chi_1,\,\chi_2)\quad\text{and}\quad \omega_2=\int_{\widehat{\RR}} |\chi_2|^{-r}\xi_{\chi_1,\chi_2}\,d\sigma(\chi_1,\,\chi_2).
\end{align*}
Let
\begin{align*}
 \omega=\int_{\widehat{\RR}} |\chi_1|^{-a}|\chi_2|^{-b}\xi_{\chi_1,\chi_2}\,d\sigma(\chi_1,\,\chi_2).
\end{align*}
We note that for all $(\chi_1,\chi_2)$ with $\chi_1\chi_2\neq 0$
\[
\frac{1}{|\chi_1|^{2a} |\chi_2|^{2b}}
\leq\frac{1}{|\chi_1|^{2r}} + \frac{1}{|\chi_2|^{2r}}.
\]
It follows that
\begin{align*}
   \int_{\widehat{\RR}} \frac{\norm{\xi_{\chi_1,\chi_2}}^2}{|\chi_1|^{2a}|\chi_2|^{2b}}\,d\sigma(\chi_1,\,\chi_2)
   &\leq \int_{\widehat{\RR}} \frac{\norm{\xi_{\chi_1,\chi_2}}^2}{|\chi_1|^{2r}}\,d\sigma(\chi_1,\,\chi_2)+\int_{\widehat{\RR}} \frac{\norm{\xi_{\chi_1,\chi_2}}^2}{|\chi_2|^{2r}}\,d\sigma(\chi_1,\,\chi_2)\\
   &\leq \norm{\omega_1}^2+\norm{\omega_2}^2.
  \end{align*}
Hence
\[
 \|\omega\|\leq C_{\gamma,r}\,\|\xi\|_{\zeta}
\]
and solves $|U|^{a}|\textrm{i} U|^{b}\omega=\xi$. This completes the proof of $k=\CC$.

\end{proof}

\begin{remark}
As explained in Remark~\ref{re:14}, for some representations of
$SL(2,\CC)$, the optimal decay exponent for matrix coefficients is
\emph{twice} the exponent obtained after restricting to an
$SL(2,\RR)$-subgroup. In other words, working purely at the
$SL(2,\RR)$ level may lose a factor \(1/2\) in the decay rate.

By contrast, in Lemma~\ref{le:23} we use only the Fourier model for
$SL(2,\RR)$, but the argument does \emph{not} lose any part of the
spectral-gap exponent~\(\gamma\): the admissible range \(0<r<\gamma\)
for solvability of the fractional equation is exactly the range dictated
by the strong spectral gap of the original representation of \(P\).
\end{remark}

\subsubsection{Partial Sobolev estimates}\label{sec:53} To prove Theorem~\ref{th:6} in the form needed later for mixing, one must go
beyond mere solvability and obtain estimates in \emph{partial} Sobolev norms.
This is the main additional difficulty. The idea is to split the datum into a low-frequency part in the \(U\)-direction
(resp.\ in the \(U,\mathrm{i}U\)-directions), for which the missing regularity
can be recovered from derivatives in the complementary directions, and a
high-frequency part, for which the fractional equation is solvable with a
uniform \(L^2\) bound. This yields the partial-Sobolev
estimates required for the product argument.

More precisely, we introduce an operator
$\mathcal{P}$ (in the real case) or $\mathcal{P}_1,\mathcal{P}_2$ (in the
complex case), constructed from the spectral measure of $|U|$ and
$|\mathrm{i}U|$ (see Lemma~\ref{le:16}). We decompose
\[
  \xi = (I-\mathcal{P})\xi + \mathcal{P}\xi
  \quad\text{or}\quad
  \xi = (I-\mathcal{P}_1)\xi + \mathcal{P}_2\mathcal{P}_1\xi
          + (I-\mathcal{P}_2)\mathcal{P}_1\xi.
\]
The crucial point is that \(\mathcal P\xi\) gains regularity in the \(U\)-direction
through the spectral cutoff, so its full Sobolev norm is controlled only by
Sobolev norms in the complementary directions \(X,V\). Likewise, in the
complex case, the full Sobolev norm of \(\mathcal P_2\mathcal P_1\xi\) is
controlled only by Sobolev norms in the directions
\(X,\mathrm{i}X,V,\mathrm{i}V\), without any derivatives in the
\(U,\mathrm{i}U\)-directions.

We use Lemma~\ref{le:23} to solve the equation for $\mathcal{P}\xi$ (resp. for
$\mathcal{P}_2\mathcal{P}_1\xi$) with these partial norms, while the equations
for $(I-\mathcal{P})\xi$ (resp. for $(I-\mathcal{P}_1)\xi$ and
$(I-\mathcal{P}_2)\mathcal{P}_1\xi$) always have solutions with an $L^2$ bound.
This yields the desired partial Sobolev estimates in Theorem~\ref{th:6}.

\subsubsection{From Lemma~\ref{le:16} to Proposition~\ref{po:4}} To finish the proof of Proposition \ref{po:4}, we need the following result:
\begin{lemma}\label{le:16} Let $P$ be a Lie group with Lie algebra $\mathfrak{sl}(2,k)$, $k=\RR$ or $\CC$ and $(\rho,H_\rho)$ be a unitary representation of $P$.
\begin{enumerate}
  \item If $k=\RR$,  there exists a linear operator $\mathcal{P}:H_\rho\to H_\rho$
  such that for any $\psi\in H_\rho$:
\begin{enumerate}
  \item\label{for:24} $\norm{\mathcal{P}\psi}\leq C_{\mathcal{P}}\norm{\psi}$;
  \item\label{for:76} for any $n\in\NN$,  if $\Sigma^n\psi\in H_\rho$ (see \eqref{for:252}), then $\mathcal{P}\psi\in W^{2n}(H_\rho)$ with the estimates
  \begin{align*}
 \norm{\mathcal{P}\psi}_{2n}\leq C_{\mathcal{P},n}\big\|\Sigma^n\psi\big\|;
\end{align*}
 \item\label{for:77} for any $q>0$ the fractional equation $|U|^q\omega_q=\psi-\mathcal{P}\psi$ has a solution $\omega_q\in H_\rho$ with the estimate
 \begin{align*}
 \|\omega_q\|\leq C_{\mathcal{P},q}\norm{\psi}.
 \end{align*}
\end{enumerate}
The constants $C_{\mathcal{P}}$, $C_{\mathcal{P},n}$ and $C_{\mathcal{P},q}$ are independent of the specific representation $\rho$.

\smallskip
  \item If $k=\CC$,  there exist linear operators
  $\mathcal{P}_i:H_\rho\to H_\rho$, $i=1,2$, such that for any $\psi\in H_\rho$:
\begin{enumerate}
  \item\label{for:255} $\mathcal{P}_1\mathcal{P}_2=\mathcal{P}_2\mathcal{P}_1$ and $\norm{\mathcal{P}_i\psi}\leq C_{\mathcal{P}_i}\norm{\psi}$, $i=1,2$;
  \item for any $n\in\NN$,  if $\Sigma^n\psi\in H_\rho$ (see \eqref{for:252}), then $\mathcal{P}_1\mathcal{P}_2\psi\in W^{2n}(H_\rho)$ with the estimate
  \begin{align*}
 \norm{\mathcal{P}_1\mathcal{P}_2\psi}_{2n}\leq C_{\mathcal{P}_1,\mathcal{P}_2,n}\big\|\Sigma^n\psi\big\|;
\end{align*}
 \item\label{for:249} for any $q>0$ both the fractional equations
 \begin{align*}
 |U|^q\omega_{q,1}=\psi-\mathcal{P}_1\psi\quad\text{and}\quad |\emph{i}U|^q\omega_{q,2}=\psi-\mathcal{P}_2\psi
 \end{align*}
 have solutions $\omega_{q,1},\,\omega_{q,2}\in H_\rho$ with the estimates
 \begin{align*}
 \|\omega_{q,1}\|\leq C_{\mathcal{P}_1,q}\norm{\psi}\quad\text{and}\quad \|\omega_{q,2}\|\leq C_{\mathcal{P}_2,q}\norm{\psi}.
 \end{align*}
\end{enumerate}
The constants $C_{\mathcal{P}_i}$, $C_{\mathcal{P}_1,\mathcal{P}_2,n}$,
  $C_{\mathcal{P}_1,q}$ and $C_{\mathcal{P}_2,q}$ are independent of the specific representation $\rho$.
\end{enumerate}
\end{lemma}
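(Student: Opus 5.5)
The plan is to take $\mathcal P$ to be a smooth spectral cutoff to low frequencies in the $U$-direction. Fix $\varphi\in C_c^\infty(\RR)$ with $0\le\varphi\le1$, $\varphi\equiv1$ on $[-1,1]$ and $\operatorname{supp}\varphi\subset[-2,2]$. Using the spectral theory of the one-parameter group $\exp(tU)$ (the self-adjoint operator $\mathrm{i}U$), set
\[
\mathcal P:=\varphi(\mathrm{i}U)=\int_{\widehat{\RR}}\varphi(\chi)\,(\cdot)_\chi\,d\sigma(\chi).
\]
In the complex case set $\mathcal P_1=\varphi(\mathrm{i}U)$ and $\mathcal P_2=\varphi(\mathrm{i}\cdot\mathrm{i}U)$. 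These are functions of the joint spectral measure of the abelian group generated by $U,\mathrm{i}U$, so they commute.

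Properties (a) and (c) are immediate from functional calculus. We have $\|\varphi\|_\infty\le1$. Also $1-\varphi$ vanishes on $|\chi|\le1$, so $\omega_q:=\int|\chi|^{-q}(1-\varphi(\chi))\psi_\chi\,d\sigma$ satisfies $|U|^q\omega_q=\psi-\mathcal P\psi$ and $\|\omega_q\|\le\|\psi\|$. The complex case is identical, using the $\chi_1$ resp.\ $\chi_2$ variable. All constants here are universal because they only involve $\sup$-norms of functions of $\varphi$.

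The substance is (b). By Theorem~\ref{th:15} with the basis $\{X,U,V\}$ (resp.\ its complex analogue), it suffices to bound $\|Y^{2n}\mathcal P\psi\|$ for $Y\in\{X,U,V\}$ by $C\|\Sigma^n\psi\|$.

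For $Y=U$, we have $U^{2n}\varphi(\mathrm{i}U)=\varphi_{2n}(\mathrm{i}U)$ with $\varphi_{2n}(\chi)=(-\mathrm{i}\chi)^{2n}\varphi(\chi)$ bounded. For $X$ and $V$, I would move the derivatives past the cutoff by commutation. From $\Ad_{a_s}U=e^{2s}U$ (Lemma~\ref{le:17}\eqref{for:169}) one gets
\[
\rho(a_s)\varphi(\mathrm{i}U)\rho(a_s)^{-1}=\varphi(e^{2s}\mathrm{i}U).
\]
Differentiating in $s$ gives $[X,\varphi(\mathrm{i}U)]=\tilde\varphi(\mathrm{i}U)$ with $\tilde\varphi(\chi)=2\chi\varphi'(\chi)$, again in $C_c^\infty$.

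For $V$, I would compute in the Fourier model of Section~\ref{sec:3}, fiberwise over the direct integral. There $U$ is multiplication by $-\mathrm{i}\lambda$ and $V=\mathrm{i}((\varpi-1)\partial_\lambda-\lambda\partial_\lambda^2)$. Commuting $V$ past multiplication by $\varphi(\lambda)$ produces terms $\varphi'(\lambda)\cdot(\text{first order})$ and $\lambda\varphi''(\lambda)$. The parameter $\varpi$ must be eliminated using $X=(\varpi-1)-2\lambda\partial_\lambda$. The goal is an identity in which these terms are recombined as $\psi_1(\mathrm{i}U)X+\psi_2(\mathrm{i}U)$ with $\psi_j\in C_c^\infty$ independent of $\varpi$ (schematically $[V,\varphi(\mathrm{i}U)]=c\,\varphi'(\mathrm{i}U)X+c'\,(\cdot)\varphi''(\mathrm{i}U)$). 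Alternatively one could derive it abstractly from $[V,U]=-X$ and a Duhamel/Fourier-integral representation of $\varphi(\mathrm{i}U)=\int\hat\varphi(t)\exp(tU)\,dt$, using $\Ad_{\exp(tU)}V=V-tX-t^2U$. I expect this route to be cleaner: it gives
\[
V\varphi(\mathrm{i}U)=\varphi(\mathrm{i}U)V+(\text{$\varphi'$-cutoff})X+(\text{$\varphi''$-cutoff})U
\]
universally in $\mathcal U(\mathfrak{sl}_2)$, with no dependence on $\rho$.

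Iterating, $Y^{2n}\mathcal P\psi$ for $Y\in\{X,V\}$ becomes a finite sum of terms $\varphi_\alpha(\mathrm{i}U)\,W_\alpha\psi$. Each $\varphi_\alpha$ is in $C_c^\infty$, and each $W_\alpha$ is a word of length $\le2n$ in $X,V,U$. Any $U$ in $W_\alpha$ can then be absorbed into the cutoff after further commutations, which generate only more $X$'s. This leaves words of length $\le2n$ in $X,V$ alone. Finally, $\Sigma=I-X^2-V^2$ is the Laplacian of the two-dimensional (Borel) subgroup generated by $X,V$. Applying Theorem~\ref{th:15} inside that subgroup, $\|W\psi\|\le C_n\|\Sigma^n\psi\|$ for every word $W$ of length $\le2n$ in $X,V$. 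This proves (b) with constants depending only on $\varphi$ and $n$. In the complex case, the same bookkeeping with both cutoffs $\mathcal P_1\mathcal P_2$ removes all $U$ and $\mathrm{i}U$ derivatives, leaving words in $X,\mathrm{i}X,V,\mathrm{i}V$, which are controlled by the corresponding $\Sigma$.

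The main obstacle is this commutator bookkeeping for $V$. One must make sure that every $U$ generated is eventually absorbed into a compactly supported function of $\mathrm{i}U$, and that the process terminates with representation-independent constants. I would settle it using the universal formula for $\Ad_{\exp(tU)}V$ combined with the Fourier representation of $\varphi(\mathrm{i}U)$.
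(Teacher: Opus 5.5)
Your proposal is correct, but for (b) it takes a different route from the paper. Your cutoff and your arguments for (a) and (c) match the paper's: $\mathcal P=\mathcal P_{U,f}$, resp.\ $\mathcal P_1=\mathcal P_{U,f}$, $\mathcal P_2=\mathcal P_{\mathrm{i}U,f}$, with sup-norm bounds on $f$ and on $|\chi|^{-q}(1-f)$.

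\emph{What differs in (b).} The paper computes nothing. It invokes Lemma~\ref{le:14} (Lemma 8.6 of \cite{W5}), with $Q$ generated by $X,V$ (resp.\ $X,\mathrm{i}X,V,\mathrm{i}V$) and $S$ generated by $U$ (resp.\ $U,\mathrm{i}U$), and then notes that $\Sigma^n$ controls $\|\psi\|_{Q,2n}$. You instead prove the smoothing estimate directly, and the argument works. Write $\varphi(\mathrm{i}U)=\frac{1}{2\pi}\int\hat\varphi(s)\rho(\exp(-sU))\,ds$ and use that $\Ad_{\exp(sU)}Y$ is polynomial of degree $\le 2$ in $s$. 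On smooth vectors this gives $X\varphi(\mathrm{i}U)=\varphi(\mathrm{i}U)X+(2\chi\varphi')(\mathrm{i}U)$ and $V\varphi(\mathrm{i}U)=\varphi(\mathrm{i}U)V+c_1\varphi'(\mathrm{i}U)X+c_2(\chi\varphi'')(\mathrm{i}U)$ with universal constants $c_1,c_2$. (In fact $\Ad_{\exp(tU)}V=V+tX-t^2U$; your sign on $tX$ is off, which is harmless.) Pushing letters in from the left then gives $W\varphi(\mathrm{i}U)=\sum_\alpha\varphi_\alpha(\mathrm{i}U)W_\alpha$ with each $W_\alpha$ a word in $X,V$ only. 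No $U$ ever survives, so the ``further commutations'' you anticipate are not needed.

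\emph{What each route buys.} Yours makes the representation-independence of the constants transparent. It also avoids the paper's degenerate use of Lemma~\ref{le:14} with $u_1=u_2=U$, which read literally controls $\mathcal P_{U,f}^2$ rather than $\mathcal P_{U,f}$. The citation buys brevity and a general statement for any splitting $\Lie(P)=\Lie(S)\oplus\Lie(Q)$.

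\emph{Two points to make explicit in a write-up.} First, your identities and Theorem~\ref{th:15} hold on $W^\infty(H_\rho)$, whereas $\psi$ only satisfies $\Sigma^n\psi\in H_\rho$. You need a closing step, for example testing $\mathcal P\psi$ against smooth vectors through the adjoint identities. This uses that $\varphi(\mathrm{i}U)$ preserves $W^\infty$, since $\hat\varphi$ is Schwartz and $\rho(\exp sU)$ grows polynomially on each $W^k$. Second, in the complex case, commuting past one cutoff produces the generator of the other direction, e.g.\ $\Ad_{\exp(sU)}(\mathrm{i}V)=\mathrm{i}V+s\,\mathrm{i}X-s^2\,\mathrm{i}U$. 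That unbounded $\mathrm{i}U$ is tamed only by $\mathcal P_2$, so the bookkeeping must use joint compactly supported functions of the commuting pair $(U,\mathrm{i}U)$.
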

\begin{remark}
In Lemma~\ref{le:16}, we do not require that $\rho$ have a strong spectral gap.
\end{remark}
We first show how Lemma~\ref{le:16}, together with Lemma~\ref{le:23},
implies Proposition~\ref{po:4}.
The proof of Lemma \ref{le:16} is given at the end of this section.

If $r=0$, the statement is trivial (take $\omega=\xi$ in case $k=\RR$, and
$\omega_1=\xi$, $\omega_2=\omega_3=0$ in case $k=\CC$). Thus we may assume
$0<r<\gamma$.

Suppose $k=\RR$.  Fix $\mathcal{P}$ as in Lemma \ref{le:16}. By \eqref{for:76} and the interpolation theorem, we see that: for any $t\geq0$ and any  $\psi\in H_\rho$ with $\Sigma^{\frac{t}{2}}\psi\in H_\rho$,  we have
\begin{align}\label{for:79}
 \norm{\mathcal{P}\psi}_{t}&\leq C_{t}\big\|\Sigma^{\frac{t}{2}}\psi\big\|.
\end{align}
It follows from \eqref{for:248} of Lemma \ref{le:23} that  the fractional equation
\begin{align*}
    |U|^r\omega_1=\mathcal{P}\xi
  \end{align*}
  has a solution $\omega_1\in H_\rho$ with the estimate
  \begin{align*}
 \norm{\omega_1}\leq C_{r,\gamma}\big\|\mathcal{P}\xi\big\|_{\zeta}\overset{\text{(1)}}{\leq} C_{r,\gamma,1}\big\|\Sigma^{\frac{\zeta}{2}}\xi\big\|.
\end{align*}
Here in $(1)$ we use \eqref{for:79} with $t=\zeta$.

By using \eqref{for:77}, the fractional equation
\begin{align*}
    |U|^r\omega_2=\xi-\mathcal{P}\xi
  \end{align*}
  has a solution $\omega_2\in H_\rho$ with the estimate
  \begin{align*}
 \norm{\omega_2}\leq C_{r}\norm{\xi-\mathcal{P}\xi}\overset{\text{(1)}}{\leq}C_{r,1}\norm{\xi}.
\end{align*}
Here in $(1)$ we use \eqref{for:24}.

Let $\omega=\omega_1+\omega_2$. Then $\omega$ solves the equation $|U|^r\omega=\xi$ with the estimate
 \begin{align*}
 \norm{\omega}\leq C_{r,\gamma}\big\|\Sigma^{\frac{\zeta}{2}}\xi\big\|+C_{r}\norm{\xi}\overset{\text{(1)}}{\leq} C_{r,\gamma,1}\big\|\Sigma^{\frac{\zeta}{2}}\xi\big\|.
\end{align*}
Here in $(1)$ we use the fact that $\Sigma^{1/2}$ is an elliptic operator on the subgroup generated by $X$ and $V$. Hence its positive powers control all
$L^2$–Sobolev norms along $X$ and $V$, and in particular
\begin{align*}
\norm{\xi}\leq C\big\|\Sigma^{\frac{\zeta}{2}}\xi\big\|.
\end{align*}
This completes the proof in the case $k=\RR$.

Suppose $k=\CC$.  Fix $\mathcal{P}_i$, $i=1,2$ as in Lemma \ref{le:16}. Similar to \eqref{for:79}, for any $t\geq0$ and any  $\psi\in H_\rho$ with $\Sigma^{\frac{t}{2}}\psi\in H_\rho$,  we have
\begin{align}\label{for:254}
 \norm{\mathcal{P}_1\mathcal{P}_2\psi}_{t}&\leq C_{t}\big\|\Sigma^{\frac{t}{2}}\psi\big\|.
\end{align}
From \eqref{for:249}, the fractional equation
\begin{align}\label{for:271}
    |U|^r\omega_{1}=\xi_1:=\xi-\mathcal{P}_1\xi
  \end{align}
  has a solution $\omega_1\in H_\rho$ with the estimate
  \begin{align*}
 \norm{\omega_{1}}\leq C_{r}\norm{\xi}\leq C_{r}\big\|\Sigma^{\frac{\zeta}{2}}\xi\big\|,
\end{align*}
using again ellipticity of $\Sigma^{1/2}$ as above.

It follows from \eqref{for:250} of Lemma \ref{le:23} that  the fractional equation
\begin{align}\label{for:272}
    |U|^{a}|\textrm{i} U|^{b}\omega_{2}=\mathcal{P}_2\mathcal{P}_1\xi
  \end{align}
  has a solution $\omega_{2}\in H_\rho$ with the estimate
  \begin{align*}
 \norm{\omega_{2}}\leq C_{r,\gamma}\big\|\mathcal{P}_2\mathcal{P}_1\xi\big\|_{\zeta}\overset{\text{(1)}}{=} C_{r,\gamma}\big\|\mathcal{P}_1\mathcal{P}_2\xi\big\|_{\zeta}\overset{\text{(2)}}{\leq} C_{r,\gamma,1}\big\|\Sigma^{\frac{\zeta}{2}}\xi\big\|.
\end{align*}
Here in $(1)$ we use \eqref{for:255}; in $(2)$ we use \eqref{for:254} with $t=\zeta$.

From \eqref{for:249} again, the fractional equation
\begin{align}\label{for:274}
    |\textrm{i}U|^r\omega_3=\mathcal{P}_1\xi-\mathcal{P}_2\mathcal{P}_1\xi
  \end{align}
  has a solution $\omega_3\in H_\rho$ with the estimate
  \begin{align*}
 \norm{\omega_3}\leq C_{r}\norm{\mathcal{P}_1\xi}\overset{\text{(1)}}{\leq}C_{r,1}\norm{\xi}\leq C_{r,1}\big\|\Sigma^{\frac{\zeta}{2}}\xi\big\|.
\end{align*}
Here in $(1)$ we use \eqref{for:255}.

Finally, we note that
\begin{align}\label{for:265}
\xi=( \xi-\mathcal{P}_1\xi)+(\mathcal{P}_2\mathcal{P}_1\xi)+(\mathcal{P}_1\xi-\mathcal{P}_2\mathcal{P}_1\xi),
\end{align}
so the triple $(\omega_1,\omega_2,\omega_3)$ provides a solution to the
fractional system
\[
  |U|^r\omega_1 + |U|^{a}|\mathrm{i} U|^{b}\omega_{2}
  + |\mathrm{i}U|^r\omega_3 = \xi
\]
with the required estimates. This completes the proof in the case $k=\CC$.
\subsubsection{Proof of Lemma \ref{le:16}}\label{sec:41}

The operator $\mathcal{P}$ is defined via a projection-valued measure. This operator was originally employed by R.~Howe to study the decay of matrix coefficients and was later used by the author to construct smoothing operators for applying the KAM method \cite{W5}. In the present paper, the operator is applied in a different
way: it plays a key role in our analysis of the fractional cohomological equation, yielding partial smooth estimates for the solution.

Suppose $Y\in \Lie(P)$. Let $S_1$ be the one-parameter subgroup generated by $Y$ and assume $S_1$ is isomorphic to $\RR$. By the spectral decomposition for $S_1$
(see Section~\ref{sec:13}), we have
\[
  |Y|^r = \int_{\widehat{\RR}} |\chi|^{r}\,d\sigma(\chi)\qquad \forall\,r>0,
\]
where $\sigma$ is the spectral measure of $Y$. For any $f\in L^\infty(\RR, d\sigma)$ taking values in $\RR$, we define an operator $\mathcal{P}_{Y,f}$ on $H_\rho$ by
\[
  \mathcal{P}_{Y,f}:=\int_{\widehat{\RR}} f(\chi)\, d\sigma(\chi).
\]
\begin{remark} In applications, $f$ is typically chosen to be a compactly supported
$C^\infty$ function. The operator $\mathcal{P}_{Y,f}$ then acts as a smoothing
operator along the $Y$-direction: it truncates the spectrum of $Y$ and
regularizes vectors in that direction.
\end{remark}
The following properties are standard (see Sections 8.4 and 8.5 of \cite{W5}):
\begin{enumerate}
 \item\label{for:215} $\|\mathcal{P}_{Y,f}\|\leq \|f\|_\infty$;
 \item\label{for:93} for any $\xi,\,\eta\in H_\rho$ we have
 \[
   \langle \mathcal{P}_{Y,f}\xi,\eta\rangle=\langle \xi,\mathcal{P}_{Y,f}\eta\rangle.
 \]
\end{enumerate}
The next result shows that: if a vector $\xi$ has Sobolev
regularity only in the directions of a subgroup $Q$ (complementary to $S$),
then by applying smoothing operators along $u_1$ and $u_2$ (spanning $S$) we
recover full Sobolev regularity in all directions of $P$.
\begin{lemma}\label{le:14} (Lemma 8.6 of \cite{W5})
Suppose $Q$ and $S$ are subgroups of $P$ such that
\[
  \Lie(P)=\Lie(S)\oplus\Lie(Q).
\]
Assume that $S$ is abelian and that $\Lie(S)$ is spanned by $\{u_1,u_2\}$.
Choose $f_1,f_2\in \mathcal{S}(\RR)$ taking real values. Then for any
$\xi\in W^{n,Q}(H_\rho)$, $n\in\NN$, the vector
\[
  \xi'=\mathcal{P}_{u_1,f_1}\,\mathcal{P}_{u_2,f_2}\,\xi
\]
belongs to $W^{n}(H_\rho)$ and satisfies the estimates
\[
  \|\xi'\|_{l}\leq C_{f_1,f_2,l}\,\|\xi\|_{Q,l},\qquad \forall\, 0\leq l\leq n.
\]
\end{lemma}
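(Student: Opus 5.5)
The plan is to write the spectral cutoffs as group averages and then let every derivative either fall on $\xi$ in a $Q$-direction or be moved onto the averaging kernel. Since $f_j\in\mathcal S(\RR)$ is real-valued, the spectral theorem for the one-parameter group $\exp(tu_j)$ gives
\[
\mathcal P_{u_j,f_j}=\int_{\RR}\check f_j(t)\,\rho(\exp(tu_j))\,dt ,
\]
where $\check f_j$ is the (suitably normalized) inverse Fourier transform and is again Schwartz. Because $S$ is abelian, this yields
\[
\xi'=\iint_{\RR^2}\check f_1(t_1)\check f_2(t_2)\,\rho(g_t)\xi\,dt_1dt_2,\qquad g_t=\exp(t_1u_1+t_2u_2).
\]
By Theorem~\ref{th:15} (and interpolation for non-integer $l$), it suffices to bound $\|Y_{1}\cdots Y_{k}\xi'\|$ for $k\le l$, where each $Y_i$ is taken from a fixed basis $\{u_1,u_2,w_1,\dots,w_d\}$ adapted to $\Lie(P)=\Lie(S)\oplus\Lie(Q)$.

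For one derivative, I would use $Y\rho(g_t)=\rho(g_t)\,\Ad_{g_t^{-1}}Y$ and split
\[
\Ad_{g_t^{-1}}Y=\sum_j a_j(t)\,u_j+\sum_i b_i(t)\,w_i .
\]
The $Q$-part gives terms $\rho(g_t)w_i\xi$, which are bounded by $\|\xi\|_{Q,1}$. For the $S$-part, $\rho(g_t)u_j\xi=\partial_{t_j}\rho(g_t)\xi$, so I would integrate by parts to move $\partial_{t_j}$ onto the weight $\check f_1\check f_2\,a_j$. This produces an $L^2$ bound by $\|\xi\|$. The coefficients $a_j,b_i$ are matrix entries of $\Ad_{\exp(-t\cdot u)}$. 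They grow at most polynomially when the $u_j$ are nilpotent, which is the case used later ($u_1=U$, $u_2=\mathrm iU$). In that case the Schwartz decay of $\check f_j$ absorbs the growth, and every constant depends only on $f_1,f_2,l$, not on $\rho$.

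For higher order, I would induct on $k$. Applying $Y_1\cdots Y_k$ and commuting each derivative through $\rho(g_t)$ gives $\rho(g_t)\,(\Ad_{g_t^{-1}}Y_1)\cdots(\Ad_{g_t^{-1}}Y_k)\xi$. I would expand each factor in the adapted basis and use PBW to reorder every word as (ordered $S$-monomial)(ordered $Q$-monomial). The commutators created by reordering are words of lower total degree, still with polynomially bounded coefficients, and are handled by the inductive hypothesis. The $S$-monomial is then converted into $t$-derivatives of $\rho(g_t)$; this uses that $S$ is abelian, so $\rho(g_t)u^\alpha=\partial_t^\alpha\rho(g_t)$. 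These derivatives are removed by $|\alpha|$ integrations by parts onto the Schwartz weight. What remains is $\|w^\beta\xi\|\le\|\xi\|_{Q,|\beta|}\le\|\xi\|_{Q,l}$, which yields $\|\xi'\|_l\le C_{f_1,f_2,l}\|\xi\|_{Q,l}$ and in particular $\xi'\in W^n(H_\rho)$.

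I expect the main obstacle to be the growth of $\Ad_{g_t^{-1}}$ and the combinatorial bookkeeping of the reordering. If some $u_j$ were not nilpotent, the coefficients could grow exponentially in $t$. In that case I would first try to choose the $f_j$ so that $\check f_j$ is compactly supported, which is still Schwartz, so the $t$-integrals stay over a compact set. A secondary technical point is justifying the integrations by parts and the commutation for $\xi$ that is only $Q$-smooth; I would handle this by first proving the estimate for $\xi\in W^\infty(H_\rho)$ and then extending by density, since $W^\infty$ is dense in $W^{n,Q}$ and both sides of the estimate are continuous.
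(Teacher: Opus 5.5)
The paper does not prove this lemma; it imports it from \cite{W5}, so there is no internal argument to compare against. Judged on its own, your proof is correct when $\mathrm{ad}(u_1)$ and $\mathrm{ad}(u_2)$ are nilpotent. That is the only case the paper uses: $u_1=u_2=U$ with $\Lie(Q)=\langle X,V\rangle$ in the real case, and $u_1=U$, $u_2=\mathrm iU$ with $Q$ the opposite Borel in the complex case.

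In that case $\Ad_{g_t^{-1}}$ acts on words of degree $\le k$ by a matrix polynomial in $t$. After PBW reordering into $u^\alpha w^\beta$ and moving $u^\alpha=\partial_t^\alpha$ onto the Schwartz weight, you get $\|Y^\gamma\xi'\|\le C\sum_{|\beta|\le|\gamma|}\|w^\beta\xi\|$. Here $C$ depends only on $f_1,f_2$ and the structure constants, so it is uniform in $\rho$, as the remark after the lemma requires. No induction is needed for the commutator terms: you are simply expanding $\Ad_{g_t^{-1}}(Y_1\cdots Y_k)$ in the PBW basis.

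Three small points:
\begin{itemize}
\item When $u_1=u_2=U$, your adapted basis should just be $\{U,X,V\}$.
\item Theorem~\ref{th:15} only covers even orders, so odd integer $l$ also needs interpolation.
\item The density of $W^\infty(H_\rho)$ in $W^{n,Q}(H_\rho)$ deserves a line. Approximate $\xi$ in $H_\rho$ by $P$-smooth vectors, then mollify along $Q$; this keeps $P$-smoothness and converges in $\|\cdot\|_{Q,n}$.
\end{itemize}

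Your fallback for non-nilpotent $u_j$ does not prove the lemma as stated. Requiring $\check f_j$ to be compactly supported changes the hypothesis. It also excludes the cutoff of \eqref{for:269}: a function whose inverse Fourier transform is compactly supported is entire, so it cannot equal $1$ near $0$ and still be Schwartz. This does not matter for the paper, since that case never arises.

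In fact the lemma, as restated here, needs a nilpotency or decay hypothesis, because it fails without one. Take $P=SL(2,\RR)$, $u_1=u_2=X$, and a principal series on $L^2(\RR,dx)$ with $\rho(\exp tX)\phi(x)=e^{-t(1+\mathrm i\nu)}\phi(e^{-2t}x)$ and $U=-\partial_x$. Let $f_1=f_2$ be the positive square root of the Fourier transform of $e^{-\sqrt{1+t^2}/2}$. For a suitable $\xi\in C_c^\infty((\tfrac12,2))$ one finds $|\partial_x\xi'(x)|\asymp x^{-5/4}$ as $x\to0^+$. Hence $\xi'\notin W^1(H_\rho)$, even though $\xi\in W^\infty(H_\rho)\subset W^{n,Q}(H_\rho)$ for any Borel $Q$ complementary to $\exp(\RR X)$. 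So your diagnosis that polynomial growth of $\Ad_{g_t^{-1}}$ is the essential ingredient is exactly right.
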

\begin{remark} The constant $C_{f_1,f_2,l}$ depends only on $f_1$, $f_2$ and $l$, and can be chosen
uniformly for all unitary representations $\rho$ of $P$; in particular, it does not
depend on the specific representation $(\rho,H_\rho)$.
\end{remark}
We now prove Lemma \ref{le:16}. Fix \( f\in \mathcal{S}(\mathbb{R}) \) satisfying  $0\leq f\leq 1$ and
\begin{align}\label{for:269}
 f(t) =
\begin{cases}
1, & \text{if } |t| \leq 1, \\
0, & \text{if } |t| \geq 2.
\end{cases}
\end{align}
Suppose $k=\RR$.  Let $\mathcal{P}:=\mathcal{P}_{U,f}$. Then property \eqref{for:24} follows immediately from \eqref{for:215}.

To see \eqref{for:77}, note that
\[
  I-\mathcal{P}=\int_{\widehat{\RR}} (1-f)(\chi)\, d\sigma(\chi).
\]
By the choice of $f$, we have
\[
  \big\||\chi|^{-q}(1-f)(\chi)\big\|_\infty \le 1+\|f\|_\infty
\]
for every $q>0$. Therefore, for any $\psi\in H_\rho$ the fractional equation
\[
  |U|^q\omega_q=\psi-\mathcal{P}\psi
\]
has the unique solution
\[
  \omega_q=\int_{\widehat{\RR}} |\chi|^{-q}(1-f)(\chi)\,\psi_\chi\, d\sigma(\chi)\in H_\rho
\]
and we obtain the estimate
\[
  \|\omega_q\|\le \big\||\chi|^{-q}(1-f)(\chi)\big\|_\infty\|\psi\|
  \le (1+\|f\|_\infty)\|\psi\|,
\]
which is \eqref{for:77} (with a constant depending only on $f$, and in particular independent of the specific representation $\rho$).

For \eqref{for:76}, let $Q$ be the subgroup with Lie algebra spanned by $X$ and $V$, and let $u_1=u_2=U$. Then the hypotheses of Lemma~\ref{le:14} apply, and we obtain, for every $n\in\NN$,
\[
  \|\mathcal{P}\psi\|_{2n}\le C_{\mathcal P,n}\,\|\psi\|_{Q,2n}
  \;\le\; C_{\mathcal P,n,1}\,\|\Sigma^n\psi\|,
\]
since $\Sigma^n$ is elliptic in the directions of $X$ and $V$ and controls the $Q$-Sobolev norms. This yields \eqref{for:76}. Thus Lemma~\ref{le:16} is proved in the case $k=\RR$.

Suppose $k=\CC$.  Let $\mathcal{P}_1=\mathcal{P}_{U,f}$ and $\mathcal{P}_2=\mathcal{P}_{\mathrm{i}U,f}$. Since $U$ and $\mathrm{i}U$ commute, the corresponding spectral projections commute as well, so
\[
  \mathcal{P}_1\mathcal{P}_2 = \mathcal{P}_2\mathcal{P}_1.
\]
Property \eqref{for:255} then follows from \eqref{for:215}.

 For \eqref{for:249}, the same argument as for \eqref{for:77} applies.

Finally, for \eqref{for:76} in the complex case, we apply Lemma~\ref{le:14} with $S$ the abelian subgroup generated by $U$ and $\mathrm{i}U$, and $Q$ the subgroup generated by $X$, $V$, $\mathrm{i}X$, and $\mathrm{i}V$. Then $\Sigma^n$ is elliptic in the $Q$-directions, and Lemma~\ref{le:14} gives
\[
  \|\mathcal{P}_1\mathcal{P}_2\psi\|_{2n}
  \;\le\; C_{\mathcal{P}_1,\mathcal{P}_2,n}\,\|\psi\|_{Q,2n}
  \;\le\; C_{\mathcal{P}_1,\mathcal{P}_2,n,1}\,\|\Sigma^n\psi\|.
\]
This establishes the claimed smoothing property and completes the proof of Lemma~\ref{le:16}. Hence, we complete the proof of Proposition~\ref{po:4}.

\subsection{Proof of Theorem \ref{th:6}: solvability and estimates}

We now combine the rank-one inputs obtained above. The proof is by induction
on the number of commuting factors. The commutation relations between the
operators associated with different factors allow one to apply the rank-one
solvability statements successively, while preserving the required Sobolev
control. In the partial-Sobolev case this produces a decomposition of the data
into $2^n$ terms; in the full-Sobolev case no such decomposition is needed.

Before we proceed to the proof of Theorem \ref{th:6}, we list two lemmas that will be used in the proof.
\begin{lemma}\label{ob:1} Suppose $\mathcal{A}$ is an essentially self-adjoint operator on a Hilbert space $\mathcal{L}$ and satisfies
\begin{align}\label{for:264}
 \langle \mathcal{A}\vartheta, \vartheta\rangle \geq\norm{\vartheta}^2\qquad \forall\, \vartheta\in \text{Dom}(\mathcal{A}).
\end{align} Then
\begin{enumerate}
  \item\label{for:116} if $\mathcal{A}^r\vartheta\in \mathcal{L}$ for some $r>0$ and $\vartheta\in \mathcal{L}$, then $\mathcal{A}^a\vartheta\in \mathcal{L}$ for any $0\leq a\leq r$;

  \smallskip
  \item\label{for:117} for any $r\leq 0$,  $\mathcal{A}^r$ is a bounded linear map on $\mathcal{L}$ with $\norm{\mathcal{A}^r}\leq 1$.
\end{enumerate}
\end{lemma}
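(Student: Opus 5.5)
The plan is to reduce both statements to the functional calculus of Section~\ref{sec:19}, after showing that the closure of $\mathcal A$ has spectrum contained in $[1,\infty)$. As agreed there, $\mathcal A^r$ means $\bar{\mathcal A}^{\,r}$. The hypothesis ``$\mathcal A^r\vartheta\in\mathcal L$'' means that $\vartheta\in\mathrm{Dom}(\bar{\mathcal A}^{\,r})$.

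\emph{Step 1: the spectrum lies in $[1,\infty)$.} Let $\vartheta\in\mathrm{Dom}(\bar{\mathcal A})$. Choose $\vartheta_k\in\mathrm{Dom}(\mathcal A)$ with $\vartheta_k\to\vartheta$ and $\mathcal A\vartheta_k\to\bar{\mathcal A}\vartheta$. Passing to the limit in \eqref{for:264} gives
\[
\langle\bar{\mathcal A}\vartheta,\vartheta\rangle\ge\|\vartheta\|^2
\qquad\text{for all }\vartheta\in\mathrm{Dom}(\bar{\mathcal A}).
\]
Since $\bar{\mathcal A}$ is self-adjoint, its spectrum lies in the closure of its numerical range, hence in $[1,\infty)$. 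An equivalent argument: for $\lambda<1$, Cauchy--Schwarz gives $\|(\bar{\mathcal A}-\lambda)\vartheta\|\ge(1-\lambda)\|\vartheta\|$. Together with self-adjointness, this makes $\bar{\mathcal A}-\lambda$ invertible. The same inequality at $\lambda=0$ gives $\|\bar{\mathcal A}\vartheta\|\ge\|\vartheta\|$, so $\bar{\mathcal A}$ is strictly positive with $c=1$. Section~\ref{sec:19} then gives a spectral measure $\tau$ on $[1,\infty)$ and a decomposition $\vartheta=\int_{[1,\infty)}\vartheta_\lambda\,d\tau(\lambda)$. In this decomposition $\bar{\mathcal A}^{\,s}$ acts by multiplication by $\lambda^s$ for every real $s$.

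\emph{Step 2: part \eqref{for:116}.} Let $0\le a\le r$. For $\lambda\ge1$ we have $\lambda^{2a}\le\lambda^{2r}$. Hence
\[
\int_{[1,\infty)}\lambda^{2a}\|\vartheta_\lambda\|^2\,d\tau(\lambda)\le\int_{[1,\infty)}\lambda^{2r}\|\vartheta_\lambda\|^2\,d\tau(\lambda)<\infty.
\]
So $\vartheta\in\mathrm{Dom}(\mathcal A^a)$, with $\|\mathcal A^a\vartheta\|\le\|\mathcal A^r\vartheta\|$.

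\emph{Step 3: part \eqref{for:117}.} Let $r\le0$. Then $0<\lambda^r\le1$ on $[1,\infty)$. So the domain condition holds for every $\vartheta\in\mathcal L$, and
\[
\|\mathcal A^r\vartheta\|^2=\int\lambda^{2r}\|\vartheta_\lambda\|^2\,d\tau(\lambda)\le\|\vartheta\|^2.
\]
Thus $\mathcal A^r$ is a bounded operator of norm at most $1$.

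The only point requiring care is Step 1: transferring the lower bound from $\mathcal A$ to its closure, and deducing that the spectrum lies in $[1,\infty)$. Once that is done, Steps 2 and 3 are immediate comparisons of the multipliers $\lambda^{2a}\le\lambda^{2r}$ and $\lambda^{2r}\le1$ on $[1,\infty)$.
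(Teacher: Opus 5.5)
Your proposal is correct and follows the same route as the paper: place the spectrum of the closure in $[1,\infty)$, then use the functional calculus with the pointwise bounds $\lambda^{2a}\le\lambda^{2r}$ for $0\le a\le r$ and $\lambda^{2r}\le 1$ for $r\le 0$. The only difference is that you justify the spectral inclusion, by passing the lower bound to $\bar{\mathcal A}$ and using self-adjointness, whereas the paper simply asserts that step.
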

\begin{proof} We recall spectral theory in Section \ref{sec:19}. The assumption implies that the spectrum of \(\mathcal{A}\) is contained in \([1,\infty)\). Then for any measurable function \(f\) defined on \([1,\infty)\), one can define the operator \(f(\mathcal{A})\).

\eqref{for:116}:  We note that for any \(0\leq a\leq r\), the function \(x\mapsto x^a\) is dominated by \(x\mapsto x^r\) on \([1,\infty)\).
Then  it follows from the spectral calculus that \(\mathcal{A}^a\vartheta\) belongs to \(\mathcal{L}\).

\eqref{for:117}: If \(r\leq 0\), then for every \(x\geq 1\) we have \(x^r\leq 1\). Consequently, the operator \(\mathcal{A}^r\) is bounded with \(\|\mathcal{A}^r\|\leq 1\).

\end{proof}

\begin{lemma}\label{ob:2}
For any $1\leq i\neq j\leq n$, any $c_i,c_j\in\RR$, and any $r\ge0$, we have
\[
 \mathcal{R}_{i}^{c_i} \mathcal{R}_{j}^{c_j}=\mathcal{R}_{j}^{c_j} \mathcal{R}_{i}^{c_i}
  \quad\text{and}\quad
  \mathcal{R}_{i}^{c_i}|Y_j|^r=|Y_j|^r\mathcal{R}_{i}^{c_i},
\]
where $\mathcal{R}_i$ denotes either $\Sigma_i$ or $\mathcal{D}_i$, and $Y_j$ denotes either $U_j$ or $\mathrm{i}U_j$ (according to whether
$k_j=\RR$ or $k_j=\CC$).
\end{lemma}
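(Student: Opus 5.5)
The plan is to reduce both identities to commutation at the level of the Lie algebra, and then push that commutation through the functional calculus. The key point is that for $i\neq j$ the factors $S_i$ and $S_j$ commute, because $\Lie(S)$ is a direct sum of the ideals $\mathfrak{sl}(2,k_i)$. So every element of $\Lie(S_i)$ commutes with every element of $\Lie(S_j)$. It follows that the unitary operators $\beta(g)$, $g\in S_i$, commute with $\beta(h)$, $h\in S_j$, at least on the identity components, which is all we use since $S_i$ and $S_j$ are connected.

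First we handle $\mathcal R_i^{c_i}|Y_j|^r=|Y_j|^r\mathcal R_i^{c_i}$.
\begin{itemize}
\item The one-parameter unitary group $\beta(\exp(tY_j))$ commutes with $\beta(g)$ for every $g\in S_i$.
\item Hence $\beta(\exp(tY_j))$ preserves $W^{\infty}(\mathcal L)$ and commutes there with $d\beta(Z)$ for $Z\in\Lie(S_i)$.
\item Therefore it commutes with the essentially self-adjoint operators $\Sigma_i$ and $\mathcal D_i$, which are elliptic or partially elliptic, are nonnegative plus $I$, and are built from $\Lie(S_i)$ on the common core $W^\infty$. Passing to closures, $\beta(\exp(tY_j))$ commutes with $\bar\Sigma_i$ and $\bar{\mathcal D}_i$ in the strong sense: it maps the domain into itself and intertwines.
\item By Stone's theorem and the standard fact that a bounded operator commuting with a self-adjoint operator commutes with its spectral projections, the spectral measure $\sigma_j$ of $Y_j$ commutes with that of $\bar{\mathcal R}_i$.
\item Since $\bar{\mathcal R}_i\ge I$ (Lemma \ref{ob:1} setting), both $\mathcal R_i^{c_i}$ and $|Y_j|^r=\int|\chi|^r\,d\sigma_j$ are Borel functions of two strongly commuting self-adjoint operators. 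Hence they commute on the natural domain. Equivalently, we may use the joint spectral resolution and write both operators as multiplication operators in the same direct integral.
\end{itemize}

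Second, $\mathcal R_i^{c_i}\mathcal R_j^{c_j}=\mathcal R_j^{c_j}\mathcal R_i^{c_i}$ follows the same way. Every $\beta(h)$, $h\in S_j$, commutes with $\bar{\mathcal R}_i$ by the argument above. Consequently the spectral projections of $\bar{\mathcal R}_i$ commute with all $\beta(h)$, $h\in S_j$. They then commute with $d\beta(\Lie(S_j))$ on smooth vectors and hence with $\bar{\mathcal R}_j$ and its spectral projections. Functional calculus on the joint spectral measure then gives the claim for all real $c_i,c_j$. For $c\le0$ the operators are bounded by Lemma \ref{ob:1}. For $c>0$ the equality is understood on the common domain, where it holds as an identity of multiplication operators.

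The main (mild) obstacle is the unbounded-operator bookkeeping: going from commutation of $d\beta$'s on $W^\infty$ to strong commutation of the closures. We handle it by working throughout with the bounded unitaries $\beta(g)$ and resolvents, which preserve $W^\infty$ and commute exactly. An alternative is to decompose $\beta|_{S_i\times S_j}$ as a direct integral of tensor products $\rho_z\otimes\rho'_z$ via Section \ref{sec:20}. There $\mathcal R_i$ acts on the first factor and $\mathcal R_j$ and $|Y_j|^r$ act on the second, so commutation is immediate.
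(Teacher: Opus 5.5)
Your argument is correct, but your main route differs from the paper's. The paper's proof is exactly your closing ``alternative'': it decomposes $\beta|_{S_iS_j}$ into a direct integral of irreducible tensor products $\rho_1\otimes\rho_2$ of representations of $S_i$ and $S_j$. There $\mathcal R_i$ acts in the first factor and $\mathcal R_j$, $|Y_j|^r$ act in the second, so commutation is immediate fiberwise.

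Your primary argument instead stays on $\mathcal L$. It runs: elementwise commutation of $\beta(S_i)$ and $\beta(S_j)$, then intertwining of the closures on a core, then commuting spectral measures, then the joint functional calculus. This frees you from the type~I/direct-integral machinery and from the bookkeeping needed to identify fiberwise fractional powers with the global ones. It also makes explicit the domain issues that the paper's operator identity glosses over:
\begin{itemize}
\item For $c_i,c_j\ge 0$, the products $\mathcal R_i^{c_i}\mathcal R_j^{c_j}$ and $\mathcal R_j^{c_j}\mathcal R_i^{c_i}$ actually have the same domain, since both symbols are $\ge 1$.
\item With $c_i>0$, the products $\mathcal R_i^{c_i}|Y_j|^r$ and $|Y_j|^r\mathcal R_i^{c_i}$ agree only on their common domain. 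This is harmless, because the paper invokes that identity only with $c_i\le 0$, where $\mathcal R_i^{c_i}$ is bounded.
\end{itemize}
The paper's route is shorter and makes the commutation visibly structural.

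One small repair is needed in your second paragraph. The spectral projections of $\bar\Sigma_i$ commute with $\beta(S_j)$ but not with $\beta(S_i)$, so they need not preserve $W^\infty(\mathcal L)$, the $S$-smooth vectors. Use the $S_j$-smooth vectors as the core for $\bar{\mathcal R}_j$ instead:
\begin{itemize}
\item The projections preserve this space and commute with $d\beta(\Lie(S_j))$ on it.
\item It contains $W^\infty(\mathcal L)$, and $\mathcal R_j$ is symmetric on it.
\item Hence the closure of $\mathcal R_j$ from this larger domain is still the self-adjoint $\bar{\mathcal R}_j$, so it is a legitimate core.
\end{itemize}
With this reading, your step ``commute with $d\beta(\Lie(S_j))$ on smooth vectors, hence with $\bar{\mathcal R}_j$'' goes through.
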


\begin{proof}
We recall the direct integral decomposition in Section \ref{sec:20}. Because $S_i$ and $S_j$ commute, we see that
$(\beta|_{S_iS_j},\,\mathcal{L})$ decomposes into a direct integral of irreducible representations of the form: $(\rho_1 \otimes \rho_2, \,\mathcal{H}_1 \otimes \mathcal{H}_2)$, where $(\rho_1, \mathcal{H}_1)$ is an irreducible representation of $S_i$ and $(\rho_2, \mathcal{H}_2)$ is an irreducible representation of $S_j$.  On $\mathcal{H}_1\otimes\mathcal{H}_2$, \(\mathcal{R}_i\) acts as an operator on \(\mathcal{H}_1\) and \(\mathcal{R}_j\) on \(\mathcal{H}_2\), hence
\[
  \mathcal{R}_{i}^{c_i} \mathcal{R}_{j}^{c_j}=\mathcal{R}_{j}^{c_j} \mathcal{R}_{i}^{c_i}\quad\text{on }\mathcal{H}_1\otimes\mathcal{H}_2.
\]
Since $\mathcal{L}$ decomposes into a direct integral of tensor products $\mathcal{H}_1\otimes\mathcal{H}_2$, it follows that

\[
  \mathcal{R}_{i}^{r_i} \mathcal{R}_{j}^{r_j}=\mathcal{R}_{j}^{r_j} \mathcal{R}_{i}^{r_i}\quad\text{on }\mathcal{L}.
\]
Likewise \(|U_j|\) or $\textrm{i}U_j$ (affiliated to \(S_j\)) commutes with \(\mathcal{R}_i\). Thus, we have
\begin{align*}
  \mathcal{R}_{i}^{c_i}|Y_j|^r=|Y_j|^r\mathcal{R}_{i}^{c_i},\qquad \forall\,c_i,\, r\in\RR.
\end{align*}
Thus, we complete the proof.
\end{proof}
Fix $1\leq i\leq n$. Suppose $k_i=\CC$.  Let $E_{i,1}$ (resp. $E_{i,2}$) be the spectral measure of the positive self-adjoint operator $|U_i|$ (resp. $|\textrm{i}U_i|$),
  so that
  \[
    |U_i| = \int_{[0,\infty)} \lambda\,dE_{i,1}(\lambda)\quad\text{and}\quad|\textrm{i}U_i| = \int_{[0,\infty)} \lambda\,dE_{i,2}(\lambda).
  \]
Recall $\mathcal{P}_{U,f}$ and $\mathcal{P}_{\mathrm{i}U_i,f}$ defined in Section \ref{sec:41}, where $f$ is as in \eqref{for:269}. Then
  \[
    \mathcal{P}_1:=\mathcal{P}_{U_i,f} = \int_{[0,\infty)} f(\lambda)\,dE_{i,1}(\lambda)\quad\text{and}\quad \mathcal{P}_2:=\mathcal{P}_{\mathrm{i}U_i,f} = \int_{[0,\infty)} f(\lambda)\,dE_{i,2}(\lambda).
  \]
\begin{lemma}\label{le:20} Let $r_1,\dots,r_n\ge0$ and define
\begin{align*}
  \mathcal A_i \;=\; \prod_{j\ne i}|Y_j|^{r_j}
\end{align*}
where $Y_j$ denotes either $U_j$ or $\mathrm{i}U_j$ according to whether
$k_j=\mathbb R$ or $k_j=\mathbb C$.

If $\psi\in\mathcal L$ and $\mathcal A_i\psi\in\mathcal L$, then
$\mathcal A_i\mathcal{P}_1\psi\in\mathcal L$ and
$\mathcal A_i(\mathcal{P}_1-\mathcal{P}_2\mathcal{P}_1)\psi\in\mathcal L$, and
\[
  \big\|\mathcal A_i\mathcal{P}_1\psi\big\|
    \;\le\; \|\mathcal A_i\psi\|,
  \qquad
  \big\|\mathcal A_i(\mathcal{P}_1-\mathcal{P}_2\mathcal{P}_1)\psi\big\|
    \;\le\; 2\|\mathcal A_i\psi\|.
\]

\end{lemma}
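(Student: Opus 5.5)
The plan is to work entirely in the joint spectral calculus of a family of commuting self-adjoint operators. We first check commutation. The operators $|Y_j|^{r_j}$ for $j\ne i$ are affiliated with the factors $S_j$, and $\mathcal P_1=\mathcal P_{U_i,f}$, $\mathcal P_2=\mathcal P_{\mathrm iU_i,f}$ are bounded Borel functions of $U_i$ and $\mathrm iU_i$, which lie in $S_i$. Since $S_i$ commutes with every $S_j$ for $j\neq i$, the one-parameter unitary groups generated by $U_i$, $\mathrm iU_i$ and the $Y_j$ ($j\ne i$) all commute. Hence they have a joint projection-valued measure $E$ on $\RR^{2}\times\RR^{n-1}$ (or a higher-dimensional space if several complex factors occur). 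This is exactly as in the proof of Lemma~\ref{ob:2}: decompose into a direct integral of tensor products $\mathcal H_1\otimes\mathcal H_2$, so that the spectral measures of operators from distinct factors commute.

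In this joint model we have
\[
\mathcal A_i=\int\prod_{j\ne i}|\chi_j|^{r_j}\,dE,\qquad \mathcal P_1=\int f(\chi_{i,1})\,dE,\qquad \mathcal P_2=\int f(\chi_{i,2})\,dE.
\]
Therefore $\mathcal P_1$ and $\mathcal P_2\mathcal P_1$ are multiplications by functions $g$ with $0\le g\le 1$. For $\psi\in\mathrm{Dom}(\mathcal A_i)$ we then have
\[
\|\mathcal A_i\mathcal P_1\psi\|^2=\int\prod_{j\neq i}|\chi_j|^{2r_j}\,f(\chi_{i,1})^2\,d\langle E\psi,\psi\rangle\le\int\prod_{j\neq i}|\chi_j|^{2r_j}\,d\langle E\psi,\psi\rangle=\|\mathcal A_i\psi\|^2 .
\]
The finiteness of the middle integral gives $\mathcal P_1\psi\in\mathrm{Dom}(\mathcal A_i)$, which is the required membership $\mathcal A_i\mathcal P_1\psi\in\mathcal L$.

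For the second bound, $\mathcal P_2\mathcal P_1$ corresponds to $f(\chi_{i,2})f(\chi_{i,1})\in[0,1]$. The same computation gives $\|\mathcal A_i\mathcal P_2\mathcal P_1\psi\|\le\|\mathcal A_i\psi\|$, and the triangle inequality yields the factor $2$. Alternatively, $1-f(\chi_{i,2})\in[0,1]$ gives the sharper constant $1$. The membership claims follow from the finiteness of these integrals.

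The only step that needs care is justifying the joint spectral measure, i.e.\ strong commutation of the unbounded operators $|Y_j|$ with the spectral projections of $U_i$ and $\mathrm iU_i$. I would deduce this from the commutation of the unitary groups $\beta(\exp tU_i)$ and $\beta(\exp sY_j)$, since the corresponding subgroups commute in $S$. By Stone's theorem, this implies that all the spectral projections commute.
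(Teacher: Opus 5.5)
Your proof is correct and follows the paper's route. The paper also argues that $\mathcal A_i$ commutes with bounded Borel functions of $|U_i|$ and $|\mathrm{i}U_i|$, hence with $\mathcal P_1,\mathcal P_2$, and then concludes from $\|\mathcal P_j\|\le 1$ and the triangle inequality. Your joint spectral measure formulation additionally makes the domain inclusion $\mathcal P_1\,\mathrm{Dom}(\mathcal A_i)\subseteq\mathrm{Dom}(\mathcal A_i)$ explicit, and it shows that the constant $2$ in the second bound can be improved to $1$.
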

\begin{proof} Since
\[
  |U_i|\mathcal A_i = \mathcal A_i |U_i|\quad\text{and}\quad  |\textrm{i}U_i|\mathcal A_i = \mathcal A_i |\textrm{i}U_i|,
\]
$\mathcal A_i$ commutes with every bounded Borel function of $|U_i|$ and $|\textrm{i}U_i|$. In particular $f(|U_i|)=\mathcal P_1$ and $f(|\textrm{i}U_i|)=\mathcal P_2$ commute with $\mathcal A_i$.
Thus
\[
  \mathcal A_i\mathcal{P}_1 = \mathcal{P}_1\mathcal A_i,
  \qquad
  \mathcal A_i(\mathcal{P}_1-\mathcal{P}_2\mathcal{P}_1)
   = (\mathcal{P}_1-\mathcal{P}_2\mathcal{P}_1)\mathcal A_i.
\]
By the construction in Section~\ref{sec:41} we chose $f$ with
$|f|\le1$, so $\|\mathcal{P}_j\|\le1$ for $j=1,2$ (see \eqref{for:215} of Section~\ref{sec:41}). Therefore,
for any $\psi$ with $\mathcal A_i\psi\in\mathcal L$,
\[
  \big\|\mathcal A_i\mathcal{P}_1\psi\big\|
  =\big\|\mathcal{P}_1\mathcal A_i\psi\big\|
  \le\|\mathcal{P}_1\|\,\|\mathcal A_i\psi\|
  \le \|\mathcal A_i\psi\|
\]
and
\[
  \big\|\mathcal A_i(\mathcal{P}_1-\mathcal{P}_2\mathcal{P}_1)\psi\big\|
  =\big\|(\mathcal{P}_1-\mathcal{P}_2\mathcal{P}_1)\mathcal A_i\psi\big\|
  \le \big(\|\mathcal{P}_1\|+\|\mathcal{P}_2\mathcal{P}_1\|\big)\,\|\mathcal A_i\psi\|
  \le 2\,\|\mathcal A_i\psi\|.
\]
This proves the claim.
\end{proof}
Fix once and for all either $\mathcal R_j=\Sigma_j$ for all $j$, or
$\mathcal R_j=\mathcal D_j$ for all $j$.  We claim that, for each $1\le i\le n$ and every
\[
  \xi\in\mathcal L\quad\text{with}\quad
  \mathcal{R}_i^{\frac{\zeta_i}{2}}\cdots\mathcal{R}_1^{\frac{\zeta_1}{2}}\xi\in\mathcal L,
\]
the following holds:
\begin{enumerate}
  \item $\mathcal{R}=\Sigma$: there exist vectors $\omega_{\lambda}, \,\xi_\lambda\in\mathcal L$, $1\le\lambda\le 2^i$, such that
\[
   |\Lambda_{1,\lambda}|^{r_1}\cdots|\Lambda_{i,\lambda}|^{r_i}\,\omega_{\lambda} = \xi_\lambda,\quad \sum_{\lambda}\xi_\lambda=\xi
\]
  \item $\mathcal{R}=\mathcal{D}$: for any $1\le\lambda\le 2^i$, there exists $\omega_{\lambda}\in\mathcal L$, such that
\[
   |\Lambda_{1,\lambda}|^{r_1}\cdots|\Lambda_{i,\lambda}|^{r_i}\,\omega_{\lambda} = \xi.
\]
\end{enumerate}
In either case we have the uniform estimate
\[
  \|\omega_{\lambda}\|\le
  C_{\mathfrak r_i,\mathfrak p_i}\,
  \big\|\mathcal{R}_i^{\frac{\zeta_i}{2}}\cdots\mathcal{R}_1^{\frac{\zeta_1}{2}}\xi\big\|,
\]
where $\mathfrak{r}_i=(r_1,\cdots, r_i)$ and $\mathfrak{p}_i=(\gamma_1,\cdots, \gamma_i)$.

We prove this result by induction on $i$.  \emph{Base case $i=1$.} We consider the restricted representation $(\beta|_{S_1},\,\mathcal{L})$.
If $\mathcal R_1=\Sigma_1$, the statement follows from
Proposition~\ref{po:4}. If $\mathcal R_1=\mathcal D_1$, it follows from
Lemma~\ref{le:23}. In the complex case for $S_{1}$, we choose $a,b\ge0$ with
$a+b=r_{1}$ so that either $\Lambda_{1,\lambda}=U_{1}$ (take $a=r_{1},b=0$) or
$\Lambda_{1,\lambda}=\mathrm{i}U_{1}$ (take $a=0,b=r_{1}$) in Lemma~\ref{le:23}.

Suppose the result holds for some $i$. Now, suppose that $\xi\in \mathcal{L}$ satisfies
\begin{align*}
 \mathcal{R}_{i+1}^{\frac{\zeta_{i+1}}{2}}\mathcal{R}_i^{\frac{\zeta_i}{2}}\cdots\mathcal{R}_1^{\frac{\zeta_1}{2}}\xi\in \mathcal{L}.
\end{align*}
 Lemma \ref{ob:2} shows that
\begin{align}\label{for:162}
   &\mathcal{R}_i^{\frac{\zeta_i}{2}}\cdots\mathcal{R}_1^{\frac{\zeta_1}{2}}(\mathcal{R}_{i+1}^{\frac{\zeta_{i+1}}{2}}\xi)=\mathcal{R}_{i+1}^{\frac{\zeta_{i+1}}{2}}\mathcal{R}_i^{\frac{\zeta_i}{2}}\cdots
   \mathcal{R}_1^{\frac{\zeta_1}{2}}\xi\in \mathcal{L}.
\end{align}
Applying successively the bounded operators \(\mathcal{R}_j^{-\frac{\zeta_j}{2}}\) (\eqref{for:117} of Lemma \ref{ob:1}) and using Lemma \ref{ob:2}, we see that
\begin{equation}\label{for:159}
 \mathcal{R}_{i+1}^{\frac{\zeta_{i+1}}{2}}\xi\in\mathcal{L}.
\end{equation}
\emph{Step 1}: apply the induction hypothesis in the first $i$ factors.
\begin{enumerate}
  \item $\mathcal{R}=\Sigma$: From \eqref{for:162} and \eqref{for:159}, it follows from the inductive assumption that there is a decomposition
\begin{align*}
 \sum_{\lambda=1}^{2^i}\big(\mathcal{R}_{i+1}^{\frac{\zeta_{i+1}}{2}}\xi\big)_\lambda=\mathcal{R}_{i+1}^{\frac{\zeta_{i+1}}{2}}\xi\quad\text{with}\quad \big(\mathcal{R}_{i+1}^{\frac{\zeta_{i+1}}{2}}\xi\big)_\lambda\in\mathcal{L},
\end{align*}
 and each of the equations
\begin{align}\label{for:160}
   |\Lambda_{1,\lambda}|^{r_1}|\Lambda_{2,\lambda}|^{r_2}\cdots |\Lambda_{i,\lambda}|^{r_i}\psi_\lambda=\big(\mathcal{R}_{i+1}^{\frac{\zeta_{i+1}}{2}}\xi\big)_\lambda
\end{align}
  has solutions $\psi_\lambda\in \mathcal{L}$ with the estimate
  \begin{align}\label{for:161}
 \norm{\psi_\lambda}\leq C_{\mathfrak{r}_i,\mathfrak{p}_i} \big\|\mathcal{R}_i^{\frac{\zeta_i}{2}}\cdots\mathcal{R}_1^{\frac{\zeta_1}{2}}(\mathcal{R}_{i+1}^{\frac{\zeta_{i+1}}{2}}\xi)\big\|
 \overset{\text{(1)}}{=}
     C_{\mathfrak{r}_{i},\mathfrak{p}_{i}}\,
     \big\|\mathcal{R}_{i+1}^{\frac{\zeta_{i+1}}{2}}\mathcal{R}_i^{\frac{\zeta_i}{2}}\cdots\mathcal{R}_1^{\frac{\zeta_1}{2}}\xi\big\|.
\end{align}
Here in $(1)$ we use \eqref{for:162}.
  \item $\mathcal{R}=\mathcal{D}$: for any $1\le\lambda\le 2^i$, it follows from the inductive assumption that there exists
        $\psi_\lambda\in\mathcal L$ solving the equation
\begin{align}\label{for:279}
   |\Lambda_{1,\lambda}|^{r_1}|\Lambda_{2,\lambda}|^{r_2}\cdots |\Lambda_{i,\lambda}|^{r_i}\psi_\lambda=\mathcal{R}_{i+1}^{\frac{\zeta_{i+1}}{2}}\xi
\end{align}
 and satisfying the same estimate \eqref{for:161}.
\end{enumerate}
In both cases, it follows from \eqref{for:117} of Lemma \ref{ob:1} that $\mathcal{R}_{i+1}^{-\frac{\zeta_{i+1}}{2}}$ is a bounded linear map on $\mathcal{L}$ and
\begin{align}\label{for:149}
  \theta_\lambda=\mathcal{R}_{i+1}^{-\frac{\zeta_{i+1}}{2}}\psi_\lambda \in \mathcal{L}
\end{align}
\emph{Step 2}: incorporate the $(i+1)$-st factor.

\emph{Case I}:  $\mathcal{R}=\mathcal{D}$.  From \eqref{for:279} we have
\begin{align}\label{for:280}
  \mathcal{R}_{i+1}^{-\frac{\zeta_{i+1}}{2}}&\big( |\Lambda_{1,\lambda}|^{r_1}|\Lambda_{2,\lambda}|^{r_2}\cdots |\Lambda_{i,\lambda}|^{r_i}\psi_\lambda \big)=\mathcal{R}_{i+1}^{-\frac{\zeta_{i+1}}{2}}(\mathcal{R}_{i+1}^{\frac{\zeta_{i+1}}{2}}\xi)\notag\\
  \overset{\text{(1)}}{\Rightarrow} & |\Lambda_{1,\lambda}|^{r_1}|\Lambda_{2,\lambda}|^{r_2}\cdots |\Lambda_{i,\lambda}|^{r_i}(\mathcal{R}_{i+1}^{-\frac{\zeta_{i+1}}{2}}\psi_\lambda)=\xi\notag\\
  \Rightarrow & |\Lambda_{1,\lambda}|^{r_1}|\Lambda_{2,\lambda}|^{r_2}\cdots |\Lambda_{i,\lambda}|^{r_i}\theta_\lambda=\xi.
\end{align}
Here in $(1)$ we use Lemma \ref{ob:2}.

We now consider the restricted representation $(\beta|_{S_{i+1}},\,\mathcal{L})$. From \eqref{for:149} and \eqref{for:161}, it follows from Lemma \ref{le:23} that
the fractional equation
\begin{align*}
  |\Lambda_{i+1,\lambda}|^{r_{i+1}}\omega_{\lambda}=\theta_\lambda
\end{align*}
has a solution $\omega_{\lambda}\in \mathcal{L}$ with
\begin{align}\label{for:281}
 \|\omega_{\lambda}\|
   &\leq C_{r_{i+1},\gamma_{i+1}}
     \big\|\mathcal{R}_{i+1}^{\frac{\zeta_{i+1}}{2}}\theta_\lambda\big\|
     = C_{r_{i+1},\gamma_{i+1}}\|\psi_\lambda\|\notag\\
     &\overset{\text{(2)}}{\leq}
     C_{\mathfrak{r}_{i+1},\mathfrak{p}_{i+1}}\,
     \big\|\mathcal{R}_{i+1}^{\frac{\zeta_{i+1}}{2}}\mathcal{R}_i^{\frac{\zeta_i}{2}}\cdots\mathcal{R}_1^{\frac{\zeta_1}{2}}\xi\big\|.
\end{align}
Here in $(2)$ we use \eqref{for:161}.

Combining with \eqref{for:280} we get, for each $\lambda$,
\[
  |\Lambda_{1,\lambda}|^{r_1}\cdots|\Lambda_{i+1,\lambda}|^{r_{i+1}}\omega_\lambda
   = \xi,
\]
and the desired estimate in the $\mathcal D$-case.

  \emph{Case II}:  $\mathcal{R}=\Sigma$. Arguing as in \eqref{for:280}, from \eqref{for:160} and Lemma~\ref{ob:2} we have
  \begin{align}\label{for:88}
 |\Lambda_{1,\lambda}|^{r_1}|\Lambda_{2,\lambda}|^{r_2}\cdots |\Lambda_{i,\lambda}|^{r_i}\theta_\lambda=\mathcal{R}_{i+1}^{-\frac{\zeta_{i+1}}{2}}(\mathcal{R}_{i+1}^{\frac{\zeta_{i+1}}{2}}\xi)_\lambda
\end{align}
and
\begin{align*}
 \sum_{\lambda=1}^{2^i}\mathcal{R}_{i+1}^{-\frac{\zeta_{i+1}}{2}}(\mathcal{R}_{i+1}^{\frac{\zeta_{i+1}}{2}}\xi)_\lambda=\xi.
\end{align*}
\emph{Subcase $(a)$}:  $k_{i+1}=\RR$. From \eqref{for:149} and \eqref{for:161}, it follows from Proposition \ref{po:4} that, for each $\lambda$, the following holds:
the fractional equation
\begin{align*}
  |\Lambda_{i+1,\lambda}|^{r_{i+1}}\omega_{\lambda}=\theta_\lambda
\end{align*}
has a solution $\omega_{\lambda}\in \mathcal{L}$ satisfying the same estimate \eqref{for:281}.

\emph{Subcase $(b)$}: $k_{i+1}=\CC$. Then we have a decomposition (see right above Lemma \ref{le:20})
\begin{align*}
\theta_\lambda=(\theta_\lambda-\mathcal{P}_1\theta_\lambda)+(\mathcal{P}_2\mathcal{P}_1\theta_\lambda)+(\mathcal{P}_1\theta_\lambda-\mathcal{P}_2\mathcal{P}_1\theta_\lambda),
\end{align*}
where $\mathcal{P}_1:=\mathcal{P}_{U_{i+1},f}$ and $\mathcal{P}_2:=\mathcal{P}_{\mathrm{i}U_{i+1},f}$.

It follows from  Lemma \ref{le:20} that
\begin{gather*}
 |\Lambda_{1,\lambda}|^{r_1}\cdots |\Lambda_{i,\lambda}|^{r_i}
  (\theta_\lambda-\mathcal{P}_1\theta_\lambda)\in \mathcal{L}, \quad
  |\Lambda_{1,\lambda}|^{r_1}\cdots |\Lambda_{i,\lambda}|^{r_i}
  (\mathcal{P}_2\mathcal{P}_1\theta_\lambda)\in \mathcal{L},\\
  |\Lambda_{1,\lambda}|^{r_1}\cdots |\Lambda_{i,\lambda}|^{r_i}
  (\mathcal{P}_1\theta_\lambda-\mathcal{P}_2\mathcal{P}_1\theta_\lambda)\in \mathcal{L}.
\end{gather*}
 From \eqref{for:271} and \eqref{for:272} of Section \ref{sec:53} the fractional equation
\begin{align*}
    |U_{i+1}|^{r_{i+1}}\omega_{\lambda,1}=(\theta_\lambda-\mathcal{P}_1\theta_\lambda)+(\mathcal{P}_2\mathcal{P}_1\theta_\lambda)
  \end{align*}
  has a solution $\omega_{\lambda,1}\in \mathcal{L}$ with the estimate
  \begin{align*}
 \norm{\omega_{\lambda,1}}\leq C_{r_{i+1},\gamma_{i+1}}\big\|\Sigma_{i+1}^{\frac{\zeta_{i+1}}{2}}\theta_\lambda\big\|= C_{r_{i+1},\gamma_{i+1}}\|\psi_\lambda\|
    \overset{\text{(3)}}{\leq}C_{\mathfrak{r}_{i+1},\mathfrak{p}_{i+1}}\,
     \big\|\Sigma_{i+1}^{\frac{\zeta_{i+1}}{2}}\Sigma_i^{\frac{\zeta_i}{2}}\cdots\Sigma_1^{\frac{\zeta_1}{2}}\xi\big\|
\end{align*}
Here in $(3)$ we use \eqref{for:161}.

From \eqref{for:274} of Section \ref{sec:53} (for the restricted representation $(\beta|_{S_{i+1}},\,\mathcal{L})$), the fractional equation
\begin{align*}
    |\textrm{i}U_{i+1}|^{r_{i+1}}\omega_{\lambda,2}=\mathcal{P}_1\theta_\lambda-\mathcal{P}_2\mathcal{P}_1\theta_\lambda
  \end{align*}
  has a solution $\omega_{\lambda,2}\in \mathcal{L}$ with a similar estimate
  \begin{align*}
 \norm{\omega_{\lambda,2}}\leq C_{r_{i+1}}\big\|\Sigma_{i+1}^{\frac{\zeta_{i+1}}{2}}\theta_\lambda\big\|\leq C_{\mathfrak{r}_{i+1},\mathfrak{p}_{i+1}}\,
     \big\|\Sigma_{i+1}^{\frac{\zeta_{i+1}}{2}}\Sigma_i^{\frac{\zeta_i}{2}}\cdots\Sigma_1^{\frac{\zeta_1}{2}}\xi\big\|.
\end{align*}
Renumbering the family of solutions (real or complex case) so that
we have \(\omega_\lambda\), \(1\le\lambda\le 2^{i+1}\), each satisfying the
estimate above, we have
\[
  \sum_{\lambda=1}^{2^{i+1}}
  |\Lambda_{1,\lambda}|^{r_1}\cdots |\Lambda_{i+1,\lambda}|^{r_{i+1}}\omega_{\lambda}=\xi
\]
and for each $\lambda$,
\begin{align*}
 |\Lambda_{1,\lambda}|^{r_1}\cdots |\Lambda_{i+1,\lambda}|^{r_{i+1}}\omega_{\lambda}\in \mathcal{L},
  \qquad
  \|\omega_{\lambda}\|
  \le C_{\mathfrak{r}_{i+1},\mathfrak{p}_{i+1}}\,
     \big\|\Sigma_{i+1}^{\frac{\zeta_{i+1}}{2}}\Sigma_i^{\frac{\zeta_i}{2}}\cdots\Sigma_1^{\frac{\zeta_1}{2}}\xi\big\|.
\end{align*}
This completes the induction step from $i$ to $i+1$. In the case $\mathcal R=\Sigma$ this gives
\eqref{for:82}, and in the case $\mathcal R=\mathcal D$ it gives
\eqref{for:277} of Theorem~\ref{th:6}.

\subsection{Proof of Theorem \ref{th:6}: sharpness of the solvable range}

We finally prove the sharpness statement in Theorem~\ref{th:6}. The point is
that if one exponent $r_i$ exceeds the corresponding strong spectral gap
$\gamma_i$, then solvability already fails in the corresponding rank-one
factor. By localizing to that factor and smoothing in the remaining
directions, one constructs smooth data for which the full product equation has
no solution.

Assume that $r_1,\cdots, r_n\in \RR^+$ and
for some $1\le i\le n$, $r_i>\gamma_i$. Fix $c$ with $\gamma_i < c < r_i$.  Suppose $m\in\NN$ with $m>r_i$. We will construct $\xi\in W^m(\mathcal L)$ such that for any $\lambda\in\{1,\dots,2^n\}$,
the equation
\[
 |\Lambda_{1,\lambda}|^{r_1}|\Lambda_{2,\lambda}|^{r_2}\cdots |\Lambda_{n,\lambda}|^{r_n}\omega_\lambda = \xi
\]
has no solutions $\omega_\lambda\in\mathcal L$.


 We first construct $\xi$ so that: $(*)$ $\xi\in W^m(\mathcal L)$ and the single fractional equation $|\Lambda_{i,\lambda}|^{c}\,\omega = \xi$
has no solution $\omega\in\mathcal L$.

We have a direct integral decomposition:
\begin{align*}
  \beta=\int_Z \beta_z d\varsigma(z),\quad \mathcal{L}=\int_Z \mathcal{L}_z d\varsigma(z),
\end{align*}
for some measure space $(Z,\varsigma)$ (see Section \ref{sec:20}), where each $(\beta_z,\mathcal{L}_z)$ is an irreducible unitary
representation of $S$. By Remark \ref{re:4}
for $\varsigma$-a.e.\,$z$ we have
\[
 (\beta_z,\, \mathcal{L}_z)
 \cong
 \bigl(\pi_{l_1(z)}\otimes \cdots \otimes \pi_{l_n(z)},\,
       \mathcal{H}_{l_1(z)}\otimes \cdots \otimes \mathcal{H}_{l_n(z)}\bigr),
\]
where $(\pi_{l_j(z)},\mathcal{H}_{l_j(z)})$ is an irreducible
representation of $SL(2,k_j)$ with strong spectral gap $\tau_{z,j}$.


For $1\leq j\leq n$, let $E_{z,j,1}$ (resp. $E_{z,j,2}$) be the spectral measure of the positive self-adjoint operator $|U_j|$ (resp. $|\textrm{i}U_j|$ if $k_j=\CC$) in $\pi_{l_j(z)}$,
  so that
  \[
    |U_j| = \int_{[0,\infty)} \lambda\,dE_{z,j,1}(\lambda)\quad\text{and}\quad|\textrm{i}U_j| = \int_{[0,\infty)} \lambda\,dE_{z,j,2}(\lambda).
  \]
Set
\[
   \mathcal{P}_{z,j}=
   \begin{cases}
     \displaystyle \int_{[0,\infty)} f(\lambda)\,dE_{z,j,1}(\lambda), & k_j=\mathbb R,\\[8pt]
     \displaystyle \mathcal{P}_{z,j,1}\mathcal{P}_{z,j,2},\quad
       \mathcal{P}_{z,j,q}=\int_{[0,\infty)} f(\lambda)\,dE_{z,j,q}(\lambda),
       \,q=1,2, & k_j=\mathbb C,
   \end{cases}
\]
where $f$ is as in
\eqref{for:269} and set
\begin{align*}
 \mathcal{P}:= \int_Z \mathcal{P}_{z,1}\otimes \mathcal{P}_{z,2} \otimes\cdots \otimes \mathcal{P}_{z,i-1}\otimes I_i\otimes\mathcal{P}_{z,i+1}\cdots\otimes \mathcal{P}_{z,n} d\varsigma(z).
\end{align*}
Thus $\mathcal{P}$ acts as a smoothing operator on each factor
$\mathcal{H}_{l_j(z)}$, $j\neq i$, along $U_j$ and $\mathrm iU_j$ in the
complex case, while it acts as the identity on the $i$-th factor
$\mathcal{H}_{l_i(z)}$.

Let $Z_1=\{z\in Z: \tau_{z,i}<c\}$.  Since $\beta|_{S_i}$ has strong spectral gap $\gamma_i<c$, $\varsigma(Z_1)>0$.  Fix a subset $Z_2\subseteq Z_1$ with $0<\varsigma(Z_2)<\infty$. We construct a vector $\xi$ as follows:
\begin{align*}
\xi=\int_{Z_2} \xi_z\,d\varsigma(z),\qquad
\xi_z=v_{1,z}\otimes v_{2,z} \otimes\cdots \otimes v_{n,z},
\end{align*}
where
\begin{enumerate}
  \item For any $j\neq i$, we choose $v_{j,z}\in \mathcal{H}_{l_j(z)}$ with $\norm{v_{j,z}}_{\mathcal{H}_{l_j(z)},m}=1$.
  \item For any $i$, choose $v_{i,z}\in W^\infty(\mathcal{H}_{l_i(z)})$ with $\norm{v_{i,z}}_{\mathcal{H}_{l_i(z)}, m}=1$ satisfying
  the
  fractional equation $|\Lambda_{i,\lambda}|^{c}\omega=v_{i,z}$ has no solution $\omega\in \mathcal{H}_{l_i(z)}$  (see Lemma \ref{le:13}).

\item $\norm{\mathcal{P}_{z,j}v_{j,z}}_{\mathcal{H}_{l_j(z)},m}\leq C_m$ (see Lemma \ref{le:14}). We note that $C_m$ is independent of $z$.
\end{enumerate}
Then both $\xi$ and $\xi_1:=\mathcal{P}\xi$ satisfy $(*)$. Next, we show that the equation
\begin{align}\label{for:275}
|\Lambda_{1,\lambda}|^{r_1}|\Lambda_{2,\lambda}|^{r_2}\cdots |\Lambda_{n,\lambda}|^{r_n}\omega_\lambda = \xi
\end{align}
has no solutions $\omega_\lambda\in\mathcal L$. Otherwise, applying to $\mathcal{P}$ to \eqref{for:275}, we get
\begin{align*}
|\Lambda_{i,\lambda}|^{r_i} \omega_\lambda'= \xi_1, \qquad\text{where }\omega_\lambda'= (\Pi_{j\neq i}|\Lambda_{j,\lambda}|^{r_j})(\mathcal{P}\omega_\lambda ).
\end{align*}
By construction, $\omega_\lambda'\in \mathcal{L}$ since $\mathcal{P}$ is the smoothing operator along $\Lambda_{j,\lambda}$ for any $j\neq i$ and any $\lambda$. Next, we show that this implies that the equation
\[
 |\Lambda_{i,\lambda}|^{c} \omega_\lambda''= \xi_1
\]
has a solution $\omega_\lambda''\in \mathcal{L}$. This is exactly the trick we used at the end of  the proof of Lemma \ref{le:23}: solvability for a larger exponent implies solvability for a smaller exponent.
This contradicts the fact that  $\xi_1$ satisfies $(*)$. Then we complete the proof.

\section{Exponential order-$2$ mixing}\label{sec:34}

\subsection{Main results} We list  the notations that will appear in the following theorems:
\begin{enumerate}

  \item $\epsilon$, $(\pi, \mathcal{H})$, $\pi(a)$, $A$, $A^+$, $a$, $a^+$, $da$, $a$ of rational type and irrational type:  see Section \ref{sec:33}.

\item $W^{s,H}(\mathcal{H})$: see Section \ref{sec:9}.

  \item $H_{-,a}$, $H_{+,a}$, $H_{-0,a}$ and $H_{+0,a}$: see \eqref{for:123} of Section \ref{sec:33}.
  \item Maximal strongly orthogonal system $\mathcal{S}$,  $u(\mathcal{S})$, $S(\mathcal{S})$: see Sections \ref{for:220} and \ref{for:221}.
  \item $p_{\epsilon}(\mathcal{S})$, $\zeta_{\epsilon}(\mathcal{S})$, $\eta_\epsilon(\mathcal{S}, a)$: see Section \ref{for:222}.

\end{enumerate}

The next two results are in the semisimple setting. We recall that $\pi$ has a strong spectral gap (see \eqref{for:120} of Section \ref{sec:33}).

\begin{theorem}\label{th:7} Let $\mathcal{S}$ be a  strongly orthogonal system.  For any $a\in A^+$ and any
\begin{align*}
 \psi\in W^{p_\epsilon(\mathcal{S}),\, u(\mathcal{S})}(\mathcal{H})\quad\text{and}\quad \xi\in W^{\zeta_\epsilon(\mathcal{S}),\, S(\mathcal{S})}(\mathcal{H}),
\end{align*}
 we have
\begin{align*}
 \big|\langle \pi(a)\psi, \xi\rangle\big|\leq C_\epsilon \eta_\epsilon(\mathcal{S}, a)\norm{\psi}_{u(\mathcal{S}),p_{\epsilon}(\mathcal{S})}\,\big\|\xi\big\|_{S(\mathcal{S}),\zeta_{\epsilon}(\mathcal{S})}.
\end{align*}

\end{theorem}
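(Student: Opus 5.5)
We apply Theorem~\ref{th:6}\eqref{for:82} to $\xi$, using the subgroup $S=H(\mathcal S)=\prod_{\theta\in\mathcal S}H_\theta$. Its factors $H_\theta$ pairwise commute because $\mathcal S$ is strongly orthogonal. Concretely, $\theta\pm\beta\notin\Phi$ for distinct $\theta,\beta\in\mathcal S$, so $[\Lie(u_{\pm\theta}),\Lie(u_{\pm\beta})]=0$; the Cartan parts also commute with the opposite root spaces, since $\beta(H_\theta)=0$ by orthogonality. Thus $\Lie(H(\mathcal S))\simeq\bigoplus_{\theta\in\mathcal S}\mathfrak{sl}(2,\theta(k))$, which is the setting of Theorem~\ref{th:6}. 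By strong spectral gap of $\pi$, each restriction $\pi|_{H_\theta}$ has strong spectral gap $\gamma_\theta$ (Section~\ref{for:222}).

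First choose exponents $r_\theta=\gamma_\theta-\epsilon<\gamma_\theta$ and $\zeta_\theta=\zeta_{\theta,\epsilon}$, which meet the hypotheses $\zeta_\theta>1$ (complex case) and $\zeta_\theta>\gamma_\theta+2$ (real case). Each $\Sigma_\theta$ involves only $X_\theta,V_\theta$ (and $\mathrm iX_\theta,\mathrm iV_\theta$), i.e.\ directions in $\Lie(\mathcal V_\theta)\subset\Lie(S(\mathcal S))$. Commutativity (Lemma~\ref{ob:2}) then gives
\[
\big\|\textstyle\prod_\theta\Sigma_\theta^{\zeta_\theta/2}\xi\big\|\le C\|\xi\|_{S(\mathcal S),\zeta_\epsilon(\mathcal S)},
\]
since the total order of this product of elliptic operators in the $S(\mathcal S)$-directions is $\sum\zeta_\theta=\zeta_\epsilon(\mathcal S)$. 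Theorem~\ref{th:6}\eqref{for:82} then yields $\xi=\sum_\lambda\xi_\lambda$ with $\xi_\lambda=\prod_\theta|\Lambda_{\theta,\lambda}|^{r_\theta}\omega_\lambda$ and $\|\omega_\lambda\|\le C\|\xi\|_{S(\mathcal S),\zeta_\epsilon(\mathcal S)}$.

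Next comes the conjugation step. For each $\lambda$, self-adjointness (Lemma~\ref{le:17}\eqref{for:163}) gives
\[
\langle\pi(a)\psi,\xi_\lambda\rangle=\big\langle \textstyle\prod_\theta|\Lambda_{\theta,\lambda}|^{r_\theta}\pi(a)\psi,\omega_\lambda\big\rangle .
\]
By Lemma~\ref{le:17}\eqref{for:169} with $h=a$, we have $\Ad_{a^{-1}}\Lambda_{\theta,\lambda}=\theta(a)^{-1}\Lambda_{\theta,\lambda}$, where $\Lambda_{\theta,\lambda}\in\Lie(u_\theta)$ is unstable for $a\in A^+$. Hence
\[
\prod_\theta|\Lambda_{\theta,\lambda}|^{r_\theta}\pi(a)=\prod_\theta\theta(a)^{-r_\theta}\,\pi(a)\prod_\theta|\Lambda_{\theta,\lambda}|^{r_\theta},
\]
and the prefactor is exactly $\eta_\epsilon(\mathcal S,a)$, since $a=a^+$. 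Cauchy--Schwarz and unitarity then give
\[
|\langle\pi(a)\psi,\xi_\lambda\rangle|\le\eta_\epsilon(\mathcal S,a)\,\big\|\textstyle\prod_\theta|\Lambda_{\theta,\lambda}|^{r_\theta}\psi\big\|\,\|\omega_\lambda\| .
\]
Summing over the $2^{|\mathcal S|}$ values of $\lambda$ gives the claim, provided we can bound $\|\prod_\theta|\Lambda_{\theta,\lambda}|^{r_\theta}\psi\|$.

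The remaining, and main delicate, step is to prove
\[
\big\|\textstyle\prod_\theta|\Lambda_{\theta,\lambda}|^{r_\theta}\psi\big\|\le C\|\psi\|_{u(\mathcal S),p_\epsilon(\mathcal S)} .
\]
We use the joint spectral measure of the abelian group $u(\mathcal S)\cong\RR^N$. With $x_\theta$ the coordinate for $\Lambda_{\theta,\lambda}$, we have pointwise
\[
\prod_\theta|x_\theta|^{r_\theta}\le\prod_\theta(1+|x|^2)^{r_\theta/2}=(1+|x|^2)^{p_\epsilon(\mathcal S)/2}.
\]
The right-hand side is the symbol of $(I+\Delta_{u(\mathcal S)})^{p_\epsilon/2}$ on the abelian group, so spectral calculus gives the bound, and the constant depends only on $\epsilon$ through the constants of Theorem~\ref{th:6}. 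We also need to justify the domain issues. Because $\xi$ lies only in a partial Sobolev space, the equation holds in $\mathcal L$ with $\xi_\lambda\in\mathcal H$, and the pairing identity requires $\pi(a)\psi$ to lie in the domain of the fractional product operator. This follows since $\pi(a)$ maps $\mathrm{Dom}(\prod|\Lambda|^{r})$ to itself by the conjugation relation. I would also double check that $\eta_\epsilon$ is defined using $a^+$, which equals $a$ here, so no Weyl element intervenes.
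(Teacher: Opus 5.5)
Your proposal is correct and follows the paper's own route. You apply Theorem~\ref{th:6}\eqref{for:82} to $\xi$ on $H(\mathcal S)$ with $r_\theta=\gamma_\theta-\epsilon$ and $\zeta_\theta=\zeta_{\theta,\epsilon}$, move the product of fractional operators onto $\pi(a)\psi$ by self-adjointness, extract $\prod_\theta\theta(a)^{-r_\theta}=\eta_\epsilon(\mathcal S,a)$ via Lemma~\ref{le:17}\eqref{for:169}, and finish with Cauchy--Schwarz. Your joint-spectral bound $\prod_\theta|x_\theta|^{r_\theta}\le(1+|x|^2)^{p_\epsilon(\mathcal S)/2}$ on the abelian group $u(\mathcal S)$ makes explicit the last step of \eqref{for:166}, which the paper leaves implicit.
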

\begin{remark}\label{re:7} To achieve an optimal decay rate, we let $\mathcal{S}$ be a maximal strongly orthogonal system.
\begin{itemize}

  \item \emph{About $C_\epsilon$}: $C_\epsilon$ depends on the spectral gap of $\pi$.

  \item If \(\pi|_{H_\theta}\) has no discrete-series summand for any \(\theta\in\mathcal S\)
(see Remark~\ref{re:12}), \emph{or} if we work with the tempered decay rate whenever a
discrete-series summand occurs, then \(p_{\epsilon}(\mathcal S)\) and
\(\zeta_{\epsilon}(\mathcal S)\) may be chosen \emph{independently of}
\(\eta_\epsilon(\mathcal S,\cdot)\), with dependence only on the real rank of \(G\).

  \item \emph{About partial norms}: We note that $S(\mathcal{S})\subseteq H_{-0,a}$ and $u(\mathcal{S})\subseteq H_{+,a}$. For any $a\in A$, there is $w$ in the Weyl group $W$ such that $w^{-1}aw\in A^+$. In that case, replace $S(\mathcal S)$ by $wS(\mathcal S)w^{-1}$ and $u(\mathcal S)$ by $wu(\mathcal S)w^{-1}$.
\end{itemize}

\end{remark}
\begin{corollary}\label{cor:9} Let $\mathcal{S}$ be a  strongly orthogonal system. For any  $0<s<1$, there exists $0<\gamma(s)<1$ (see \eqref{for:363}) such that for any $a\in A^+$ and any
\begin{align*}
 \psi\in W^{s,\, u(\mathcal{S})}(\mathcal{H})\quad\text{and}\quad \xi\in W^{s,\, S(\mathcal{S})}(\mathcal{H}),
\end{align*}
 we have
\begin{align*}
 \big|\langle \pi(a)\psi, \xi\rangle\big|\leq C_{s,\epsilon} \big(\eta_\epsilon(\mathcal{S}, a)\big)^{\gamma}\norm{\psi}_{u(\mathcal{S}),s}\,\big\|\xi\big\|_{S(\mathcal{S}),s}.
\end{align*}

\end{corollary}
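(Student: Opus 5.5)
This corollary follows from Theorem~\ref{th:7} by interpolation against the trivial bound. Theorem~\ref{th:7} is not yet proved at this point, but it is stated, so I take it as given. The bilinear form $B_a(\psi,\xi)=\langle \pi(a)\psi,\xi\rangle$ satisfies two estimates. The first is the trivial one,
\[
|B_a(\psi,\xi)|\le \norm{\psi}\,\norm{\xi},
\]
with decay factor $1$ on $W^{0,u(\mathcal S)}\times W^{0,S(\mathcal S)}$. The second is the decay estimate of Theorem~\ref{th:7},
\[
|B_a(\psi,\xi)|\le C_\epsilon\,\eta_\epsilon(\mathcal S,a)\,\norm{\psi}_{u(\mathcal S),p}\,\norm{\xi}_{S(\mathcal S),\zeta},
\]
on $W^{p,u(\mathcal S)}\times W^{\zeta,S(\mathcal S)}$, where $p=p_\epsilon(\mathcal S)$ and $\zeta=\zeta_\epsilon(\mathcal S)$. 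The partial Sobolev spaces are defined as domains of $(I+\Delta_H)^{\alpha/2}$ for the positive self-adjoint operator $I+\Delta_H$, so they form complex interpolation scales in each variable. Bilinear complex interpolation at parameter $\theta\in(0,1)$ then gives
\[
|B_a(\psi,\xi)|\le C_\epsilon^\theta\,\eta_\epsilon(\mathcal S,a)^\theta\,\norm{\psi}_{u(\mathcal S),\theta p}\,\norm{\xi}_{S(\mathcal S),\theta\zeta}.
\]

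Because the orders $p$ and $\zeta$ differ, I choose $\theta$ so that both $\theta p\le s$ and $\theta\zeta\le s$. Since $\zeta\ge p$ by \eqref{for:4}, this means taking
\begin{equation}\label{for:363}
\gamma(s):=\theta=\frac{s}{\zeta_\epsilon(\mathcal S)}.
\end{equation}
Then $\theta\zeta=s$ and $\theta p\le s$. I finish by using the monotonicity of partial Sobolev norms, $\norm{\cdot}_{H,t}\le\norm{\cdot}_{H,s}$ for $t\le s$, which holds by Lemma~\ref{ob:1}. Since $\zeta_\epsilon(\mathcal S)>1$ (each $\zeta_{\theta,\epsilon}>1$), we get $0<\gamma(s)<1$ for $s\in(0,1)$.

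To avoid quoting abstract bilinear interpolation, I would give a concrete route. Fix $\xi$ and interpolate the linear functional $\psi\mapsto B_a(\psi,\xi)$, then do the same in $\xi$. A cleaner alternative is a smoothing argument. Split $\psi=\mathfrak s_b\psi+(I-\mathfrak s_b)\psi$, using smoothing operators built from the spectral calculus of $\Delta_{u(\mathcal S)}$, i.e.\ cutoffs $\chi(\Delta_{u(\mathcal S)}/b^2)$, and split $\xi$ analogously along $S(\mathcal S)$. These partial smoothing operators satisfy the analogues of \eqref{for:197}--\eqref{for:198} by functional calculus.

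Estimate the fully smoothed term by Theorem~\ref{th:7}. Each smoothed factor costs at most $b^{\zeta-s}$ in norm, so this term is bounded by
\[
C\,\eta\,b^{(p-s)+(\zeta-s)}\,\norm{\psi}_s\norm{\xi}_s.
\]
Estimate every term containing a rough piece by Cauchy--Schwarz; each such term is bounded by $C\,b^{-s}\norm{\psi}_s\norm{\xi}_s$. Optimizing $b=\eta^{-1/(p+\zeta)}$ gives decay $\eta^{s/(p+\zeta)}$. This yields the explicit exponent $\gamma(s)=s/(p_\epsilon(\mathcal S)+\zeta_\epsilon(\mathcal S))$, which also lies in $(0,1)$ and is what I would record as \eqref{for:363} if I follow this route.

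The main technical point in either route is that the smoothing operators must commute with the relevant Laplacians, and that $I+\Delta_H$ for $H=u(\mathcal S)$ (abelian) and $H=S(\mathcal S)$ is essentially self-adjoint on smooth vectors. For $u(\mathcal S)$ this is immediate from the joint spectral measure of the commuting generators. For $S(\mathcal S)$ it follows from the general elliptic theory for $\Delta_H$ on a Lie subgroup, in Nelson's theorem form. Once these are in place the estimates are routine, and the constant depends only on $s$ and $\epsilon$ through $C_\epsilon$ and the smoothing constants.
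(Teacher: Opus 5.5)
Your proof is correct. Your fallback smoothing route is essentially the paper's argument, and your primary route, bilinear complex interpolation, is genuinely different.

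\textbf{The paper's route.} The paper only smooths. It applies the operators $\mathfrak{s}_b$ of Section~\ref{sec:9} along $H_{+,a}$ and $H_{-0,a}$. It bounds the doubly smoothed term by Theorem~\ref{th:7} using only $L^2$ norms, $|\langle\pi(a)\mathfrak{s}_b\psi,\mathfrak{s}_b\xi\rangle|\le C_\epsilon\,\eta_\epsilon(\mathcal{S},a)\,b^{2s_0}\norm{\psi}\norm{\xi}$ with $s_0=\zeta_1(\mathcal{S})$. The three terms containing a factor $I-\mathfrak{s}_b$ are bounded by $b^{-s}$ or $b^{-2s}$ via \eqref{for:197}--\eqref{for:198}. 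Taking $b=\eta_\epsilon(\mathcal{S},a)^{-1/(4s_0)}$ then gives $\gamma(s)=\min\{s/(4s_0),1/2\}=s/(4s_0)$.

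\textbf{Your smoothing route.} Your version is slightly better on two counts. Write $p=p_\epsilon(\mathcal{S})$ and $\zeta=\zeta_\epsilon(\mathcal{S})$.
\begin{itemize}
\item Measuring the smoothed pieces in $s$-norms rather than $L^2$ gives the better exponent $s/(p+\zeta)$.
\item Building the cutoffs from $\Delta_{u(\mathcal{S})}$ and $\Delta_{S(\mathcal{S})}$ themselves makes $\norm{(I-\mathfrak{s}_b)\psi}\le b^{-s}\norm{\psi}_{u(\mathcal{S}),s}$ immediate from functional calculus. Smoothing along the larger groups $H_{+,a}$, $H_{-0,a}$ naturally controls the rough pieces only by the stronger $H_{+,a}$, $H_{-0,a}$ norms, not by the $u(\mathcal{S})$, $S(\mathcal{S})$ norms in the statement.
\end{itemize}

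\textbf{Your interpolation route.} This is cleaner and sharper still, giving $\gamma=s/\zeta$. Concretely, apply the three lines lemma to $F(z)=\langle\pi(a)(I+\Delta_{u(\mathcal{S})})^{-zp/2}f,\,(I+\Delta_{S(\mathcal{S})})^{-\bar z\zeta/2}g\rangle$ for spectrally truncated $f,g$. On $\operatorname{Re}z=0$ you have $|F|\le\norm{f}\norm{g}$, since imaginary powers are unitary. On $\operatorname{Re}z=1$ you have $|F|\le C_\epsilon\eta_\epsilon(\mathcal{S},a)\norm{f}\norm{g}$ by Theorem~\ref{th:7}. Then evaluate at $z=\theta$.

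This route uses essentially that the partial Sobolev norms are exactly fractional-power norms of self-adjoint operators. The smoothing route needs only Cauchy--Schwarz and cutoff bounds. So it would survive for norms that do not form an exact interpolation scale, such as the $C^s$-type norms of the later sections, at the price of a worse exponent.

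\textbf{Three small corrections.}
\begin{itemize}
\item The iterated one-variable interpolation you offer as a concrete substitute does not give exponent $\theta$. After interpolating in $\psi$, the $\xi$-dependence is $\norm{\xi}^{1-\theta}\norm{\xi}_{S(\mathcal{S}),\zeta}^{\theta}$, which dominates $\norm{\xi}_{S(\mathcal{S}),\theta\zeta}$ rather than being dominated by it. The second step then only yields the exponent $\min\{1,s/p\}\,s/\zeta$. That is still enough for the corollary, but use the simultaneous three-lines argument above if you want $s/\zeta$.
\item In the smoothing route the smoothed $\psi$ costs $b^{\max\{p-s,0\}}$, not $b^{p-s}$, because $p_\epsilon(\mathcal{S})$ can be smaller than $s$ (e.g.\ a single real root with $\gamma_\theta\le\frac12$). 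With your $b$ the smoothed term becomes $\eta^{(p+s)/(p+\zeta)}\le\eta^{s/(p+\zeta)}$, so your exponent survives.
\item If you want $\gamma$ independent of $\epsilon$, as the paper's choice of $s_0$ arranges, you can take $\gamma=s/\zeta_1(\mathcal{S})$. This works because $\eta_\epsilon(\mathcal{S},a)\le1$ on $A^+$ and $\zeta_\epsilon(\mathcal{S})\le\zeta_1(\mathcal{S})$ for $\epsilon\le1$.
\end{itemize}
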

\begin{remark}
Corollary \ref{cor:9} shows that partially hyperbolic algebraic actions have exponential mixing for partial $s$-H\"{o}lder vectors. We give the explicit dependence between $s$ and $\gamma$.
\end{remark}

\subsection{Proof of Theorem \ref{th:7}}\label{sec:21}We list  the notations that will appear in the following proofs:
\begin{enumerate}
  \item $H_\theta$, $H(\mathcal{S})$, $u_\theta$, $\theta(k)$, $\Psi_\theta$: see Sections \ref{for:221}. \, $\Phi^+$: see Section \ref{sec:36}.

  \item We can write $\mathcal{S}=\{\theta_1,\cdots, \theta_l\}$ of $\Phi^+$. For $1\le i\le l$,  set
  \begin{align*}
X_i=\Psi_{\theta_i}\begin{pmatrix}
  1 & 0 \\
  0 & -1
\end{pmatrix},\quad U_i=\Psi_{\theta_i}\begin{pmatrix}
  0 & 1 \\
  0 & 0
\end{pmatrix},\quad V_i=\Psi_{\theta_i}\begin{pmatrix}
  0 & 0 \\
  1 & 0
\end{pmatrix}.
  \end{align*}
\item For any $1\leq i\leq l$, set $k_i=\theta_i(k)$,  $\gamma_{\theta_i}=\gamma_i$,  $\zeta_{i,\epsilon}=\zeta_{\theta_i,\epsilon}$ (see Section \ref{for:222}).
  \item  For each $\lambda\in\{1,\dots,2^l\}$ and each $1\le i\le l$, set
\begin{align*}
 \Sigma_i&=\;
  \begin{cases}
    I - X_i^2 - V_i^2, &
      \text{if } k_i= \RR,\\[4pt]
    I - X_i^2 - (\mathrm{i}X_i)^2 - V_i^2 - (\mathrm{i}V_i)^2, &
      \text{if } k_i= \CC
  \end{cases}\qquad \text{and}\\
  \Lambda_{i,\lambda}&=
  \begin{cases}
    U_i, & \text{if } k_i=\RR,\\[4pt]
    U_i \text{ or } \mathrm{i}U_i, & \text{if } k_i=\CC,
  \end{cases}
\end{align*}
so that, as $\lambda$ ranges from $1$ to $2^l$, the $l$-tuples
$(\Lambda_{1,\lambda},\dots,\Lambda_{l,\lambda})$ exhaust all possible choices
of $U_i$ or $\mathrm{i}U_i$ in the complex factors.

\end{enumerate}
Then we have:
\begin{enumerate}

  \item We recall that for each $1\leq i\leq l$,  $\text{Lie}(H_{\theta_i})=\mathfrak{sl}(2,k_i)$.  Since $\mathcal{S}$ is strongly orthogonal, the subalgebras $\Lie(H_{\theta_i})$ commute pairwise, hence
      \begin{align*}
       \text{Lie}(H(\mathcal{S}))=\mathfrak{sl}(2,k_1)\times\cdots \times \mathfrak{sl}(2,k_l).
      \end{align*}
 \item \(\sum_{i=1}^{l}r_i=p_{\epsilon}(\mathcal{S})\) (recall Section \ref{for:221}), where $r_i=\gamma_i-\epsilon$.
   \item\label{for:195} $\xi\in W^{\zeta_{\epsilon}(\mathcal{S}),\, S(\mathcal{S})}(\mathcal{H})$ implies that $\Sigma_l^{\frac{\zeta_{l,\epsilon}}{2}}\cdots \Sigma_1^{\frac{\zeta_{1,\epsilon}}{2}}\xi\in\mathcal{H}$.
   \item\label{for:200} For any $1\leq i\leq l$,
$\pi|_{H_{\theta_i}}$ has strong spectral gap $\gamma_{i}$.

\end{enumerate}

We consider the restricted representation $\pi|_{H(\mathcal{S})}$. By the points above, we can apply
 Theorem \ref{th:6} to conclude that there is a decomposition $\xi=\sum_{\lambda=1}^{2^l}\xi_\lambda$ with $\xi_\lambda\in \mathcal{H}$, such that each of the equations
\begin{align*}
|\Lambda_{1,\lambda}|^{r_1}|\Lambda_{2,\lambda}|^{r_2}\cdots |\Lambda_{l,\lambda}|^{r_l}\omega_\lambda=\xi_\lambda, \quad 1\leq \lambda\leq 2^l,
\end{align*}
has a solution $\omega_\lambda\in \mathcal{H}$ with the estimate
\begin{align}\label{for:135}
\norm{\omega_\lambda} \leq C_{\gamma,\epsilon} \big\|\Sigma_l^{\frac{\zeta_{l,\epsilon}}{2}}\cdots \Sigma_1^{\frac{\zeta_{1,\epsilon}}{2}}\xi\big\|\leq C_{\gamma,\epsilon} \norm{\xi}_{S(\mathcal{S}),\zeta_{\epsilon}(\mathcal{S})},
\end{align}
where $\gamma=(\gamma_1,\cdots, \gamma_l)$. Then we have
\begin{align}\label{for:166}
 \big|\langle \pi(a)\psi, \,\xi\rangle\big|&=\Big|\sum_{\lambda=1}^{2^l}\langle \pi(a)\psi, \,\xi_\lambda\rangle\Big|\leq\sum_{\lambda=1}^{2^l}\Big|\big\langle \pi(a)\psi, \, |\Lambda_{1,\lambda}|^{r_1}|\Lambda_{2,\lambda}|^{r_2}\cdots |\Lambda_{l,\lambda}|^{r_l}\omega_\lambda\big\rangle\Big|\notag\\
 &\overset{\text{(i)}}{=}\sum_{\lambda=1}^{2^l}\Big|\big\langle |\Lambda_{1,\lambda}|^{r_1}|\Lambda_{2,\lambda}|^{r_2}\cdots |\Lambda_{l,\lambda}|^{r_l}(\pi(a)\psi), \, \omega_\lambda\big\rangle\Big|\notag\\
 &\overset{\text{(ii)}}{=}\Pi_{i=1}^l c_i^{r_i}\Big|\big\langle \pi(a)\big(|\widetilde{\Lambda_{1,\lambda}}|^{r_1}|\widetilde{\Lambda_{2,\lambda}}|^{r_2}\cdots |\widetilde{\Lambda_{l,\lambda}}|^{r_l}\psi\big), \, \omega_\lambda\big\rangle\Big|\notag\\
 &\overset{\text{(iii)}}{\leq} \Pi_{i=1}^l c_i^{r_i}\big\|  |\widetilde{\Lambda_{1,\lambda}}|^{r_1}|\widetilde{\Lambda_{2,\lambda}}|^{r_2}\cdots |\widetilde{\Lambda_{l,\lambda}}|^{r_l}\psi \big\|\cdot \norm{\omega_\lambda}\notag\\
 &\overset{\text{(vi)}}{\leq} C_{\gamma,\epsilon} \,\eta_\epsilon(\mathcal{S}, a)\|\psi\|_{u(\mathcal{S}),\,p_{\epsilon}(\mathcal{S})} \cdot \norm{\xi}_{S(\mathcal{S}),\zeta_{\epsilon}(\mathcal{S})}.
\end{align}
We explain steps:
\begin{enumerate}
  \item [(i)] We use \eqref{for:163} of Lemma \ref{le:17}.
  \item [(ii)] We use \eqref{for:169}  of Lemma \ref{le:17}, where
\begin{align*}
 c_i=\|\text{Ad}_{a^{-1}}\Lambda_{i,\lambda}\|=\theta_i(a)^{-1}\|\Lambda_{i,\lambda}\|,\quad \widetilde{\Lambda_{i,\lambda}}=\frac{\text{Ad}_{a^{-1}}\Lambda_{i,\lambda}}{\|\text{Ad}_{a^{-1}}\Lambda_{i,\lambda}\|}=\frac{\Lambda_{i,\lambda}}{\|\Lambda_{i,\lambda}\|}.
\end{align*}
  \item [(iii)] We use Cauchy-Schwarz inequality and the fact that $\pi$ is unitary.
  \item [(vi)] We use \eqref{for:135}, the fact that $u_{\theta_i}\subseteq u(\mathcal{S})$ (see Section \ref{for:221}) and $\sum_{i=1}^lr_i=p_{\epsilon}(\mathcal{S})$ (see \eqref{for:195}), and
\begin{align*}
 \Pi_{i=1}^lc_i^{r_i}\leq C\Pi_{i=1}^l \theta_i(a)^{-r_i} =C\Pi_{i=1}^l \theta_i(a)^{-(\gamma_i-\epsilon)}=\eta_\epsilon(\mathcal{S}, a)\quad\text{(see Section \ref{for:222})}.
\end{align*}
\end{enumerate}
 Then we complete the proof.

\subsubsection{Proof of Corollary \ref{cor:9}} Set
\begin{align}\label{for:363}
 s_0=\zeta_1(\mathcal{S})\quad\text{and}\quad \gamma(s)=\min\!\Big\{\frac{s}{4s_0},\,\frac{1}{2}\Big\}.
\end{align}
We apply the smoothing operator $\mathfrak{s}_b$ (see Section \ref{sec:9}) to $\psi$ and $\xi$ for $(\pi|_{H_{+,a}},\,\mathcal{H})$ and $(\pi|_{H_{-0,a}},\,\mathcal{H})$ respectively. Then
 \begin{align*}
  \mathfrak{s}_b \psi\in W^{\infty,\,H_{+,a}}(\mathcal H)\quad\text{and}\quad \mathfrak{s}_b\xi\in W^{s,\,H_{-0,a}}(\mathcal H).
 \end{align*}
 It follows from Theorem~\ref{th:7} that
\begin{align}\label{for:199}
 \big|\langle \pi(a)\mathfrak{s}_b\psi,\, \mathfrak{s}_b\xi\rangle\big|&\leq C_\epsilon \eta_\epsilon(\mathcal{S}, a)\norm{\mathfrak{s}_b\psi}_{u(\mathcal{S}),s_0}\,\big\|\mathfrak{s}_b\xi\big\|_{S(\mathcal{S}),s_0}\notag\\
 &\overset{\text{(1)}}{\leq} C_{\epsilon,1} \eta_\epsilon(\mathcal{S}, a)b^{2s_0}\norm{\psi}\,\big\|\xi\big\|.
\end{align}
Here in $(1)$ we use \eqref{for:197} of Section \ref{sec:9}. Then we have
\begin{align*}
\big|\langle \pi(a)\psi, \,\xi\rangle\big|&=\big|\langle \pi(a)(\psi-\mathfrak{s}_b\psi)+\pi(a)\mathfrak{s}_b\psi, \,(\xi-\mathfrak{s}_b\xi)+\mathfrak{s}_b\xi\rangle\big|\notag\\
&\leq \big|\langle \pi(a)(\psi-\mathfrak{s}_b\psi), \,\xi-\mathfrak{s}_b\xi\rangle\big|+\big|\langle \pi(a)(\psi-\mathfrak{s}_b\psi), \,\mathfrak{s}_b\xi\rangle\big|\notag\\
&+\big|\langle \pi(a)\mathfrak{s}_b\psi, \,(\xi-\mathfrak{s}_b\xi)\rangle\big|+\big|\langle \pi(a)\mathfrak{s}_b\psi, \,\mathfrak{s}_b\xi\rangle\big|\notag\\
&\overset{\text{(1)}}{\leq} \|\psi-\mathfrak{s}_b\psi\|\|\xi-\mathfrak{s}_b\xi\|+\|\psi-\mathfrak{s}_b\psi\| \|\mathfrak{s}_b\xi\|\notag\\
&+\|\mathfrak{s}_b\psi\| \|\xi-\mathfrak{s}_b\xi\|+\big|\langle \pi(a)\mathfrak{s}_b\psi, \,\mathfrak{s}_b\xi\rangle\big|\notag\\
&\overset{\text{(2)}}{\leq}C_sb^{-2s}\|\psi\|_{u(\mathcal{S}),s}\norm{\xi}_{S(\mathcal{S}),\,s}+C_sb^{-s}\|\psi\|_{u(\mathcal{S}),s}\norm{\xi}\notag\\
&+C_sb^{-s}\|\psi\|\norm{\xi}_{S(\mathcal{S}),\,s}+C_\epsilon \eta_\epsilon(\mathcal{S}, a) b^{2s_0}\|\psi\|\norm{\xi}.
\end{align*}
Here in $(1)$ we use Cauchy-Schwarz inequality and the fact that $\pi$ is unitary; in $(2)$ we use \eqref{for:197} and \eqref{for:198} of Section \ref{sec:9}  and \eqref{for:199}.

Let $b=\big(\eta_\epsilon(\mathcal{S}, a)\big)^{-\frac{1}{4s_0}}$. Then the above inequality implies that
\begin{align*}
 \big|\langle \pi(a)\psi,\, \xi\rangle\big|&\leq C_\epsilon \big(\eta_\epsilon(\mathcal{S}, a)\big)^\gamma \norm{\psi}_{u(\mathcal{S}),s}\,\big\|\xi\big\|_{S(\mathcal{S}),s}.
\end{align*}
Then we complete the proof.

\section{Higher order exponential mixing for rank-one actions}

\subsection{Main results} We list  the notation that will appear in the following theorems:
\begin{enumerate}

  \item $(\pi, \mathcal{H})$, where  $\mathcal H=L_0^2(\mathcal X,\varrho)$:  see \eqref{for:120} of Section \ref{sec:33}.

\item strong spectral gap: see \eqref{for:120} of Section \ref{sec:33}.

\item $W_{+,a}$, $W_{-,a}$, $W_{0,a}$: see Section \ref{sec:33}.

\item $H_{0,a}$, $H_{-,a}$, $H_{+,a}$, $H_{-0,a}$ and $H_{+0,a}$: see \eqref{for:123} of Section \ref{sec:33}.

  \item Maximal strongly orthogonal system $\mathcal{S}$,  $u(\mathcal{S})$, $S(\mathcal{S})$: see Sections \ref{for:220} and \ref{for:221}.
  \item $p_{\epsilon}(\mathcal{S})$, $\zeta_{\epsilon}(\mathcal{S})$, $\eta_\epsilon(\mathcal{S}, a)$, $a\in A$: see Section \ref{for:222}.

  \item Essentially semisimple,  $\tau$, $s_z$ and $w_z$: see  \eqref{for:173} of Section \ref{sec:33}.

\end{enumerate}
Let $\alpha$ be a rank one abelian partially hyperbolic algebraic action on $\mathcal{X}$. Let $\mathcal Z\subset G$ be a closed abelian subgroup of rank one and is essentially semisimple.
Then there exists a maximal split torus $A\le G$ and a one-parameter subgroup $(a^t)\subset A$
such that for every $z\in\mathcal Z$ the semisimple part satisfies $s_z=a^{\tau(z)}$ (see  \eqref{for:173} of Section \ref{sec:33}).

Let $\mathcal S$ be a maximal strongly orthogonal system for $A$. Recall that $(\pi, \mathcal{H})$ has a strong spectral gap.

\begin{theorem}\label{th:8}  \emph{(Semisimple setting)}
 For any $f_1,\cdots, f_n\in C_c^{\infty}(\mathcal{X})$ and any $z_1,\cdots,z_n\in\mathcal{Z}$, we have:
 \begin{align*}
 &\Big|\int_{\mathcal{X}} \Pi_{i=1}^{n} \pi(z_i)f_i \,d\varrho - \Pi_{i=1}^{n} \int_{\mathcal{X}} f_i \,d\varrho\Big|\\
 &\leq C_{n,\epsilon}\,\max_{1\le i\neq j\le n} \eta_\epsilon(\mathcal{S}, s_{z_i-z_j})\Pi_{i=1}^{n}\norm{f_{i}}_{C^{\zeta_{\epsilon}(\mathcal{S})}}.
\end{align*}
 Moreover, if   $\int_{\mathcal{X}} f_i \, d\varrho=0$, $1\leq i\leq n$, then
  \begin{enumerate}
     \item\label{for:207} Let $q\in \operatorname*{arg\,min}_{1\le i\le n} \tau(z_i)$.  Then
  \begin{align*}
   &\Big|\int_{\mathcal{X}} \Pi_{i=1}^{n} \pi(z_i)f_i \,d\varrho \Big|\\
   &\le  C_{\epsilon,n}\max_{1\le i\neq j\le n} \eta_\epsilon(\mathcal{S}, s_{z_i-z_j}) \big(\Pi_{i\neq q}\norm{f_i}_{H_{+,a}, C^{p_{\epsilon}(\mathcal{S})}}\big)\norm{f_q}_{H_{-0,a}, C^{\zeta_{\epsilon}(\mathcal{S})}}.
  \end{align*}
     \item \label{for:208} If $n=3$, then
      \begin{align*}
   \Big|\int_{\mathcal{X}} \Pi_{i=1}^{3}\pi(z_i)f_i \, d\varrho\Big|&\leq C_\epsilon \big(\min_{1\le i,j\le 3} \eta_\epsilon(\mathcal{S}, s_{z_i-z_j})\big)^{\frac{1}{2}}\Pi_{i=1}^3\|f_i\|_{C^{\zeta_{\epsilon}(\mathcal{S})}}.
  \end{align*}

\item\label{for:203} If $n\geq 4$ no uniform bound in terms of $\min_{1\le i,j\le n} \eta_\epsilon(\mathcal{S}, s_{z_i-z_j})$ can hold in general.

  \end{enumerate}

\end{theorem}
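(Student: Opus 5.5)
The plan is to order the times and reduce every estimate to a single application of Theorem~\ref{th:7} across one well-chosen "gap". This is the two-block decomposition described in the proof strategy.

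\textbf{Setup.} Relabel so that $\tau(z_1)\le\tau(z_2)\le\cdots\le\tau(z_n)$, and write $t_i=\tau(z_i)$. For a cut index $1\le k<n$, set
\[
A_k=\prod_{i\le k}\pi(z_iz_{k}^{-1})f_i,\qquad B_k=\prod_{i>k}\pi(z_iz_{k+1}^{-1})f_i .
\]
By invariance of $\varrho$,
\[
\int_{\mathcal X}\prod_i\pi(z_i)f_i\,d\varrho=\int A_k\cdot \pi(z_{k+1}z_k^{-1})B_k\,d\varrho .
\]
Subtracting means gives
\[
\int A_k\,\pi(z_{k+1}z_k^{-1})B_k=\int A_k\int B_k+\big\langle \pi(z_{k+1}z_k^{-1})(B_k-\textstyle\int B_k),\,A_k-\int A_k\big\rangle .
\]

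\textbf{Regularity of the two blocks.} The covariance term will be bounded by Theorem~\ref{th:7} applied with $\psi=B_k-\int B_k$ and $\xi=A_k-\int A_k$. This requires $u(\mathcal S)$-regularity of $B_k$ and $S(\mathcal S)$-regularity of $A_k$, with norms independent of the times.

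For $B_k$: each factor is $f_i$ translated forward by $s_{z_iz_{k+1}^{-1}}=a^{t_i-t_{k+1}}$ with $t_i-t_{k+1}\ge 0$. By \eqref{for:125}, derivatives along $W_{+,a}$ of such a translate are bounded by the corresponding derivatives of $f_i$, since $\Ad_{a^{-t}}$ contracts $W_{+,a}$. The compact/unipotent part $w_z$ costs only the polynomial factor \eqref{for:127}.

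For $A_k$: symmetrically, derivatives along $W_{-,a}\oplus W_{0,a}$ are contracted or only polynomially distorted by backward translates.

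Combining these with a Leibniz rule for the partial $C^s$ norms (interpolating the H\"older part) and \eqref{for:189}, I expect
\[
\|B_k\|_{u(\mathcal S),p_\epsilon}\le C_n\,\mathrm{poly}(t)\prod_{i>k}\|f_i\|_{H_{+,a},C^{p_\epsilon}},\qquad
\|A_k\|_{S(\mathcal S),\zeta_\epsilon}\le C_n\,\mathrm{poly}(t)\prod_{i\le k}\|f_i\|_{H_{-0,a},C^{\zeta_\epsilon}} .
\]
The polynomial factors are absorbed by shrinking $\epsilon$ in $\eta_\epsilon$. Theorem~\ref{th:7}, together with the Weyl-group conjugation of Remark~\ref{re:7}, then bounds the covariance term by $C\,\eta_\epsilon(\mathcal S,s_{z_{k+1}z_k^{-1}})$ times these norms.

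\textbf{The four assertions.}
\begin{enumerate}
\item[(a)] Take $k=1$, so that $q=1$ and $A_1=f_q$ has zero mean. Then $\int A_1\int B_1=0$, and the bound is $\eta_\epsilon(\mathcal S,s_{z_2z_1^{-1}})\le\max_{i\ne j}\eta_\epsilon$. This uses only the $H_{-0,a}$-norm of $f_q$ and the $H_{+,a}$-norms of the others, exactly as stated in \eqref{for:207}.
\item[The general estimate.] Write $f_i=\int f_i+f_i^0$ and expand the product. Each term with at least one zero-mean factor is handled by induction on $n$, using (a) at the lowest zero-mean index. The lower-order terms are bounded by $\eta_\epsilon$ at the minimal gap, which is the maximum of $\eta_\epsilon$.
\item[(b)] For $n=3$ with zero means, cut at the larger of the two gaps $t_2-t_1$ and $t_3-t_2$. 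Whichever side is a singleton has zero mean, so $\int A\int B=0$. Since the larger gap is at least $(t_3-t_1)/2$, we get $\eta_\epsilon(\text{larger gap})\le\big(\min\eta_\epsilon\big)^{1/2}$. Both blocks may appear in either role, which is why full $C^{\zeta_\epsilon}$ norms are needed.
\item[(c)] Take $g$ with zero mean such that $c:=\int g\cdot\pi(a)g\,d\varrho\ne0$ (available since $a$ does not act trivially). Use the times $0,1,T,T+1$. As $T\to\infty$, order-two mixing applied to the two blocks makes the correlation tend to $c^2\ne0$, while the minimal $\eta_\epsilon$ tends to $0$.
\end{enumerate}

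\textbf{Main obstacle.} I expect the main difficulty to be the uniform partial-norm bounds on the blocks $A_k$ and $B_k$. This involves:
\begin{itemize}
\item a Leibniz rule for fractional H\"older partial norms along non-commuting directions in $\Lie(H_{+,a})$ and $\Lie(H_{-0,a})$;
\item controlling the neutral directions and the polynomial $w_z$-growth so that only an $\epsilon$-loss results.
\end{itemize}
I would first try to handle the fractional part by writing $s=m+\theta$ and estimating difference quotients of products directly in $C^0$.
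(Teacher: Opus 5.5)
Your plan follows the paper's proof: a two-block split fed into Theorem~\ref{th:7}, a cut after the earliest time for \eqref{for:207}, an expansion into means for the general bound, a cut at the larger of the two gaps when $n=3$, and a two-block counterexample for $n\ge 4$. Your counterexample uses times $0,1,T,T+1$ and order-two mixing of the blocks, which is slightly cleaner than the paper's repeated-time example. For $n>4$ you still have to add mean-zero factors to a block while keeping its integral nonzero.

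The one step that does not close as written is the absorption of polynomial factors in \eqref{for:207}. There the only exponential gain is $\eta_\epsilon(\mathcal S,s_{z_2z_1^{-1}})$, which decays only in the cut gap $t_2-t_1$. By \eqref{for:127}, however, the factors $w_{z_iz_2^{-1}}$ inside $B_1$ cost powers of $1+t_i-t_2$. Take $t_1=0$, $t_2=1$, $t_3=T\to\infty$: the target bound is a constant times $\eta_\epsilon$ at gap $1$, so a bound of the form $\mathrm{poly}(t)\,\eta_\epsilon$ is not uniform, and shrinking $\epsilon$ cannot fix that.

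The fix is to combine the two effects inside each factor rather than multiply separate bounds. For $v\in W_{+,a}$ and $b=a^{t_i-t_2}w_{z_iz_2^{-1}}$,
$\|\Ad_{b^{-1}}v\|\le C(1+t_i-t_2)^{N}e^{-\chi(t_i-t_2)}\|v\|\le C'\|v\|$,
where $N$ depends only on $G$ and $\chi>0$ is the smallest positive Lyapunov exponent of $a$. Hence the $H_{+,a}$-norms of $B_1$ are bounded uniformly in the times. The only remaining polynomial loss comes from $w_{z_2z_1^{-1}}$; it is polynomial in $t_2-t_1$ and is absorbed by running Theorem~\ref{th:7} with $\epsilon/2$. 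This is the paper's estimate $e^{-(\chi-\epsilon)(\tau(z_i-z_n)-\kappa)}(1+|\tau(z_i-z_n)|)^{\dim G}\le C(1+\kappa)^{\dim G}$.

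Your worry about the neutral directions does not arise for the cuts you actually use. In \eqref{for:207} the $H_{-0,a}$-block is the single untranslated $f_q$. For $n=3$ every polynomial loss is in $t_3-t_1$, which is at most twice the cut gap.
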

\begin{remark} If $a\in A^+$, then we can restrict  $H_{+,a}$ to $u(\mathcal{S})$ and $H_{-0,a}$ to $S(\mathcal{S})$  (see  Section \ref{for:221}). For general $a\in A$,  we recall that there exists $\textbf{w}$ in the Weyl group such that $a^+=\textbf{w}^{-1}a\textbf{w}\in A^+$ (see \eqref{for:120} of Section \ref{sec:33}). Then we can restrict $H_{+,a}$ to $\textbf{w}u(\mathcal{S})\textbf{w}^{-1}$ and $H_{-0,a}$ to $\textbf{w}S(\mathcal{S})\textbf{w}^{-1}$.
\end{remark}

\subsection{Proof sketches}

We first treat the mean-zero case. Reorder the parameters so that
$\tau(z_n)=\min_{1\le i\le n}\tau(z_i)$,
and factor out the earliest time. After conjugating into the positive chamber,
we rewrite the multilinear correlation in the form
\[
\mathfrak m\big(\pi(z_1)f_1,\dots,\pi(z_n)f_n\big)
   = \mathfrak m\big(\pi((a^+)^\kappa)\mathcal F,\widetilde f_n\big),
\]
where
\[
\kappa=\min_{1\le i\le n-1}\big(\tau(z_i)-\tau(z_n)\big),
\qquad
\mathcal F=\prod_{i=1}^{n-1}\pi((a^+)^{\tau(z_i-z_n)-\kappa})\widetilde f_i.
\]
We then apply the two-point mixing estimate (Theorem~\ref{th:7}) to the
pair \(\big(\pi((a^+)^\kappa)\mathcal F,\widetilde f_n\big)\).

The key technical point is to control the partial Sobolev norm
\(\|\mathcal F\|_{H_{+,a^+},t}\). This is done by combining a
Leibniz/Kato--Ponce estimate with the contraction of
\(\Ad_{(a^+)^{-m}}\) on the unstable directions, which yields a bound in terms of $\Pi_{i=1}^{n-1}\|f_i\|_{H_{+,a},C^t}$.
In the non-split case, conjugation by the compact factors produces only
polynomial losses, and these are absorbed into the exponential decay after
slightly shrinking \(\epsilon\).

For general \(f_i\), we expand each function into its mean-zero part
\(f_i-\int_{\mathcal X}f_i\,d\varrho\) plus its mean, and apply the
mean-zero estimate to each nonempty subset. The sharper three-point bound is
obtained by grouping according to the larger of the two adjacent gaps, while
the obstruction for \(n\ge4\) follows from an explicit counterexample.

\subsection{Proof of Theorem \ref{th:8}}  For any $z_i\in \mathcal{Z}$ and $f_i\in C_c^\infty(\mathcal{X})$, $1\leq i\leq n$, set
\begin{align*}
  \mathfrak{m}\big(\pi(z_1)f_1,\pi(z_2)f_2,\cdots,\pi(z_n)f_n \big)=\int_{\mathcal{X}} \Pi_{i=1}^{n} \pi(z_i)f_i \,d\varrho.
\end{align*}
\emph{Case I: $\int_{\mathcal{X}} f_i \, d\varrho=0$, $1\leq i\leq n$.} Without loss of generality, assume $\tau(z_{n})=\min_{1\leq j\leq n} \tau(z_j)$. Set
\begin{align*}
 \kappa=\min_{1\leq i\leq n-1} (\tau(z_i)-\tau(z_{n}))>0.
\end{align*}
We recall that there exists $\textbf{w}$ in the Weyl group such that $a^+=\textbf{w}^{-1}a\textbf{w}\in A^+$ (see \eqref{for:120} of Section \ref{sec:33}).
Then
\begin{align}\label{for:128}
 \text{Ad}_{\textbf{w}}(W_{+,a^+})\subseteq W_{+,a}\quad\text{and}\quad \text{Ad}_{\textbf{w}}(W_{-0,a^+})\subseteq W_{-0,a}.
\end{align}
Then for any $z\in \mathcal{Z}$, we can write
\begin{align*}
 z=s_zw_z=a^{\tau(z)}w_z=\textbf{w}(a^+)^{\tau(z)}\textbf{w}^{-1}w_z.
\end{align*}
Since $w_{z_i}$ commutes with $a^{\tau(z_i)}$, it preserves the $a$-(un)stable/neutral directions, hence
\begin{align}\label{for:129}
 \text{Ad}_{w_z}(W_{+,a})\subseteq W_{+,a}\quad\text{and}\quad \text{Ad}_{w_z}(W_{-0,a})\subseteq W_{-0,a}.
\end{align}
Using the $G$-invariance of $\varrho$ and the change of variables $x\mapsto \mathbf w^{-1}x$, we have
\begin{align}\label{for:2}
 &\mathfrak{m}\big(\pi(z_1)f_1,\pi(z_2)f_2,\cdots,\pi(z_n)f_n \big)=\mathfrak{m}(\pi((a^+)^\kappa)\mathcal{F}, \widetilde{f_{n}}),
\end{align}
where $\widetilde{f_i}=\pi(\textbf{w}^{-1}w_{z_i-z_n})f_i$, $1\leq i\leq n$ and
\begin{gather*}
\mathcal{F}=\Pi_{i=1}^{n-1} \pi((a^+)^{\tau(z_i-z_n)-\kappa})\widetilde{f_i}.
\end{gather*}
Then from \eqref{for:127} and \eqref{for:125} of Section \ref{sec:33}, \eqref{for:128} and \eqref{for:129} we have
\begin{align}\label{for:130}
 \norm{\widetilde{f_i}}_{\tilde{H}, C^t}\leq C_t(1+|\tau(z_i-z_n)|)^{t\dim G}\norm{f_i}_{H, C^t},\qquad \forall\,t\geq0,\,1\leq i\leq n
\end{align}
with $(\tilde H,H)\in\{(H_{+,a^+},H_{+,a}),\,(H_{-0,a^+},H_{-0,a}),\,(G,G)\}$.

Since $W_{+,a^+}$ is $\text{Ad}_{a^+}$-invariant with the minimal Lyapunov exponent of $\text{Ad}_{a^+}|_{W_{+,a}}\geq \chi$, where $\chi>0$ is the minimal absolute nonzero Lyapunov exponent of $a^+$ and $\tau(z_i-z_{n})-\kappa\ge0$
\begin{align*}
 \big\|\text{Ad}_{(a^+)^{-(\tau(z_i-z_{n})-\kappa))}}\big|_{W_{+,a^+}}\big\|\leq C_\epsilon e^{-(\chi-\epsilon)(\tau(z_i-z_{n})-\kappa)}, \qquad 1\leq i\leq n-1.
\end{align*}
This together with \eqref{for:127} of Section \ref{sec:33} gives: for any $1\leq i\leq n-1$
\begin{align*}
 &\big\|\text{Ad}_{\textbf{w}w_{z_i-z_n}^{-1}}\circ\text{Ad}_{(a^+)^{-(\tau(z_i-z_{n})-\kappa))}}\big|_{W_{+,a^+}}\big\|\\
 &\leq C_\epsilon e^{-(\chi-\epsilon)(\tau(z_i-z_{n})-\kappa)}(1+|\tau(z_i-z_n)|)^{\dim G}\overset{\text{(1)}}{\leq}C(1+\kappa)^{\dim G}.
\end{align*}
Here in $(1)$ we note that
\[
1+|\tau(z_i)-\tau(z_n)|\ \le\ \big(1+|\tau(z_i)-\tau(z_n)-\kappa|\big)\,(1+\kappa).
\]
It follows from \eqref{for:125} of Section \ref{sec:33} that for any $t\geq0$ and $1\leq i\leq n-1$
\begin{align*}
 \big\|\pi((a^+)^{\tau(z_i-z_n)-\kappa})&\widetilde{f_i}\big\|_{H_{+,a^+},\,C^{t}}=\big\|\pi((a^+)^{\tau(z_i-z_n)-\kappa})\pi(\textbf{w}^{-1}w_{z_i-z_n})f_i\big\|_{H_{+,a^+},\,C^{t}}\\
 &\leq C_t(1+\kappa)^{t\dim G}\big\|f_i\big\|_{H_{+,a},\,C^{t}}
\end{align*}
 Thus, by Leibniz
(and Kato-Ponce for fractional orders) and \eqref{for:189} of Section \ref{sec:33},  for any $t\geq0$ we have
\begin{align}\label{for:133}
\|\mathcal F\|_{H_{+,a^+},\,t}&\le C_{t,n}(1+\kappa)^{(n-1)t\dim G}\prod_{i=1}^{n-1}\|f_i\|_{H_{+,a},\,C^t}.
\end{align}
\begin{remark}
If $\mathcal Z$ is contained in a split Cartan subgroup (so $s_z\in A$ and $w_z=e$ for all $z\in\mathcal Z$), then the conjugation factors by $w_z$ disappear and the transfer estimate \eqref{for:130} has no polynomial growth:
\[
\|\widetilde f_i\|_{\tilde H,C^t}\;\le\; C_t\,\|f_i\|_{H,C^t}\qquad (t\ge0).
\]
Thus, by Leibniz
(and Kato-Ponce for fractional orders)
\begin{align}\label{for:126}
\|\mathcal F\|_{H_{+,a},t}&\leq\|\mathcal F\|_{H_{+,a},C^t}\le C_{t,n}\Pi_{i=1}^{n-1} \|\pi(a^{\tau(z_i-z_n-z)})f_i\|_{H_{+,a},\,C^{t}}\notag\\
&\leq  C_{t,n,1}\Pi_{i=1}^{n-1}\|f_i\|_{H_{+,a},C^t}
\end{align}
for any $t\ge0$.

In the general (non-split) case, the elements $w_z$ need not be trivial, and the factors $(1+|\tau(z_i)-\tau(z_n)|)^{t\dim G}$ in \eqref{for:130} are unavoidable; hence $\|\mathcal F\|_{H_{+,a^+},t}$ is not uniformly bounded by a constant independent of the time gaps. We compensate for this polynomial growth by absorbing it into the exponential decay $\eta_{\epsilon/2}(\mathcal S,a^+)^{\kappa}$ after shrinking $\epsilon$.
\end{remark}
It follows from Theorem \ref{th:7} and Remark \ref{re:7} that: for any $\psi,\,\xi\in C^\infty_c(\mathcal{X})$
\begin{align*}
 \big|\langle \pi((a^+)^\kappa)\psi, \xi\rangle\big|\leq C_\epsilon \eta_{\frac{\epsilon}{2}}(\mathcal{S}, a^+)^{\kappa}\norm{\psi}_{H_{+,a^+},\,p_{\epsilon}(\mathcal{S})}\,\big\|\xi\big\|_{H_{-0,a^+},\,\zeta_{\epsilon}(\mathcal{S})}.
\end{align*}
For simplicity, denote $p_{\epsilon}(\mathcal{S})$ by $r_1$ and $\zeta_{\epsilon}(\mathcal{S})$ by $r_2$. Then $r_2\geq r_1$ by \eqref{for:4} of Remark \ref{re:12}. Thus, we have
\begin{align*}
 \big|\mathfrak{m}(\pi((a^+)^\kappa)\mathcal{F}, \tilde{f_{n}})\big|&\overset{\text{(1)}}{\leq} C_\epsilon \eta_{\frac{\epsilon}{2}}(\mathcal{S}, a^+)^{\kappa} \norm{\mathcal{F}}_{H_{+,a^+}, C^{r_{1}}}\norm{\widetilde{f_{n}}}_{H_{-0,a^+},\,C^{r_{2}}}\notag\\
&\overset{\text{(2)}}{\leq}C_{n,\epsilon}\eta_{\epsilon}(\mathcal{S}, a^+)^{\kappa}\big(\Pi_{i=1}^{n-1}\norm{f_i}_{H_{+,a},\, C^{r_{1}}}\big)\norm{f_n}_{H_{-0,a}, C^{r_{2}}}\\
&\overset{\text{(3)}}{\leq}   C_{n,\epsilon,1}\,\max_{1\le i\neq j\le n} \,\eta_{\epsilon}(\mathcal{S}, s_{z_i-z_j}) \big(\Pi_{i=1}^{n-1}\norm{f_i}_{H_{+,a},\, C^{r_{1}}}\big)\norm{f_n}_{H_{-0,a}, C^{r_{2}}}.
\end{align*}
Here in $(1)$ we recall $\kappa>0$ and we shrink $\epsilon$ to $\frac{\epsilon}{2}$ so that  the polynomial factor $(1+z)^{(n-1)r_1\dim G}$ arising in the bound of
\(\|\mathcal{F}\|_{H_{+,a^+}, C^{r_{1}}}\) is absorbed by
$\eta_{\epsilon/2}(\mathcal S,a^+)^\kappa$; in $(2)$ we use \eqref{for:130} and \eqref{for:133}; in $(3)$ we note that $\kappa\ge\gamma=\min_{1\le i\le n} \tau(z_i)$ and
\begin{align*}
\eta_{\epsilon}(\mathcal{S}, s_{z_i-z_j})=\eta_{\epsilon}(\mathcal{S}, a^{\tau(z_i-z_j)})=\eta_{\epsilon}(\mathcal{S}, a^+)^{|\tau(z_i-z_j)|}\quad \text{and}\quad
\eta_{\epsilon}(\mathcal{S}, a)<1.
\end{align*}
This together with \eqref{for:2} proved \eqref{for:207}.

\smallskip
\emph{Case II (general means):} Let $\vec{f_i}=f_i-c_i$, where $c_i=\int_{\mathcal{X}} f_i \, d\varrho$, $1\leq i\leq n$.
Then
\begin{align*}
\Pi_{i=1}^n \pi(z_i)f_i=\sum_{J\subseteq\{1,\dots,n\}}
\big(\Pi_{i\in J}\pi(z_i)\vec{f_i} \big)
\big(\Pi_{i\notin J} c_i\big),\quad \int_{\mathcal{X}} \vec{f_i} \, d\varrho=0.
\end{align*}
The $J=\varnothing$ term equals $\Pi_i c_i=\Pi_i\int_{\mathcal X}f_i\,d\varrho$.
For every nonempty $J$,  by Case $I$ we have
\begin{align*}
&\Big|\int_{\mathcal X}\big(\Pi_{i\in J}\pi(z_i)\vec f_i\big)
\big(\Pi_{i\notin J} c_i\big)\,d\varrho\Big|\\
&\le\;
C_{n,\epsilon}\,\max_{ i\neq j,\,i,j\in J} \,\eta_{\epsilon}(\mathcal{S}, s_{z_i-z_j})\Pi_{i\in J}\|\vec f_i\|_{C^{r_2}}\big(\Pi_{i\notin J} \norm{f_i}_{C^0}\big)\\
 &\le  C_{n,\epsilon}\,\max_{1\le i\neq j\le n} \,\eta_{\epsilon}(\mathcal{S}, s_{z_i-z_j})\Pi_{i=1}^n\|f_i\|_{C^{r_2}}.
\end{align*}
Summing over $J\neq\varnothing$
yields the result.

\smallskip
\emph{Case III: $\int_{\mathcal{X}} f_i \, d\varrho=0$, $1\leq i\leq 3$.} Without loss of generality, assume that $\tau(z_1)\geq \tau(z_2)\geq \tau(z_3)$. Then
\begin{align}\label{for:3}
 \tau(z_1)-\tau(z_3)=\max_{1\leq i\neq j\leq 3}|\tau(z_i)-\tau(z_j)|.
\end{align}
$\textbf{(i)}$: If $\tau(z_{2})-\tau(z_{3})>\frac{1}{2}(\tau(z_1)-\tau(z_3))$, set
\begin{align*}
\mathcal{F}= \big(\pi((a^+)^{\tau(z_1)-\tau(z_2)})\tilde{f_1}\big) \tilde{f_2},\qquad \text{where }\tilde{f_i}=\pi(\textbf{w}^{-1}w_{z_i-z_3})f_i.
\end{align*}
Then
\begin{align}\label{for:343}
 \mathfrak{m}\big(\pi(z_1)f_1,\,\pi(z_2)f_2,\, \pi(z_3)f_{3}\big)=\mathfrak{m}\big(\pi((a^+)^{\tau(z_2)-\tau(z_3)})\mathcal{F},\,\tilde{f_{3}}\big).
\end{align}
It follows from Theorem \ref{th:7} that
\begin{align}\label{for:344}
 \big|\mathfrak{m}\big(\pi((a^+)&^{\tau(z_2)-\tau(z_3)})\mathcal{F},\,\tilde{f_{3}}\big)\big|\leq C_\epsilon \eta_{\frac{\epsilon}{2}}(\mathcal{S}, a^+)^{\tau(z_2)-\tau(z_3)}\norm{\mathcal{F}}_{H_{+,a^+},\,r_1}\,\big\|\tilde{f_{3}}\big\|_{H_{-0,a^+},\,r_2}\notag\\
 &\overset{\text{(1)}}{\leq} C_{\epsilon}\eta_{\frac{\epsilon}{2}}(\mathcal{S}, a^+)^{\frac{1}{2}\max_{1\leq i,j\leq 3}\abs{\tau(z_i)-\tau(z_j)}}\norm{\mathcal{F}}_{H_{+,a^+},\,C^{r_1}}\,\big\|\tilde{f_{3}}\big\|_{H_{-0,a^+},\,C^{r_2}}\notag\\
 &\leq C_{\epsilon,1}\eta_{\frac{\epsilon}{2}}(\mathcal{S}, a^+)^{\frac{1}{2}\max_{1\leq i,j\leq 3}\abs{\tau(z_i)-\tau(z_j)}}\norm{\mathcal{F}}_{H_{+,a^+},\,C^{r_1}}\,\big\|f_{3}\big\|_{C^{r_2}}.
\end{align}
Here in $(1)$ we use \eqref{for:189} of Section \ref{sec:33} and  we recall the assumption $\tau(z_{2})-\tau(z_{3})>\frac{1}{2}(\tau(z_1)-\tau(z_3))$.

Since $W_{+,a^+}$ is $\text{Ad}_{a^+}$-invariant with the minimal Lyapunov exponent of $\text{Ad}_{a^+}|_{W_{+,a}}>0$ and and $\tau(z_1)-\tau(z_{2})\geq0$,
it follows from
\eqref{for:125} of Section \ref{sec:33} that
\begin{align*}
 \|\pi((a^+)^{\tau(z_1)-\tau(z_2})\widetilde{f_1}\|_{H_{+,a^+},\,C^{r_1}}\leq C\|\widetilde{f_1}\|_{H_{+,a^+},\,C^{r_1}}.
\end{align*}
Thus, by Leibniz
(and Kato-Ponce for fractional orders), we have
\begin{align*}
 \norm{\mathcal{F}}_{H_{+,a^+},\,C^{r_1}}&\leq C\|\pi((a^+)^{\tau(z_1)-\tau(z_2})\widetilde{f_1}\|_{H_{+,a^+},\,C^{r_1}}\|\widetilde{f_2}\|_{H_{+,a^+},\,C^{r_1}}\\
 &\leq C_1\|\widetilde{f_1}\|_{H_{+,a^+},\,C^{r_1}}\|\widetilde{f_2}\|_{H_{+,a^+},\,C^{r_1}} \\
 &\leq C(1+|\tau(z_1-z_3)|)^{r_1\dim G}(1+|\tau(z_2-z_3)|)^{r_1\dim G}\norm{f_1}_{C^{r_1}}\norm{f_2}_{C^{r_1}}\\
 &\overset{\text{(1)}}{\leq} C_1(1+|\tau(z_1-z_3)|)^{(r_1+r_2)\dim G}\norm{f_1}_{C^{r_2}}\norm{f_2}_{C^{r_2}}.
\end{align*}
Here in $(1)$ we recall \eqref{for:3} and $r_2\geq r_1$.

This together with \eqref{for:344} give
\begin{align*}
 &\big|\mathfrak{m}\big(\pi((a^+)^{\tau(z_2)-\tau(z_3)})\mathcal{F},\,\tilde{f_{3}}\big)\big|\\
 &\leq C_{\epsilon}\eta_{\frac{\epsilon}{2}}(\mathcal{S}, a^+)^{\frac{1}{2}\max_{1\leq i,j\leq 3}\abs{\tau(z_i)-\tau(z_j)}}(1+|\tau(z_1-z_3)|)^{(r_1+r_2)\dim G}\Pi_{i=1}^3\norm{f_i}_{C^{r_2}}\\
 &\overset{\text{(1)}}{\leq} C_{\epsilon}\eta_{\epsilon}(\mathcal{S}, a^+)^{\frac{1}{2}\max_{1\leq i,j\leq 3}\abs{\tau(z_i)-\tau(z_j)}}\Pi_{i=1}^3\norm{f_i}_{C^{r_2}}.
\end{align*}
Here in $(1)$ We shrink $\epsilon$ to $\frac{\epsilon}{2}$ so that  the polynomial factor arising in the bound of
$\norm{\mathcal{F}}_{H_{+,a^+},\,C^{r_1}}$ is absorbed by
$\eta_{\frac{\epsilon}{2}}(\mathcal{S}, a^+)^{\frac{1}{2}\max_{1\leq i,j\leq 3}\abs{\tau(z_i)-\tau(z_j)}}$.

\smallskip
$\textbf{(ii)}$: If $\tau(z_{2})-\tau(z_{3})\leq\frac{1}{2}(\tau(z_1)-\tau(z_3))$. Then
\begin{align}\label{for:345}
 \tau(z_1)-\tau(z_2)&=\tau(z_3)-\tau(z_2)+(\tau(z_1)-\tau(z_3))\notag\\
 &>-\frac{1}{2}(\tau(z_1)-\tau(z_3))+(\tau(z_1)-\tau(z_3))=\frac{1}{2}(\tau(z_1)-\tau(z_3))>0.
\end{align}
Set
\begin{align*}
\mathcal{G}=\widetilde{f_2} \big(\pi((a^+)^{\tau(z_2)-\tau(z_3)})\widetilde{f_3}\big),\qquad\text{where }\widetilde{f_i}=\pi(\textbf{w}^{-1}w_{z_i-z_2})f_i.
\end{align*}
Then
\begin{align}\label{for:347}
&\mathfrak{m}\big(\pi(z_1)f_1,\,\pi(z_2)f_2,\, \pi(z_3)f_{3}\big)=\mathfrak{m}\big(\pi((a^+)^{\tau(z_1)-\tau(z_2)})\widetilde{f_1},\mathcal{G}\big).
\end{align}
It follows from Theorem \ref{th:7} that
\begin{align}\label{for:346}
 \big|\mathfrak{m}\big(&\pi((a^+)^{\tau(z_1)-\tau(z_2)})\widetilde{f_1},\mathcal{G}\big)\big|\overset{\text{(1)}}{\leq} C_\epsilon \eta_{\frac{\epsilon}{2}}(\mathcal{S}, a^+)^{\tau(z_1)-\tau(z_2)}\norm{\widetilde{f_1}}_{H_{+,a^+},\,r_1}\,\big\|\mathcal{G}\big\|_{H_{-0,a^+},\,r_2}\notag\\
 &\overset{\text{(2)}}{\leq} C_\epsilon \eta_{\frac{\epsilon}{2}}(\mathcal{S}, a^+)^{\tau(z_1)-\tau(z_2)}(1+|\tau(z_1-z_2)|)^{r_1\dim G}\norm{f_1}_{C^{r_1}}\,\big\|\mathcal{G}\big\|_{H_{-0,a^+},\,C^{r_2}}.
\end{align}
Here in $(1)$ we shrink $\epsilon$ to $\frac{\epsilon}{2}$ to absorb the polynomial growth factor  from the bound of $\mathcal{G}$; in $(2)$ we use \eqref{for:189} of Section \ref{sec:33}.

Since $W_{-0,a^+}$ is $\text{Ad}_{a^+}$-invariant with the maximal Lyapunov exponent of $\text{Ad}_{a^+}|_{W_{-0,a^+}}\leq0$ and $\tau(z_3)-\tau(z_{2})\leq 0$,
it follows from
\eqref{for:125} of Section \ref{sec:33} that
\begin{align*}
 \|\pi((a^+)^{\tau(z_2)-\tau(z_3)})\widetilde{f_3}\|_{H_{-0,a},C^{r_2}}\leq C\|\widetilde{f_3}\|_{H_{-0,a},C^{r_2}}.
\end{align*}
Thus, by Leibniz
(and Kato-Ponce for fractional orders), we have
\begin{align*}
\|\mathcal G\|_{H_{-0,a},C^{r_2}}&\leq C  \|\widetilde{f_2}\|_{H_{-0,a},C^{r_2}}\|\pi((a^+)^{\tau(z_2)-\tau(z_3)})\widetilde{f_3}\|_{H_{-0,a},C^{r_2}}\leq C_1\|\widetilde{f_2}\|_{C^{r_2}}\|\widetilde{f_3}\|_{C^{r_2}}\\
&\le C_2(1+|\tau(z_2)-\tau(z_3)|)^{r_2\dim G}\|f_2\|_{C^{r_2}}\|f_3\|_{C^{r_2}}\notag\\
&\overset{\text{(1)}}{\leq} C_3(1+|\tau(z_1)-\tau(z_3)|)^{r_2\dim G}\|f_2\|_{C^{r_2}}\|f_3\|_{C^{r_2}}\notag\\
&\overset{\text{(2)}}{\leq} C_4(1+|\tau(z_1)-\tau(z_2)|)^{r_2\dim G}\|f_2\|_{C^{r_2}}\|f_3\|_{C^{r_2}}.
\end{align*}
Here in $(1)$ we recall the assumption $\tau(z_{2})-\tau(z_{3})\leq\frac{1}{2}(\tau(z_1)-\tau(z_3))$; in $(2)$ we use \eqref{for:345}.

This together with \eqref{for:346} gives
\begin{align*}
 &\big|\mathfrak{m}\big(\pi((a^+)^{\tau(z_1)-\tau(z_2)})\widetilde{f_1},\mathcal{G}\big)\big|\\
 &\leq  C_\epsilon \eta_{\frac{\epsilon}{2}}(\mathcal{S}, a^+)^{\tau(z_1)-\tau(z_2)}(1+|\tau(z_1-z_2)|)^{(r_1+r_2)\dim G}\norm{f_1}_{C^{r_1}}\|f_2\|_{C^{r_2}}\|f_3\|_{C^{r_2}}\\
&\overset{\text{(1)}}{\leq} C_\epsilon \eta_{\varepsilon}(\mathcal{S}, a^+)^{\tau(z_1)-\tau(z_2)}\norm{f_1}_{C^{r_1}}\|f_2\|_{C^{r_2}}\|f_3\|_{C^{r_2}}\\
&\overset{\text{(2)}}{\leq} C_{\epsilon,1}\eta_{\frac{\epsilon}{2}}(\mathcal{S}, a^+)^{\frac{1}{2}\max_{1\leq i,j\leq 3}\abs{\tau(z_i)-\tau(z_j)}}\Pi_{i=1}^3\norm{f_i}_{C^{r_2}}.
\end{align*}
Here in $(1)$ the polynomial factor is absorbed by $\eta_{\frac{\epsilon}{2}}(\mathcal{S}, a^+)^{\tau(z_1)-\tau(z_2)}$; in $(2)$ we use \eqref{for:345} and recall $r_2\geq r_1$.
Hence, we proved \eqref{for:208}.

\smallskip
\emph{Case IV: } Suppose $m\in\NN$. For any $n\geq2$ we can choose non-zero $f_1,\,f_2\in C_c^\infty(\mathcal{X})$ with $\int_{\mathcal{X}} f_1 \, d\varrho=0$, $i=1,2$ and  $c:=\int_{\mathcal{X}} f_2^n \, d\varrho\neq0$.
Then we have
\begin{align*}
 &\Big|\int_{\mathcal{X}} \big(\pi(m)f_1\big)^2 \big(\pi(2m)f_2\big)^n \,d\varrho \Big|=\Big|\int_{\mathcal{X}} \big(\pi(m)f_1\big)^2 \big(\pi(2m)f_2^n\big) \,d\varrho \Big|\notag\\
 &=\Big|\int_{\mathcal{X}} \big(\pi(m)f_1\big)^2 \big(\pi(2m)(f_2^n-c)\big) \,d\varrho +c\int_{\mathcal{X}} f_1^2 \, d\varrho\Big|\notag\\
 &\overset{\text{(1)}}{\geq} |c|\int_{\mathcal{X}} f_1^2 \, d\varrho\,-C_\epsilon \big( \eta_\epsilon(\mathcal{S}, s_{m})\big)^{\frac{1}{2}}\|f_1\|_{C^{r_{2}}}^2\|f_2^n-c\|_{C^{r_{2}}}.
\end{align*}
Here in $(1)$ we use \emph{Case III}.

On the other hand, if this quantity were uniformly bounded in terms of $\min_{1\leq i,j\le n} \eta_\epsilon(\mathcal{S}, s_{z_i-z_j})$, i.e.,  if there were $0<\gamma<1$ with
\[
 \Big|\!\int_{\mathcal{X}} \big(\pi(m)f_1\big)^2 \big(\pi(2m)f_2\big)^n \,d\varrho \Big|
 \le C_{f_1,f_2}\,\big(\eta_\epsilon(\mathcal{S}, s_{m})\big)^\gamma\qquad\forall \,m\in\mathbb N,
\]
then letting $m\to\infty$, we get a contraction.  The right-hand side tends to $0$ while $|c|\int f_1^2\,d\varrho>0$. This proves \eqref{for:203}.

\section{Higher order exponential mixing: semisimple setting}
\subsection{Main results} We list  the notation that will appear in the following theorems:
\begin{enumerate}

  \item Strong spectral gap, $(\pi, L^2(\mathcal X,\varrho))$:  see \eqref{for:120} of Section \ref{sec:33}.

  \item Maximal strongly orthogonal system $\mathcal{S}$: see Section \ref{for:220}.
  \item $p_{\epsilon}(\mathcal{S})$, $\zeta_{\epsilon}(\mathcal{S})$, $\eta_\epsilon(\mathcal{S}, a)$, $a\in A$: see Section \ref{for:222}.

  \item Essentially semisimple, $d(s_{z_1},s_{z_2})$, $s_z$: see  \eqref{for:173} of Section \ref{sec:33}.

\end{enumerate}

 Let $\mathcal Z\subset G$ be a closed abelian subgroup of rank $\ell$, $\ell\geq2$ and $\mathcal Z$ is essentially semisimple.
Then there exist a maximal split torus $A\le G$
such that for every $z\in\mathcal Z$ the semisimple part satisfies $s_z=\mathfrak{a}^{\tau(z)}\in A$ (see  \eqref{for:173} of Section \ref{sec:33}).

Let $\mathcal S$ be a maximal strongly orthogonal system for $A$ and recall that $(\pi, L_0^2(\mathcal X,\varrho))$ has a strong spectral gap.

\begin{theorem}\label{th:3} For any $f_1,\cdots, f_n\in C_c^{\infty}(\mathcal{X})$ and any $z_1,\cdots,z_n\in\mathcal{Z}$, we have:
 \begin{align}\label{for:219}
 &\Big|\int_{\mathcal{X}} \Pi_{i=1}^{n} \pi(z_i)f_i \,d\varrho - \Pi_{i=1}^{n} \int_{\mathcal{X}} f_i \,d\varrho\Big|\notag\\
 &\leq C_{n,\epsilon}\,\max_{1\le i\neq j\le n}\eta_\epsilon (\mathcal{S}, s_{z_i-z_j})^{\frac{1}{(n-1)|\mathcal{S}|}}\,
\Pi_{i=1}^n\norm{f_i}_{C^{\zeta_{\epsilon}(\mathcal{S})}}.
\end{align}
Moreover, if   $\int_{\mathcal{X}} f_i \, d\varrho=0$, $1\leq i\leq n$, then
\begin{enumerate}
  \item\label{for:247} If $n=3$ and $\int_{\mathcal{X}} f_i \,d\varrho=0$, $1\leq i\leq3$,  then
\begin{align*}
 &\Big|\int_{\mathcal{X}} \Pi_{i=1}^{3} \pi(z_i)f_i \,d\varrho \Big|\le C_{\epsilon}\,\eta_\epsilon (\mathcal{S}, s_{z_{i_1}-z_{i_2}})^{\frac{1}{2|\mathcal{S}|}}\,
\Pi_{i=1}^3\norm{f_i}_{C^{\zeta_{\epsilon}(\mathcal{S})}}.
\end{align*}
where $1\leq i_1\neq i_2\leq 3$ satisfying
\begin{align*}
   d(s_{z_{i_1}},s_{z_{i_2}})=\max_{1\leq i,j\leq3}d(s_{z_{i}},s_{z_{j}}).
  \end{align*}
  \item\label{for:150} In particular, if $\mathcal Z\subseteq A$, then
\begin{align*}
 &\Big|\int_{\mathcal{X}} \Pi_{i=1}^{3} \pi(z_i)f_i \,d\varrho \Big|\le C_{\epsilon}\,\min_{1\le i,j\le 3}\eta_\epsilon (\mathcal{S}, s_{z_{i}-z_{j}})^{\frac{1}{2|\mathcal{S}|}}\,
\Pi_{i=1}^3\norm{f_i}_{C^{\zeta_{\epsilon}(\mathcal{S})}}.
\end{align*}
  \item\label{for:191} If $n\geq 4$, even assuming $\mathcal Z\subseteq A$,,  no uniform bound in terms of $\min_{1\le i,j\le n} \eta_\epsilon(\mathcal{S}, s_{z_i-z_j})$ can hold in general.
\end{enumerate}

\end{theorem}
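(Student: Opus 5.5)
The plan is to reduce the higher-rank statement to the rank-one argument of Theorem~\ref{th:8}. The input is the order-two estimate of Theorem~\ref{th:7} in the partial-norm form of Remark~\ref{re:7}, applied along a well-chosen single element of $A$.

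\textbf{Reduction to a one-dimensional configuration.} First treat the mean-zero case. Given $z_1,\dots,z_n$, project the parameters $\tau(z_i)\in\RR^\ell$ onto a direction $v\in\RR^\ell$. Choose $v$ so that the projected points are separated by a gap comparable to $\max_{i\ne j}d(s_{z_i},s_{z_j})/(n-1)$; pigeonholing over the $n-1$ consecutive gaps gives this. Next, order the points along $v$ and split them at the largest gap into two blocks, $\{z_i\}_{i\in I_1}$ and $\{z_i\}_{i\in I_2}$. Set $b=\mathfrak a^{t v}$, with $t$ the length of the gap. After conjugating by a Weyl element $\mathbf w$ with $\mathbf w^{-1}b\mathbf w\in A^+$, write the correlation as
\[
\mathfrak m\big(\pi(b^+)\mathcal F,\mathcal G\big),\qquad
\mathcal F=\prod_{i\in I_1}\pi(\cdot)\widetilde f_i,\quad
\mathcal G=\prod_{i\in I_2}\pi(\cdot)\widetilde f_i ,
\]
exactly as in \eqref{for:2}.

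\textbf{Estimating the block products.} Apply Theorem~\ref{th:7} to this pair. This requires bounding $\|\mathcal F\|_{H_{+,b^+},p_\epsilon(\mathcal S)}$ and $\|\mathcal G\|_{H_{-0,b^+},\zeta_\epsilon(\mathcal S)}$, which I would do with Leibniz/Kato--Ponce as in \eqref{for:133}.

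Here lies the main obstacle. The elements translating within a block are no longer powers of $b$, so they need not contract $W_{+,b^+}$. They can expand these directions at rate up to $e^{C\cdot\mathrm{diam}}$, where the block diameter can be as large as $\max d$. To handle this I would use full $C^{\zeta_\epsilon(\mathcal S)}$ norms, which explains the full norms in \eqref{for:219}. I would bound the Lipschitz growth of $\|\pi(\mathfrak a^{u})f\|_{C^s}$ by $e^{s\Lambda|u|}\|f\|_{C^s}$ and then balance this growth against the decay.

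The decay at hand is $\eta_\epsilon(\mathcal S,b)$. By the definition of $\eta_\epsilon$ through the roots $\theta\in\mathcal S$ evaluated at $b^+$, it is bounded by $\eta_\epsilon(\mathcal S,s_{z_{i}-z_{j}})^{1/((n-1)|\mathcal S|)}$. The factor $|\mathcal S|$ enters because $v$ has to be chosen so that every root in $\mathcal S$ is nondegenerate along it: I would take $v$ from finitely many cones determined by the roots and lose a factor $|\mathcal S|$ in comparing $\theta(b^+)$ with $\theta(s_{z_i-z_j}^+)$.

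If the exponential growth inside the blocks cannot be absorbed, I would instead iterate the splitting. This is an induction on $n$: apply the two-block decomposition recursively, use the inductive hypothesis for each block correlation, and subtract their product. The exponent $1/(n-1)$ is then what accumulates under recursive halving with geometric loss.

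\textbf{General means and the remaining cases.} For general means, expand each $f_i=\vec f_i+c_i$ exactly as in Case~II of Theorem~\ref{th:8} and sum over the nonempty subsets $J$.

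For \eqref{for:247}, repeat Case~III of Theorem~\ref{th:8}. Take $i_1,i_2$ realizing the maximal distance, and group the third point with whichever endpoint is closer along the direction $\tau(z_{i_1})-\tau(z_{i_2})$. The remaining gap is then at least half the diameter, which gives the exponent $1/(2|\mathcal S|)$. When $\mathcal Z\subseteq A$, the polynomial $w_z$ factors disappear and the minimum form \eqref{for:150} follows.

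For \eqref{for:191}, restrict to a one-parameter subgroup of $A$ and reuse the counterexample of Case~IV: $\int(\pi(m)f_1)^2(\pi(2m)f_2)^n\,d\varrho\to c\int f_1^2\,d\varrho\ne0$.
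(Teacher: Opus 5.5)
\textbf{There is a gap at the step you flag yourself as the main obstacle, and neither of your remedies closes it.}

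You split along a direction $v$ in parameter space and apply Theorem~\ref{th:7} with the full system $\mathcal S$ at $b^+$. This requires bounds on $\|\mathcal F\|_{H_{+,b^+},p_\epsilon(\mathcal S)}$ and $\|\mathcal G\|_{H_{-0,b^+},\zeta_\epsilon(\mathcal S)}$. But the intra-block translations have components transverse to $v$ of size up to the full diameter $D$, and these really do expand some of the differentiated root directions.

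Balancing cannot absorb this in general. Up to root-length constants, the growth rate per unit transverse displacement is of the same order as the decay rate per unit gap: both scale with $p_\epsilon(\mathcal S)\le\zeta_\epsilon(\mathcal S)$, cf.\ \eqref{for:4}. The displacement, however, can be $\asymp D$, while the gap is only guaranteed to be $\gtrsim D/(n-1)$. Replacing partial norms by full norms only enlarges the quantity you need to bound.

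The inductive fallback does not help either. Induction controls the product of the block means, but the decorrelation term for the two blocks still needs exactly the same block-norm bound.

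There is also a telling symptom. Since $\eta_\epsilon(\mathcal S,a^\lambda)=\eta_\epsilon(\mathcal S,a)^\lambda$ for $\lambda>0$, your scheme, if it worked, would give the exponent $1/(n-1)$ with no $|\mathcal S|$ at all. So your account of the $|\mathcal S|$ loss via nondegeneracy of $v$ does not correspond to any actual step.

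\textbf{The missing idea is to reduce to a single root before splitting.}

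Take $t_1,t_{k+1}$ at maximal distance. The paper chooses $\beta_{i_0}\in\mathcal S$ whose factor in $\eta_{\epsilon/2}(\mathcal S,\mathfrak a^{t_1-t_{k+1}})$ is at least its $|\mathcal S|$-th root; this is the true source of $|\mathcal S|$. It then applies Theorem~\ref{th:7} to the singleton system $\mathcal S_1=\{\beta_{i_0}\}$. Now only $U_{\beta_{i_0}}$ is differentiated on one block, and only $U_{\beta_{i_0}^{-1}},X_{\beta_{i_0}}$ on the other. These are eigendirections for all of $A$, each scaled by $\beta_{i_0}^{\pm1}$ or fixed.

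The points are then ordered and split as follows:
\begin{itemize}
\item sort them by the values $\beta_{i_0}(\mathfrak a^{t_{k+1}-t_i})$;
\item pigeonhole the at most $k-1$ intermediate values into the $k$ logarithmic subintervals $[c^{j/k},c^{(j+1)/k}]$ of $[1,c]$ to find an empty one, which is the source of $1/(n-1)$;
\item recentre each block at its extremal index $\mathfrak q_1$, respectively $\mathfrak q_2$.
\end{itemize}

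After this, every intra-block translation is non-expanding on exactly the differentiated directions, whatever its transverse size. Only polynomial losses from conjugating by $w_z$ remain, and these are absorbed by shrinking $\epsilon$. Those losses, not absorption of exponential growth, are why full $C^{\zeta_\epsilon(\mathcal S)}$ norms appear in \eqref{for:219}. The product-of-means term is then handled by the inductive hypothesis.

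For \eqref{for:247}, the same construction with $k=2$ makes one block a singleton, so the product of means vanishes for mean-zero data. Your grouping ``along $\tau(z_{i_1})-\tau(z_{i_2})$'' has the same transverse-expansion defect and must be replaced by ordering according to $\beta_{i_0}$. The split case \eqref{for:150} is likewise a case analysis of the $\beta_{i_0}$-values.

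Your reduction for general means and your treatment of \eqref{for:191} agree with the paper.
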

\begin{remark}  When $n=3$ in the split case $\mathcal Z\subseteq A$, we use the same two-block decomposition as in the proof of Theorem~\ref{th:8}. Here $w_z=e$ for all $z$, so the transfer estimate \eqref{for:130} has no polynomial loss (cf.\ \eqref{for:130}–\eqref{for:133}). Consequently, the two-block argument together with the gap split yields the exponent $1/(2|\mathcal S|)$ with the minimal value of $\eta_\epsilon(\mathcal S,s_{z_i-z_j})$ (equivalently, the maximal gap); see \eqref{for:150}.
\end{remark}

\subsection{Proof of Theorem \ref{th:3}}

We recall  the following notation that will be used in the proof.
\begin{enumerate}
\item Maximal strongly orthogonal system $\mathcal{S}$,  $u(\mathcal{S})$, $S(\mathcal{S})$: see Sections \ref{for:220} and \ref{for:221}.

  \item $d(s_{z_1},s_{z_2})$, $\mathfrak{a}$, $\tau$, $s_z$ and $w_z$: see  \eqref{for:173} of Section \ref{sec:33}.

\item Let $\mathcal S=\{\beta_1,\dots,\beta_l\}\subset\Phi^+$, $l=|\mathcal S|$ be a maximal strongly orthogonal system for $A$. Set $\gamma_i:=\gamma_{\beta_i}$ (see Section \ref{for:222}).

\end{enumerate}

\subsubsection{Proof strategy} The basic idea is to bootstrap \emph{order-$2$ mixing} (rank-one estimates for two-point
correlations with partial Sobolev norms; see Theorem~\ref{th:7}) to \emph{higher-order}
mixing via a two-block decomposition and an induction on the number of factors.  The
partial norms (along the stable/unstable/neutral directions) allow us to place the
``expanding derivative cost" on one block and the ``contracting derivative cost" on
the other, so that the order-$2$ estimate yields an exponential gain that is uniform
with respect to the remaining factors (and their translates).

\subsubsection{Proof of \eqref{for:219} of Theorem \ref{th:3}}\label{sec:27}

 For any $z_i\in \mathcal{Z}$ and $f_i\in C_c^\infty(\mathcal{X})$, $1\leq i\leq n$, set
\begin{align*}
  \mathfrak{m}\big(\pi(z_1)f_1,\pi(z_2)f_2,\cdots,\pi(z_n)f_n \big)=\int_{\mathcal{X}} \Pi_{i=1}^{n} \pi(z_i)f_i \,d\varrho.
\end{align*}
We prove by induction on $n$. The case of $n=2$ follows from Theorem \ref{th:8} directly.

Suppose the argument holds for $n=k$; that is, for
\[
z_1,\dots, z_k\in \mathcal{Z}\quad\text{and}\quad f_1,\dots, f_k\in C_c^{\infty}(\mathcal{X}),
\]
we have
\begin{align}\label{for:235}
 &\Big|\int_{\mathcal{X}} \Pi_{i=1}^{k} \pi(z_i)f_i \,d\varrho - \Pi_{i=1}^{k} \int_{\mathcal{X}} f_i \,d\varrho\Big|\notag\\
&\le C_{k,\epsilon}\,\max_{1\le i\neq j\le k}\eta_\epsilon (\mathcal{S}, s_{z_i-z_j})^{\frac{1}{(k-1)|\mathcal{S}|}}\,
\Pi_{i=1}^k\norm{f_i}_{C^{\zeta_{\epsilon}(\mathcal{S})}}.
\end{align}
\textbf{Inductive Step:} Suppose
\begin{align*}
z_1,\cdots,z_{k+1}\in \mathcal{Z}\quad\text{ and }\quad f_1,\cdots, f_{k+1}\in C_c^{\infty}(\mathcal{X}).
\end{align*}
There exists $t_i\in \RR^m$, $1\leq i\leq k+1$ such that
\begin{align*}
 s_{z_i}=\mathfrak{a}^{t_i},\qquad 1\leq i\leq k+1.
\end{align*}
Without loss of generality, assume
\begin{align}\label{for:136}
 \norm{t_1-t_{k+1}}=\max_{1\leq i,j\leq k+1} \norm{t_i-t_{j}}.
\end{align}
There exists $1\leq i_0\leq l$ such that
\begin{align*}
 \eta_{\frac{\epsilon}{2}} (\mathcal{S}, \mathfrak{a}^{t_1-t_{k+1}})^{\frac{1}{l}}\geq \min\{\beta_{i_0}(\mathfrak{a}^{t_{1}-t_{k+1}})^{-(\gamma_{i_0}-\frac{\epsilon}{2})},\,\beta_{i_0}(\mathfrak{a}^{t_{k+1}-t_{1}})^{-(\gamma_{i_0}-\frac{\epsilon}{2})}\}.
\end{align*}
Without loss of generality, assume that
\begin{align}\label{for:225}
\eta_{\frac{\epsilon}{2}} (\mathcal{S}, \mathfrak{a}^{t_1-t_{k+1}})^{\frac{1}{l}}\geq \beta_{i_0}(\mathfrak{a}^{t_{k+1}-t_{1}})^{-(\gamma_{i_0}-\frac{\epsilon}{2})}.
\end{align}
\emph{Note}. The above inequality implies that $\beta_{i_0}(\mathfrak{a}^{t_{k+1}-t_{1}})>1$.   Set
\begin{enumerate}
  \item [$(\mathcal{T}_1)$]\namedlabel{for:230}{$\mathcal{T}_1$} $D_1=\{1\leq i\leq k: \beta_{i_0}(\mathfrak{a}^{t_{k+1}-t_{i}})>1\}$. Then $1\in D_1$ and $|D_1|\leq k$.

  \smallskip
  \item [$(\mathcal{T}_2)$]\namedlabel{for:231}{$\mathcal{T}_2$} $D_2=\{1\leq i\leq k: \beta_{i_0}(\mathfrak{a}^{t_{k+1}-t_{i}})\leq 1\}.$
  \smallskip
  \item [$(\mathcal{T}_3)$]\namedlabel{for:226}{$\mathcal{T}_3$} $c=\max_{i\in D_1}\{\beta_{i_0}(\mathfrak{a}^{t_{k+1}-t_{i}})\}$. Clearly, $c\geq \beta_{i_0}(\mathfrak{a}^{t_{k+1}-t_{1}})>1$.
\end{enumerate}
Then $D_1$ and $D_2$ form a partition of the set \(\{1,\dots, k\}\). The $k+1$ constants $c^{\frac{j}{k}}$, $0\leq j\leq k$ divide the interval $[1,c]$ into $k$ subintervals $[  c^{\frac{j}{k}}, c^{\frac{j+1}{k}}]$, $0\leq j\leq k-1$.
All values $\beta_{i_0}(\mathfrak{a}^{t_{k+1}-t_{i}})$, $i\in D_1$,  except those equal to $c$, lie in $(1,c)$.  Since there are at most $|D_1|-1\leq k-1$ such values, there exists some $0\leq j\leq k-1$, such that none of these numbers fall inside
 the interval $(c^{\frac{j}{k}}, c^{\frac{j+1}{k}})$. Consequently, for all $i\in D_1$, we have either
\begin{align*}
 \beta_{i_0}(\mathfrak{a}^{t_{k+1}-t_{i}})-c^{\frac{j}{k}}\leq 0\quad\text{or}\quad \beta_{i_0}(\mathfrak{a}^{t_{k+1}-t_{i}})-c^{\frac{j+1}{k}}\geq 0.
\end{align*}
Set
\begin{enumerate}
  \item [$(\mathcal{T}_4)$] $D_{1,1}=\{i\in D_1: \beta_{i_0}(\mathfrak{a}^{t_{1+k}-t_{i}})\leq c^{\frac{j}{k}}\}$. We note that $D_{1,1}$ may be an empty set.

  \smallskip
  \item[$(\mathcal{T}_5)$]  $D_{1,2}=\{i\in D_1: \beta_{i_0}(\mathfrak{a}^{t_{1+k}-t_{i}})\geq c^{\frac{j+1}{k}}\}$. Then $D_{1,2}\neq\emptyset$.

  \smallskip

  \item [$(\mathcal{T}_6)$] \namedlabel{for:246}{$\mathcal{T}_6$} $D_{1,1}$ and $D_{1,2}$ form a partition of $D_1$ and $1\in D_{1,2}$. $D_{1,1}$, $D_{1,2}$ and $D_2$ form a partition of the set $\{1,\dots, k\}$.

  \smallskip
  \item [$(\mathcal{T}_7)$]  \namedlabel{for:223}{$\mathcal{T}_7$} If $D_{1,1}=\emptyset$, let $\mathfrak{q}_1=k+1$. If $D_{1,1}\neq\emptyset$, choose index  $\mathfrak{q}_1 \in D_{1,1}$ such that
  \begin{align*}
   \beta_{i_0}(\mathfrak{a}^{t_{k+1}-t_{ \mathfrak{q}_1}})=\max_{i\in D_{1,1}} \beta_{i_0}(\mathfrak{a}^{t_{1+k}-t_{i}})\leq c^{\frac{j}{k}}.
  \end{align*}

  \item [$(\mathcal{T}_8)$] \namedlabel{for:224}{$\mathcal{T}_8$}  Choose index $\mathfrak{q}_2 \in D_{1,2}$ such that
  \begin{align*}
   \beta_{i_0}(\mathfrak{a}^{t_{k+1}-t_{ \mathfrak{q}_2}})=\min_{i\in D_{1,2}} \beta_{i_0}(\mathfrak{a}^{t_{1+k}-t_{i}})\geq c^{\frac{j+1}{k}}.
  \end{align*}

\end{enumerate}
Let $\mathcal{S}_1=\{\beta_{i_0}\}$. Then:
\begin{enumerate}
  \item [$(\mathcal{T}_9)$] \namedlabel{for:227}{$\mathcal{T}_9$} $\mathcal{S}_1$ is a  strongly orthogonal system of $A$ (see Section  \ref{for:220}).

  \smallskip
  \item [$(\mathcal{T}_{10})$]\namedlabel{for:228}{$\mathcal{T}_{10}$}   $\text{Lie}(u(\mathcal{S}_1))=U_{\beta_{i_0}}$ and $\text{Lie}(S(\mathcal{S}_1))$ is spanned by $U_{\beta_{i_0}^{-1}}$ and $X_{\beta_{i_0}}$ (see Section \ref{for:221}).

      \smallskip
  \item [$(\mathcal{T}_{11})$] \namedlabel{for:244}{$\mathcal{T}_{11}$}  For any $a\in A$ (see Section \ref{for:222})
  \begin{align*}
   \zeta_{\epsilon}(\mathcal{S}_1)= \gamma_{i_0}+2+\epsilon\quad\text{and}\quad \eta_\epsilon(\mathcal{S}_1, a)=\beta_{i_0}(a^+)^{-(\gamma_{i_0}-\epsilon)}. \end{align*}

\end{enumerate}
Then we have
\begin{align*}
 \int_{\mathcal{X}} \Pi_{i=1}^{k+1} \pi(z_i)f_i \,d\varrho=\int_{\mathcal{X}} \Pi_{i=1}^{k+1} \pi(z_i-z_{\mathfrak{q}_2})f_i \,d\varrho=\mathfrak{m}\big(\pi(\mathfrak{a}^{t_{ \mathfrak{q}_1}-t_{ \mathfrak{q}_2}})\mathcal{F}_1,\,\mathcal{F}_2\big),
\end{align*}
where
\begin{gather}\label{for:143}
 \mathcal{F}_1=\Pi_{i\in D_{1,1}\cup D_2\cup\{k+1\} } \pi(\mathfrak{a}^{t_{i}-t_{ \mathfrak{q}_1}})\tilde{f}_i,\quad
 \mathcal{F}_2= \Pi_{i\in D_{1,2}}\, \pi(\mathfrak{a}^{t_i-t_{ \mathfrak{q}_2}})\tilde{f}_i\\
 \tilde{f}_i=f_i\circ (w_{z_iz_{\mathfrak{q}_2}^{-1}}).\notag
\end{gather}
From \eqref{for:127} and \eqref{for:125} of Section \ref{sec:33},  we have: for any $t\geq0$, $1\leq i\leq k+1$
\begin{align}\label{for:229}
 \norm{\widetilde{f_i}}_{C^t}&\leq C_t(1+\|t_i-t_{\mathfrak{q}_2}\|)^{t\dim G}\norm{f_i}_{C^t}\overset{\text{(1)}}{\leq} C_{t,1}(\|t_1-t_{1+k}\|+1)^{t\dim G}\norm{f_i}_{C^{t}}.
\end{align}
Here in $(1)$ we recall \eqref{for:136}. Then we have
\begin{align}\label{for:145}
 \Big|\int_{\mathcal{X}} \Pi_{i=1}^{k+1} \pi(z_i)f_i \,d\varrho - \Pi_{i=1}^{k+1} \int_{\mathcal{X}} f_i \,d\varrho\Big|\leq I_1+I_2,
\end{align}
where
\begin{align*}
 I_1&=\Big|\mathfrak{m}\big(\pi(\mathfrak{a}^{t_{ \mathfrak{q}_1}-t_{ \mathfrak{q}_2}})\mathcal{F}_1,\,\mathcal{F}_2\big)-\mathfrak{m}(\mathcal{F}_1)\mathfrak{m}(\mathcal{F}_2)\Big|,\\
 I_2&=\Big|\mathfrak{m}(\mathcal{F}_1)\mathfrak{m}(\mathcal{F}_2)-\Pi_{i=1}^{k+1} \int_{\mathcal{X}} f_i \,d\varrho\Big|.
\end{align*}
We now estimate $I_1$ and $I_2$ separately.

\smallskip
\textbf{\emph{Estimate for $I_1$}}: From \eqref{for:227}, it follows from  Theorem \ref{th:7} for $\mathcal{S}=\mathcal{S}_1$ that
\begin{align}\label{for:232}
 I_1&\leq C_\epsilon \eta_{\frac{\epsilon}{2}}(\mathcal{S}_1, \mathfrak{a}^{t_{ \mathfrak{q}_1}-t_{ \mathfrak{q}_2}})\norm{\mathcal{F}_1}_{u(\mathcal{S}_1),\,C^{\zeta_\epsilon(\mathcal{S}_1)}}\,\big\|\mathcal{F}_2\big\|_{S(\mathcal{S}_1),\,C^{\zeta_\epsilon(\mathcal{S}_1)}}.
 \end{align}
 We point out that we shrink $\epsilon$ to $\frac{\epsilon}{2}$ to absorb polynomial factors when estimating Sobolev norms of  $\mathcal{F}_1$ and $\mathcal{F}_2$.
Next, we estimate $\eta_{\frac{\epsilon}{2}}(\mathcal{S}_1, \mathfrak{a}^{t_{ \mathfrak{q}_1}-t_{ \mathfrak{q}_2}})$,  $\mathcal{F}_1$ and $\mathcal{F}_2$ respectively.

\emph{Estimate for $\mathcal{F}_1$}: we have
\begin{align}\label{for:233}
 \norm{\mathcal{F}_1}&_{u(\mathcal{S}_1),C^{\zeta_\epsilon(\mathcal{S}_1)}}\leq C_k\,\Pi_{i\in D_{1,1}\cup D_2\cup \{k+1\}}\big\| \pi(\mathfrak{a}^{t_i-t_{ \mathfrak{q}_1}})\tilde{f}_i\big\|_{u(\mathcal{S}_1),C^{\zeta_\epsilon(\mathcal{S}_1)}}\notag\\
 &\overset{(x)}{\leq} C_k\,\Pi_{i\in D_{1,1}\cup D_2\cup \{k+1\}}\big\| \tilde{f}_i\big\|_{u(\mathcal{S}_1),C^{\zeta_\epsilon(\mathcal{S}_1)}}\notag\\
 &\overset{(y)}{\leq} C_{k,1}\, \Pi_{i\in D_{1,1}\cup D_2\cup \{k+1\}}\big\{(\|t_1-t_{1+k}\|+1)^{\zeta_\epsilon(\mathcal{S}_1)\dim G }\big\| f_i\big\|_{C^{\zeta_\epsilon(\mathcal{S}_1)}}\big\}
\end{align}
We explain steps:
\begin{itemize}
  \item In $(x)$ if $D_{1,1}=\emptyset$, then for any $i\in D_2\cup \{k+1\}$, we have
  \begin{align*}
   \beta_{i_0}(\mathfrak{a}^{t_{i}-t_{ \mathfrak{q}_1}})&=\beta_{i_0}(\mathfrak{a}^{-(t_{k+1}-t_{i})})\overset{\text{(1)}}{\geq}1.
 \end{align*}
  Here in $(1)$ we use  \eqref{for:231}. Next, we suppose $D_{1,1}\neq\emptyset$.

  For any $i\in D_{1,1}$ we have
  \begin{align*}
   \beta_{i_0}(\mathfrak{a}^{t_{i}-t_{ \mathfrak{q}_1}})&=\beta_{i_0}(\mathfrak{a}^{-(t_{k+1}-t_{i})})\beta_{i_0}(\mathfrak{a}^{t_{k+1}-t_{ \mathfrak{q}_1}})\overset{(*)}{=}\beta_{i_0}(\mathfrak{a}^{-(t_{k+1}-t_{i})})\big( \max_{r\in D_{1,1}} \beta_{i_0}(\mathfrak{a}^{t_{1+k}-t_{r}})\big)\\
   &\geq \beta_{i_0}(\mathfrak{a}^{-(t_{k+1}-t_{i})})\beta_{i_0}(\mathfrak{a}^{t_{k+1}-t_{ i}})=1.
 \end{align*}
  Here in $(*)$ we use \eqref{for:223}.

  For any $i\in D_2\cup \{k+1\}$, we have
  \begin{align*}
   \beta_{i_0}(\mathfrak{a}^{t_{i}-t_{ \mathfrak{q}_1}})&=\beta_{i_0}(\mathfrak{a}^{-(t_{k+1}-t_{i})})\beta_{i_0}(\mathfrak{a}^{t_{k+1}-t_{ \mathfrak{q}_1}})\overset{\text{(1)}}{\geq} \beta_{i_0}(\mathfrak{a}^{t_{k+1}-t_{\mathfrak{q}_1}})\overset{\text{(2)}}{\geq} 1.
 \end{align*}
  Here in $(1)$ we use  \eqref{for:231}; in $(2)$ we recall that $\mathfrak{q}_1 \in D_{1,1}\subseteq D_1$ and then use \eqref{for:230}.

  This shows that for any $i\in D_{1,1}\cup D_2\cup \{k+1\}$ we have
 \begin{align*}
  \beta_{i_0}(\mathfrak{a}^{t_{i}-t_{ \mathfrak{q}_1}})\geq 1.
 \end{align*}
  Then for $U_{\beta_{i_0}}$, which spans $\text{Lie}(u(\mathcal{S}_1))$ (see \eqref{for:228}), we have
  \begin{align*}
 \|\text{Ad}_{\mathfrak{a}^{-(t_{i}-t_{ \mathfrak{q}_1})}}U_{\beta_{i_0}}\|=\beta_{i_0}(\mathfrak{a}^{-(t_{i}-t_{ \mathfrak{q}_1})})\leq 1.
\end{align*}
  Then $(x)$ follows from \eqref{for:125} of Section \ref{sec:33}.
  \smallskip
  \item In $(y)$ we use \eqref{for:229}.
\end{itemize}
\emph{Estimate for $\mathcal{F}_2$}: we have
\begin{align}\label{for:234}
 \norm{\mathcal{F}_2}&_{S(\mathcal{S}_1),\,C^{\zeta_\epsilon(\mathcal{S}_1)}}\leq C_k\,\Pi_{i\in D_{1,2}}\big\| \pi(\mathfrak{a}^{t_i-t_{ \mathfrak{q}_2}})\tilde{f}_i\big\|_{S(\mathcal{S}_1),\,C^{\zeta_\epsilon(\mathcal{S}_1)}}\notag\\
 &\overset{(y_1)}{\leq} C_k\,\Pi_{i\in D_{1,2}}\big\| \tilde{f}_i\big\|_{S(\mathcal{S}_1),\,C^{\zeta_\epsilon(\mathcal{S}_1)}}\notag\\
 &\overset{(y)}{\leq} C_{k,1}\Pi_{i\in D_{1,2}}\big\{(\|t_1-t_{1+k}\|+1)^{\zeta_\epsilon(\mathcal{S}_1)\dim (G)}\big\|f_i\big\|_{S(\mathcal{S}_1),\,C^{\zeta_\epsilon(\mathcal{S}_1)}}\big\}.
\end{align}
We explain steps.
\begin{itemize}

  \item In $(y_1)$ for any $i\in D_{1,2}$ we have
 \begin{align*}
   \beta_{i_0}(\mathfrak{a}^{t_{i}-t_{ \mathfrak{q}_2}})&=\beta_{i_0}(\mathfrak{a}^{t_{k+1}-t_{\mathfrak{q}_2}})\beta_{i_0}(\mathfrak{a}^{t_{ i}-t_{k+1}})\overset{(*)}{=}\min_{r\in D_{1,2}} \beta_{i_0}(\mathfrak{a}^{t_{1+k}-t_{r}})\big(\beta_{i_0}(\mathfrak{a}^{t_{k+1}-t_{ i}})\big)^{-1}\\
   &\leq \beta_{i_0}(\mathfrak{a}^{t_{1+k}-t_{i}})\big(\beta_{i_0}(\mathfrak{a}^{t_{k+1}-t_{ i}})\big)^{-1}=1.
 \end{align*}
 Here in $(*)$ we use \eqref{for:224}.

Then for  $U_{\beta_{i_0}^{-1}}$ and $X_{\beta_{i_0}}$, which spans $\text{Lie}(S(\mathcal{S}_1))$ (see \eqref{for:228}), we have
  \begin{align*}
 \|\text{Ad}_{\mathfrak{a}^{-(t_{i}-t_{ \mathfrak{q}_2})}}U_{\beta_{i_0}^{-1}}\|&=\beta_{i_0}(\mathfrak{a}^{t_{i}-t_{ \mathfrak{q}_2}})\leq 1\quad\text{and}\quad
  \|\text{Ad}_{\mathfrak{a}^{-(t_{i}-t_{ \mathfrak{q}_2})}}X_{\beta_{i_0}}\|=1.
\end{align*}
  Then $(y_1)$ follows from \eqref{for:125} of Section \ref{sec:33}.
  \smallskip
\item In $(y)$ we use \eqref{for:229}.
\end{itemize}
\emph{Estimate for $\eta_{\frac{\epsilon}{2}}(\mathcal{S}_1, \mathfrak{a}^{t_{ \mathfrak{q}_1}-t_{ \mathfrak{q}_2}})$}: if $D_{1,1}=\emptyset$, then
\begin{align}\label{for:385}
   \beta_{i_0}(\mathfrak{a}^{t_{ \mathfrak{q}_1}-t_{ \mathfrak{q}_2}})=\beta_{i_0}(\mathfrak{a}^{t_{ k+1}-t_{ \mathfrak{q}_2}})\overset{\text{(*)}}{\geq }c^{\frac{j+1}{k}}\geq c^{\frac{1}{k}}>1.
 \end{align}
 Here in $(*)$ we use \eqref{for:224}. If $D_{1,1}\neq\emptyset$, then
 \begin{align}\label{for:245}
   \beta_{i_0}(\mathfrak{a}^{t_{ \mathfrak{q}_1}-t_{ \mathfrak{q}_2}})=\beta_{i_0}(\mathfrak{a}^{t_{ k+1}-t_{ \mathfrak{q}_2}})\beta_{i_0}(\mathfrak{a}^{-(t_{k+1}-t_{ \mathfrak{q}_1})})\overset{\text{(*)}}{\geq }c^{\frac{j+1}{k}}c^{-\frac{j}{k}}=c^{\frac{1}{k}}\overset{\text{(**)}}{> } 1.
 \end{align}
 Here in $(*)$ we use \eqref{for:223} and \eqref{for:224}; in $(**)$ we use \eqref{for:226}.

Then  \eqref{for:385} and \eqref{for:245} imply that
 \begin{align}\label{for:141}
  \eta_{\frac{\epsilon}{2}}(\mathcal{S}_1, \mathfrak{a}^{t_{ \mathfrak{q}_1}-t_{ \mathfrak{q}_2}})&\overset{\text{(*)}}{=}\beta_{i_0}(\mathfrak{a}^{t_{ \mathfrak{q}_1}-t_{ \mathfrak{q}_2}})^{-(\gamma_{i_0}-\frac{\epsilon}{2})}\leq (c^{\frac{1}{k}})^{-(\gamma_{i_0}-\frac{\epsilon}{2})}\notag\\
  &\overset{\text{(**)}}{\leq}  \big(\beta_{i_0}(\mathfrak{a}^{t_{1+k}-t_{1}})^{\frac{1}{k}}\big)^{-(\gamma_{i_0}-\frac{\epsilon}{2})}
  \overset{(***)}{\leq}  \eta_{\frac{\epsilon}{2}} (\mathcal{S}, \mathfrak{a}^{t_1-t_{k+1}})^{\frac{1}{lk}}.
 \end{align}
Here in $(*)$ we recall \eqref{for:244} and use the fact that $\beta_{i_0}(\mathfrak{a}^{t_{ \mathfrak{q}_1}-t_{ \mathfrak{q}_2}})>1$ (see \eqref{for:245}); in
$(**)$ we use \eqref{for:226};  in $(***)$ we use \eqref{for:225}.

It follows from \eqref{for:232}, \eqref{for:233}, \eqref{for:234} and \eqref{for:141} that
\begin{align}
 I_1&\leq  C_{k,\epsilon} \eta_{\frac{\epsilon}{2}} (\mathcal{S}, \mathfrak{a}^{t_1-t_{k+1}})^{\frac{1}{lk}}(\|t_1-t_{1+k}\|+1)^{\zeta_\epsilon(\mathcal{S}_1)\dim (G)k}\Pi_{1\leq i\leq k+1}\big\| f_i\big\|_{C^{\zeta_\epsilon(\mathcal{S}_1)}}\notag\\
 &\leq C_{k,\epsilon,1} \,\eta_{\epsilon} (\mathcal{S}, \mathfrak{a}^{t_1-t_{k+1}})^{\frac{1}{lk}}\,\Pi_{1\leq i\leq k+1}\big\| f_i\big\|_{C^{\zeta_\epsilon(\mathcal{S}_1)}}\label{for:146}\\
  &\leq C_{k,\epsilon,1} \,\max_{1\leq i\neq j\leq k+1}\eta_{\epsilon} (\mathcal{S}, s_{z_i-z_j})^{\frac{1}{lk}}\,\Pi_{1\leq i\leq k+1}\big\| f_i\big\|_{C^{\zeta_\epsilon(\mathcal{S}_1)}}.\label{for:238}
\end{align}
\textbf{\emph{Estimate for $I_2$}}: Recalling \eqref{for:143}, we have
\begin{align*}
 I_2&\leq \big|\mathfrak{m}(\mathcal{F}_1)-\Pi_{i\in D_{1,1}\cup D_2\cup \{k+1\}}\mathfrak{m}( f_i)\big|\, |\mathfrak{m}(\mathcal{F}_2)|\\
 &+\big|\Pi_{i\in D_{1,1}\cup D_2\cup \{k+1\}}\mathfrak{m}( f_i)\big|\, \big|\mathfrak{m}(\mathcal{F}_2)- \Pi_{i\in D_{1,2}}\mathfrak{m}( f_i)\big|\\
 &\leq \big|\mathfrak{m}(\mathcal{F}_1)-\Pi_{i\in D_{1,1}\cup D_2\cup \{k+1\}}\mathfrak{m}( f_i)\big| \,\Pi_{i\in D_{1,2}} \norm{f_i}_{C^0}\\
 &+ \big|\mathfrak{m}(\mathcal{F}_2)- \Pi_{i\in D_{1,2}}\mathfrak{m}( f_i)\big|\, \Pi_{i\in D_{1,1}\cup D_2\cup \{k+1\}}\norm{f_i}_{C^0}.
\end{align*}
By inductive assumption \eqref{for:235}, we have
\begin{align*}
 &\big|\mathfrak{m}(\mathcal{F}_1)-\Pi_{i\in D_{1,1}\cup D_2\cup \{k+1\}}\mathfrak{m}( f_i)\big|\notag\\
 &\leq  C_{d,\epsilon}\,\max_{i\neq j\in D_{1,1}\cup D_2\cup \{k+1\}}\eta_\epsilon (\mathcal{S}, s_{z_i-z_j})^{\frac{1}{(d-1)|\mathcal{S}|}}\,
\Pi_{i\in D_{1,1}\cup D_2\cup \{k+1\}}\norm{f_i}_{C^{\zeta_\epsilon(\mathcal{S}_1)}}\notag\\
&\leq C_{k, \epsilon}\,\max_{1\leq i\neq j\leq k+1}\eta_{\epsilon} (\mathcal{S}, s_{z_i-z_j})^{\frac{1}{lk}}\,
\Pi_{i\in D_{1,1}\cup D_2\cup \{k+1\}}\norm{f_i}_{C^{\zeta_\epsilon(\mathcal{S}_1)}}
\end{align*}
where $d=|D_{1,1}\cup D_2\cup \{k+1\}|$, and
\begin{align*}
 &\Big|\mathfrak{m}(\mathcal{F}_2)-\Pi_{i\in D_{1,2}}\mathfrak{m}( f_i)\Big|\notag\\
 &\leq  C_{|D_{1,2}|,\epsilon}\,\max_{i\neq j\in D_{1,2}}\eta_\epsilon (\mathcal{S}, s_{z_i-z_j})^{\frac{1}{(|D_{1,2}|-1)|\mathcal{S}|}}
 \Pi_{i\in D_{1,2}}\norm{f_i}_{C^{\zeta_\epsilon(\mathcal{S}_1)}}\notag\\
&\leq C_{k,\epsilon}\,\max_{1\leq i\neq j\leq k+1}\eta_{\epsilon} (\mathcal{S}, s_{z_i-z_j})^{\frac{1}{lk}}\,
\Pi_{i\in D_{1,2}}\norm{f_i}_{C^{\zeta_\epsilon(\mathcal{S}_1)}}.
\end{align*}
Consequently, since $D_{1,1}$, $D_{1,2}$, $D_2$ and $\{k+1\}$ form a partition of the set $\{1,\dots, k+1\}$. (see \eqref{for:246}), we have
\begin{align}\label{for:241}
 I_2&\leq C_{k,\epsilon}\,\max_{1\leq i\neq j\leq k+1}\eta_{\epsilon} (\mathcal{S}, s_{z_i-z_j})^{\frac{1}{lk}}\,
\Pi_{1\leq i\leq k+1}\norm{f_i}_{C^{\zeta_\epsilon(\mathcal{S}_1)}}.
\end{align}
It follows from \eqref{for:145}, \eqref{for:238} and \eqref{for:241} that
\begin{align*}
 &\Big|\int_{\mathcal{X}} \Pi_{i=1}^{k+1} \pi(z_i)f_i \,d\varrho - \Pi_{i=1}^{k+1} \int_{\mathcal{X}} f_i \,d\varrho\Big|\\
 &\leq C_{k,\epsilon}\,\max_{1\le i\neq j\le k+1}\eta_\epsilon (\mathcal{S}, s_{z_i-z_j})^{\frac{1}{k|\mathcal{S}|}}\,
\Pi_{i=1}^k\norm{f_i}_{C^{\zeta_\epsilon(\mathcal{S}_1)}}.
\end{align*}
This completes the inductive step. Hence, by induction the claim \eqref{for:219} is proved.

\subsubsection{Proof of \eqref{for:247} and \eqref{for:150} of Theorem \ref{th:3}} \emph{Proof of \eqref{for:247}}:
Proceed as in Section~\ref{sec:27}. We recall that $D_{1,1}$, $D_{1,2}$, $D_2$ and $\{k+1\}$ form a partition of the set $\{1,\dots, k+1\}$. In particular, when $k=2$ (i.e., $n=3$), either $|D_{1,1}\cup D_2\cup\{k+1\}|=1$ or $|D_{1,2}|=1$, which implies that $\mathfrak{m}(\mathcal{F}_1)\mathfrak{m}(\mathcal{F}_2)=0$. Then the result follows from \eqref{for:146} for $k=2$.

\emph{Proof of \eqref{for:150}}: Assume $\mathcal{Z}\subseteq A$, so there exist $t_i\in\mathbb{R}^m$ with $z_i=\mathfrak{a}^{t_i}$ for $1\le i\le3$. Without loss of generality,
\begin{align*}
\eta_\epsilon (\mathcal{S}, \mathfrak{a}^{t_1-t_3})=\min_{1\le i,j\le 3}\eta_\epsilon (\mathcal{S}, \mathfrak{a}^{t_i-t_j}).
\end{align*}
There exists $1\leq i_0\leq l$ such that
\begin{align*}
 \eta_{\epsilon} (\mathcal{S}, \mathfrak{a}^{t_1-t_{3}})^{\frac{1}{l}}\geq \min\{\beta_{i_0}(\mathfrak{a}^{t_{1}-t_{3}})^{-(\gamma_{i_0}-\epsilon)},\,\beta_{i_0}(\mathfrak{a}^{t_{3}-t_{1}})^{-(\gamma_{i_0}-\epsilon)}\}.
\end{align*}
Without loss of generality, assume that
\begin{align}\label{for:147}
\eta_{\epsilon} (\mathcal{S}, \mathfrak{a}^{t_1-t_{3}})^{\frac{1}{l}}\geq \beta_{i_0}(\mathfrak{a}^{t_{3}-t_{1}})^{-(\gamma_{i_0}-\epsilon)}.
\end{align}
\emph{Note}. The above inequality implies that $\beta_{i_0}(\mathfrak{a}^{t_{3}-t_{1}})>1$.
\begin{enumerate}
  \item If $\beta_{i_0}(\mathfrak{a}^{t_{2}-t_{1}})<1$, set $\mathcal{F}_1:=(\pi(\mathfrak{a}^{t_{2}-t_{1}})f_2)\,f_1$ and
  \begin{align*}
 \Big|\int_{\mathcal{X}} \Pi_{i=1}^{3} \pi(z_i)f_i \,d\varrho \Big|=J_1:=\mathfrak{m}\big(\pi(\mathfrak{a}^{t_{3}-t_{ 1}})f_3,\,\mathcal{F}_1\big).
\end{align*}
  \item If $1\leq\beta_{i_0}(\mathfrak{a}^{t_{2}-t_{1}})<\beta_{i_0}(\mathfrak{a}^{t_{3}-t_{1}})^{\frac{1}{2}} $, let $\mathcal{F}_2=f_2(\pi(\mathfrak{a}^{t_{1}-t_{ 2}})f_1)$, then
      \begin{align*}
 \Big|\int_{\mathcal{X}} \Pi_{i=1}^{3} \pi(z_i)f_i \,d\varrho \Big|=J_2:=\mathfrak{m}\big(\pi(\mathfrak{a}^{t_{3}-t_{ 2}})f_3,\,\mathcal{F}_2\big).
\end{align*}
We note that in this case
\begin{align}\label{for:151}
    \beta_{i_0}(\mathfrak{a}^{t_3-t_{2}})&=\beta_{i_0}(\mathfrak{a}^{t_3-t_{1}})\beta_{i_0}(\mathfrak{a}^{t_1-t_{2}})
    >\beta_{i_0}(\mathfrak{a}^{t_3-t_{1}})  (\beta_{i_0}(\mathfrak{a}^{t_3-t_{1}}))^{-\frac{1}{2}}\notag\\
    &=(\beta_{i_0}(\mathfrak{a}^{t_3-t_{1}}))^{\frac{1}{2}}>1.
  \end{align}

  \item If $\beta_{i_0}(\mathfrak{a}^{t_{3}-t_{1}})^{\frac{1}{2}}\leq\beta_{i_0}(\mathfrak{a}^{t_{2}-t_{1}})\leq \beta_{i_0}(\mathfrak{a}^{t_{3}-t_{2}})$, let $\mathcal{F}_3=f_2(\pi(\mathfrak{a}^{t_{3}-t_{ 1}})f_3)$, then \begin{align*}
 \Big|\int_{\mathcal{X}} \Pi_{i=1}^{3} \pi(z_i)f_i \,d\varrho \Big|=J_3:=\mathfrak{m}\big(\pi(\mathfrak{a}^{t_{2}-t_{ 1}})\mathcal{F}_3,\,f_1\big).
\end{align*}
  \item If $\beta_{i_0}(\mathfrak{a}^{t_{3}-t_{1}})<\beta_{i_0}(\mathfrak{a}^{t_{2}-t_{1}})$, let $\mathcal{F}_4=(\pi(\mathfrak{a}^{t_{2}-t_{ 3}})f_2)f_3$, then
  \begin{align*}
 \Big|\int_{\mathcal{X}} \Pi_{i=1}^{3} \pi(z_i)f_i \,d\varrho \Big|=J_4:=\mathfrak{m}\big(\pi(\mathfrak{a}^{t_{3}-t_{ 1}})\mathcal{F}_4,\,f_1\big).
\end{align*}
We note that in this case
\begin{align}\label{for:152}
  \beta_{i_0}(\mathfrak{a}^{-(t_{2}-t_{ 3})})=\beta_{i_0}(\mathfrak{a}^{-(t_{2}-t_{ 1})})\beta_{i_0}(\mathfrak{a}^{-(t_{1}-t_{3})})<\beta_{i_0}(\mathfrak{a}^{-(t_{3}-t_{ 1})})\beta_{i_0}(\mathfrak{a}^{-(t_{1}-t_{3})})=1.
 \end{align}
\end{enumerate}
Let $\mathcal{S}_1=\{\beta_{i_0}\}$. Then  for any $a\in A$ (see Section \ref{for:222} and \eqref{for:244})
\begin{align}\label{for:148}
\eta_\epsilon(\mathcal{S}_1, a)=\big(\max\{\beta_{i_0}(a), \beta_{i_0}(a^{-1})\}\big)^{-(\gamma_{i_0}-\epsilon)}.
\end{align}
Similar to \eqref{for:232}, it follows from  Theorem \ref{th:7} for $\mathcal{S}=\mathcal{S}_1$ that:
\begin{sect} \emph{Estimate for $J_1$}:
\begin{align*}
 J_1&\leq C_\epsilon \eta_{\epsilon}(\mathcal{S}_1, \mathfrak{a}^{t_{3}-t_{ 1}})\norm{f_3}_{u(\mathcal{S}_1),\,C^{\zeta_\epsilon(\mathcal{S}_1)}}\,\big\|\mathcal{F}_1\big\|_{S(\mathcal{S}_1),\,C^{\zeta_\epsilon(\mathcal{S}_1)}} \\
 &\overset{\text{(*)}}{\leq}C_{\epsilon,1} \eta_{\epsilon}(\mathcal{S}_1, \mathfrak{a}^{t_{3}-t_{ 1}})\norm{f_3}_{C^{\zeta_\epsilon(\mathcal{S}_1)}}\,\big\|f_1\big\|_{C^{\zeta_\epsilon(\mathcal{S}_1)}}
 \big\|f_2\big\|_{C^{\zeta_\epsilon(\mathcal{S}_1)}}.
\end{align*}
Here in $(*)$ we use \eqref{for:147}  and the fact that for  $U_{\beta_{i_0}^{-1}}$ and $X_{\beta_{i_0}}$, which spans $\text{Lie}(S(\mathcal{S}_1))$ (see \eqref{for:228}), we have
  \begin{align*}
 \|\text{Ad}_{\mathfrak{a}^{-(t_{2}-t_{1})}}U_{\beta_{i_0}^{-1}}\|&=\beta_{i_0}(\mathfrak{a}^{t_{2}-t_{ 1}})< 1\quad\text{and}\quad\|\text{Ad}_{\mathfrak{a}^{-(t_{i}-t_{ j})}}X_{\beta_{i_0}}\|=1.
\end{align*}
  Then the estimate for $\mathcal{F}_1$ follows from \eqref{for:125} of Section \ref{sec:33}.

\end{sect}

\begin{sect} \emph{Estimate for $J_2$}:
\begin{align*}
  J_2&\leq C_\epsilon \eta_{\epsilon}(\mathcal{S}_1, \mathfrak{a}^{t_{3}-t_{ 2}})\norm{f_3}_{u(\mathcal{S}_1),\,C^{\zeta_\epsilon(\mathcal{S}_1)}}\,\big\|\mathcal{F}_2\big\|_{S(\mathcal{S}_1),\,C^{\zeta_\epsilon(\mathcal{S}_1)}}\\
 &\overset{\text{(**)}}{\leq}C_{\epsilon,1} \eta_{\epsilon} (\mathcal{S}, \mathfrak{a}^{t_3-t_{1}})^{\frac{1}{2}}\norm{f_3}_{C^{\zeta_\epsilon(\mathcal{S}_1)}}\,\big\|f_1\big\|_{C^{\zeta_\epsilon(\mathcal{S}_1)}}
 \big\|f_2\big\|_{C^{\zeta_\epsilon(\mathcal{S}_1)}}.
\end{align*}
Here in $(**)$ we note that
\begin{align*}
   \eta_{\epsilon} (\mathcal{S}, \mathfrak{a}^{t_3-t_{2}})\overset{\text{($x$)}}{\leq}\beta_{i_0}(\mathfrak{a}^{t_3-t_{2}})^{-(\gamma_{i_0}-\epsilon)}
   \overset{\text{($x_1$)}}{\leq}\beta_{i_0}(\mathfrak{a}^{t_3-t_{1}})^{-\frac{1}{2}(\gamma_{i_0}-\epsilon)}= \eta_{\epsilon} (\mathcal{S}, \mathfrak{a}^{t_3-t_{1}})^{\frac{1}{2}}.
  \end{align*}
Here in $(x)$ we use \eqref{for:151} and \eqref{for:148}; in $(x_1)$ we use \eqref{for:151}.

  We also have
  \begin{align*}
 \|\text{Ad}_{\mathfrak{a}^{-(t_{1}-t_{2})}}U_{\beta_{i_0}^{-1}}\|&=\beta_{i_0}(\mathfrak{a}^{t_{1}-t_{ 2}})\leq1\quad\text{and}\quad\|\text{Ad}_{\mathfrak{a}^{-(t_{i}-t_{ j})}}X_{\beta_{i_0}}\|=1.
\end{align*}
  Then the estimate for $\mathcal{F}_2$ follows from \eqref{for:125} of Section \ref{sec:33}.
\end{sect}
\begin{sect} \emph{Estimate for $J_3$}:
\begin{align*}
J_3&\leq C_\epsilon \eta_{\epsilon}(\mathcal{S}_1, \mathfrak{a}^{t_{2}-t_{ 1}})\norm{\mathcal{F}_3}_{u(\mathcal{S}_1),\,C^{\zeta_\epsilon(\mathcal{S}_1)}}\,\big\|f_1\big\|_{S(\mathcal{S}_1),\,C^{\zeta_\epsilon(\mathcal{S}_1)}}\\
 &\overset{\text{($\diamondsuit$)}}{\leq}C_{\epsilon,1} \eta_{\epsilon} (\mathcal{S}, \mathfrak{a}^{t_3-t_{1}})^{\frac{1}{2}}\norm{f_2}_{C^{\zeta_\epsilon(\mathcal{S}_1)}}\,\big\|f_3\big\|_{C^{\zeta_\epsilon(\mathcal{S}_1)}}
 \big\|f_1\big\|_{C^{\zeta_\epsilon(\mathcal{S}_1)}}.
\end{align*}
Here in $(\diamondsuit)$ by \eqref{for:148} we have
  \begin{align*}
   \eta_{\epsilon} (\mathcal{S}, \mathfrak{a}^{t_2-t_{1}})=\beta_{i_0}(\mathfrak{a}^{t_2-t_{1}})^{-(\gamma_{i_0}-\epsilon)}
   \leq\beta_{i_0}(\mathfrak{a}^{t_3-t_{2}})^{-\frac{1}{2}(\gamma_{i_0}-\epsilon)}= \eta_{\epsilon} (\mathcal{S}, \mathfrak{a}^{t_3-t_{1}})^{\frac{1}{2}}.
  \end{align*}
   Then for $U_{\beta_{i_0}}$, which spans $\text{Lie}(u(\mathcal{S}_1))$ (see \eqref{for:228}), we have
  \begin{align*}
 \|\text{Ad}_{\mathfrak{a}^{-(t_{3}-t_{1})}}U_{\beta_{i_0}}\|=\beta_{i_0}(\mathfrak{a}^{-(t_{3}-t_{ 1})})< 1.
\end{align*}
   Then the estimate for $\mathcal{F}_3$ follows from \eqref{for:125} of Section \ref{sec:33}.
\end{sect}
\begin{sect} \emph{Estimate for $J_4$}:
\begin{align*}
J_4&\leq C_\epsilon \eta_{\epsilon}(\mathcal{S}_1, \mathfrak{a}^{t_{3}-t_{ 1}})\norm{\mathcal{F}_4}_{u(\mathcal{S}_1),\,C^{\zeta_\epsilon(\mathcal{S}_1)}}\,\big\|f_1\big\|_{S(\mathcal{S}_1),\,C^{\zeta_\epsilon(\mathcal{S}_1)}}\\
&\overset{\text{($\heartsuit$)}}{\leq}C_{\epsilon,1} \eta_{\epsilon} (\mathcal{S}, \mathfrak{a}^{t_3-t_{1}})^{\frac{1}{2}}\norm{f_2}_{C^{\zeta_\epsilon(\mathcal{S}_1)}}\,\big\|f_3\big\|_{C^{\zeta_\epsilon(\mathcal{S}_1)}}
 \big\|f_1\big\|_{C^{\zeta_\epsilon(\mathcal{S}_1)}}.
 \end{align*}
 Here in $(\heartsuit)$ by using \eqref{for:152}, we have
 \begin{align*}
 \|\text{Ad}_{\mathfrak{a}^{-(t_{2}-t_{3})}}U_{\beta_{i_0}}\|=\beta_{i_0}(\mathfrak{a}^{-(t_{2}-t_{ 3})})< 1.
\end{align*}
   Then the estimate for $\mathcal{F}_4$ follows from \eqref{for:125} of Section \ref{sec:33}.
\end{sect}
Finally, by  \eqref{for:147} we have
\begin{align*}
 \eta_{\epsilon} (\mathcal{S}, \mathfrak{a}^{t_3-t_{1}})^{\frac{1}{2}}\leq \eta_{\epsilon} (\mathcal{S}, \mathfrak{a}^{t_1-t_{3}})^{\frac{1}{2l}}.
\end{align*}
Combining the four cases we get the result.
\subsubsection{Proof of \eqref{for:191} of Theorem \ref{th:3}}
The argument proving \eqref{for:203} in Theorem \ref{th:8} applies verbatim to the present case, so we omit the details.


\begin{thebibliography}{99}

\bibitem{BEG} M. Bj{\"o}rklund, M. Einsiedler, A. Gorodnik, Quantitative multiple mixing. J. Eur. Math. Soc. 22 (2020), no. 5, pp. 1475-1529

\bibitem{Borel}A. Borel and N. Wallach, Continuous cohomology, discrete subgroups, and representations of reductive
groups. Annals of Mathematics Studies, 94. Princeton University Press, Princeton, N.J.; University of
Tokyo Press, Tokyo, 1980.

\bibitem{Clark} P. L. Clark, Geometry of numbers with applications to number theory, (preprint),
http://alpha.math.uga.edu/~pete/geometryofnumbers.pdf.

\bibitem{Cowling} M. Cowling, Sur les coefficients des représentations unitaires des groupes de Lie simples, Analyse harmonique
sur les groupes de Lie (Sém., Nancy–Strasbourg 1976–1978), II, Springer, Berlin (1979), 132–178.

\bibitem{Cowling1} Cowling, Michael, Uffe Haagerup, and Roger Howe. ``Almost $L^2$ Matrix Coefficients."  J. Reine Angew. Math., vol. 387, 1988, pp. 97–110.


\bibitem{Da1} S. G. Dani, Kolmogorov automorphisms on homogeneous spaces. Amer. J. Math. 98 (1976), no. 1, 119-163.

\bibitem{Da2} S. G. Dani, Spectrum of an affine transformation. Duke Math. J. 44 (1977), no. 1, 129-155.







\bibitem{F} L. Flaminio and G. Forni, On the cohomological equation for nilflows, J. Mod. Dyn., 1 (2007), 37-60.

\bibitem{Forni}L. Flaminio, G. Forni. Invariant distributions and time averages for horocycle flows. Duke Math
J. 119 No. 3 (2003) 465-526.

\bibitem{flaminio2014effective}
Livio Flaminio and Giovanni Forni.
\newblock On effective equidistribution for higher step nilflows.
\newblock {\em arXiv preprint arXiv:1407.3640}, 2014.

\bibitem{flaminio2016effective}
Livio Flaminio, Giovanni Forni, and James Tanis.
\newblock Effective equidistribution of twisted horocycle flows and horocycle
  maps.
\newblock {\em Geometric and Functional Analysis}, 26(5):1359--1448, 2016.

\bibitem{Forni1} G. Forni. Ruelle resonances from cohomological equations https://arxiv.org/abs/2007.03116




\bibitem{lang2012sl2} S. Lang, $SL(2,\RR)$, Addison-Wesley, Reading, MA, 1975.

\bibitem{Li} J.-S. Li, The minimal decay of matrix coefficients for classical groups. Harmonic analysis in China, 146–169,
Math. Appl., 327, Kluwer Acad. Publ., Dordrecht, 1995.

\bibitem{Li1} 23. J.-S. Li and C.-B. Zhu, On the decay of matrix coefficients for exceptional groups. Math. Ann. 305 (1996),
no. 2, 249–270.

\bibitem{Hamilton} R. S. Hamilton, The inverse function theorem of Nash and Moser, Bull. Amer. Math. Soc.
(N.S.), 7 (1982), 65-222.

\bibitem{Howe} R. Howe, On a notion of rank for unitary representations of the classical groups. Harmonic analysis and
group representations, 223–331, Liguori, Naples, 1982.

\bibitem{HM} R. Howe, and C. Moore, Asymptotic properties of unitary representations, J. Funct. Anal. 32 (1979), 72–96.

\bibitem{tan} R. E. Howe and E. C. Tan, Non-Abelian Harmonic Analysis, Springer-Verlag, 1992.


\bibitem{L} D. A. Lind. Dynamical properties of quasihyperbolic toral automorphisms. \emph{Ergod. Th.  Dynam. Sys}. 2(1)
(1982), 49-68.

\bibitem{margulis1991discrete} G. A. Margulis, Discrete subgroups of semisimple Lie groups, Berlin Heidelberg New York,
Springer-Verlag, 1991.

\bibitem{mautner1950unitary} F. I. Mautner, Unitary representations of locally compact groups, II, Ann. of Math. (2) 52
(1950), 528-556.


\bibitem{Moore} C. Moore, Exponential decay of correlation coefficients for geodesic flows. Group representations, ergodic
theory, operator algebras, and mathematical physics (Berkeley, Calif., 1984), 163–181, Math. Sci. Res. Inst.
Publ., 6, Springer, New York, 1987.

\bibitem{mo} S. Mozes, Mixing of all orders of Lie groups actions. Invent. Math. 107 (1992), no. 2, 235-241; erratum:
Invent. Math. 119 (1995), no. 2, 399.






\bibitem{Kleinbock} D. Kleinbock and G. A. Margulis, Bounded orbits of
nonquasiunipotent flows on homogeneous spaces, Sinai's Moscow
Seminar on Dynamical Systems, 141.172, {\em AMS Transl}. Ser.
2, {\bf 171}, AMS  Providence, RI, 1996.


\bibitem{Knapp} Knapp, Anthony W. Representation Theory of Semisimple Groups: An Overview Based on Examples. Princeton University Press, 1986.







\bibitem{GR1} A. Gorodnik and R. Spatzier. Exponential mixing of nilmanifold automorphisms. JAMA 123, 355–396 (2014). https://doi.org/10.1007/s11854-014-0024-7.



  \bibitem{GR}  A. Gorodnik and R. Spatzier. Mixing properties of commuting nilmanifold automorphisms. \emph{Acta Math}.
215(1) (2015), 127-159.

\bibitem{Spatzier1}A. Katok, R. Spatzier, First cohomology of Anosov actions of higher rank abelian groups and applications to rigidity, Publications Math{\'e}matiques de l'Institut des Hautes {\'E}tudes Scientifiques, Volume 79 (1994), pp. 131-156.

\bibitem{Konstantoulas} I. Konstantoulas, Effective decay of multiple correlations in semidirect product actions. J. Mod. Dyn. 10
(2016), 81–111.

\bibitem{vw} K. Vinhage and Z. J. Wang, Local Rigidity of Higher Rank Homogeneous Abelian
Actions: a Complete Solution via the Geometric Method, Geom Dedicata (2018). https://doi.org/10.1007/s10711-018-0379-5.

\bibitem{BM} P. E. Blanksby and H. L. Montgomery, Algebraic integers near the unit circle, Acta Arith.
18 (1971), 355-369.


\bibitem{Oh1} H. Oh, Tempered subgroups and representations with minimal decay of matrix coefficients. Bull. Soc. Math.
France 126 (1998), no. 3, 355–380.



\bibitem{oh}H. Oh, Uniform pointwise bounds for matrix coefficients of unitary representations and applications to
Kazhdan constants. Duke Math. J. 113 (2002), no. 1, 133-192.



\bibitem{Robinson} D. W. Robinson, Elliptic Operators and Lie Groups, Oxford Mathematical Monographs, 1991.





\bibitem{Star} A. Starkov, Multiple mixing of homogeneous flows. Dokl. Akad. Nauk 333 (1993), no. 4, 442-445; translation
in Russian Acad. Sci. Dokl. Math. 48 (1994), no. 3, 573-578.

\bibitem{TV} B. Timoth\'ee, and P. P. Varj\'u. 2024. Exponential Multiple Mixing for Commuting Automorphisms of a Nilmanifold.  \emph{Ergodic Theory and Dynamical Systems} 44: 1729-1740

\bibitem{BV} B. Speh and T. N. Venkataramana, On the Restriction of Representations of $\mathrm{SL}(2,\mathbb{C})$ to $\mathrm{SL}(2,\mathbb{R})$, in Lie Groups and Lie Algebras: E. B. Dynkin's Seminar, Progress in Mathematics, vol. 169, Birkhäuser Boston, 1995, pp. 279–288.


\bibitem{Wallach} N. Wallach: Real Reductive groups I, Pure and Applied Math. 132, Academic Press,
Boston, MA (1988).

\bibitem{W1} Z. J. Wang, Cohomological equation and cocycle rigidity of parabolic actions in some higher-rank Lie groups, accepted by Geom. and Funct. Analysis, Volume 25, Issue 6, (2015), 1956-2020

\bibitem{Zhenqi2}Z. J. Wang, Uniform pointwise bounds for Matrix coefficients of unitary representations on
semidirect products, J. functional analysis, Volume 267, Issue 1, 2014, 15-79.



\bibitem{W5} Z. J. Wang, Local rigidity of weak or no hyperbolicity algebraic actions, J of the AMS, 2025 Volume 38, pp 1107-1191.

\bibitem{W6} Z. J. Wang, Local rigidity of partially hyperbolicity algebraic actions, submitted.

\bibitem{W2} Z. J. Wang, Multiple fractional cohomological equations and quantitative mixing on nilmanifolds, submitted.


\bibitem{Zimmer} R. J. Zimmer, {\em Ergodic theory and semisimple groups},
Birkh\"{a}user, Boston,  1984

\bibitem{howe-moore}R. Howe and C. C. Moore, Asymptotic properties of unitary representations, J. Func. Anal. 32
(1979), Kluwer Acad., 72-96.



\bibitem{Warner} G. Warner, Harmonic analysis on semi-simple Lie groups. I. Die Grundlehren der mathematischen Wissenschaften,
Band 188. Springer-Verlag, New York-Heidelberg, 1972.


\bibitem{Warner1} G. Warner, Harmonic analysis on semi-simple Lie groups. II. Die Grundlehren der mathematischen Wissenschaften,
Band 189. Springer-Verlag, New York-Heidelberg, 1972.



\end{thebibliography}
\end{document}